\newtheorem{thm}{Theorem}[section]
\newtheorem{define}[thm]{Definition}
\newtheorem{lem}[thm]{Lemma}
\newtheorem{pro}[thm]{Propostion}
\newtheorem{rem}[thm]{Remark}
\newcounter{aa}
\theoremstyle{definition}
\newcommand{\w}{\mathfrak{p}}
\newcommand{\ww}{|\mathfrak{p}|}
\newcommand{\m}{\mathbf}
\newcommand{\ti}{\tilde}
\newcommand{\dha}{\delta^{\frac{1}{2}}}
\newcommand{\dne}{\delta^{-1}}
\newcommand{\dnh}{\delta^{-\frac{1}{2}}}
\newcommand{\gpg}{\mathrm{G}}
\newcommand{\gph}{\mathrm{H}}
\newcommand{\gpm}{\mathrm{M}}
\newcommand{\gpi}{\mathrm{I}}
\newcommand{\gpj}{\mathrm{J}}
\newcommand{\gpk}{\mathrm{K}}
\newcommand{\gpb}{\mathrm{B}}
\newcommand{\gpu}{\mathrm{U}}
\newcommand{\gpt}{\mathrm{T}}
\newcommand{\gpw}{\mathrm{W}}
\newcommand{\gpx}{\mathrm{X}}
\newcommand{\gpy}{\mathrm{Y}}
\newcommand{\gpz}{\mathrm{Z}}
\newcommand{\gpn}{\mathrm{N}}
\newcommand{\funf}{\mathrm{F}}
\newcommand{\funk}{\mathcal{K}}
\newcommand{\aveg}{\mathcal{A}_{\gpw_{\gpg}}}
\newcommand{\avem}{\mathcal{A}_{\gpw_{\gpm}}}
\newcommand{\ud}{\,\mathrm{d}}
\newcommand{\fifi}{\varphi_{1}}
\newcommand{\fise}{\varphi_{2}}
\newcommand{\pair}[2]{\lpair (#1,#2)}
\newcommand{\ppair}[2]{\langle #1,#2 \rangle}
\newcommand{\chag}{\chi}
\newcommand{\cham}{\xi}
\newcommand{\whi}{\mathrm{I}}
\newcommand{\repfi}{\mathrm{Ind}^{\gpg}_{\gpb_{\gpg}}(\chag)}
\newcommand{\repse}{\mathrm{Ind}^{\gpm^{\gpj}}_{\gpb_{\gpm^{\gpj}}}(\cham,\psi)}
\newcommand{\wsf}{\mathrm{W}_{\chag,\cham,\psi}^{0}}
\newcommand{\wsfb}{\mathrm{W}_{\chag,\cham,\psi}}
\newcommand{\iii}{\mathrm{I}_{\chag,\cham,\psi}^{0}}
\newcommand{\iiib}{\mathrm{I}_{\chag,\cham,\psi}}
\newcommand{\ws}{\mathcal{WS}_{\chag,\cham,\psi}}
\newcommand{\iwag}{\varPhi_{1}}
\newcommand{\iwam}{\Psi_{1}}
\newcommand{\iwagw}[1]{\varPhi_{#1}}
\newcommand{\iwamw}[1]{\Psi_{#1}}
\newcommand{\lpair}{l_{\chag,\cham,\psi}}
\newcommand{\schh}{\mathrm{\tilde{T}}}
\begin{document}
\title{The Whittaker-Shintani functions for symplectic groups}\keywords{Whittaker-Shintani function, Whittaker-function, symplectic group,  L-function, Fourier-Jacobi model}
\subjclass[2010]{22E50,11F70}
\author{Xin Shen}
\address{School of mathematics, University of Minnesota}
\address{520 Vincent Hall, 206 Church st. SE, Minneapolis, MN, 55455, USA}
\address{shenx125@umn.edu}

\maketitle

\begin{abstract}
  In this note, we give a formula for the Whittaker-Shintani functions for the p-adic symplectic groups, which is a generalization of the Zonal spherical functions and Whittaker functions. We then use the formula to give an alternative proof of a conjecture given by T.Shintani on the unramified calculation of L-functions for $\mathrm{Sp_{2m}}\times \mathrm{GL_{1}}$.

\end{abstract}

%\setlength{\abovedisplayskip}{12pt}
%\setlength{\belowdisplayskip}{12pt}

%\onehalfspacing

\section{Introduction}

\subsection{}
\label{sec:1.1}

Let $\mathbb{G}$ be a connected reductive linear algebraic group defined over a number field $F$ with its ring of Adeles $\mathbf{A}$, $\Pi=\otimes_{v}\Pi_{v}$  an irreducible unitary automorphic cuspidal representation of $\mathbb{G}(\mathbf{A})$, and $r$ a finite dimensional representation of the $L$-group $^{L}\mathbb{G}$ of $\mathbb{G}$. Following Langlands, one may define the partial L-function as 
\begin{equation}
  \label{eq:aa1}
  L_{S}(s,\Pi,r)=\prod_{v\notin S}L(s,\Pi_{v},r_{v})
\end{equation}
where $S$ is a finite set of places of $F$ outside of which both $\mathbb{G}$ and $\Pi_{v}$ are unramified. Langlands conjectured that this partial L-function continues to a meromorphic function in $\mathbb{C}$ which has only finitely many poles and satisfies a standard functional equation relating its value at $s$ to $1-s$. One of the successful approaches to this conjecture, the Rankin-Selberg method, is by constructing a global zeta-integral plus an Euler product expansion, and equating the unramified local zeta-integrals with the ``Langlands factors'' $L(s,\Pi_{v},r_{v})$ (referred to as ``unramified computation'').

One of the interesting cases of the above conjecture is the partial tensor L-function, where $(\mathbb{G}, \Pi, r)=(Sp_{2n}\times GL_{k}, \pi\otimes \tau, \text{"standard"}.)$. Here $\pi$ and $\tau$ are irreducible cuspidal automorphic representations of $Sp_{2n}$ and $GL_{k}$ respectively. The purpose of this paper is to give an explicit formula for the Whittkaker-Shintani functions, which is one of the key steps towards the unramified computation, for the case when $\pi$ is non-generic and $k<n$, following the global construction of zeta-integral in \cite{fourierjacobi}. When $\pi$ is generic the unramified calculation is completed in \cite{MR1675971} using the Casselman-Shalika formula (\cite{MR581582}) for the Whittaker functions, and the formula for Whittkaker-Shintani functions play a parallel role in the non-generic case.

The main idea of this paper actually comes from \cite{MR581582} for the Casselman-Shalika formula for Whittaker functions, and \cite{MR1956080} where the  Whittaker-Shintani functions for orthogonal groups are defined in a similar way and an explicit formula is given. However, the calculation in our case is more technical since the Jacobi group we are dealing with is not reductive. While the general formula we obtain is not as explicit as in the orthogonal case due to its nature, we still have an explicit formula (\ref{whatweneed}) when restricted to the torus, which is enough for the unramified computation. 

The author would like to express his sincere appreciation to his adviser, Prof. Dihua Jiang, for his support and advice in writing this paper. He also thanks Prof. David Ginzburg for his suggestion on the unramified computation. This paper would be part of the author's thesis.

\subsection{}
\label{sec:1.2}

Let $\gpg$ and $\gpm$ be symplectic groups, defined over a non-archimedean local field, of rank $n$ and $m$ respectively with $n\geq m+1$. Let $\repfi$ be the unramified principle series of $\gpg$. Let $\gpm^{\gpj}$ be the Jacobi group and $\gpb_{\gpm^{\gpj}}$ its Borel subgroup as defined in (\ref{notgroup}) in \textbf{Section \ref{Notation}}, and let  $\repse$ be an unramified principle series of $\gpm^{\gpj}$ as defined in (\ref{notrep}) in \textbf{Section \ref{Notation}}. Let $\gpu$ be the unipotent radical of a parabolic subgroup $P_{1}^{n-m-1}$ of $\gpg$ and $\psi_{\gpu}$ be a character on $\gpu$ which is stablized by $\gpm^{\gpj}$ (see (\ref{equ}) and (\ref{eqpsiu})). Then one can define an $\gpm^{\gpj}$-invariant, $(\gpu,\psi_{\gpu})$-covariant pairing $\lpair$ between $\repfi$ and $\repse$. Let $\funf_{\chag}^{0}$ and $\funf_{\cham,\psi}^{0}$ be the normalized spherical vectors in $\repfi$ and $\repse$, and we define
\begin{equation*}
  \wsfb(g)=\lpair(R(g)\funf_{\chag}^{0},\funf_{\cham,\psi}^{0}).
\end{equation*}
This function is a Whittaker-Shintani function attached to $(\chag,\cham,\psi)$ (see \textbf{Definition \ref{defwhi}}). We will show later that for given $(\chag,\cham,\psi)$ (unramified) such function is unique up to a scalar. Denote by $\wsf$ the normalized Whittaker-Shintani function which is equal to 1 at the identity. In this paper we show the following two theorems (for the definition of $\gpx^{0}, \gpz, \gpk_{\gpm^{\gpj}}, \w^{\m d}, \lambda, \w^{\m f}, \gpk_{\gpg}$, see \textbf{Section \ref{Notation}}).
\begin{thm}
  Let $(\chag,\cham)\in \mathbb{C}^{n}\times \mathbb{C}^{m}$, and let $\m d\in \Lambda^{+}_{m}$, $\m f\in \Lambda^{+}_{n}$. Let $\wsf$ be the normailzed Whittaker-Shintani function attached to $(\chag,\cham,\psi)$. Then we have 
  \begin{equation}
    \begin{split}
    \int_{\gpx^{0}}\ud &x\wsf(\w^{\mathbf{d}} x \w^{\mathbf{f}})=\zeta(1)^{-m}\prod_{i=1}^{m}\zeta(2i)\\
    &\cdot \sum_{w\in W_{\gpg},w'\in W_{\gpm}}b(w\chag,w'\cham)d(w\chag)d'(w'\cham)((w\chag)^{-1}\dha)(\w^{\m f})((w'\cham)^{-1}\dha)(\w^{\m d}) .
  \end{split}
  \end{equation}
where
\begin{equation*}
  \mathrm{d}(\chag)=\prod_{1 \leq a< b \leq n }\zeta(\chag_{a}\pm\chag_{b})\prod_{i=1}^{n}\zeta(\chag_{i}), \qquad  \mathrm{d'}(\cham)=\prod_{1\leq a<b \leq m}\zeta(\cham_{a}\pm\cham_{b})\prod_{j=1}^{m}\zeta(2\cham_{j}),
\end{equation*}
and
\begin{equation*}
  \begin{split}
    \mathrm{b}(\chag,\cham)=&\prod_{i<j+n-m}\zeta^{-1}(\chag_{i}-\cham_{j}+\frac{1}{2})\cdot 
\prod_{i>j+n-m}\zeta^{-1}(-\chag_{i}+\cham_{j}+\frac{1}{2})\\
    &\prod_{ 1\leq j \leq m}\zeta^{-1}(\cham_{j}+\frac 1 2)\prod_{\substack{1 \leq i \leq n\\ 1\leq j \leq m}}\zeta^{-1}(\chag_{i}+\cham_{j}+\frac{1}{2})
  \end{split}   
\end{equation*}
\end{thm}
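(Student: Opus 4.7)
The strategy is the Casselman-Shalika style of argument that Murase and Sugano used for the orthogonal Whittaker-Shintani function in \cite{MR1956080}. The ingredients are (i) uniqueness of $\lpair$ up to scalar for generic $(\chag,\cham)$; (ii) functional equations for $\wsfb$ under the intertwining operators of $\repfi$ and $\repse$; and (iii) a single explicit ``leading-term'' computation that pins down the coefficient in the Weyl sum.

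First I would prove the uniqueness of the Whittaker-Shintani functional. A Bruhat-cell decomposition of $\repfi$ along $\gpb_{\gpm^{\gpj}}\backslash\gpg/\gpu$, together with the standard induction on the open cell, should show that only the open double coset supports a nonzero $\gpm^{\gpj}$-invariant, $(\gpu,\psi_{\gpu})$-covariant functional. Given uniqueness, composing $\lpair$ with the standard intertwining operator $\sch_{w}$ for $w\in W_{\gpg}$ and with $\schh_{w'}$ for $w'\in W_{\gpm}$ produces functionals that must be scalar multiples of $\lpair$. Evaluating on the normalized spherical vectors yields functional equations
\[
W_{w\chag,\cham,\psi}(g)=c_{\gpg}(w,\chag,\cham)\,\wsfb(g),\qquad W_{\chag,w'\cham,\psi}(g)=c_{\gpm}(w',\chag,\cham)\,\wsfb(g),
\]
where $c_{\gpg}$ and $c_{\gpm}$ can be computed explicitly from the Gindikin-Karpelevich formula together with the contribution of the $\gpu$-integral in $\lpair$.

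Next, the integrated function $I(\chag,\cham;\m d,\m f):=\int_{\gpx^{0}}\wsf(\w^{\m d}x\w^{\m f})\ud x$ inherits the same functional equations. After dividing by the explicit factor $b(\chag,\cham)d(\chag)d'(\cham)$, the result becomes a $W_{\gpg}\times W_{\gpm}$-invariant Laurent polynomial in $q^{\pm\chag_{i}},q^{\pm\cham_{j}}$ multiplied by $(\chag^{-1}\dha)(\w^{\m f})(\cham^{-1}\dha)(\w^{\m d})$. Invoking the Bernstein-type presentation of such polynomials as Weyl averages of a single dominant character, as in \cite{MR581582}, collapses $I$ into precisely the sum on the right-hand side of the theorem, up to a global constant.

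The main obstacle, and the heart of the computation, is to identify $b(\chag,\cham)$ explicitly. Unlike the orthogonal case, the Jacobi group $\gpm^{\gpj}$ is not reductive, and $b$ encodes the coupling of $\repfi$ with the Heisenberg-Weil piece of $\repse$ through the $\gpu$-integral defining $\lpair$. The half-integer shifts $\chag_{i}\pm\cham_{j}+\tfrac{1}{2}$ in $b$ are traceable to the central character $\psi$ of this Heisenberg-Weil component. To pin down $b$, one evaluates the leading term directly in the stable range $n\geq m+1$, recognizing the resulting integral as a Tate-style Euler product; the delicate point is to verify convergence on an open set of $(\chag,\cham)$, to analytically continue, and to rule out spurious poles beyond those dictated by $b,d,d'$. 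Finally, the overall constant $\zeta(1)^{-m}\prod_{i=1}^{m}\zeta(2i)$ is fixed by the normalization $\wsf(1)=1$, i.e.\ by specializing both sides to $\m d=\m f=0$.
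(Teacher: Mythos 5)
Your overall strategy matches the paper's (Murase--Sugano/Casselman--Shalika): uniqueness of the covariant functional, functional equations under intertwining operators on both the $\gpg$ and $\gpm^{\gpj}$ sides, a single explicit leading-term evaluation, and normalization at the identity. The observation that the shifts $\chag_{i}\pm\cham_{j}+\tfrac{1}{2}$ in $b$ trace back to the central character of the Heisenberg--Weil component is a nice conceptual gloss, though the paper arrives at it by raw computation.

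However, the bridge you propose from the functional equations to the explicit Weyl sum is misstated, and as stated it does not go through. You assert that $I(\chag,\cham;\m d,\m f)/\bigl(b(\chag,\cham)d(\chag)d'(\cham)\bigr)$ is a $W_{\gpg}\times W_{\gpm}$-invariant Laurent polynomial times the single orbit factor $(\chag^{-1}\dha)(\w^{\m f})(\cham^{-1}\dha)(\w^{\m d})$. If that were true, $I$ would be a single term, not a Weyl sum, and it is false: the individual summands $b(w\chag,w'\cham)d(w\chag)d'(w'\cham)$ are not Weyl translates of an invariant function. What the paper actually shows (Theorem \ref{thm:gamma}) is that $\Gamma(\chag,\cham)^{-1}\,\lpair(R(g)\funf_{\chag}^{0},\funf_{\cham,\psi}^{0})$ is $W_{\gpg}\times W_{\gpm}$-invariant for a specific factor $\Gamma$ which is not $b\,d\,d'$. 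The missing mechanism is the Iwahori-level expansion of Section \ref{sec:step5}: the spherical vectors are written in the Casselman bases $\{\overline{\mathrm{T}}_{w^{-1}}\iwag^{w\chag}\}_{w\in\gpw_{\gpg}}$ and $\{\overline{\mathrm{T}}_{w^{-1}}\iwam^{w\cham,\psi}\}_{w\in\gpw_{\gpm}}$, and the eigenvector relations of Propositions \ref{pro:aG} and \ref{pro:aM} for $R(\mathrm{Ch}_{\gpi_{\gpg}\w^{-\m f}\gpi_{\gpg}})$ and $R(\mathrm{Ch}_{\overline\gpi_{\gpm}\w^{-\m d}\overline\gpi_{\gpm}})$ produce the Weyl-sum structure with undetermined coefficients $\mathcal{A}(\chag,\cham,w,w')$. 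The combinatorial Lemma \ref{1childlem}, fed with the $\gpw_{\gpg}\times\gpw_{\gpm}$-invariance coming from the $\gamma$-factor, then forces $\mathcal{A}(\chag,\cham,w,w')=\mathcal{A}(w\chag,w'\cham,e,e)$ and reduces everything to the $(e,e)$ coefficient, which is precisely the leading-term computation (Proposition \ref{pro:beta.1}) you mention. Your appeal to a ``Bernstein-type presentation as Weyl averages'' does not substitute for this chain; it is exactly where the argument does the real work, and the sketch leaves it unsupplied.
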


\begin{thm}
  Under the same notation and assumption as in the previous theorem, the support of $\wsf$ is on
  \begin{equation}
    \bigcup_{\m d\in \Lambda^{+}_{m}, \m f\in \Lambda^{+}_{n}}\gpz \gpu \gpk_{\gpm^{\gpj}}(\w^{\m d}\lambda \w^{\m f})\gpk_{\gpg}.
  \end{equation}
 If we let $\mathcal{L}(\m d', \m f')=  \int_{\gpx^{0}}\ud x\wsf(\w^{\mathbf{d'}} x \w^{\mathbf{f'}})$, then there exists $a(\m d')\geq 0$ independent of $(\chag,\cham, \psi)$ such that
\begin{equation}
  \wsf(\w^{\mathbf{d}} \lambda \w^{\mathbf{f}})=\sum_{\m d'}a(\m d')\mathcal{L}(\m d', \mathbf{f+d-d'} ).
\end{equation}
where $\m d'$ runs over the set $\{\m d'\mid\mathbf{d'\in }\Lambda^{+}_{m}, \mathbf{f+d-d'}\in \Lambda^{+}_{n}, \mathbf{d'\leq d}\}$ and $a(\m d)>0$. In particular, we have
\begin{equation}
\label{whatweneed}
  \wsf(\w^{\mathbf{f}})= \mathcal{L}(\m 0, \mathbf{f})
\end{equation}

\end{thm}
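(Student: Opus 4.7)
The plan is to deduce the support statement from the joint equivariance of $\wsf$, and to obtain the reduction formula by computing $\mathcal{L}(\m d, \m f)$ as a triangular sum of point values of $\wsf$ which is then inverted.

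For the support, I would first record that $\wsf$ is right $\gpk_{\gpg}$-invariant, left $\gpz\gpk_{\gpm^{\gpj}}$-invariant, and satisfies $\wsf(ug)=\psi_{\gpu}(u)\wsf(g)$ for $u\in\gpu$; all three follow directly from the definition $\wsfb(g)=\lpair(R(g)\funf_{\chag}^{0},\funf_{\cham,\psi}^{0})$ together with the $\gpm^{\gpj}$-invariance and $(\gpu,\psi_{\gpu})$-covariance of $\lpair$ and the $\gpk$-fixity of the normalized spherical vectors. Combining with the Cartan decomposition of $\gpg$ relative to $\gpk_{\gpg}$ and the analogous decomposition of the reductive quotient of $\gpm^{\gpj}$ relative to $\gpk_{\gpm^{\gpj}}$, these equivariances force the support of $\wsf$ to sit inside $\bigcup_{\m d,\m f}\gpz\gpu\gpk_{\gpm^{\gpj}}(\w^{\m d}\lambda\w^{\m f})\gpk_{\gpg}$ with $\m d\in\Lambda^{+}_{m}$ and $\m f\in\Lambda^{+}_{n}$, after discarding the non-dominant Weyl-chamber contributions by the usual argument that a spherical-type function on the negative chamber is forced to vanish by its growth behavior.

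For the reduction formula, I would stratify $\gpx^{0}=\bigsqcup_{\m d'\leq\m d}\gpx^{0}(\m d,\m d')$ according to the $\m d'$-component that appears in the Iwasawa--Cartan reduction of $\w^{\m d}x$. On the stratum indexed by $\m d'$, the element $\w^{\m d}x\w^{\m f}$ can be brought, by absorbing compact pieces into $\gpk_{\gpm^{\gpj}}$ on the left and $\gpk_{\gpg}$ on the right, to the form $u'\cdot\w^{\m d'}\lambda\w^{\m{f+d-d'}}$, where the shift $\m f\mapsto\m{f+d-d'}$ reflects the transfer of the ``missing'' left-torus weight $\m{d-d'}$ to the right-torus coordinate. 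Invoking the equivariance of $\wsf$ and integrating $\psi_{\gpu}(u')$ over each stratum yields
\begin{equation*}
\mathcal{L}(\m d,\m f)=\sum_{\m d'\leq\m d,\ \m{f+d-d'}\in\Lambda^{+}_{n}}c(\m d,\m d')\,\wsf(\w^{\m d'}\lambda\w^{\m{f+d-d'}}),
\end{equation*}
with $c(\m d,\m d')\geq 0$ a volume independent of the inducing data, and $c(\m d,\m d)>0$ because the top stratum is open and $\psi_{\gpu}$ is trivial on it. The shift $\m f\mapsto\m{f+d-d'}$ preserves $\m d+\m f$, so the system is square-triangular on each fiber $\m d+\m f=\text{const}$; inverting gives coefficients $a(\m d')\geq 0$ independent of $(\chag,\cham,\psi)$ with $a(\m d)=c(\m d,\m d)^{-1}>0$. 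Specializing to $\m d=\m 0$ forces $\m d'=\m 0$ in the sum and (after normalizing $c(\m 0,\m 0)=1$) returns formula (\ref{whatweneed}).

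The main obstacle will be the explicit layer decomposition of $\gpx^{0}$: one must identify the strata precisely, verify on each stratum that the residual compact element really does absorb into $\gpk_{\gpg}$ producing the exact shift $\m{f+d-d'}$, and show that $\int\psi_{\gpu}$ on each stratum is a non-negative volume independent of the characters. This is where the non-reductivity of $\gpm^{\gpj}$ — specifically the interplay of the Heisenberg center $\gpz$ with $\gpu$ — forces the analysis to depart substantially from the orthogonal case of \cite{MR1956080}, and the bookkeeping of commutators between $\w^{\m d}$, the Heisenberg part of $\gpx^{0}$, and $\gpu$ is the delicate point.
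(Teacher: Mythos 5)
Your high-level strategy is the same as the paper's: use the equivariances of $\wsf$ to restrict the support, stratify $\gpx^{0}$ to get a triangular relation between $\mathcal{L}$-values and point values of $\wsf$, then invert. But several of the details you supply are off, and the two hardest steps are either absent or replaced with an incorrect heuristic.

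For the support, the restriction to the dominant cone does not come from a ``growth behavior'' argument. The paper's Lemma 5.2 is a $\psi$-covariance argument: one conjugates a unit in a suitable root subgroup $E_{\alpha}$ past the torus element and picks up a nontrivial value of $\psi$ or $\psi_{\gpu}$, forcing $\wsf$ to vanish there; there is no appeal to growth of the function. More seriously, the reduction of a general element $\w^{\m d}\lambda(\m c)\w^{(\m a,\m r)}$ to the canonical form $\w^{\m d}\lambda\w^{\m f}$ with $\m d\in\Lambda^{+}_{m}$, $\m f\in\Lambda^{+}_{n}$ is not automatic from ``Cartan decomposition of the reductive quotient.'' This is the content of Lemma \ref{lem:induction} (Operations 1, (2,$i$), (3,$i$)) and Proposition \ref{lem:closure}, which move valuation deficits between $\m d$ and $\m r$ by conjugating root elements across $\lambda$ and $\w^{\m d}$; your outline omits this entirely.

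For the reduction formula, the stratification you propose does not directly land on $u'\cdot\w^{\m d'}\lambda\w^{\m{f+d-d'}}$. After commuting $\w^{\m d}$ past $x$ one only gets $\w^{\m{d-c'}}\lambda\w^{\m{f+c'}}$ with $\m c'=\min(v(\m x),\m d)$, and $\m{d-c'}$ and $\m{f+c'}$ need not be dominant; the Operations are applied again to push into the dominant cone, possibly decreasing $\m{d-c'}$ further. There is also no $\psi_{\gpu}$-integration at this point: on each stratum the integrand is literally a single double-coset and hence a constant, so the coefficient $b(\m d')$ is just a volume. In particular $b(\m d)>0$ is not because ``$\psi_{\gpu}$ is trivial on the top stratum'' but because on $x\in(\mathcal{O}^{*})^{m}$ (of volume $(1-\ww)^{m}$) the element $\w^{\m d}\gpx(x)\w^{\m f}$ already lies in $\gpz\gpu\gpk_{\gpm^{\gpj}}\w^{\m d}\lambda\w^{\m f}\gpk_{\gpg}$. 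Finally, the inversion of a nonnegative triangular system with positive diagonal does not in general produce nonnegative coefficients; this is a genuine gap in your argument for $a(\m d')\geq 0$ (though for the paper's application only $a(\m 0)=1$ and independence of $a$ from $(\chag,\cham,\psi)$ are used, and the latter follows because the $b$'s are measure-theoretic and combinatorial). The special case (\ref{whatweneed}) is immediate from left $\gpk_{\gpm^{\gpj}}$-invariance, as you note.
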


The paper is organized as follows. In \textbf{Section \ref{Notation}} we give the notation we use in this paper. In \textbf{Section \ref{sec:step0}} we use the Rankin-Selberg convolution to find an integral expression of the pairing $\lpair$ when $(\chag,\cham)$ belongs to $\mathcal{Z}_{c}\subset \mathbb{C}^{n}\times \mathbb{C}^{m}$ which contains a Hausdoff open set. In \textbf{Section \ref{sec:step1}} we show that the pairing $\lpair$ between $\repfi$ and $\repse$ satisfying Condition A (see \textbf{Definition \ref{conditiona}}) is unique up to a scalar. Then in  \textbf{Section \ref{sec:step3}} we apply the Bernstein's theorem to extend this pairing defined by the integral to generic $(\chag,\cham)$. In \textbf{Section \ref{sec:step4}} we discuss the double cosets of $\gpg$ on which the Whittaker-Shintani function is supported. By considering the vectors invariant under certain open compact subgroups (in \textbf{Section \ref{sec:step5}}) and applying the intertwining operators (in \textbf{Section \ref{sec:step6}}) we give an explicit formula in \textbf{Section \ref{sec:step7}} for the Whittaker-Shintani function attached to generic $(\chag,\cham)$, and we obtain its value at the identity by an combinatorial argument in \textbf{Section \ref{sec:step8}}. After showing the uniqueness of the normailized Whittaker-Shintani function in \textbf{Section \ref{sec:step9}}, we apply the Bernstein' theorem again to extend the formula to  all $(\chag,\cham)$ in \textbf{Section \ref{sec:step10}}. In \textbf{Section \ref{sec:13}} we use the formula we obtained to give an alternative proof of  in  \cite[Theorem 6.1]{MR1121142}, the unramified calculation of L-fucntions for $\mathrm{Sp}_{2n}\times \mathrm{GL_{1}}$.

The application in the last section is in fact a special case of \cite[Theorem 4.3]{fourierjacobi}, the unramified calculation of L-functions for $\mathrm{Sp_{2n}}\times \mathrm{GL}_{k}$. The proof for the general case will be shown in our \cite{xinshen2}. We also expect parellel results for unitary groups (with respect to skew hermitian forms). This will be covered in the future.

\section{Notation}
\label{Notation}
In this paper, we let $F$ be a non-archimedean local field of characteristic 0. Let $\mathcal O$ be its maximal compact subring and $\w$ the uniformizer. Suppose the order of the residue field is $q$ which is not a power of 2. All the groups are defined over $F$. Through out the paper we fix $\psi$ to be an additive character on $\mathbb F$ with conductor $0$. 
\begin{enumerate}[(A)]
\setlength{\itemsep}{12pt}
\item \textbf{Groups.} \label{notgroup}Let $\gpg =\mathrm{Sp}_{2n}$, $\gph=\mathrm{Sp}_{2m+2}$ and $\gpm =\mathrm{Sp}_{2m}$, where $m,n$ are two positive integers with $n\geq m+1$.
$\gpm$ (or $ \gph$) embedds to $\gpg$ as $diag(1_{n-m},g,1_{n-m})$ (or $diag(1_{n-m-1},g,1_{n-m-1})$) for $g\in \gpm$ (or $g\in \gph$). Let $\mathcal{M}_{2n\times 2n}(F)$ be the $2n$ by $2n$ matrix over $F$. For any subgroup $\ti {\gpg}$ of $\gpg$, and any $i\geq 0$, we define $\ti {\gpg}^{i}$ as
\begin{equation*}
  \ti {\gpg}^{i}=\ti {\gpg} \cap \left(I_{2n}+\mathcal M_{2n\times 2n}(\w^{i}\mathcal O)\right).
\end{equation*}
Let  $\gpk_{\gpg}=\gpg^{0}$, and $\gpk_{\gpm}=\gpm\cap \gpk_{\gpg}$. Let  $\gpj$ be the Heisenberg group of dimension $2m+1$ embedding to $\gph$ as 
\begin{equation*}
  \gpj (x,y,z)=
  \begin{pmatrix}
  1&x&y&z\\ &1&&^{t}y\\ & &1& -^{t}x\\ &&&1
\end{pmatrix}
\end{equation*}
where $x,y\in \mathbb F^{m}$, $z\in \mathbb F$. Let $\gpm^{\gpj}=\gpm\ltimes \gpj$, and $\gpk_{\gpm^{\gpj}}=\gpk_{\gpm}\ltimes \gpj^{0}$.  We let $\gpx(x)=\gpj (x,0,0)$, $\gpy(y)=\gpj (0,y,0)$, $\gpz(z)=\gpj (0,0,z)$ and let $\gpx,\gpy,\gpz$ be the group of them respectively. Let $\gpb_{\gpg}$, $\gpb_{\gph}$ and $\gpb_{\gpm}$ be the standard Borel subgroup of $\gpg,\gph,\gpm$, and $\gpb_{\gpm^{\gpj}}=\gpb_{\gpm}\ltimes (\gpy\times \gpz)$, and let $\gpn_{\gpg}, \gpn_{\gph}, \gpn_{\gpm},\gpn_{\gpm^{\gpj}}$ be their unipotent radical respectively. Let $\mathrm{T}_{\gpg}$ be the toral part of of $\gpb_{\gpg}$, and let
  \begin{equation*}
    \begin{split}
      &      \Lambda_{k}^{+}=\{(d_{1},...,d_{k})\in \mathbb{Z}^{k} \mid d_{1}\geq d_{2}\geq... \geq d_{k}\geq 0 \}\\
      &\mathrm{T}_{\gpg}^{+}=\{diag(t_{1},...,t_{n},t_{n}^{-1},...,t_{1}^{-1})\mid |t_{1}|\leq...\leq |t_{n}|\leq 1\}\\
      &\mathrm{T}_{\gpg}^{-}=\{t^{-1}\mid t\in T_{\gpg}^{+}\}
    \end{split}
  \end{equation*}
The definition of $T_{\gpm}^{+}$ and $T_{\gpm}^{-}$ are similar. Let $P^{n-m-1}_{1}$ be the standard parabolic subgroup of $\gpg$ with Levi decomposition
\begin{equation}
\label{equ}
  P_{1}^{n-m-1}=\mathrm{GL}_{1}^{n-m-1}\times \gph \ltimes \gpu.
\end{equation}
 Let $\psi_{\gpu}$ be the character on $\gpu$ given by
\begin{equation}
\label{eqpsiu}
 \psi_{\gpu}(u)=\psi(\sum_{i=1}^{n-m-1}u_{i,i+1}),
\end{equation}
which is stablized by $\gpm^{\gpj}$.
We denote by $\gpi_{\gpg}$, $\gpi_{\gpm}$ the Iwahori subgroups of $\gpg$ and $\gpm$, and $\overline I_{\gpm}=I_{\gpm}\ltimes \gpj^{0}$. Let $\gpw_{\gpg}$, $\gpw_{\gpm}$ be the Weyl group of $\gpg$ and $\gpm$ with respect to $\gpt_{\gpg}$ and $\gpt_{\gpm}$.
\item \textbf{Elements.} Let $w_{0}^{\gpg}$ be the longest Weyl element in $\gpg$. For $k\leq n$, and given $t_{1},...,t_{k}\in F^{*}$, we let $d_{k}(t_{1},...,t_{k})=diag(I_{n-k},t_{1},\ldots,t_{k},t_{k}^{-1},\ldots,t_{1}^{-1},I_{n-k})\in \gpt_{\gpg}$. Let $\mathbb{\overline Z}=\mathbb{Z}\cup \{\infty\}$, and let $v$ be the normalized valuation from $F$ to $\mathbb{\overline Z}$. For $\mathbf{a,b}\in \mathbb{\overline Z}^{k}$, we define an order in $ \mathbb{\overline Z}$ such that $\mathbf{a}\geq \mathbf{b}$ if and only if $\mathbf{a-b}\in \mathbb{(N\cup \{\infty\})}^{k}$. We define $\mathrm{min}\mathbf{(a,b)}=(\mathrm{min}(a_{1},b_{1}),\ldots , \mathrm{min} (a_{k},b_{k})).$ When $\mathbf{a}\in \mathbb{\overline Z}^{m}$, we let $\lambda(\mathbf{a})=\gpx(\w^{a_{1}},\ldots,\w^{a_{m}}).$ Here $\w^{\infty}=0.$ Let $\lambda=\lambda(\m 0)$. For $\mathbf{a}=(a_{1},...,a_{k})\in \mathbb{Z}^{k}$, we let $\w^{\mathbf{a}}=d_{k}(\w^{a_{1}},...,\w^{a_{k}})$.

\item \textbf{Representations.}\label{notrep} Let $\chag$ and $\cham$ be unramified characters on $\gpt_{\gpg}$ and $\gpt_{\gpm}$. We parametrize them as $\chag=(\chag_{1},...,\chag_{n})\in \mathbb{C}^{n}$ and $\cham=(\cham_{1},...,\cham_{m})\in \mathbb{C}^{m}$ such that $\chag(d_{n}(t_{1},...,t_{n}))=\prod_{i=1}^{n}|t_{i}|^{\chag_{i}}$ and $  \cham(d_{m}(t_{1},...,t_{m}))=\prod_{j=1}^{m}|t_{j}|^{\cham_{j}}.$ Let $\mathrm{Ind}^{\gpm^{\gpj}}_{\gpb_{\gpm^{\gpj}}}(\cham, \psi)$ be a representation of $\gpm^{\gpj}$ consistiting of smooth functions on $\gpm^{\gpj}$ such that
\begin{equation*}
 f(b_{\gpm}(0,y,z)m^{\gpj})=\cham \delta_{\gpb_{\gpm^{\gpj}}}^{\frac{1}{2}}(b_{m})\psi(z)f(m^{\gpj}),
\end{equation*}
with $\gpm^{\gpj}$ acting by right translation. Sometimes we write $\cham\psi$ as a character on $\gpb_{\gpm^{\gpj}}$ such that
\begin{equation*}
  \cham\psi(b_{\gpm}\gpj(0,y,z))=\cham(b_{\gpm})\psi(z).
\end{equation*}

\end{enumerate}
%\suspend{enumerate}
\setcounter{aa}{\value{enumi}}
\begin{rem}
  Although $\gpb_{\gpm^{\gpj}}\backslash \gpm^{\gpj}$ is not compact, functions in $\mathrm{Ind}^{\gpm^{\gpj}}_{\gpb_{\gpm^{\gpj}}}(\cham, \psi)$ are compactly supported on $\gpb_{\gpm^{\gpj}}\backslash \gpm^{\gpj}$ by smoothness. In fact by Iwasawa decomposition on $\gpm$, we have
  \begin{equation*}
    \gpm^{\gpj}=\gpb_{\gpm^{\gpj}}\gpx\gpk_{\gpm}.
  \end{equation*}
Suppose $f\in \mathrm{Ind}^{\gpm^{\gpj}}_{\gpb_{\gpm^{\gpj}}}(\cham, \psi)$ which is right $\gpk_{f}$ invariant for some open compact subgroup $\gpk_{f}$. Note that $\gpk_{f}\backslash \gpk_{\gpm^{\gpj}}$ is finite. Let $k$ be a representative in one of the cosets and suppose $f(xk)\neq 0$. Then by the smoothness of $f$, $f(xk)=f(xyk)$ when $y$ is in a neighbourhood of $0\in F^{m}$. But note that $f(xyk)=\psi(2\ppair{x}{y})f(xk)$, so $\psi(\ppair{x}{y})=1$ for all such $y$, which implies that $x$ belongs to a compact subset. 

In particular, if $f$ is $\gpk_{\gpm^{\gpj}}$-invariant, then $f(x)\neq 0$ implies $x\in \gpx^{0}$. So the spherical vector in $\mathrm{Ind}^{\gpm^{\gpj}}_{\gpb_{\gpm^{\gpj}}}(\cham, \psi)$ is supported on $\gpb_{\gpm^{\gpj}}\gpk_{\gpm^{\gpj}}$, and is unique up to a scalar.
\end{rem}

\begin{enumerate}
\setcounter{enumi}{\value{aa}}

\item\textbf{Functions and Functional.} \label{def17}
We denote by $\zeta(s)$ the local zeta function as $\zeta(s)=(1-q^{-s})^{-1}$. For any set $\mathcal{X}$ we denote by $Ch_{\mathcal{X}}$ its characteristic function.  For $\fifi\in \mathbf{C}^{\infty}_{c}(\gpg)$, we let
\begin{equation}
\label{eqfchag}
  \funf_{\chag}(\varphi_{1})(g)=\int_{\gpb_{\gpg}}\chag^{-1}\dha_{\gpb_{\gpg}}(b_{\gpg})\varphi_{1}(b_{\gpg}g)\ud_{l}b_{\gpg}.
\end{equation}
Then the map $\varphi_{1}\mapsto \funf_{\chag}(\varphi_{1})$ is surjective from $\mathbf{C}^{\infty}_{c}(\gpg)$ to $\repfi$.
Similarly for $\fise \in \mathbf{C}^{\infty}_{c}(\gpm^{\gpj})$, we let
\begin{equation}
\label{eqfcham}
  \funf_{\cham,\psi}(\fise)(m^{\gpj})=\int_{\gpb_{\gpm^{\gpj}}} (\cham\psi)^{-1}\dha_{\gpb_{\gpm^{\gpj}}}(b_{\gpm^{\gpj}})\fise(b_{\gpm^{\gpj}}m^{\gpj})\ud_{l}b_{\gpm^{\gpj}}.
\end{equation}
The map $\fise\mapsto \funf_{\cham,\psi}(\fise)$ is surjective from $\mathbf{C}^{\infty}_{c}(\gpm^{\gpj})$ to 
$\repse$. Let $\funk_{\chag,\cham,\psi}$ be a function defined on $\gpg$ such that
\begin{equation}
\label{def19}
    \funk_{\chag,\cham,\psi}(b_{\gpg}w_{0}^{\gpg}\lambda \gpj(0,y,z)b_{\gpm}u)=\chag^{-1}\delta^{\frac{1}{2}}(b_{\gpg})\cham\delta^{-\frac{1}{2}}(b_{\gpm})\psi(z)\psi_{\gpu}^{-1}(u),
\end{equation}
and $\funk_{\chag,\cham,\psi}(g)=0$ for all other $g$.  For $\fifi\in C_{c}^{\infty}(\gpg)$ and $\fise\in C_{c}^{\infty}(\gpm^{\gpj})$, let
\begin{equation}
\label{integral}
  \iiib(\fifi,\fise)(g)=\int_{\gpg}dg'\int_{\gpm^{\gpj}}dm^{\gpj}\varphi_{1}(g'){K_{\chag,\cham,\psi}}(g'g^{-1}(m^{\gpj})^{-1})\varphi_{2}(m^{\gpj}),
\end{equation}
and let  $\iii(g)=\iiib(Ch_{\gpk_{\gpg}},Ch_{\gpk_{\gpm^{\gpj}}})(g)$. Let $\funf_{\chag}^{0}=\funf_{\chag}(Ch_{\gpk_{\gpg}})$ and $\funf_{\cham,\psi}^{0}=\funf_{\cham,\psi}(Ch_{\gpk_{\gpm^{\gpj}}})$ be spherical elements in $\repfi$ and $\repse$ respectively. Let $\mathcal{H}_{\gpg}$ be the spherical Hecke algebra of $\gpg$, and $\mathcal{H}_{\gpm^{\gpj},\psi}$ be the spherical Hecke algebra of $\gpm^{\gpj}$ with respect to $\psi$ as defined in section 4 of \cite{MR1018057}, and let them act on $\repfi^{\gpk_{\gpg}}$ and $\repse^{\gpk_{\gpm^{\gpj}}}$ by characters $\omega_{\chag}$ and $\omega_{\cham}$ respectively. For any function $f$ on $\gpg$, let $(L(g_{0})f)(g)=f(g_{0}^{-1}g)$, and $(R(g_{0})f)(g)=f(gg_{0})$. 

\begin{define}
\label{conditiona}
  A pairing $\lpair$ between $\repfi$ and $\repse$ is called satisfying \textrm{Condition A} if
\begin{enumerate}[(i)]
\item $\lpair(\funf_{\chag},\funf_{\cham,\psi})=\lpair(R(m^{\gpj})\funf_{\chag},R(m^{\gpj})\funf_{\cham,\psi})$ for any $m^{\gpj}\in \gpm^{\gpj}$.
\item $\lpair(R(u)\funf_{\chag},\funf_{\cham,\psi})=\psi_{\gpu}(u)\lpair(\funf_{\chag},\funf_{\cham,\psi})$ for any $u\in \gpu$.
\end{enumerate}
\end{define}

\begin{define}
\label{defwhi}
  For $(\chag,\cham)\in \mathbb{C}^{n}\times \mathbb{C}^{m}$, a function $\wsfb \in \mathbf{C}^{\infty}(\gpg)$ is called a \textrm{Whittaker-Shintani Function} attached to $(\chag,\cham)$ if
\begin{enumerate}[(i)]
\item $\wsfb(zuk_{\gpm^{\gpj}}gk_{\gpg}) =\psi^{-1}(z)\psi_{\gpu}(u)\wsfb(g) $.\vspace{3pt}
\item $L(\varphi_{\gpm^{\gpj}})R(\varphi_{\gpg})\wsfb =\omega_{\cham}(\varphi_{\gpm^{\gpj}})\omega_{\chag}(\varphi_{\gpg})\cdot \wsfb $  for any $\varphi_{\gpm^{\gpj}}\in \mathcal{H}_{\gpm^{\gpj},\psi}$ and $\varphi_{\gpg}\in \mathcal{H}_{\gpg}$.
\end{enumerate} The space of Whittaker-Shintani functions attached to $(\chag,\cham,\psi)$ is denoted by $\ws$. Sometimes we omit $\psi$ because it is fixed in this paper.  A Whittaker-Shintani function is called a \textrm{Normalized Whittaker-Shintani function} if it equals 1 at the identity.
\end{define}
\end{enumerate}

\section{Integral expression for the pairing}
\label{sec:step0}

We first use the function  $\funk_{\chag,\cham,\psi}$, as defined in  (\ref{def19}), to construct a pairing between $\repfi$ and $\repse$ satisfying \textrm{Condition A}. For any element $g\in \gpb_{\gpg}w_{0}^{\gpg}\gpn_{\gpg}$, the way to express $g=bw_{0}^{\gpg}n$ with $b\in \gpb_{\gpg}$ and $n\in \gpn_{\gpg}$ is unique. From this it is not hard to see that the set $\gpb_{\gpg}w_{0}^{\gpg}\lambda \gpy \gpz \gpb_{\gpm}\gpu$ has the same property. So the function $\funk_{\chag,\cham,\psi}$ is well-defined. Moreover $\gpb_{\gpg}w_{0}^{\gpg}\lambda \gpy \gpz \gpb_{\gpm}\gpu$ is zarisky open in $\gpg$ by lemma (\ref{lemopen}) below. Let $\fifi\in \mathbf{C}_{c}^{\infty}(\gpg)$ and $\fise\in \mathbf{C}_{c}^{\infty}(\gpm^{\gpj})$, and let $\funf_{\chag}(\fifi)$ and $\funf_{\cham,\psi}(\fise)$ be defined as in (\ref{eqfchag}) and (\ref{eqfcham}). Then we let
\begin{equation*}
  \mathcal{E}(\funf_{\chag}(\fifi))(g)=\int_{\gpb_{\gpg}\backslash \gpg}\funf_{\chag}(\fifi)(\dot gg){\funk_{\chag,\cham,\psi}}(\dot g)\ud \dot g,
\end{equation*}
where $\ud\dot g$ is the right $\gpg$-invariant functional on $\mathrm{Ind}^{\gpg}_{\gpb_{\gpg}}(\delta_{\gpb_{\gpg}}^{\frac 1 2})$ determined by the Haar measure of $\gpg$. By direct calculation we have
\begin{equation*}
  \mathcal{E}(\funf_{\chag}(\fifi))(g)=\int_{\gpg}\fifi(g'g){\funk_{\chag,\cham,\psi}}(g')\ud g\ ,
\end{equation*}
which is convergent when $\funk_{\chag,\cham,\psi}$ is continuous on $\gpg$. $\mathcal{E}(\funf_{\chag}(\fifi))$ satisfies 
\begin{enumerate}
\item $\mathcal{E}(R(u)\funf_{\chag}(\fifi))(g)=\psi_{\gpu}(u)\mathcal{E}(\funf_{\chag}(\fifi))(g)$ when $u\in \gpu$.
\item When restricted to $\gpm^{\gpj}$, $\mathcal{E}(\funf_{\chag}(\fifi))\in \mathrm{Ind}_{\gpb_{\gpm^{\gpj}}}^{\gpm^{\gpj}}(\cham^{-1},\psi)$.
\end{enumerate}
So $\mathcal{E}$ actually gives an $\gpm^{\gpj}$-homomorphism from the twisted Jacquet-module \\$ \left(\repfi\right)_{\gpu,\psi_{\gpu}}$ to $\mathrm{Ind}_{\gpb_{\gpm^{\gpj}}}^{\gpm^{\gpj}}(\cham^{-1},\psi)$. Now we consider the integral
\begin{equation*}
  \pair{\funf_{\chag}(\fifi)}{\funf_{\cham,\psi}(\fise)}=\int_{\gpb_{\gpm^{\gpj}}\backslash \gpm^{\gpj}}\mathcal{E}(\funf_{\chag}(\fifi))(\dot m^{\gpj})\funf_{\cham,\psi}(\fise)(\dot m^{\gpj})\ud \dot m^{\gpj}
\end{equation*}
where $\ud \dot m^{\gpj}$ is the right $\gpm^{\gpj}$-invariant functional on $\mathrm{Ind}^{\gpm^{\gpj}}_{\gpb_{\gpm^{\gpj}}}(\delta_{\gpb_{\gpm^{\gpj}}}^{\frac 1 2})$ determined by the Haar measure of $\gpm^{\gpj}$. Substituting $\mathcal{E}$ and $\funf_{\cham,\psi}$ by definition, we have
\begin{equation}
\label{defl}
  \pair{\funf_{\chag}(\fifi)}{\funf_{\cham,\psi}(\fise)}=\int_{\gpg}\int_{\gpm^{\gpj}}\fifi(g'){\funk_{\chag,\cham,\psi}}(g'(m^{\gpj})^{-1})\fise(m^{\gpj})\ud m^{\gpj} \ud g'.
\end{equation}
Note that the right hand side is actually $\iiib(\fifi,\fise)(e)$ as defined in (\ref{integral}). 

It is easy to see that the pairing $\pair{\funf_{\chag}(\fifi)}{\funf_{\cham,\psi}(\fise)}$ satisfies \textrm{Condition A} if the integral is convergent, and the integral is convergent if $\funk_{\chag,\cham,\psi}$ is continuous on $\gpg$. In the rest of this section we will prove the following proposition. 
\begin{pro}
\label{pro3.1}
  Let $\mathcal{Z}_{c}$ be the set of unramified characters $(\chag,\cham)$ satisfying 
  \begin{equation}
    \begin{cases}
      Re(\chag_{i}-\chag_{i+1})\geq 1&\text{ for }1\leq i \leq n-m-1\\
      Re(\chag_{n-m-1+j}-\cham_{j})\geq \frac 1 2 &\text{ for }1\leq j \leq m\\
      Re(-\chag_{n-m+j}+\cham_{j})\geq \frac 1 2&\text{ for } 1\leq j \leq m\\
      Re(\chag_{n})\geq 1&
    \end{cases}
  \end{equation}
then when $(\chag,\cham)\in \mathcal{Z}_{c}$, the function $\funk_{\chag,\cham,\psi}$ is continuous on $\gpg$, and as a consequence, the integral (\ref{defl}) is convergent.
\end{pro}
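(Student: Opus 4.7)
The plan is to establish continuity of $\funk_{\chag,\cham,\psi}$ on $\gpg$ and deduce the convergence of (\ref{defl}) as an immediate consequence: once continuity is proved, the integrand in (\ref{defl}) is a continuous function, and the compact support of $\fifi$ and $\fise$ restricts the domain of integration to a compact set, forcing absolute convergence.

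First I would analyze $\funk_{\chag,\cham,\psi}$ on the open cell $\mathcal{V}:=\gpb_{\gpg}w_{0}^{\gpg}\lambda \gpy \gpz \gpb_{\gpm}\gpu$. By the uniqueness of the decomposition $g=b_{\gpg}w_{0}^{\gpg}\lambda \gpj(0,y,z)b_{\gpm}u$, the components are rational functions of the matrix entries of $g$; combined with the local constancy of $\chag$, $\cham$, $\psi$, and $\psi_{\gpu}$, this makes $\funk_{\chag,\cham,\psi}|_{\mathcal{V}}$ locally constant on $\mathcal{V}$, hence continuous there. The substantive task is therefore continuity at a point $g_{0}\in\gpg\setminus\mathcal{V}$, where by definition $\funk_{\chag,\cham,\psi}(g_{0})=0$; I must show $\funk_{\chag,\cham,\psi}(g_{n})\to 0$ for every sequence $g_{n}\in\mathcal{V}$ with $g_{n}\to g_{0}$.

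To control this, I would express the modulus $|\funk_{\chag,\cham,\psi}(g)|$ on $\mathcal{V}$ as a monomial in the absolute values $|m_{\alpha}(g)|$ of certain minors of $g$, where the loci $\{m_{\alpha}=0\}$ trace out the irreducible divisors of $\gpg\setminus\mathcal{V}$. A direct matrix computation writes the exponent of $|m_{\alpha}(g)|$ as a nonnegative integer combination of the shifted real parts
\begin{equation*}
\mathrm{Re}(\chag_{i}-\chag_{i+1})-1,\ \mathrm{Re}(\chag_{n-m-1+j}-\cham_{j})-\tfrac{1}{2},\ \mathrm{Re}(-\chag_{n-m+j}+\cham_{j})-\tfrac{1}{2},\ \mathrm{Re}(\chag_{n})-1,
\end{equation*}
the constant shifts $1,\tfrac{1}{2},\tfrac{1}{2},1$ coming from the modulus characters $\dha_{\gpb_{\gpg}}$ and $\delta^{-\frac{1}{2}}_{\gpb_{\gpm^{\gpj}}}$ in the definition of $\funk_{\chag,\cham,\psi}$. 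The inequalities defining $\mathcal{Z}_{c}$ are precisely the assertions that each of these four shifted real parts is nonnegative, so each exponent is nonnegative, and a small refinement shows that for a boundary point $g_{0}$ at least one exponent is strictly positive along the relevant stratum; this forces $|\funk_{\chag,\cham,\psi}(g_{n})|\to 0$.

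The main obstacle is the bookkeeping step: enumerating the irreducible divisors in $\partial\mathcal{V}$ and matching each to the correct defining minor and the precise exponent. The four families of conditions in $\mathcal{Z}_{c}$ should correspond, respectively, to the simple roots of the Levi $\mathrm{GL}_{1}^{n-m-1}$ inside $P_{1}^{n-m-1}$, the two groups of roots mixing the $\gpg$-Borel with the $\gpm^{\gpj}$-Borel at the interface $\lambda\gpy\gpz$, and the long simple root through which $w_{0}^{\gpg}$ twists $\lambda$. Once this root-by-root dictionary is set up, the verification of the inequalities and the continuity claim reduces to checking monomial powers of matrix minors, after which convergence of (\ref{defl}) is automatic.
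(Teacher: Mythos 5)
Your approach is the paper's: the minors you call $m_\alpha$ are precisely the functions $\alpha_k$ and $\beta_l$ of Lemmas \ref{lemmaalpha} and \ref{lemopen}, defined as explicit determinants of lower-left submatrices of $g$, and Lemma \ref{lemkk} carries out exactly the monomial expansion you describe. In fact the computation is tighter than you claim: each of the $n+m$ exponents equals \emph{exactly one} of your four shifted real parts (one per inequality defining $\mathcal{Z}_c$), not merely a nonnegative integer combination, which is how the four families of conditions arise in the first place.

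The one step that does not go through as written is your ``small refinement,'' the assertion that at least one exponent is strictly positive along the stratum through a boundary point $g_0$. Under the non-strict inequalities in $\mathcal{Z}_c$ this can fail: if, say, $\mathrm{Re}(\chag_1-\chag_2)=1$ exactly, the exponent of $|\alpha_1|$ is $0$ and the factor $|\alpha_1(g)|^0$ is identically $1$ on the open cell; at a boundary point $g_0$ where $\alpha_1(g_0)=0$ but all other $\alpha_k(g_0)$, $\beta_l(g_0)$ are nonzero, the monomial tends to a nonzero limit from the open cell while $\funk_{\chag,\cham,\psi}(g_0)=0$, so continuity at $g_0$ fails. (The paper's own concluding sentence after Lemma \ref{lemkk} is equally terse on this point.) This is harmless for the conclusion that matters: with exponents nonnegative, $|\funk_{\chag,\cham,\psi}|$ is dominated by a continuous function and hence locally bounded, and since $\fifi$ and $\fise$ are compactly supported the integral (\ref{defl}) converges absolutely regardless --- and the Bernstein argument in Section \ref{sec:step3} only needs convergence on some Hausdorff open set of $(\chag,\cham)$. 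So you should either replace $\geq$ by $>$ in the definition of $\mathcal{Z}_c$ (which still gives such an open set and recovers genuine continuity), or argue convergence of (\ref{defl}) directly from boundedness rather than routing it through continuity.
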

 
Since $\funk_{\chag,\cham,\psi}$ is defined continuously on an Zariski open subset of $\gpg$ (we will see this soon) and extend by $0$ to $\gpg$, we only need to show the continuity outside the Zariski open set, for the function $|\funk_{\chag,\cham,\psi}|$. The method we use here is similar to that in \cite{MR1956080}.

First by the Bruhat decomposition we have
\begin{equation*}
  \gpg=\bigcup_{w\in \gpw_{\gpg}}\gpb_{\gpg}w\gpn_{\gpg}. 
\end{equation*}
And we know that $\gpb_{\gpg}w_{0}^{\gpg}\gpn_{\gpg}$ is zariski open in $\gpg$. In fact we have
\begin{lem}
\label{lemmaalpha}
  There exists $\alpha_{k}\in \mathfrak{o}[\gpg]$ for $1\leq k \leq n$  such that $$\gpb_{\gpg}w_{0}^{\gpg}\gpn_{\gpg}=\{g\mid \alpha_{k}(g)\neq 0\text{ for all $k$ }\}.$$ 
\end{lem}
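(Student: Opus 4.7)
The plan is to translate the big Bruhat cell $\gpb_{\gpg}w_0^{\gpg}\gpn_{\gpg}$ to an opposite Gauss open cell, exhibit the latter as the joint non-vanishing locus of trailing principal minors in the natural $2n$-dimensional representation, and then use the symplectic relation to reduce from $2n$ to $n$ independent minors. Because $w_0^{\gpg}$ normalizes $\gpt_{\gpg}$ and conjugates $\gpn_{\gpg}$ to the opposite unipotent $\gpn_{\gpg}^{-}$, we have $\gpb_{\gpg}w_0^{\gpg}\gpn_{\gpg}=\gpb_{\gpg}\gpn_{\gpg}^{-}w_0^{\gpg}$, so $g$ lies in the big cell iff $g(w_0^{\gpg})^{-1}$ lies in $\gpb_{\gpg}\gpn_{\gpg}^{-}$. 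Embedding $\gpg\subset\mathrm{GL}_{2n}$ via the natural representation, uniqueness of the UL-decomposition in $\mathrm{GL}_{2n}$ combined with $\gpb_{\gpg}=\gpg\cap B_{\mathrm{GL}_{2n}}$ and $\gpn_{\gpg}^{-}=\gpg\cap N_{\mathrm{GL}_{2n}}^{-}$ gives $\gpg\cap(B_{\mathrm{GL}_{2n}}N_{\mathrm{GL}_{2n}}^{-})=\gpb_{\gpg}\gpn_{\gpg}^{-}$, so it suffices to characterize the opposite Gauss cell of $\mathrm{GL}_{2n}$.

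Next, apply the classical criterion that $h\in\mathrm{GL}_{2n}$ admits a UL-decomposition iff its trailing principal minors
\begin{equation*}
\widetilde{\alpha}_{k}(h):=\det\bigl(h[\{2n-k+1,\dots,2n\};\{2n-k+1,\dots,2n\}]\bigr),\qquad k=1,\dots,2n,
\end{equation*}
are all non-vanishing; indeed the diagonal entries of the upper-triangular factor are the ratios $\widetilde{\alpha}_{k}/\widetilde{\alpha}_{k-1}$. Define $\widetilde{\alpha}_{k}(g):=\widetilde{\alpha}_{k}(g(w_0^{\gpg})^{-1})$ on $\gpg$; these are polynomials in the matrix entries of $g$ with integer coefficients, hence lie in $\mathfrak{o}[\gpg]$. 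The top minor $\widetilde{\alpha}_{2n}(g)=\pm\det g=\pm 1$ is a nonzero constant and may be discarded.

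Finally, the symplectic identity $g^{t}Jg=J$ forces Plücker-type relations between complementary minors of $g(w_0^{\gpg})^{-1}$ on $\Lambda^{k}F^{2n}$ and $\Lambda^{2n-k}F^{2n}$; with the appropriate matching of $w_0^{\gpg}$ to $J$ these relations show that $\widetilde{\alpha}_{2n-k}$ equals $\widetilde{\alpha}_{k}$ up to a unit for each $1\leq k<n$. Setting $\alpha_{k}:=\widetilde{\alpha}_{k}$ for $1\leq k\leq n$ therefore cuts out $\gpb_{\gpg}w_0^{\gpg}\gpn_{\gpg}$. The delicate step is this Plücker reduction, since the bookkeeping of complementary bottom-right minors of $g(w_0^{\gpg})^{-1}$ under the symplectic form is coordinate-heavy. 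A cleaner, coordinate-free alternative is to take $\alpha_{k}(g):=\langle v_{k}^{-},\,\pi_{k}(g(w_0^{\gpg})^{-1})v_{k}^{+}\rangle$, where $\pi_{k}$ is the $k$-th fundamental representation of $\mathrm{Sp}_{2n}$ (on $V_{k}\subset\Lambda^{k}F^{2n}$) and $v_{k}^{\pm}$ are its highest and lowest weight vectors; the general theory of split reductive groups identifies the big Bruhat cell with the common non-vanishing locus of these $n$ generalized minors, and the integrality $\alpha_{k}\in\mathfrak{o}[\gpg]$ follows from the existence of a $\gpg(\mathfrak{o})$-stable $\mathfrak{o}$-form of each $V_{k}$.
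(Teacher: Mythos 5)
Your overall strategy is sound, and in fact the functions you end up with coincide with the paper's: since $w_{0}^{\gpg}$ is essentially the anti-diagonal permutation (with signs), the trailing principal minors of $g(w_{0}^{\gpg})^{-1}$ for $1\leq k\leq n$ are, up to sign, exactly the bottom-left corner minors $\alpha_{k}(g)=\Delta_{I_{k},J_{k}}(g)$ with $I_{k}=\{2n-k+1,\ldots,2n\}$, $J_{k}=\{1,\ldots,k\}$ that the paper writes down. The route is genuinely different, though. The paper never passes through $\mathrm{GL}_{2n}$: it simply proposes these $n$ minors directly, verifies three elementary transformation properties --- bi-$\gpn_{\gpg}$-invariance, torus covariance with explicit weights, and the fact that $\alpha_{k}(w)\neq 0$ for all $k$ forces $w=w_{0}^{\gpg}$ among Weyl representatives --- and then invokes the Bruhat decomposition of $\gpg$. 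That third property is exactly the point your argument handles via the UL/Gauss criterion plus the Jacobi complementary-minor identity for symplectic matrices, and it is here that your write-up has a real gap: you assert that the symplectic relation $g^{t}Jg=J$ identifies $\widetilde\alpha_{2n-k}$ with $\widetilde\alpha_{k}$ up to a unit, but you explicitly flag this as "delicate" and "coordinate-heavy" and do not carry it out. The claim is in fact true --- writing $h=g(w_{0}^{\gpg})^{-1}$, one has $h^{-1}=J^{-1}h^{t}J$, and Jacobi's theorem gives $\det(h^{-1})_{[1,k]^{2}}=\pm\det h_{[k+1,2n]^{2}}/\det h$, while the $J$-conjugation identifies the leading $k\times k$ block of $h^{-1}$ with the (transposed, sign-twisted) trailing $k\times k$ block of $h$, yielding $\widetilde\alpha_{k}(h)=\pm\widetilde\alpha_{2n-k}(h)$ since $\det h=1$ --- but as stated your proposal is incomplete at precisely the step that does the work. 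Your "cleaner alternative" via generalized minors of the fundamental representations of $\mathrm{Sp}_{2n}$ is also only gestured at. The paper's direct approach buys you a short, self-contained verification with no appeal to $\mathrm{GL}_{2n}$, no Jacobi identity, and no general theory; yours buys a conceptual explanation of \emph{why} exactly $n$ minors suffice (rank of $\gpg$ $=$ number of fundamental weights), at the cost of the Plücker bookkeeping you deferred.
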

\begin{proof}
For $g\in \gpg$, let its matrix be $g=\left(g_{ij}\right)_{1\leq i,j\leq 2n}$. Let $\mathcal{N}_{2n}=\{1,2,\ldots,2n\}$. For $I=(i_{1},\ldots,i_{k})$ and $J=(j_{1},\ldots,j_{k})$ both belonging to $\mathcal{N}_{2n}^{k}$, we Let $g_{IJ}=(g_{i_{s},j_{t}})_{1\leq s,t\leq k}$. We define
\begin{equation}
\label{eqIJ}
  \Delta_{IJ}(g)=\det g_{IJ}.
\end{equation}
For $1\leq k\leq n$, let $I_{k}=\{2n+1-k,2n+1-(k-1),\ldots, 2n\}$, and $J_{k}=\{1,2,\ldots,k\}$, and we take
\begin{equation}
\label{eqalpha}
  \alpha_{k}(g)=\Delta_{I_{k},J_{k}}(g).
\end{equation}
Then one can check that 
\begin{enumerate}
\item For any $n_{1},n_{2}\in \gpn_{\gpg}$, $\alpha_{k}(n_{1}gn_{2})=\alpha_{k}(g)$.
\item $\alpha_{k}(d_{n}(t_{1},\ldots,t_{n})gd_{n}(s_{1},\ldots, s_{n}))=\prod_{i=1}^{k}t_{i}^{-1}s_{i}\cdot \alpha_{k}(g)$.
\item Let $w\in \gpw_{\gpg}$. If $\alpha_{k}(w)\neq 0$ for all $1\leq k\leq n$, then $w=w_{0}^{\gpg}$. 
\end{enumerate}
Combining these properties with the Bruhat decomposition of $\gpg$, we have our lemma. 
\end{proof}

Next we have 

\begin{lem}
\label{lemopen}
  There exists $\beta_{l}\in \mathfrak{o}[\gpg]$ for $1\leq l\leq m$ such that
  \begin{equation*}
     \gpb_{\gpg}w_{0}^{\gpg}\lambda \gpy \gpz \gpb_{\gpm}\gpu=\{ g\in \gpg\mid \alpha_{k}(g)\neq 0, \beta_{l}(g)\neq 0\text{ for all }1\leq k \leq n, 1\leq l \leq m \},
  \end{equation*}
where $\alpha_{k}$ is as defined in lemma \ref{lemmaalpha}.
\end{lem}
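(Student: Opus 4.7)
The plan is to imitate the structure of the proof of Lemma \ref{lemmaalpha}: realize $\gpb_{\gpg}w_{0}^{\gpg}\lambda \gpy \gpz \gpb_{\gpm}\gpu$ as the common non-vanishing locus of finitely many regular functions $\alpha_{k}, \beta_{l}$, where the $\beta_{l}$ are further minors $\Delta_{I,J}$ as defined in \eqref{eqIJ}, adapted to the fact that $\gpm$ sits as the central $2m\times 2m$ block of $\gpg$. Concretely, I expect $\beta_{l}$ to be given by $\Delta_{I_{l}', J_{l}'}$ where $I_{l}'$ is shifted from $I_{l}$ of Lemma \ref{lemmaalpha} by one of the $n-m$ rows corresponding to the $\gpx$-coordinates in the Heisenberg embedding, and $J_{l}'$ is correspondingly shifted. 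The precise indices will be forced by requiring $\beta_{l}$ to be left $\gpb_{\gpg}$-semi-invariant and right $\gpb_{\gpm}\gpu$-semi-invariant.

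The argument then proceeds in three steps, parallelling the previous lemma. First I would verify the transformation laws: for any $b\in\gpb_{\gpg}$, $b_{\gpm}\in \gpb_{\gpm}$, and $u\in \gpu$,
\begin{equation*}
\alpha_{k}(bgb_{\gpm}u)=\chi_{k}(b)\chi_{k}''(b_{\gpm})\alpha_{k}(g),\qquad
\beta_{l}(bgb_{\gpm}u)=\eta_{l}(b)\eta_{l}'(b_{\gpm})\beta_{l}(g),
\end{equation*}
for explicit characters of the respective tori. The invariance under $\gpn_{\gpg}$, $\gpn_{\gpm}$, and $\gpu$ should follow because these unipotent groups preserve the chosen rows (or columns) of the selected minors. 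This step reduces the lemma to checking the non-vanishing of the $\alpha_{k}$ and $\beta_{l}$ on a transversal to the $\gpb_{\gpg}\times \gpb_{\gpm}\gpu$-action.

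Second, for the inclusion $\subseteq$, I evaluate directly: on an element of the form $w_{0}^{\gpg}\lambda \gpy(y)\gpz(z)$ one computes the minors explicitly and checks $\alpha_{k}, \beta_{l}$ are all $\pm 1$ (or at least nonzero units). Third, for the inclusion $\supseteq$, one starts with $g$ satisfying all the non-vanishing conditions. By Lemma \ref{lemmaalpha}, $g=b w_{0}^{\gpg} n$ uniquely with $n\in \gpn_{\gpg}$. Using the Levi decomposition of $\gpn_{\gpg}$ with respect to $P_{1}^{n-m-1}$, write $n=n_{\gph}\cdot u$ with $u\in \gpu$ and $n_{\gph}\in \gpn_{\gph}$. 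Then I further decompose $n_{\gph}$ using the Heisenberg structure $\gpj = \gpx\cdot\gpy\cdot\gpz$ and the Borel of $\gpm$: the non-vanishing of $\beta_{l}(g)$ translates into the statement that the $\gpx$-coordinate of $n_{\gph}$ is nonzero (in fact is a unit times the entries of $\lambda$), which is exactly the condition permitting the decomposition $n_{\gph}=\lambda\gpy(y)\gpz(z)b_{\gpm}^{0}$ for some $b_{\gpm}^{0}\in\gpb_{\gpm}$.

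The main obstacle will be making the correct choice of index sets $I_{l}'$, $J_{l}'$ so that $\beta_{l}$ is simultaneously right-invariant under $\gpn_{\gpm}$ and $\gpu$ while detecting precisely the $\gpx$-coordinate of the Heisenberg component. The condition $n\geq m+1$ guarantees enough room in the ambient $2n\times 2n$ matrix for these minors to separate the $\gpb_{\gpm}$- and $\gpu$-directions from the $\gpx$-direction; without this separation the transformation laws above would mix $\gpm$ and the Heisenberg part and the argument would collapse. All other steps amount to linear-algebra verifications analogous to those in Lemma \ref{lemmaalpha}.
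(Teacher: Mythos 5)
Your proposal follows essentially the same route as the paper: define the $\beta_{l}$ as minors $\Delta_{I,J}$, check their left $\gpb_{\gpg}$- and right $\gpb_{\gpm^{\gpj}}\gpu$-semi-invariance, and use these transformation laws together with the Bruhat decomposition to cut out the open cell. The only small correction: the paper takes $\beta_{l}=\Delta_{I_{n-m+l-1},J'_{l}}$ with $I_{n-m+l-1}$ exactly as in Lemma \ref{lemmaalpha} (no row shift) and $J'_{l}=\{1,\ldots,n-m+l\}\setminus\{n-m\}$, i.e.\ an initial interval with column $n-m$ \emph{omitted}; it is this omission (rather than a shift) that makes $\beta_{l}$ right-invariant under $\gpn_{\gpm^{\gpj}}$ and $\gpu$ while making $\beta_{l}(w_{0}^{\gpg}\gpx(\m r))=\pm r_{l}$, which is the evaluation that detects the $\gpx$-coordinate you describe in your $\supseteq$ step.
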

\begin{proof}
   Note that for any $w\in\gpw$, $\gpb_{\gpg}w\gpn_{\gpg}=\gpb_{\gpg}w\gpx\gpu\gpn_{\gpm^{\gpj}}$. For any $\gpx(x_{1},\ldots,x_{m})\in \gpx$, we have $\gpx(x_{1},\ldots,x_{m})=\m s^{-1}\gpx(r_{1},\ldots,r_{m})\m s$, where $\m s=d_{m}(s_{1},\ldots, s_{m})\in \gpt_{\gpm}$ such that
\begin{equation*}
  (s_{i},r_{i})=\begin{cases}
    (x_{i},1)&\text{ if }x_{i}\neq 0;\\
    (1,0)& \text{ if }x_{i}=0
  \end{cases}
\end{equation*}
From this we can see that
\begin{equation}
\label{bruhat}
  \gpb_{\gpg}w\gpn_{\gpg}=\bigcup_{\m r\in \{0,1\}^{m}} \gpb_{\gpg}w \gpx(\m r)\gpb_{\gpm^{\gpj}}\gpu,
\end{equation}
and when $w=w_{0}^{\gpg}$, the union is disjoint. For $1\leq l\leq m$, we let $$J'_{l}=\{1,2,\ldots,\hat{(n-m)},\ldots,n-m+l\},$$ and we define
\begin{equation}
  \label{eqbeta}
  \beta_{l}(g)=\Delta_{I_{n-m+l-1},J'_{l}}(g).
\end{equation}
Then $\beta_{l}$ satisfies
\begin{enumerate}
  \item $\beta_{l}(n_{1}g)=\beta_{l}(g)$ for any $n_{1}\in \gpn_{\gpg}$.
  \item $\beta_{l}(gn_{2}u)=\beta_{l}(g)$ for any $n_{2}\in \gpn_{\gpm^{\gpj}}$ and $u\in \gpu$.
  \item $\beta_{l}(d_{n}(t_{1},\ldots,t_{n})gd_{m}(s_{1},\ldots,s_{m}))=\prod_{i=1}^{n-m}t_{i}^{-1}\cdot \prod_{j=1}^{l-1}t_{n-m+j}^{-1}\cdot \prod_{j=1}^{l}s_{j}\cdot \beta_{l}(g).$
  \item $\beta_{l}(w_{0}^{\gpg}\gpx(\m r))=\pm r_{l}$. The sign in front of $r_{l}$ depends on $l$, which is not impotant since we are only interested in $|\beta_{l}|$.
  \end{enumerate}
In fact, (1) is by the definition of $I_{k}$ while (3) and (4) are by direct calculation. For (2), note that if we only consider the first $n$ column of $(g_{ij})$, multiplying elements in $\gpn_{\gpm^{\gpj}}\gpu$ from the right corresponds to column operations adding multiples of column $k_{1}$ to column $k_{2}$ where $1\leq k_{1}<k_{2}\leq n$ with $k_{1}\neq n-m$. On the other hand elements in $J'_{l}$ are consecutive from $1$ to $n-m+l$ with $n-m$ missing, so $\Delta_{I_{n-m+l-1},J'_{l}}(g)$ is invariant under such column operations. 

So for $g\in \gpb_{\gpg}w \gpx(\m r)\gpb_{\gpm^{\gpj}}\gpu$, $\alpha_{k}(g)\neq 0$ and $\beta_{l}(g)\neq 0$ for all $1\leq k \leq n$ and $1\leq l \leq m$ if and only if $w=w_{0}^{\gpg}$ (by lemma \ref{lemmaalpha}) and $\gpx(\m r)=\lambda$ (by the property of $\beta_{l}$'s), completing our proof.
\end{proof}
 
\begin{rem}
  If we let $\varpi_{i}=\epsilon_{1}+\ldots+\epsilon_{i}\in \mathrm{Hom}(\gpt_{\gpg},\mathrm{GL_{1}})$ and $\varpi'_{j}=\epsilon_{1}'+\ldots +\epsilon_{j}'\in\mathrm{Hom}(\gpt_{\gpm},\mathrm{GL_{1}})$ be the dominant weights of $\gpg$ and $\gpm$ with respect to $\gpb_{\gpg}$ and $\gpb_{\gpm}$, then the properties of $\alpha_{k}$ and $\beta_{l}$ actually shows that under the $\gpb_{\gpg}\times\gpb_{\gpm^{\gpj}}$ action, $\alpha_{i} $ has the highest weight $(\varpi_{k},0)$ when $1\leq k\leq n-m$, and $(\varpi_{k},\varpi'_{k-(n-m)})$ when $n-m+1\leq k \leq n$, and $\beta_{l}$ has the highest weight $(\varpi_{n-m+l-1},\varpi_{l}')$.
\end{rem}
Now we can expressed $\funk_{\chag,\cham,\psi}$ by $\alpha_{k}$ and $\beta_{l}$. First we have 
\begin{lem}
  Let $g=d_{n}(t_{1},\ldots,t_{n})n_{\gpg}w_{0}^{\gpg}\lambda d_{m}(s_{1},\ldots, s_{m})n_{\gpm^{\gpj}}u\in \gpb_{\gpg}w_{0}^{\gpg}\lambda\gpb_{\gpm^{\gpj}}\gpu$, we have
  \begin{equation*}
    |t_{i}|=
    \begin{cases}
      |\alpha_{1}(g)^{-1}|&\text{ if }i=1;\\
      |\alpha_{i-1}\alpha_{i}^{-1}(g)|&\text{ if }2\leq i\leq n-m;\\
      |\beta_{i-(n-m)}\alpha_{i}^{-1}(g)|&\text{ if }i>n-m
    \end{cases}
  \end{equation*}
and
\begin{equation*}
  |s_{j}|=|\beta_{j}\alpha_{n-m+j-1}^{-1}(g)|\qquad \text{for }1\leq j\leq m
\end{equation*}
\end{lem}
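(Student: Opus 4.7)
The plan is to reduce the computation of $|\alpha_k(g)|$ and $|\beta_l(g)|$ to the ``base values'' $|\alpha_k(w_0^{\gpg})|$ and $|\beta_l(w_0^{\gpg}\lambda)|$ by repeatedly applying the transformation rules established in Lemma \ref{lemmaalpha} and Lemma \ref{lemopen}, then to invert the resulting linear system in the $|t_i|$'s and $|s_j|$'s. First I would observe two structural facts. First, $n_{\gpm^{\gpj}}\in \gpn_{\gpm^{\gpj}}$ and $u\in \gpu$ both sit inside the upper triangular unipotent $\gpn_{\gpg}$, and likewise $\lambda=\gpx(1,\ldots,1)\in \gpx\subset \gpn_{\gpg}$; second, the element $d_m(s_1,\ldots,s_m)$ embeds into $\gpt_{\gpg}$ as $d_n(1,\ldots,1,s_1,\ldots,s_m)$ where the first $n-m$ diagonal entries are $1$.

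Next I would compute $\alpha_k(g)$ in three strokes: strip $n_{\gpm^{\gpj}}u$ on the right using property (1), apply property (2) with the full torus factorization $d_n(t)\cdot(\cdot)\cdot d_n(1,\ldots,1,s_1,\ldots,s_m)$, and then strip $n_{\gpg}$ on the left and $\lambda$ on the right by property (1) again. This yields
\begin{equation*}
  |\alpha_k(g)|=\prod_{i=1}^{k}|t_i|^{-1}\cdot \prod_{j=1}^{\max(0,k-(n-m))}|s_j|\cdot |\alpha_k(w_0^{\gpg})|,
\end{equation*}
and a quick inspection of the antidiagonal shape of $w_0^{\gpg}$ shows that the minor $\Delta_{I_k,J_k}(w_0^{\gpg})$ is $\pm 1$, so $|\alpha_k(w_0^{\gpg})|=1$. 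The identical three-step procedure for $\beta_l$, now ending with property (4) of Lemma \ref{lemopen} and the fact that $\lambda$ has all coordinates equal to $1$, gives
\begin{equation*}
  |\beta_l(g)|=\prod_{i=1}^{n-m+l-1}|t_i|^{-1}\cdot \prod_{j=1}^{l}|s_j|.
\end{equation*}

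Finally I would read off the claimed identities by elementary ratios. Taking $|\alpha_1(g)|=|t_1|^{-1}$ gives the $i=1$ case; for $2\leq i\leq n-m$ the telescoping ratio $|\alpha_{i-1}(g)/\alpha_i(g)|$ equals $|t_i|$; and for $i>n-m$ the formula for $|\beta_{i-(n-m)}(g)|$ differs from that for $|\alpha_i(g)|$ precisely by the factor $|t_i|$, giving $|t_i|=|\beta_{i-(n-m)}\alpha_i^{-1}(g)|$. The same kind of cancellation between the $\beta_l$ formula and the $\alpha_{n-m+l-1}$ formula isolates $|s_l|=|\beta_l\alpha_{n-m+l-1}^{-1}(g)|$. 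I do not expect any real obstacle here: the only mild care needed is to ensure that every factor I strip genuinely lies in $\gpn_{\gpg}$ (for $\alpha_k$) or in $\gpn_{\gpm^{\gpj}}\gpu$ on the right (for $\beta_l$), and to track the shift of indices caused by the embedding $d_m\hookrightarrow d_n$.
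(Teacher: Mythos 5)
Your proof is correct and is precisely the ``direct calculation'' that the paper declines to spell out. The formulas $|\alpha_k(g)|=\prod_{i=1}^{k}|t_i|^{-1}\prod_{j=1}^{\max(0,k-(n-m))}|s_j|$ and $|\beta_l(g)|=\prod_{i=1}^{n-m+l-1}|t_i|^{-1}\prod_{j=1}^{l}|s_j|$ follow from the transformation properties in Lemmas~\ref{lemmaalpha} and~\ref{lemopen} together with $|\alpha_k(w_0^{\gpg})|=|\beta_l(w_0^{\gpg}\lambda)|=1$, and the stated identities drop out by taking ratios exactly as you describe; the one expositional point worth tightening is the order of operations in your middle paragraph (you need to first conjugate $n_{\gpg}$ past $d_n(t)$ and $\lambda$ past $d_m(s)$, strip the resulting unipotents at the two ends, and only then apply the torus property (2) or (3) to $d_n(t)\,w_0^{\gpg}\,d_m(s)$), but you flag this care yourself and it does not affect the result.
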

By this we have 
\begin{lem}
\label{lemkk}
  For $g\in \gpb_{\gpg}w_{0}^{\gpg}\lambda\gpb_{\gpm^{\gpj}}\gpu$, we have
  \begin{equation*}
    \begin{split}
      |\funk_{\chag,\cham,\psi}(g)|=&\prod_{i=1}^{n-m-1}|(\chag_{i}\chag_{i+1}^{-1}|\cdot|^{-1})(\alpha_{i}(g))|\cdot \prod_{j=1}^{m}|(\chag_{n-m-1+j}\cham_{j}^{-1}|\cdot|^{-\frac 1 2}(\alpha_{n-m+j}(g))|\\
      &\cdot |(\chag_{n}|\cdot|^{-1})(\alpha_{n}(g))|\cdot \prod_{j=1}^{m}|(\chag_{n-m+j}^{-1}\cham_{j}|\cdot|^{-\frac 1 2})(\beta_{j}(g))|.
    \end{split}    
  \end{equation*}
\end{lem}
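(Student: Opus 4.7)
The plan is direct substitution. Since $\funk_{\chag,\cham,\psi}$ vanishes outside $\gpb_\gpg w_0^\gpg \lambda \gpb_{\gpm^\gpj}\gpu$, I may restrict to that open set. On it, the decomposition $g = b_\gpg w_0^\gpg \lambda \gpj(0,y,z) b_\gpm u$ is unique (as established in the discussion following Lemma \ref{lemopen}), so the defining formula (\ref{def19}) applies; the factors $\psi(z)$ and $\psi_\gpu^{-1}(u)$ are of absolute value one and drop out of $|\funk(g)|$. What remains is
$$|\funk(g)| = \bigl|\chag^{-1}\delta_{\gpb_\gpg}^{1/2}(d_n(t))\bigr|\cdot\bigl|\cham\delta_{\gpb_{\gpm^\gpj}}^{-1/2}(d_m(s))\bigr|,$$
so the lemma amounts to rewriting these two modular characters, in the $d_n(t),d_m(s)$ variables, as a product over the polynomial coordinates $\alpha_k(g)$ and $\beta_l(g)$.

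The first step is to compute the two moduli explicitly. For $\mathrm{Sp}_{2n}$ the standard formula gives $\delta_{\gpb_\gpg}(d_n(t))=\prod_i |t_i|^{2(n-i+1)}$. For the Jacobi Borel I use the decomposition $\gpn_{\gpm^\gpj}=\gpn_\gpm\ltimes(\gpy\times\gpz)$: conjugation by $d_m(s)$ has determinant $\prod_j|s_j|^{2(m-j+1)}$ on $\gpn_\gpm$, is trivial on the central $\gpz$, and, by a direct matrix computation inside the embedding $\gpj\subset\gph=\mathrm{Sp}_{2m+2}$, rescales each of the $m$ coordinates of $\gpy$ with a weight $\epsilon_j$, contributing an extra $\prod_j|s_j|$. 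Hence $\delta_{\gpb_{\gpm^\gpj}}(d_m(s))=\prod_j|s_j|^{2(m-j+1)+1}$, and the two modular expressions reduce to
$$\prod_{i=1}^n |t_i|^{n-i+1-\chag_i}\quad\text{and}\quad \prod_{j=1}^m |s_j|^{\cham_j-m+j-3/2}.$$

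The second step is to substitute the formulas for $|t_i|$ and $|s_j|$ from the preceding lemma and collect exponents. For each $1\leq k<n-m$, $|\alpha_k|$ is contributed only by $|t_k|$ and $|t_{k+1}|$, and the two contributions sum to $\chag_k-\chag_{k+1}-1$. For $n-m\leq k\leq n-1$, $|\alpha_k|$ arises from exactly one $|t_i|$ (with $i=k$) and one $|s_j|$ (with $j=k-n+m+1$), and the exponents sum to $\chag_k-\cham_{k-n+m+1}-1/2$. For $k=n$ only $|t_n|$ contributes, giving $\chag_n-1$. Finally, for each $\beta_l$ the combination of $|t_{n-m+l}|$ and $|s_l|$ gives $-\chag_{n-m+l}+\cham_l-1/2$. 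These are exactly the characters of $\alpha_k$ and $\beta_l$ appearing in the statement.

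The only potential obstacle is the identification of $\delta_{\gpb_{\gpm^\gpj}}$: because $\gpm^\gpj$ is not reductive, there is no shortcut, and one must carry out the conjugation inside $\gph$ to pin down the weight of $d_m(s)$ on the Heisenberg piece $\gpy$, which is the source of the ``$+1$'' in the exponent $2(m-j+1)+1$. Once this is in hand, everything reduces to routine exponent bookkeeping.
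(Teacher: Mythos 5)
Your proof is correct and follows the same ``direct calculation'' route the paper intends: write $g$ in the coordinates $(t,s)$, note that the $\psi$-factors drop out of the absolute value, reduce to the two normalized characters $|\chag^{-1}\dha_{\gpb_{\gpg}}(d_n(t))|$ and $|\cham\dnh_{\gpb_{\gpm^{\gpj}}}(d_m(s))|$, and then substitute the expressions for $|t_i|$ and $|s_j|$ from the unnamed lemma preceding Lemma~\ref{lemkk}. Your computation of $\delta_{\gpb_{\gpm^{\gpj}}}(d_m(s))=\prod_j|s_j|^{2(m-j+1)+1}$ is the genuinely nontrivial point, and you handle it correctly: the extra $\prod_j|s_j|$ comes from $\det\mathrm{Ad}(d_m(s))|_{\gpy}$ inside the Heisenberg piece.

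One thing you should flag rather than absorb silently: your bookkeeping correctly produces the exponent $\chag_k-\cham_{k-n+m+1}-\frac12$ on $|\alpha_k|$ for $n-m\le k\le n-1$, and $\chag_n-1$ on $|\alpha_n|$ alone. The printed statement of Lemma~\ref{lemkk} instead attaches the middle product to $\alpha_{n-m+j}$ for $j=1,\dots,m$, i.e.\ to $\alpha_{n-m+1},\dots,\alpha_n$, so $\alpha_n$ appears twice and $\alpha_{n-m}$ never appears. A check with $n=2$, $m=1$ (where $|t_1|=|\alpha_1|^{-1}$, $|t_2|=|\beta_1\alpha_2^{-1}|$, $|s_1|=|\beta_1\alpha_1^{-1}|$) gives $|\funk|=|\alpha_1|^{\chag_1-\cham_1-\frac12}|\alpha_2|^{\chag_2-1}|\beta_1|^{\cham_1-\chag_2-\frac12}$, and the convergence conditions in Proposition~\ref{pro3.1} ($\mathrm{Re}(\chag_{n-m-1+j}-\cham_j)\ge\frac12$, $\mathrm{Re}(\chag_n)\ge 1$) also pair $\chag_{n-m-1+j}-\cham_j$ with a single $\alpha_{n-m-1+j}$ rather than with $\alpha_{n-m+j}$. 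So your version (with $\alpha_{n-m-1+j}$) is the correct one and the paper's index is a misprint; your closing sentence claims literal agreement with the printed formula, which is the one place where your write-up should instead record the corrected index.
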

The proof of these are by direct calculation. Note that $\alpha_{k}$, $\beta_{l}$ are continuous functions on $\gpg$, so when the assumptions in proposition \ref{pro3.1} are satisfied, the extension of $|\funk_{\chag,\cham,\psi}(g)|$ by 0 to outside the set $\gpb_{\gpg}w_{0}^{\gpg}\lambda\gpb_{\gpm^{\gpj}}\gpu$ is continuous, and so $\funk_{\chag,\cham,\psi}(g)$ is continuous. 

\section{Uniqueness for the pairing for generic $(\chag,\cham)$ }
\label{sec:step1}

The pairing $\lpair$ satisfying \textrm{Condition A} corresponds to the homomorphism
\begin{equation}
\label{18}
  \mathrm{Hom}_{\gpb_{\gpm^{\gpj}}\gpu}(\mathrm{Ind}_{\gpb_{\gpg}}^{\gpg}(\chag),\cham^{-1}\psi^{-1}\dha_{\gpb_{\gpm^{\gpj}}}\otimes\psi_{\gpu}).
\end{equation}
In this section we prove that 

\begin{pro}
  For generic $(\chag,\cham)$,
  \begin{equation*}
    \mathrm{dim}\mathrm{Hom}_{\gpb_{\gpm^{\gpj}}\gpu}(\mathrm{Ind}_{\gpb_{\gpg}}^{\gpg}(\chag),\cham^{-1}\psi^{-1}\dha_{\gpb_{\gpm^{\gpj}}}\otimes\psi_{\gpu})\leq 1.
  \end{equation*}
\end{pro}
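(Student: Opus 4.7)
The plan is a standard Bruhat filtration / geometric lemma argument in the spirit of Rodier. By (\ref{bruhat}), the double coset space $\gpb_{\gpg}\backslash \gpg/\gpb_{\gpm^{\gpj}}\gpu$ is parametrized, up to redundancy, by pairs $(w,\m r)\in \gpw_{\gpg}\times\{0,1\}^{m}$, with $(w_{0}^{\gpg},\m 1)$ giving the open stratum of lemma \ref{lemopen}. First, order these strata so that each closure is a union of preceding strata, and filter $\mathrm{Ind}_{\gpb_{\gpg}}^{\gpg}(\chag)$, viewed as a $\gpb_{\gpm^{\gpj}}\gpu$-module by right translation, by subrepresentations of sections supported on unions of successive closures. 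The subquotients are the sections compactly supported on each individual stratum, and by exactness of $\mathrm{Hom}(-,\eta)$ with a one-dimensional target $\eta:=\cham^{-1}\psi^{-1}\dha_{\gpb_{\gpm^{\gpj}}}\otimes\psi_{\gpu}$, the dimension in (\ref{18}) is bounded above by the sum over $(w,\m r)$ of the Hom dimensions of these subquotients.

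Second, apply Mackey/Frobenius reciprocity. Each subquotient is a smooth compact induction from the stabilizer, so the summand is
\[
\mathrm{Hom}_{S_{w,\m r}}\bigl(\chi_{w,\m r},\ \eta|_{S_{w,\m r}}\bigr),
\]
where $S_{w,\m r}=(w\gpx(\m r))^{-1}\gpb_{\gpg}(w\gpx(\m r))\cap \gpb_{\gpm^{\gpj}}\gpu$ and $\chi_{w,\m r}$ is the restriction of the $(w\gpx(\m r))$-pullback of $\chag\dha_{\gpb_{\gpg}}$, up to the appropriate modulus factor. This is a $\mathrm{Hom}$ between two characters, so it is at most one dimensional and is zero unless the two agree on $S_{w,\m r}$.

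Third, the key step is to analyze this matching condition. On $S_{w,\m r}\cap \gpu$ the left character $\chi_{w,\m r}$ is a positive real product of moduli, while the right character is $\psi_{\gpu}$; matching therefore forces $\psi_{\gpu}|_{S_{w,\m r}\cap \gpu}\equiv 1$. Analogously matching forces $\psi|_{S_{w,\m r}\cap \gpz}\equiv 1$. A root-level argument strictly parallel to the proof of lemma \ref{lemopen}, using the covariants $\alpha_{k}$ and $\beta_{l}$, shows that these two non-degeneracy constraints together pin down $w=w_{0}^{\gpg}$ and $\m r=\m 1$, i.e.\ the open stratum. Every other stratum thus contributes zero, independently of $(\chag,\cham)$. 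On the open stratum itself, $S_{w_{0}^{\gpg},\m 1}$ projects to $\gpt_{\gpm}\gpz$, on which character matching becomes a single linear condition in $(\chag,\cham)$; the contribution is at most one dimensional, which is exactly what is required. Genericity in $(\chag,\cham)$ is then invoked to ensure that the successive quotients in the Bruhat filtration behave cleanly as $\gpb_{\gpm^{\gpj}}\gpu$-modules (in particular that no hidden extensions inflate the total dimension above the sum of the pieces).

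The main obstacle will be step three. Because $\gpm^{\gpj}$ is non-reductive, and specifically because $\gpj$ is Heisenberg, conjugation by $\gpx(\m r)$ mixes the $\gpy$- and $\gpz$-coordinates, so the intersection $S_{w,\m r}\cap \gpz$ has to be computed with some care. The plan is to re-use the invariants $\alpha_{k},\beta_{l}$ from lemmas \ref{lemmaalpha} and \ref{lemopen} to track when the two non-degeneracy conditions above can simultaneously hold; this is a root-level combinatorial verification in the spirit of the uniqueness arguments of \cite{MR581582, MR1956080}.
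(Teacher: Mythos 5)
Your overall strategy, Bruhat filtration on $\gpb_{\gpg}\backslash\gpg/\gpb_{\gpm^{\gpj}}\gpu$ followed by Frobenius reciprocity on each stratum, is exactly the paper's. But the key assertion in your third step is false: it is \emph{not} true that the two nondegeneracy constraints $\psi_{\gpu}|_{S_{w,\m r}\cap\gpu}\equiv 1$ and $\psi|_{S_{w,\m r}\cap\gpz}\equiv 1$ pin down $(w,\m r)=(w_{0}^{\gpg},\m 1)$, nor that the non-open strata vanish independently of $(\chag,\cham)$. Concretely, take $w=w_{0}^{\gpg}$ and $\m r=\m 0$. Then $(w\gpx(\m 0))^{-1}\gpb_{\gpg}(w\gpx(\m 0))$ is the opposite Borel $\gpb_{\gpg}^{-}$, and its intersection with $\gpu$ and with $\gpz$ (both contained in $\gpn_{\gpg}$) is trivial, so both of your constraints are vacuous — yet this is not the open stratum. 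What actually kills it in the paper's argument is that $\gpt_{\gpm}\cap \gpx(\m r)^{-1}\gpt_{\gpg}\gpx(\m r)$ contains a positive-dimensional torus $\gpt_{s}$ whenever some $r_{j}=0$, and the unramified characters built from $\chag$ and $\cham$ disagree on $\gpt_{s}$ \emph{for generic} $(\chag,\cham)$. The same phenomenon occurs in the paper's case $\m r=\m 1$, $w\neq w_{0}^{\gpg}$ with $\alpha=e_{i}-e_{i+1}$, $n-m\leq i\leq n-1$: there the witnesses $b_{1},b_{2}$ in~(\ref{b1b2}) are essentially torus elements $d_{m}(\m r(t,i))$, so again the vanishing depends on genericity, not on $\psi$ or $\psi_{\gpu}$.

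So genericity is doing real work that you have displaced to the wrong place: you invoke it only to control putative extension issues in the filtration, but those are irrelevant here because $\mathrm{Hom}(-,\eta)$ is left exact, so $\dim\mathrm{Hom}(\mathbf{S}(\mathcal{U}_{d}))\leq \dim\mathrm{Hom}(\mathbf{S}(\mathcal{U}_{d-1}))+\sum\dim\mathrm{Hom}(\mathbf{S}(\mathcal{U}))$ holds automatically. What genericity is actually needed for is to make the character-matching condition fail on the closed strata that contain a nontrivial torus in the stabilizer. The $\psi$- and $\psi_{\gpu}$-nondegeneracy arguments you describe do appear in the paper, but only for two of the three simple-root cases when $\m r=\m 1$ (namely $\alpha=e_{i}-e_{i+1}$ with $i\leq n-m-1$, and $\alpha=2e_{n}$); they do not, on their own, close the argument. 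Your plan to reprove the selection of the open cell via the covariants $\alpha_{k},\beta_{l}$ as in lemma \ref{lemopen} would, at best, reproduce the geometric identification of the open stratum — it cannot replace the genericity input, because the obstruction lives on a torus where $\psi$ and $\psi_{\gpu}$ are blind.
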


Let $\mathcal{U}_{d}$ be the union of double cosets of $\gpb_{\gpg}\backslash\gpg\slash \gpb_{\gpm^{\gpj}}\gpu$ with codimension $\leq d$. Then $\mathcal{U}_{0}=\gpb_{\gpg}w_{0}^{\gpg}\lambda\gpb_{\gpm^{\gpj}}\gpu$ is open in $\gpg$, and for any $d\geq 1$ we have the exact sequence
\begin{equation*}
  0\to \mathbf{S}(\mathcal{U}_{d-1})\to \mathbf{S}(\mathcal{U}_{d})\to \sum_{\text{codim }\mathcal{U}=d}\mathbf{S}(\mathcal{U})\to 0.
\end{equation*}
Obviously we have
\begin{equation*}
  \mathrm{dim\,Hom}_{\gpb_{\gpm^{\gpj}}\gpu}(\mathrm{Ind}_{\gpb_{\gpg}}^{\gpg}(\chag,\mathcal{U}_{0}),\cham^{-1}\psi^{-1}\dha_{\gpb_{\gpm^{\gpj}}}\otimes\psi_{\gpu})\leq 1,
\end{equation*}
so we only need to show that 
\begin{lem} Suppose $(\chag,\cham)$ is generic, and let $\mathcal{U}=\gpb_{\gpg} g \gpb_{\gpm^{\gpj}}\gpu$ be a double coset different from $\mathcal{U}_{0}$, then
  \begin{equation*}
    \mathrm{dim\,Hom}_{\gpb_{\gpm^{\gpj}}\gpu}(\mathrm{Ind}_{\gpb_{\gpg}}^{\gpg}(\chag,\mathcal{U}),\cham^{-1}\psi^{-1}\dha_{\gpb_{\gpm^{\gpj}}}\otimes\psi_{\gpu})=0.
  \end{equation*}
\end{lem}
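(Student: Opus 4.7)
The proof should follow the classical geometric-lemma scheme, working cell by cell in the Bruhat stratification of $\gpb_{\gpg}\backslash\gpg/\gpb_{\gpm^{\gpj}}\gpu$. First I would invoke Frobenius reciprocity for induction from a locally closed subset: for a coset $\mathcal{U}=\gpb_{\gpg}g\gpb_{\gpm^{\gpj}}\gpu$, setting $\mathrm{St}_{g}=g^{-1}\gpb_{\gpg}g\cap (\gpb_{\gpm^{\gpj}}\gpu)$, one has a canonical isomorphism
\begin{equation*}
\mathrm{Hom}_{\gpb_{\gpm^{\gpj}}\gpu}\bigl(\mathrm{Ind}_{\gpb_{\gpg}}^{\gpg}(\chag,\mathcal{U}),\,\cham^{-1}\psi^{-1}\dha_{\gpb_{\gpm^{\gpj}}}\otimes \psi_{\gpu}\bigr)\;\cong\;\mathrm{Hom}_{\mathrm{St}_{g}}\bigl((\chag\dha_{\gpb_{\gpg}})^{g}\delta_{1},\,\cham^{-1}\psi^{-1}\dha_{\gpb_{\gpm^{\gpj}}}\psi_{\gpu}\delta_{2}\bigr),
\end{equation*}
where the superscript $g$ denotes the character transported by conjugation and $\delta_{1},\delta_{2}$ are modulus twists coming from the comparison of left-invariant measures on $\gpb_{\gpg}\backslash\mathcal{U}$ and $\mathrm{St}_{g}\backslash(\gpb_{\gpm^{\gpj}}\gpu)$. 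Non-vanishing of the left-hand side therefore forces the two characters on the right to coincide on $\mathrm{St}_{g}$ after the twist.

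Second, I would use the refined Bruhat decomposition (\ref{bruhat}) to enumerate the cosets: $\gpb_{\gpg}\backslash\gpg/(\gpb_{\gpm^{\gpj}}\gpu)$ is indexed by pairs $(w,\m r)\in \gpw_{\gpg}\times \{0,1\}^{m}$, the open cell $\mathcal{U}_{0}$ corresponding to $(w_{0}^{\gpg},(1,\ldots,1))$. For every other pair I would exhibit a one-parameter subgroup inside $\mathrm{St}_{g}$ on which the character equality just derived fails. This splits into two flavors. When $w\neq w_{0}^{\gpg}$, there is a positive root $\alpha$ with $w\alpha$ still positive, producing a nontrivial root subgroup of $g^{-1}\gpn_{\gpg}g$ whose intersection with $\gpb_{\gpm^{\gpj}}\gpu$ sits in one of $\gpu$, $\gpn_{\gpm}$, $\gpy$, or $\gpz$. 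The left character $(\chag\dha_{\gpb_{\gpg}})^{g}$ is trivial on every unipotent element, while the target character is already nontrivial on $\gpu$ (via $\psi_{\gpu}$) and on $\gpz$ (via $\psi^{-1}$); whenever the surviving root subgroup meets $\gpu$ or $\gpz$ nontrivially this gives unconditional vanishing. In the residual configurations the compatibility collapses to a Laurent-monomial equation in $q^{\chag_{i}},q^{\cham_{j}}$ carved out of a proper Zariski closed subset, which is avoided for generic $(\chag,\cham)$. When $w=w_{0}^{\gpg}$ but some $r_{i}=0$, the vanishing of $r_{i}$ enlarges $\mathrm{St}_{g}$ by an extra one-parameter torus direction inside $\gpt_{\gpm}$, producing a constraint of the form $\chag_{i_{1}}\pm\cham_{i_{2}}=c$ which is again violated for generic parameters.

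The main obstacle is the case-by-case bookkeeping of exactly which root subgroups of $\gpg$ survive conjugation by $g=w\gpx(\m r)$ into $\gpb_{\gpm^{\gpj}}\gpu$, together with matching the surviving constraints against the three nontrivial components $\psi_{\gpu}$, $\psi^{-1}$, and $\cham^{-1}\dha_{\gpb_{\gpm^{\gpj}}}$ of the target character. The structural payoff is that only finitely many hyperplanes in $\mathbb{C}^{n}\times \mathbb{C}^{m}$ ever appear from this analysis, so the union of their complements defines the generic locus on which the Hom space vanishes on every non-open orbit, leaving at most the one-dimensional contribution from $\mathcal{U}_{0}$.
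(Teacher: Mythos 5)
Your proposal takes essentially the same route as the paper: reduce via the geometric lemma to a character-compatibility condition on the stabilizer $\gpg_{g}=g^{-1}\gpb_{\gpg}g\cap\gpb_{\gpm^{\gpj}}\gpu$, index the non-open orbits by $(w,\mathbf{r})\in\gpw_{\gpg}\times\{0,1\}^{m}$ via the refined Bruhat decomposition, and exhibit a one-parameter subgroup of the stabilizer on which the two characters disagree, using $\psi_{\gpu}$ and $\psi$ for unconditional vanishing and genericity of $(\chag,\cham)$ elsewhere. The only discrepancy is organizational: the paper first disposes of every $\mathbf{r}\neq(1,\ldots,1)$ using the nontrivial torus $\gpt_{\gpg}\cap g\gpt_{\gpm}g^{-1}$ (regardless of $w$), so that the subsequent simple-root computations may assume $\gpx(\mathbf{r})=\lambda$, whereas you split by $w\neq w_{0}^{\gpg}$ first, which would force the explicit conjugation formulas to account for vanishing coordinates of $\mathbf{r}$.
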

\begin{proof}
When $\mathcal{U}=\gpb_{\gpg} g \gpb_{\gpm^{\gpj}}\gpu$, we let $\gpg_{g}= \gpb_{\gpm^{\gpj}}\gpu\cap g^{-1}\gpb_{\gpg}g$, then
\begin{equation*}
  \begin{split}
    &\mathrm{Hom}_{\gpb_{\gpm^{\gpj}}\gpu}(\mathrm{Ind}_{\gpb_{\gpg}}^{\gpg}(\chag,\mathcal{U}),\cham^{-1}\psi^{-1}\dha_{\gpb_{\gpm^{\gpj}}}\otimes\psi_{\gpu})\\
=&\mathrm{Hom}_{\gpg_{g}}(g^{-1}(\chag\dha_{\gpb_{\gpg}})\otimes \cham\psi\dnh_{\gpb_{\gpm^{\gpj}}}\otimes \psi_{\gpu}^{-1},\delta_{g})
  \end{split}
\end{equation*}
where $\delta_{g}$ is the modulus character of $\gpg_{g}$. So we need to show that
\begin{equation}
\label{equiv1}
  g^{-1}(\chag\dha_{\gpb_{\gpg}})\cdot \cham\psi\dnh_{\gpb_{\gpm^{\gpj}}}\cdot \psi_{\gpu}^{-1}\cdot \delta_{g}^{-1}\not\equiv 1
\end{equation}
when restricted to $\gpg_{g}$ for any generic $(\chag,\cham)$. Recall from (\ref{bruhat}) that
  \begin{equation*}
    \gpg=\bigcup_{w\in \gpw_{\gpg},\mathbf{r}\in \{0,1\}^{m}}\gpb_{\gpg}w\gpx(\mathbf{r})\gpb_{\gpm^{\gpj}}.
  \end{equation*}
So we can assume  $g=w\gpx(\mathbf{r})$ with either $w\neq w_{0}^{\gpg}$ or $\mathbf{r}\neq (1,1,\ldots, 1)$. What we need to find is $b_{1}\in \gpb_{\gpg}$ and $b_{2}\in \gpb_{\gpm^{\gpj}}\gpu$ such that
\begin{equation}
  \label{b1b2}
  \begin{split}
    &b_{1}g=gb_{2}\\
    &(\chag\dha_{\gpb_{\gpg}})(b_{1})\neq \cham\psi\dnh_{\gpb_{\gpm^{\gpj}}}\psi_{\gpu}\cdot \delta_{g}(b_{2}).
  \end{split}
\end{equation}

First suppose $\mathbf{r}\neq  (1,1,\ldots, 1)$. In this case we claim that $\gpt_{\gpg}\cap (g\gpt_{\gpm}g^{-1})$ contains a nontrivial torus.  Let $\mathbf{t}=d_{m}(t_{1},\ldots t_{m})\in \gpt_{\gpm}$. Note that $\gpt_{\gpg}$ is stablized by the adjoint action of $\gpw_{\gpg}$, so it suffices to show that there exists a nontrivial torus $\gpt_{s}$ of $\gpt_{\gpm}$ such that when $\mathbf{t}\in \gpt_{s}$, $\gpx(\mathbf{r})^{-1}\mathbf{t}\gpx(\mathbf{r})\in \gpt_{\gpg}$. Note that
\begin{equation*}
  \gpx(\mathbf{r})^{-1}\mathbf{t}\gpx(\mathbf{r})=\mathbf{t}\cdot \gpx((1-t_{1})r_{1},(1-t_{2})r_{2},\ldots,(1-t_{m})r_{m}),
\end{equation*}
so when $r_{j}=0$ for some $j$, we can let $\gpt_{s}=\{\mathbf{t}=d_{m}(1,\ldots,\overset{\text{j-th}}{t},\ldots, 1)\}$ be the torus we claimed. Then since $(\chag,\cham)$ is generic, one can find some $b_{2}\in \gpt_{s}$ and $b_{1}=gb_{2}g^{-1}\in \gpt_{\gpm}$ so that (\ref{b1b2}) is satisfied, completing the proof for this case. 

Now suppose $\mathbf{r}=(1,\ldots, 1)\in F^{m}$ and $w\neq w_{0}^{\gpg}$, so $\gpx(\mathbf{r})=\lambda$ by our notation. In this case there is a simple root $\alpha$ in $\gpg$ such that $w\gpn_{\alpha}w^{-1}\in \gpn_{\gpg}$. 

When $\alpha=e_{i}-e_{i+1}$ with $1\leq i \leq n-m-1$, note that $\lambda\in \gpm^{\gpj}$ stablizes $\psi_{\gpu}$, so $\psi_{\gpu}(\lambda^{-1}n_{\alpha}(t)\lambda)=\psi_{\gpu}(n_{\alpha}(t))\neq 1$ for some $t\in F$. On the other hand, $wn_{\alpha}(t)w^{-1}\in \gpn_{\gpg}$ by our assumption. So let $b_{1}=wn_{\alpha}(t)w^{-1}$ and $b_{2}=\lambda^{-1}n_{\alpha}(t)\lambda$ we have (\ref{b1b2}).

When $\alpha=e_{i}-e_{i+1}$ with $i\geq n-m$, we let $\mathbf{r}(t,i)=\gpx(1,\ldots, \overset{\text{i'-th}}{1},1+t,\ldots, 1)$, where $i'=i-(n-m)$.  Then for $t\neq -1$ we have
\begin{equation*}
  \begin{split}
    &wn_{\alpha}(t)\lambda=w\gpx(\mathbf{r}(t,i))n_{\alpha}(t)\\
=&(d_{m}^{-1}(\mathbf{r}(t,i))\,)^{w}w\lambda d_{m}(\mathbf{r}(t,i))n_{\alpha}(t).
  \end{split}
\end{equation*}
So let $b_{1}=(d_{m}^{-1}(\mathbf{r}(t,i))n_{\alpha}(t))^{w}$ and $b_{2}=d_{m}(\mathbf{r}(t,i))n_{\alpha}(t)$. For generic $(\chag,\cham)$, we can always find some $t\in F$ so that (\ref{b1b2}) is satisfied.

When $\alpha=2e_{n}$, we have
\begin{equation*}
  wn_{\alpha}(t)\lambda=w\lambda\gpy(t)\gpz(-t)n_{\alpha}(t).
\end{equation*}
So let $b_{1}=(n_{\alpha}(t))^{w}$ and $b_{2}=\gpy(t)\gpz(-t)n_{\alpha}(t)$, we have $(\chag\dha_{\gpb_{\gpg}})(b_{1})=1$, and we can find some $t\in F$ so that $\psi(\gpz(-t))\neq 1$, so (\ref{b1b2}) is satisfied.
\end{proof}

\section{Rationality(I)}
\label{sec:step3}
In this section we show that the pairing $\lpair$ defined in (\ref{defl}) can be extended to all generic $(\chag,\cham)$.
\begin{lem}
  Given $\fifi$ and $\fise$, the pairing $\lpair(\funf_{\chag}(\fifi),\funf_{\cham,\psi}(\fise))$ as defined in (\ref{defl}) can be extended as a rational function on $(\chag,\cham)$, and for generic $(\chag,\cham)$ it is the pairing between $\repfi$ and $\repse$ satisfying \textrm{Condition A}.
\end{lem}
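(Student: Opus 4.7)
The plan is to establish convergence on the open set $\mathcal{Z}_{c}$ given by Proposition \ref{pro3.1}, verify that the pairing depends holomorphically on $(\chag,\cham)$ there, and then invoke Bernstein's continuation theorem, using the generic uniqueness from Section \ref{sec:step1}, to extend rationally to all generic $(\chag,\cham)$.

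First I would fix $\fifi\in \mathbf{C}_{c}^{\infty}(\gpg)$ and $\fise\in \mathbf{C}_{c}^{\infty}(\gpm^{\gpj})$ and regard $\lpair(\funf_{\chag}(\fifi),\funf_{\cham,\psi}(\fise))$ as defined by the absolutely convergent integral (\ref{defl}) for $(\chag,\cham)\in \mathcal{Z}_{c}$. Because $\funk_{\chag,\cham,\psi}$ is expressed through unramified characters applied to the regular functions $\alpha_{k}(g)$ and $\beta_{l}(g)$ (Lemma \ref{lemkk}), the integrand depends holomorphically on $(\chag,\cham)$, and the uniform bound underlying the proof of Proposition \ref{pro3.1} justifies differentiating under the integral sign. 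Hence the pairing is a holomorphic function of $(\chag,\cham)$ on the tube $\mathcal{Z}_{c}$, which corresponds to a Zariski open subset of the algebraic parameter variety with coordinates $q^{\pm \chag_{i}}, q^{\pm \cham_{j}}$.

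Next I would interpret the pairing as a section of the family of Hom spaces (\ref{18}), i.e.\ a family of linear functionals on the algebraic family of principal series $\repfi$ cut out by the linear conditions of \textrm{Condition A}, whose coefficients are regular in $(\chag,\cham)$. By the uniqueness result of Section \ref{sec:step1}, this family of Hom spaces has dimension at most one on a Zariski open set of parameters. Bernstein's rationality theorem then applies: any section which is holomorphic on a nonempty Zariski open set in parameter space, such as the one we produced on $\mathcal{Z}_{c}$, extends uniquely to a rational section over the whole parameter space. The resulting rational extension automatically satisfies \textrm{Condition A} by the principle of analytic continuation, since each of the defining linear identities holds on the open set $\mathcal{Z}_{c}$ and both sides are rational in $(\chag,\cham)$ for fixed $\fifi,\fise$.

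The main obstacle I expect is the careful setup of Bernstein's theorem in this twisted context: the covariance with respect to the character $\psi_{\gpu}$ on $\gpu$ and with respect to the Heisenberg part $\gpy\gpz\subset \gpm^{\gpj}$ means that one has to work with an appropriate twisted Jacquet module rather than the full principal series, and one must check that the relevant family is algebraic in $(\chag,\cham)$ with the generic Hom bound yielding a true Zariski open (not merely dense) genericity locus. Once this bookkeeping is in place, the extension is essentially automatic from the convergent integral representation on $\mathcal{Z}_{c}$.
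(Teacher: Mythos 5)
Your outline correctly identifies the three ingredients (convergence on $\mathcal{Z}_c$, uniqueness from Section~\ref{sec:step1}, Bernstein continuation), but there are two real gaps in how you assemble them, and both matter.

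First, $\mathcal{Z}_c$ is not a Zariski open subset of the parameter variety, and the paper does not claim it is. It is defined by inequalities on the real parts $\mathrm{Re}(\chag_i-\chag_{i+1})\geq 1$, etc., so it is a tube domain: a Hausdorff open set, not an algebraic one. Your phrase ``corresponds to a Zariski open subset of the algebraic parameter variety with coordinates $q^{\pm\chag_i},q^{\pm\cham_j}$'' is false. This is not merely cosmetic: it feeds into the misstatement of Bernstein's theorem below.

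Second, and more importantly, the version of Bernstein's theorem you invoke is not correct, and the correct version requires an extra normalization that your argument never supplies. Bernstein's theorem does \emph{not} say that a section which is holomorphic on some open set extends rationally; holomorphy alone never implies rationality. What it says is: if a system of linear equations whose coefficients depend polynomially on the parameters has a \emph{unique} solution for generic parameters, then that solution is a rational function of the parameters. The uniqueness result of Section~\ref{sec:step1} only gives that the space of pairings satisfying Condition~A is at most one-dimensional; that is uniqueness up to scalar, which is not enough to pin down a unique solution. The paper closes this gap by adjoining the normalization condition
\begin{equation*}
  \lpair\left(\funf_{\chag}(\mathrm{Ch}_{\gpi_{\gpg}}),\funf_{\cham,\psi}(\mathrm{Ch}_{\gpi_{\gpm}})\right)=\mathrm{Vol}(\gpi_{\gpg})\mathrm{Vol}(\gpi_{\gpm}),
\end{equation*}
which is a linear equation in the unknown pairing with constant (hence polynomial) coefficients, and then verifies by a direct computation (Lemma~\ref{lemflat}, which has no circular dependence on rationality) that the integral-defined pairing satisfies this equation when $(\chag,\cham)\in\mathcal{Z}_c$. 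With Condition~A plus this normalization, the solution is genuinely unique for generic $(\chag,\cham)$ and Bernstein applies. Without the normalization, your argument cannot conclude: a one-dimensional solution space varying over the parameters need not have a canonical rational trivialization, and the holomorphic section on $\mathcal{Z}_c$ is a priori just one choice of scalar. You would need to add the analogue of Lemma~\ref{lemflat} and the normalization equation to make the proposal work.
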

\begin{proof}
  By lemma (\ref{lemflat}), which does not depend on the rationality of $\lpair$, we know that for $(\chag,\cham)\in \mathcal{Z}_{c}$,
\begin{equation}
\label{eq1}
  \pair{\funf_{\chag}(\mathrm{Ch}_{\gpi_{\gpg}})}{\funf_{\cham,\psi}(\mathrm{Ch}_{\gpi_{\gpm}})}=\mathrm{Vol}(\gpi_{\gpg})\mathrm{Vol}(\gpi_{\gpm}).
\end{equation}
Now we apply the Bernstein theorem. Note that by the last section, the pairing $\lpair$ satisfying \textrm{Condition A} and (\ref{eq1}) is unique for generic $(\chag,\cham)$. When $(\chag,\cham)\in \mathcal{Z}_{c}$, the pairing $\lpair$ defined by
  \begin{equation*}
    \pair{\funf_{\chag}(\fifi)}{\funf_{\cham,\psi}(\fise)}=\iiib(\fifi,\fise)
  \end{equation*}
satisfies these conditions. So it extends to a rational function in $(\chag,\cham)$ when $\fifi$ and $\fise$ are given. Moreover, it is regular at generic $(\chag,\cham)$ where the uniqueness is valid. 
\end{proof}

From now on we still denote by $l_{\chag,\cham,\psi}$ its own meromorphic continuation to all generic $(\chag,\cham)$, and similarly for $\iiib(g)$ and $\iii(g)$.

\section{Double coset decomposition}
\label{sec:step4}
Let $\wsfb(g)$ be a Whittaker-Shintani function. In this section we discuss the support of $\wsfb(g)$ on the double cosets $\gpz\gpu\gpk_{\gpm^{\gpj}}\backslash \gpg \slash \gpk_{\gpg}$. We denote by $g_{1}~g_{2}$ when $g_{1}$ and $g_{2}$ belong to the same double coset. We are going to show that
\begin{thm}
  \label{thm:step4}
  The support of $\wsfb(g)$ is contained in the double coset
  \begin{equation*}
    \bigcup_{\m d\in \Lambda^{+}_{m}, \m f\in \Lambda^{+}_{n}}\gpz \gpu \gpk_{\gpm^{\gpj}}(\w^{\m d}\lambda \w^{\m f})\gpk_{\gpg}.
  \end{equation*}

\end{thm}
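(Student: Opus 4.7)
The plan is to reduce an arbitrary $g\in \gpg$ with $\wsfb(g)\neq 0$ to a canonical representative via Iwasawa and Cartan decompositions, then apply a Casselman-Shalika-type character argument to eliminate the ``bad'' cosets on which $\wsfb$ must vanish.

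First, I would apply the Iwasawa decomposition of $\gpg$ with respect to the parabolic $P_1^{n-m-1}$, whose Levi is $\mathrm{GL}_1^{n-m-1}\times\gph$ and whose unipotent radical is $\gpu$. Writing $g = u\cdot l\cdot k$ with $u\in \gpu$, $l$ in the Levi, $k\in \gpk_{\gpg}$, the $(\gpu,\psi_{\gpu})$-covariance and right $\gpk_{\gpg}$-invariance of $\wsfb$ yield $\wsfb(g) = \psi_{\gpu}(u)\wsfb(l)$, reducing the analysis to $\wsfb(l)$. Splitting $l = t_1 h$ with $t_1\in \mathrm{GL}_1^{n-m-1}$, $h\in \gph$, and applying Iwasawa for $\gph=\mathrm{Sp}_{2m+2}$ relative to the maximal parabolic stabilizing $\langle e_1\rangle$ (with Levi $\mathrm{GL}_1\times \gpm$ and Heisenberg unipotent radical $\gpj$; note $\gpm^{\gpj}$ is the stabilizer of $e_1$ in $\gph$), we write $h = s\cdot mj\cdot k_{\gph}$ with $k_{\gph}\in \gpk_{\gph}\subset \gpk_{\gpg}$. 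Absorbing $k_{\gph}$ on the right, the analysis reduces to $\wsfb((t_1 s)\cdot mj)$, where $t_1 s\in \mathrm{GL}_1^{n-m}\subset \gpt_{\gpg}$ and $mj\in \gpm^{\gpj}$.

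Next, applying the Cartan decomposition of $\gpm$, we write $m = k_{\gpm,1}\w^{\m d}k_{\gpm,2}$ with $\m d\in \Lambda^+_m$. Absorbing the $\gpk_{\gpm}$-factors via left $\gpk_{\gpm^{\gpj}}$-invariance and right $\gpk_{\gpg}$-invariance, and decomposing the resulting Heisenberg element as $j' = \gpx(x)\gpy(y)\gpz(z)$, the left $\gpz$-covariance contributes a character $\psi^{-1}(z)$. Using the identity $d_m(\mathbf{t})\lambda d_m(\mathbf{t})^{-1} = \gpx(\mathbf{t}^{-1})$ (a direct matrix computation), when all $x_i\neq 0$ we may rewrite $\gpx(x) = d_m(x^{-1})\lambda d_m(x)$, so that $(t_1 s)\w^{\m d}\gpx(x)=\bigl[(t_1 s)\w^{\m d}d_m(x^{-1})\bigr]\,\lambda\, d_m(x)$. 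Combining the torus factors produces a representative of the form $\w^{\m d'}\lambda\w^{\m f}$ with $\m d'\in \mathbb{Z}^m$ and $\m f\in \mathbb{Z}^n$; the $\gpy(y)$-factor is absorbed by conjugation through the right torus, provided the appropriate antidominance holds (to be enforced below).

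Finally, the conditions $\m d'\in \Lambda^+_m$, $\m f\in \Lambda^+_n$, and the non-vanishing $x_i\neq 0$ are enforced by a Casselman-Shalika-type argument. For each simple root $\alpha$ whose root subgroup lies in $\gpu$ (namely $\alpha=e_i-e_{i+1}$ for $i\leq n-m-1$, on which $\psi_{\gpu}$ is nontrivial), if the exponent inequality fails (say $f_i < f_{i+1}$), one exhibits $u'_\alpha\in \gpu$ with $(\w^{\m d'}\lambda\w^{\m f})^{-1}u'_\alpha(\w^{\m d'}\lambda\w^{\m f})\in \gpk_{\gpg}$ and $\psi_{\gpu}(u'_\alpha)\neq 1$, whence $\wsfb(g)=0$. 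For roots inside the $\gpm$-factor and the long roots at the end of $\gpt_{\gpg}$, analogous conjugation arguments using left $\gpk_{\gpm^{\gpj}}$-invariance (with Weyl-element conjugation) give the remaining antidominance, and the $x_i=0$ cases are eliminated by similar Weyl-reflection arguments at the corresponding short roots. The main technical obstacle is executing this root-by-root analysis in the presence of the Jacobi structure: the transition root at the boundary between the $\mathrm{GL}_1^{n-m-1}$ and $\gph$ blocks, as well as the long roots at the tail of $\gpt_{\gpg}$, require careful bookkeeping due to the nontrivial center $\gpz$ and the $\gpx$-factor appearing in the representative.
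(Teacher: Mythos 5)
Your opening moves agree with the paper's: Iwasawa for $\gpg$, Cartan for $\gpm$, and Casselman--Shalika-type character arguments to constrain the support. That part is fine and corresponds to the paper's Lemmas analogous to your antidominance step (the paper shows $\m a\in\Lambda_{n-m}^+$, $y\in\gpy^0$, $\m d\geq\m 0$, and $\m d\leq\m b$ with $\m b\in\Lambda_m^+$, all by exhibiting root-group elements on which $\psi_\gpu$ or $\psi$ is nontrivial).

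The gap is in the last paragraph. After the vanishing arguments, what one actually has is that the support lives on cosets represented by $\w^{\m d}\lambda\w^{(\m a,\m r)}$ with $\m a\in\Lambda_{n-m}^+$, $\m d\geq\m 0$, $\m r\geq\m 0$, and $\m d+\m r\in\Lambda_m^+$. This is genuinely weaker than the desired conclusion that one can take $\m d\in\Lambda_m^+$ and $(\m a,\m r)\in\Lambda_n^+$. Crucially, the passage from one to the other is \emph{not} a vanishing statement and cannot be delivered by a Casselman--Shalika argument: the decomposition $g\sim\w^{\m d}\lambda\w^{\m f}$ is highly non-unique, so the same coset has many representatives and you cannot ``force'' dominance of $\m d$ and $\m f$ by showing $\wsfb$ vanishes elsewhere -- you must instead show the coset \emph{equals} one with a dominant representative.

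Your proposed fix, ``Weyl-element conjugation using left $\gpk_{\gpm^{\gpj}}$-invariance,'' does not achieve this. If $w\in\gpw_\gpm$ is a permutation preserving $\lambda$, then $w(\w^{\m d}\lambda\w^{\m f})w^{-1}=\w^{w\m d}\lambda\w^{w\m f}$, where $w$ acts simultaneously on $\m d$ and on the $\gpm$-block of $\m f$. Since the constraint linking them is $\m d+\m r\in\Lambda_m^+$, swapping two out-of-order entries of $\m d$ merely transfers the violation into $\m r$: with $m=2$, $\m d=(1,3)$, $\m r=(4,0)$ (so $\m d+\m r=(5,3)\in\Lambda_2^+$), the transposition gives $\m d'=(3,1)$ but $\m r'=(0,4)$, so now $\m r'$ is out of order. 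The paper resolves this with conjugations by \emph{unipotent} elements $E'_{e_i-e_j}(1)\in\gpk_\gpm$ interacting with $\lambda$ in a nontrivial way (the three ``Operations'' of Lemma \ref{lem:induction}), followed by a careful induction (Proposition \ref{lem:closure}) that shows a specific sequence of these operations terminates at a representative with $\overline{\m d}\in\Lambda_m^+$ and $(\m a,\overline{\m r})\in\Lambda_n^+$. That combinatorial engine is the heart of the proof and is absent from your proposal. A minor additional issue: the $x_i=0$ case is not eliminated by vanishing either; the paper shows (via $\lambda\in\gpk_{\gpm^\gpj}$) that the coset equals one with all $x_i\neq 0$, again a double-coset identity rather than a vanishing statement.
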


First, by the Iwasawa decomposition on $\gpg$ and the Cartan decomposition on $\gpm$, we have
\begin{equation*}
  \gpg=\gpz\gpu\gpk_{\gpm^{\gpj}}(\gpx\gpy\gpt_{1}^{n-m}\gpt_{\gpm}^{+}) \gpk_{\gpg}.
\end{equation*}
Here $\gpt_{1}^{n-m}$ is the embedding of $\mathrm{GL}_{1}^{n-m}$ to $\gpg$ as
\begin{equation*}
  \gpt_{1}^{n-m}(t_{1},...,t_{n-m})=d_{n}(t_{1},...,t_{n-m}, 1,1,\ldots,1).
\end{equation*}
So we only need to consider the support of $\wsfb$ on the set $\gpx\gpy\w^{\m{(a;b)}}$ where $\mathbf{a}\in \mathbb{Z}^{n-m}$  and $\m b\in \Lambda^{+}_{m}$. We have
\begin{lem}
\label{lem:4.1}
   Suppose $\wsfb(xy\w^{\mathbf{(a,b)}})\neq 0$. Then $y\in \gpy^{0}$ and $\m a\in \Lambda_{n-m}$
\end{lem}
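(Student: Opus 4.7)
The plan is to exploit the left transformation rule $\wsfb(zuk_{\gpm^{\gpj}}g) = \psi^{-1}(z)\psi_{\gpu}(u)\wsfb(g)$ combined with right $\gpk_{\gpg}$-invariance, using the Heisenberg identities and the Chevalley commutator formulas in $\mathrm{Sp}_{2n}$. At the root level one has $\gpy(y) = \prod_{j=1}^m u_{e_{n-m}+e_{n+1-j}}(y_j)$, $\gpx(x) = \prod_{j=1}^m u_{e_{n-m}-e_{n-m+j}}(x_j)$ and $\gpz(z) = u_{2e_{n-m}}(z)$, while $\psi_{\gpu}$ is nontrivial precisely on the simple root subgroups $u_{e_i-e_{i+1}}$ for $1 \leq i \leq n-m-1$.

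To get the dominance $\m a \in \Lambda_{n-m}$, I would fix $1 \leq i \leq n-m-1$ and consider the left action of $u(t) := u_{e_i-e_{i+1}}(t)$, which contributes a factor $\psi(t)$. For $i \leq n-m-2$ the root $e_i-e_{i+1}$ is orthogonal to every root of $\gpx(x)\gpy(y)$ so the elements commute; for $i = n-m-1$ the Chevalley commutators produce extra $\gpu$-terms of shape $u_{e_{n-m-1}\pm e_\bullet}$ on which $\psi_{\gpu}$ is trivial. Conjugating past $\w^{(\m a, \m b)}$ rescales $u(t)$ to $u(t\w^{a_{i+1}-a_i})$. If $a_i < a_{i+1}$, I pick $t$ with $v(t) = -1$ so that $\psi(t) \neq 1$ while the rescaled unipotent and all commutator terms lie in $\gpk_{\gpg}$; right invariance then yields $\psi(t)\wsfb(g) = \wsfb(g)$, contradicting $\wsfb(g) \neq 0$, so $a_i \geq a_{i+1}$.

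For the integrality $y \in \gpy^0$, I would right-multiply by $\gpx(x_0)$ with $x_0 \in \mathcal{O}^m$, so $\gpx(x_0) \in \gpx^0 \subset \gpk_{\gpg}$. Passing through $\w^{(\m a, \m b)}$ sends $x_0$ to $x_0'$ with $(x_0')_j = (x_0)_j\w^{a_{n-m}-b_j}$, and the Heisenberg identity $\gpy(y)\gpx(x_0') = \gpz(-2\ppair{y}{x_0'})\gpx(x_0')\gpy(y)$ extracts a central $\gpz$-factor to the left. Applying the left $\psi^{-1}$-character and absorbing $\gpx(x_0')$ into $\gpx^0 \subset \gpk_{\gpm^{\gpj}}$ (in the range where $x_0' \in \gpx^0$) gives $\wsfb(g) = \psi(2\ppair{y}{x_0'})\wsfb(g)$, and hence $\psi(2\ppair{y}{x_0'}) = 1$ for every admissible $x_0$. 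Varying $x_0$ component-wise and using that $\psi$ has conductor $0$ yields $v(y_j) \geq 0$ directly when $b_j > a_{n-m}$; the complementary range is handled by combining with the constraint $a_{n-m} \geq 0$, which I would derive by the parallel argument using the left $\gpz$-character together with $\w^{(\m a, \m b)^{-1}}\gpz(z)\w^{(\m a, \m b)} = \gpz(z\w^{-2a_{n-m}})$, and with the right-invariance under $\gpy^0 \subset \gpk_{\gpg}$.

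The hard part will be the $i = n-m-1$ case of the dominance argument, where the Chevalley commutators introduce arguments of the form $tx_j\w^{b_j-a_{n-m-1}}$ and $ty_j\w^{-a_{n-m-1}-b_{m+1-j}}$ that must lie in $\mathcal{O}$ when $v(t) = -1$. This is not automatic, so the two halves of the argument must be interleaved: first establish $y \in \gpy^0$ using only the dominance already secured for $i \leq n-m-2$ together with $a_{n-m} \geq 0$, then return to close the dominance chain all the way down to $a_{n-m}$.
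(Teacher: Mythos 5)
Your proposal has two genuine gaps, and both can be traced to the same root cause: you move auxiliary elements from left to right, whereas the paper's proof moves them from right to left, which lets the commutator terms land in $\gpu$ (where $\psi_{\gpu}$ absorbs them) rather than land on the right of $\w^{(\m a,\m b)}$ (where they would have to lie in $\gpk_{\gpg}$ but in general do not).

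For the dominance argument, your approach works cleanly for $i \leq n-m-2$. For $i = n-m-1$ you yourself flag the issue, but the interleaving you propose does not repair it: the commutator argument $tx_j\w^{b_j - a_{n-m-1}}$ involves $x_j$, on which the lemma places no constraint whatsoever, so no amount of information about $y$ or $a_{n-m}$ bounds it. The paper instead begins from $E_{e_i-e_{i+1}}(t)$ with $t\in\mathcal{O}$ on the \emph{right} (so it is already in $\gpk_{\gpg}$), pulls it left through $\w^{(\m a,\m b)}$ to get $E_{e_i-e_{i+1}}(\w^{a_i-a_{i+1}}t)$, and then pulls that left through $xy$: since $\gpm^{\gpj}$ normalizes $\gpu$ and stabilizes $\psi_{\gpu}$, the conjugate $xy\,E(\w^{a_i-a_{i+1}}t)\,(xy)^{-1}$ lies in $\gpu$ and has $\psi_{\gpu}$-value $\psi(\w^{a_i-a_{i+1}}t)$. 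No commutator needs to be bounded at all; one only needs $\psi(\w^{a_i-a_{i+1}}t) \neq 1$ for some $t\in\mathcal{O}$, which is automatic when $a_i < a_{i+1}$. This works uniformly for all $1 \leq i \leq n-m-1$, with no special case at $i = n-m-1$.

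For $y\in\gpy^0$ your choice of auxiliary element is also the wrong one. Multiplying on the right by $\gpx(x_0)$ with $x_0\in\mathcal{O}^m$ and pushing through $\w^{(\m a,\m b)}$ gives $x_0'$ with $(x_0')_j = \w^{a_{n-m}-b_j}(x_0)_j$, so the admissible $(x_0')_j$ (those with $x_0'\in\mathcal{O}^m$ so that $\gpx(x_0')$ can be absorbed into $\gpk_{\gpm^{\gpj}}$) range only over $\w^{\max(a_{n-m}-b_j,0)}\mathcal{O}$. The Heisenberg commutator with $\gpy$ is linear in $y_j$, so the resulting constraint is $\psi(2y_j (x_0')_j)=1$, which gives only $v(y_j)\geq -\max(a_{n-m}-b_j,0)$; this is $v(y_j)\geq 0$ precisely when $b_j\geq a_{n-m}$ and is strictly weaker otherwise. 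Since there is no constraint forcing $b_j \geq a_{n-m}$ (e.g. $\m b=0$ and $a_{n-m}>0$ is allowed at this stage), this is a real gap, and your proposed fix via right-invariance under $\gpy^0$ does not help: that invariance produces a constraint on $x$, not on $y$, again because the Heisenberg commutator of $\gpy$ with $\gpy$ is trivial. The paper's choice is an element $E'_{-2e_j}\in\gpm$, not in $\gpj$: the symplectic action of $\gpm$ on $\gpj$ sends $(x,y)\mapsto(x+sy_j e_j,y)$ and extracts a central factor $\gpz(\mp s y_j^{2})$ that is \emph{quadratic} in $y_j$. Starting from $E'_{-2e_j}(\w^{2b_j+r}t)\in\gpk_{\gpg}$ on the right with $t\in\mathcal{O}^{*}$ and $r=-v(y_j)>0$, one pulls left to get $E'_{-2e_j}(\w^{r}t)$ and then a $\gpz$-factor of valuation $-r<0$ regardless of the relative size of $a_{n-m}$ and $b_j$. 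This quadratic scaling is exactly what your linear Heisenberg commutator misses.

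Finally, the paper does open with the same step you describe for $a_{n-m}\geq 0$, namely right-multiplication by $\gpz^{0}$; that part of your plan is fine.
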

\begin{proof}
  The proof is given in lemma 2.1 in \cite{MR1121142}. First for any $z\in \gpz^{0}$, we have
  \begin{equation*}
    \wsfb(xy\w^{\mathbf{(a,b)}})=\wsfb(xy\w^{\mathbf{(a,b)}}z)=\psi(\w^{2a_{n-m}}z)\wsfb(xy\w^{\mathbf{(a,b)}}).
  \end{equation*}
  So $a_{n-m}\geq 0$.

  To show $y\in \gpy^{0}$, we argue by contradiction. Let $y=\gpy(y_{1},...,y_{m})$ with $|y_{j}|=\ww^{-r}$, $r>0$. We let $E_{\alpha}$ be the root subgroup in $\gpg$ of the root $\alpha$. Let $E_{\alpha}(t)$ be the canonical embedding of $F$ to $E_{\alpha}$. Define $E'_{\beta}(t)$ similarly on $\gpm$. Then for any $t\in \mathcal{O}^{*}$,
  \begin{equation*}
    \begin{split}
      &\wsfb(xy\w^{\mathbf{(a,b)}})=\wsfb(xy\w^{\mathbf{(a,b)}}E'_{-2e_{j}}(\w^{2b_{j}+r}t))\\
      =&\wsfb(xyE'_{-2e_{j}}(\w^{r}t))\w^{\mathbf{(a,b)}})=\psi(\w^{r}ty_{j}^{2})\wsfb(xy\w^{\mathbf{(a,b)}})
    \end{split}
  \end{equation*}
But note that $\w^{r}ty_{j}^{2}\in w^{-r}\mathcal{O}^{*}$, one can choose $t\in \mathcal{O}^{*}$ so that $\psi(\w^{r}ty_{j}^{2})\neq 1$, which is a contradiction. So $y\in \gpy^{0}$. 

To show $\m a\in \Lambda^{+}_{n-m}$, we also argue by contradiction. Since we already have $a_{n-m}\geq 0$, we assume that $a_{i}<a_{i+1}$ for some $i\leq n-m-1$. Note that when $i\leq n-m-1$, $E_{e_{i}-e_{i+1}}\in \gpu$. For any $t\in \mathcal{O}$, we have
\begin{equation*}
  \begin{split}
    &\wsfb(xy\w^{\mathbf{(a,b)}})=\wsfb(xy\w^{\mathbf{(a,b)}}E_{e_{i}-e_{i+1}}(t))\\
    =&\psi(\w^{a_{i}-a_{i+1}}t)\wsfb(xy\w^{\mathbf{(a,b)}})
  \end{split}
\end{equation*}
But when $a_{i}<a_{i+1}$, we can always find some $t\in \mathcal{O}$ such that $\psi(\w^{a_{i}-a_{i+1}}t)\neq 1$, contradicting the assumption. So we have $\m a\in \Lambda^{+}_{n-m}$. 

\end{proof}

By this lemma the support of $\wsfb$ is on $\gpz\gpu\gpk_{\gpm^{\gpj}}(\gpx \gpt_{n-m}^{+}\gpt_{m}^{+}) \gpk_{\gpg}$. Next we narrow the support further on the $\gpx$-part. Note that if $v(x_{i})=c_{i}$ for $1\leq i \leq m$, then $\gpx((x_{1},\ldots,x_{m}))t~\lambda(\m c)t$ for any $t\in \gpt_{\gpg}$. So we only need to consider for which $\m c$ that $\lambda(\m c)\gpt_{n-m}^{+}\gpt_{m}^{+}$ is contained in the support. 
\begin{lem}
\label{lem:zero}
  Suppose $x=\lambda(\m c)$, then for any $t\in \gpt_{n}$,
  \begin{equation*}
    x\,t~\lambda(min(\m c,\m 0))t
  \end{equation*}

\end{lem}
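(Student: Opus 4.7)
The strategy is to exhibit an element $\eta \in \gpx^{0} \subset \gpk_{\gpm^{\gpj}}$ such that $\lambda(\mathbf{c}) = \eta\cdot\lambda(\min(\mathbf{c},\mathbf{0}))$; multiplying on the right by $t$ and using $\gpk_{\gpm^{\gpj}} \subset \gpz\gpu\gpk_{\gpm^{\gpj}}$ will then immediately yield the claimed double-coset equivalence.

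First I would record that $\gpx$ is an abelian subgroup of $\gph$ with additive parametrization. A direct block multiplication of
\[
\gpj(x,y,z) = \begin{pmatrix} 1 & x & y & z \\ & 1 & & ^t y \\ & & 1 & -^t x \\ & & & 1 \end{pmatrix}
\]
shows that the Heisenberg cross term $x_{1}\cdot{}^{t}y_{2} - y_{1}\cdot{}^{t}x_{2}$ in the $(1,4)$-block vanishes whenever both $y$-coordinates are zero, so $\gpx(a)\gpx(b) = \gpx(a+b)$.

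Next, set $\mathbf{c}' := \min(\mathbf{c},\mathbf{0})$ and define
\[
\eta := \gpx\!\bigl(\w^{c_{1}} - \w^{c'_{1}},\,\ldots,\,\w^{c_{m}} - \w^{c'_{m}}\bigr).
\]
By the abelianness just noted, $\eta\cdot\lambda(\mathbf{c}') = \lambda(\mathbf{c})$. For each $i$, the $i$-th entry in the argument of $\eta$ is either $0$ (when $c_{i}<0$, in which case $c'_{i} = c_{i}$) or $\w^{c_{i}}-1$ (when $c_{i}\geq 0$, in which case $c'_{i} = 0$); in the latter case both $\w^{c_{i}}$ and $1$ lie in $\mathcal{O}$, so every entry is integral. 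Hence $\eta\in\gpx^{0}$, and by the definition $\gpk_{\gpm^{\gpj}} = \gpk_{\gpm}\ltimes\gpj^{0}$ we have $\gpx^{0}\subset\gpj^{0}\subset\gpk_{\gpm^{\gpj}}$.

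Finally, $x\,t = \lambda(\mathbf{c})\,t = \eta\cdot\lambda(\mathbf{c}')\,t$ with $\eta\in\gpz\gpu\gpk_{\gpm^{\gpj}}$, so $x\,t$ and $\lambda(\min(\mathbf{c},\mathbf{0}))\,t$ lie in the same double coset; the torus element $t$ is never manipulated. There is no real obstacle here — the proof is a short bookkeeping calculation, and the only thing that must be spotted is the correct additive factorization of $\lambda(\mathbf{c})$ together with the integrality of $\w^{c_{i}}-1$ when $c_{i}\geq 0$.
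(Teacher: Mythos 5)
Your proof is correct. The paper takes a mildly different route: it left-multiplies by $\lambda = \lambda(\m 0) = \gpx(1,\ldots,1) \in \gpk_{\gpm^{\gpj}}$, so $\lambda x = \gpx(1+\w^{c_1},\ldots,1+\w^{c_m})$, and then absorbs the resulting unit factors by conjugation with an element of $\gpt_{\gpm}^{0}$, which sits in both $\gpk_{\gpm^{\gpj}}$ and $\gpk_{\gpg}$. That step uses that $1+\w^{c_i}$ has valuation exactly $\min(c_i,0)$, which for $c_i=0$ requires $2 \in \mathcal{O}^{*}$, i.e.\ the standing assumption that the residue characteristic is odd. Your direct factorization $\lambda(\m c) = \eta\cdot\lambda(\min(\m c,\m 0))$ with $\eta\in\gpx^{0}$ is a touch cleaner: it uses only the left absorption $\gpx^{0}\subset\gpk_{\gpm^{\gpj}}$, avoids the auxiliary torus conjugation, and does not lean on the residue characteristic. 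Both proofs, however, turn on the same underlying observation: the $\gpx$-coordinates may be adjusted by an element of $\gpx^{0}\subset\gpk_{\gpm^{\gpj}}$ on the left without changing the double coset.
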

\begin{proof}
  Note that $\lambda\in \gpk_{\gpm^{\gpj}}$, so $x\,t~\lambda x\,t\in \gpt_{\gpm}^{0}\lambda(\mathrm{min}(\m c,\m 0))t\gpt_{\gpm}^{0}$. 
\end{proof}

By this lemma, we just need to consider the support of $\wsfb$ on $x\w^{(\m a;\m b)}$ with $x=\lambda(-\m d)$ for some $\m d\geq \m 0$. 

\begin{lem}
\label{lem:4.2}
  Let $x=\lambda(\mathbf{-d})$ with $\m d\geq \m 0 $. Let $\m a\in \Lambda^{+}_{n-m}$ and $\m b\in \Lambda^{+}_{m}$. Suppose $ \wsfb(x\w^{\mathbf{(a,b)}})\neq 0$, then $\m d \leq \m b$.
\end{lem}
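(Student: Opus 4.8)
The plan is to argue by contradiction, reducing the assertion to the coordinatewise statement that for each fixed $j$, if $d_j>b_j$ then $\wsfb\big(\gpx(\w^{-\mathbf d})\w^{(\mathbf a,\mathbf b)}\big)=0$; since $j$ is arbitrary this gives $\mathbf d\le \mathbf b$. Fix $j$ and write the $j$-th coordinate of $\w^{-\mathbf d}$ as $x_j=\w^{-d_j}u_j$ with $u_j\in\mathcal O^{*}$. Using property (i) of Definition \ref{defwhi} with $z=e$, $u=e$ (so that left multiplication by $\gpk_{\gpm^{\gpj}}$ and right multiplication by $\gpk_\gpg$ leave $\wsfb$ unchanged), and conjugating $\gpx(\w^{-\mathbf d})$ by the maximal compact torus $\gpt_{\gpm}\cap\gpk_\gpm$, I may replace $\gpx(\w^{-\mathbf d})$ by $\gpx(\mathbf u\cdot\w^{-\mathbf d})$ for any $\mathbf u\in(\mathcal O^{*})^{m}$. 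I use this freedom to arrange that $u_j$ avoids one prescribed residue class modulo $\w$; this is possible because $q$ is not a power of $2$, so the residue field has at least three elements, and the excluded class is the unique one at which a certain leading coefficient below would vanish.

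The main step is a commutation computation. Put $g=\gpx(\w^{-\mathbf d})\w^{(\mathbf a,\mathbf b)}$ and insert on the right the root element $h=E_{2e_{n-m+j}}(s)$ with $s\in\mathcal O$, so that $h\in\gpk_\gpg$ and $\wsfb(g)=\wsfb(gh)$. Conjugating $h$ to the left past $\w^{(\mathbf a,\mathbf b)}$ replaces it by $E_{2e_{n-m+j}}(s\w^{2b_j})$, and the Chevalley commutator relation for the roots $2e_{n-m+j}$ and $e_{n-m}-e_{n-m+j}$ of $\gpg$ — the second being the $j$-th $\gpx$-direction of the Heisenberg group $\gpj$ — produces, upon moving this element past $\gpx(\w^{-\mathbf d})$, one factor in $\gpy$ and one in $\gpz$. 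Using the Heisenberg commutation relations and further conjugation by $\w^{(\mathbf a,\mathbf b)}$, I transport all the $\gpy$- and $\gpz$-factors to the left of $g$; a bookkeeping of valuations, in which the position-$(n-m)$ torus exponent $a_{n-m}$ cancels, yields
\[
  gh=\gpz(z_0)\,k\,g,
\]
where $k\in\gpk_{\gpm^{\gpj}}$ is a product of an element of $\gpj^{0}$ with $E_{2e_{n-m+j}}(s\w^{2b_j})\in\gpk_\gpm$, and $v(z_0)=v(s)+2b_j-2d_j$. This identity is valid whenever $v(s)\ge\max(0,d_j-2b_j)$, the condition $v(s)\ge d_j-2b_j$ being exactly what forces the relevant $\gpy$-factor into $\gpj^{0}$; and the normalization of $u_j$ guarantees that the leading coefficient of $z_0$ is a unit, so the stated valuation of $z_0$ is correct. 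Applying property (i) of Definition \ref{defwhi} once more gives $\wsfb(g)=\psi^{-1}(z_0)\,\wsfb(g)$.

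Finally, assume $d_j>b_j$. Then $v(s):=\max(0,d_j-2b_j)$ satisfies $v(z_0)=v(s)+2b_j-2d_j<0$, so $\psi^{-1}(z_0)\ne 1$ because $\psi$ has conductor $0$; hence $\wsfb(g)=0$, contradicting the hypothesis. The main obstacle is the bookkeeping in the middle step: tracking the precise $\gpy$- and $\gpz$-correction terms coming from the commutator and the Heisenberg relations, checking that they fall into $\gpj^{0}\subset\gpk_{\gpm^{\gpj}}$ under the stated lower bound on $v(s)$, and disposing of the single degenerate value of $u_j$ at which the leading coefficient of $z_0$ would vanish — which is why the preliminary normalization of $\mathbf u$ is needed.
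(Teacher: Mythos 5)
Your approach is essentially the one the paper uses: insert the long-root element $E'_{2e_j}(s)$ (your $E_{2e_{n-m+j}}(s)$) on the right as an element of $\gpk_\gpg$, conjugate it past $\w^{(\mathbf a,\mathbf b)}$ to get $E'_{2e_j}(s\w^{2b_j})$, and then commute it past $\gpx(\w^{-\mathbf d})$ to produce $\gpy$- and $\gpz$-factors that are pushed to the left, so that Definition \ref{defwhi}(i) forces $\wsfb(g)=\psi^{-1}(z_0)\wsfb(g)$. The only substantive difference is the treatment of the edge case $d_j>2b_j$: the paper simply takes $t\in\mathcal O^{*}$ throughout and invokes Lemma \ref{lem:4.1} to conclude that if the resulting $\gpy$-factor (with argument $\sim\w^{2b_j-d_j}$) is not in $\gpy^0$ then $\wsfb$ already vanishes, whereas you raise $v(s)$ to force the $\gpy$-factor into $\gpy^0$; both are fine.

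Two small points. First, your final sentence ``$v(z_0)<0$, so $\psi^{-1}(z_0)\ne1$'' is not correct for a single fixed $z_0$: an element of negative valuation need not lie outside $\ker\psi$. The argument works because $s$ (hence $z_0$) ranges freely over $\w^{v(s)}\mathcal O^{*}$ and $\psi$ of conductor $0$ cannot be trivial on all of $\w^{v(z_0)}\mathcal O^{*}$ when $v(z_0)<0$; you need to say you choose $s$ accordingly. Second, the preliminary normalization of $u_j$ by a compact torus element is harmless but unnecessary: the Chevalley structure constants for $\mathrm{Sp}$ relevant here are $\pm1$ or $\pm2$, and the Heisenberg corrections contribute comparable unit coefficients, so (as the paper's computation confirms) the leading coefficient of $z_0$ is automatically a unit since the residue characteristic is odd; there is no degenerate residue class to dodge.
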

\begin{proof}
  Suppose not, so we let $d_{j}>b_{j}$ for some $j$. Let $t\in \mathcal{O}^{*}$, then
  \begin{equation*}
    \begin{split}
      &\wsfb(x\w^{\mathbf{(a,b)}})=\wsfb(x\w^{\mathbf{(a,b)}}E'_{2e_{j}}(t))=\wsfb(xE'_{2e_{j}}(\w^{2b_{j}}t)\w^{\mathbf{(a,b)}})\\=&\psi(\w^{2b_{j}-2d_{j}}t)\wsfb(Y(0,\ldots,0,\underbrace{\w^{2b_{j}-d_{j}}}_{{j-th}},0,\ldots,0)x\w^{\mathbf{(a,b)}})
     \end{split}
  \end{equation*}
By lemma (\ref{lem:4.1}), when $\wsfb(x\w^{\mathbf{(a,b)}})\neq 0$, we have $\w^{2b_{j}-d_{j}}\in \mathcal{O}$, so the last formula equals $\psi(\w^{2b_{j}-2d_{j}}t)\wsfb(x\w^{\mathbf{(a,b)}})$. But when $d_{j}>b_{j}$ we can choose some $t\in \mathcal{O}^{*}$ such that $\psi(\w^{2b_{j}-2d_{j}}t)\neq 1$, contradicting the assumption.
\end{proof}
Note that when $x=\lambda(-\m d)$ with $\m d \geq \m 0$, $x\w^{\mathbf{(a,b)}}=\w^{\m d}\lambda \w^{\mathbf{(a,b-d)}}$. Let $\mathbf{r}=\mathbf{b-d}$. So by the previous lemmas,  the support of $\wsfb$ is on the union of $\gpz\gpu\gpk_{\gpm^{\gpj}}(\w^{\m d}\lambda \w^{\mathbf{(a,r)}})\gpk_{\gpg}$ for all $\m a\in \Lambda^{+}_{n-m}$, $\m d\geq \m 0$, $\mathbf{r\geq 0}$ and $\mathbf{d+r\in \Lambda^{+}_{m}}$.  The following two lemmas help us to narrow our choice of $\mathbf{a,b,d}$ so that we get theorem (\ref{thm:step4}). We also need to use them in the later calculations for the Whittaker-Shintani function. 
\begin{lem}
  \label{lem:induction}
  Suppose $g=\w^{\m d}\lambda \w^{\mathbf{(a,r)}}$ with $\m a\in \Lambda^{+}_{n-m}$, $\m d \geq \m 0$, $\m r\geq \m 0$, and $\mathbf{d+r}\in \Lambda^{+}_{m}$, then
  \begin{enumerate}
    \item Suppose $a_{n-m}<r_{1}$. Let $\mathbf{\overline r}=(a_{n-m},r_{2},\ldots,r_{m})$. Then
  \begin{equation*}
    \w^{\m d}\lambda \w^{\mathbf{(a,r)}}~\w^{\mathbf{d+r-\overline r}}\lambda \w^{\mathbf{(a,\overline r)}}.
  \end{equation*}
Moreover, if $\m d\in \Lambda^{+}_{m}$, then so is $\mathbf{d+r-\overline r}$. We call the process from $\mathbf{(d;a,r)}$ to $\mathbf{(d+r-\overline r;a,\overline r)}$ \textbf{Operation 1}.
  \item Fix $i$, let $\mathbf{\ti r}=(\ti r_{1},...,\ti r_{m})$ where
    \begin{equation*}\ti r_{j}=
      \begin{cases}
        r_{i}&\text{ if }j>i \text{ and } r_{j}>r_{i}\\
        r_{j}& \text{ otherwise},
      \end{cases}
    \end{equation*} 
    then
    \begin{equation*}
      \w^{\m d}\lambda \w^{\mathbf{(a,r)}}~\w^{\mathbf{d+r-\ti r}}\lambda \w^{\mathbf{(a,\ti r)}}.
    \end{equation*}
 Moreover, if $a_{n-m}\geq r_{1}$, then $a_{n-m}\geq \ti r_{1}$; if $\m d\in \Lambda^{+}_{m}$, then so is $\mathbf{d+r-\ti r}$. For given $i$, we call the process from $\mathbf{(d;a,r)}$ to $\mathbf{(d+r-\ti r;a,\ti r)}$ \textbf{Operation (2,i)}.
  \item Given $i$, let $\mathbf{\ti d}=(\ti d_{1},...,\ti d_{m})$ where
    \begin{equation*}
      \ti d_{j}=
      \begin{cases}
        d_{i}&\text{ if }j<i \text{ and } d_{j}<d_{i}\\
        d_{j}&\text{ otherwise},
      \end{cases}
    \end{equation*}
then
\begin{equation*}
  \wsfb(\w^{\m d}\lambda \w^{\mathbf{(a,r)}})=\wsfb(\w^{\mathbf{\ti d}}\lambda \w^{\mathbf{(a,r+d-\ti d)}}).
\end{equation*}
 Moreover, if $\mathbf{(a,r)}\in \Lambda^{+}_{n}$, then so is $\mathbf{(a,r+d-\ti d)}$. For given $i$, we call the process from $\mathbf{(d;a,r)}$ to $\mathbf{(\ti d;a,r+d-\ti d)}$ \textbf{Operation (3,i)}
  \end{enumerate}
\end{lem}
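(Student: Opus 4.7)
The plan is to derive all three parts from one commutation identity in $\gpm^{\gpj}=\gpm\ltimes \gpj$, combined with the inclusion $\gpj^{0}\subset \gpk_{\gpg}\cap \gpk_{\gpm^{\gpj}}$. Since the torus $\gpt_{\gpm}$ acts on $\gpj$ by $d_m(t)^{-1}\gpx(x)d_m(t)=\gpx(t_1x_1,\ldots,t_mx_m)$ and $\lambda=\gpx(1,\ldots,1)$, we obtain the key identity
\[
\lambda\cdot d_m(t_1,\ldots,t_m)=d_m(t_1,\ldots,t_m)\cdot \gpx(t_1,\ldots,t_m),
\]
and any unipotent $\gpx$-residue whose entries lie in $\mathcal{O}$ is absorbed into whichever compact subgroup is convenient. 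Each operation is then produced by extracting a carefully chosen torus factor from $\w^{(\m a,\m r)}$ (for parts (1) and (2,$i$)) or from $\w^{\m d}$ (for part (3,$i$)), commuting it across $\lambda$ via the identity, and absorbing the residue.

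For Operation (1) I would set $e=r_1-a_{n-m}>0$ and write $\w^{(\m a,\m r)}=\w^{(\m a,\overline{\m r})}\cdot d_m(\w^e,1,\ldots,1)$. Applying the identity across $\lambda$ and using $\gpx(\w^e,1,\ldots,1)=\lambda\cdot \gpx(\w^e-1,0,\ldots,0)$ gives
\[
\w^{\m d}\lambda \w^{(\m a,\m r)}=\w^{\m d+\m r-\overline{\m r}}\cdot \lambda\cdot \gpx(\w^e-1,0,\ldots,0)\cdot \w^{(\m a,\overline{\m r})}.
\]
Because $\overline r_1=a_{n-m}$ by construction, the conjugation of the residue by $\w^{(\m a,\overline{\m r})}$ is trivial at its one nonzero coordinate, so the residue, lying in $\gpj^{0}$, slides past to the right and is absorbed into $\gpk_{\gpg}$. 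Preservation of $\Lambda_m^{+}$ for $\m d+\m r-\overline{\m r}$ is immediate from $d_1\geq d_2$ together with $e>0$.

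For Operations (2,$i$) and (3,$i$) the same procedure applies with different torus factors: $d_m(1,\ldots,1,\w^{\alpha_{i+1}},\ldots,\w^{\alpha_m})$ (with $\alpha_j=r_j-\ti r_j$) extracted from the right of $\lambda$ for (2,$i$), and $d_m(\w^{-\beta_1},\ldots,\w^{-\beta_m})$ (with $\beta_j=\ti d_j-d_j\geq 0$) extracted from the left for (3,$i$). In each case the identity produces a $\gpj^{0}$-valued residue, $\gpx(0,\ldots,0,\w^{\alpha_{i+1}}-1,\ldots,\w^{\alpha_m}-1)$ or $\gpx(\w^{\beta_1}-1,\ldots,\w^{\beta_m}-1)$ respectively, which must then be carried past the surviving torus and absorbed. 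For (2,$i$) the residue is pushed to the right and lands in $\gpk_{\gpg}$; for (3,$i$) it is pushed to the left through $\w^{\ti{\m d}}$ into $\gpk_{\gpm^{\gpj}}$, and the possible $\gpz$- and $\gpu$-factors produced in that passage must evaluate trivially under the characters $\psi^{-1}$ and $\psi_{\gpu}$, which explains why (3,$i$) is stated as an equality of $\wsfb$-values rather than as a double-coset identity. The ``moreover'' preservation statements for $\Lambda_m^{+}$, $\Lambda_n^{+}$, and the inequality $a_{n-m}\geq \ti r_1$ are direct arithmetic consequences of $\m d+\m r\in \Lambda_m^{+}$ and the case definitions of $\ti{\m r}$ and $\ti{\m d}$.

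The principal obstacle is the $\w$-adic valuation bookkeeping in (2,$i$) and (3,$i$): one must check in each subcase that, after its conjugation by the transport torus, the residue still has $\mathcal{O}$-coefficients. This is precisely what dictates the form of the clipping rules defining $\ti{\m r}$ and $\ti{\m d}$ and, in (3,$i$), forces the descent from a double-coset identity to an identity of $\wsfb$-values via the left-$\gpk_{\gpm^{\gpj}}$-equivariance.
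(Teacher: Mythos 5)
Your Operation~1 is correct and essentially a variant of the paper's: you split the torus factor from the right of $\w^{(\m a,\m r)}$ and commute it through $\lambda$, whereas the paper multiplies by $\lambda\in\gpk_{\gpg}$ on the right and pushes that through; either way the residue $\gpx(\w^{e}-1,0,\ldots,0)$ is harmless because $\overline r_1=a_{n-m}$ makes its conjugation by $\w^{(\m a,\overline{\m r})}$ trivial.

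For Operations $(2,i)$ and $(3,i)$, however, there is a genuine gap. Your torus decomposition produces a $\gpj$-residue (e.g.\ $\gpx(0,\ldots,0,\w^{\alpha_{i+1}}-1,\ldots,\w^{\alpha_m}-1)$ for $(2,i)$) that you then want to slide past a remaining torus factor, but the conjugation scales coordinate $j$ by $\w^{\ti r_j-a_{n-m}}$ when pushed right, and since $\ti r_j=r_i$ at the nonzero entries this needs $r_i\geq a_{n-m}$ --- which is not among the hypotheses and can fail. A concrete counterexample: $n=4$, $m=3$, $\m a=(2)$, $\m d=(3,2,0)$, $\m r=(0,0,2)$, $i=1$; then $\ti{\m r}=(0,0,0)$, the residue is $\gpx(0,0,\w^{2}-1)$, and pushing it right past $\w^{(\m a,\ti{\m r})}=d_4(\w^2,1,1,1)$ scales the third coordinate by $\w^{-2}$, leaving it outside $\gpj^0$. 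Pushing it left past $\w^{\m d+\m r-\ti{\m r}}$ gives negative valuations as well. The same defect appears in $(3,i)$: the residue $\gpx(\w^{\beta_1}-1,\ldots)$ pushed left past $\w^{\ti{\m d}}$ acquires the factor $\w^{-\ti d_j}$ and leaves $\gpj^0$ whenever $\ti d_j>0$ at a nonzero $\beta_j$; no $\gpz$- or $\gpu$-terms are generated in this passage since $\gpx$ is abelian and the diagonal only rescales, so the mechanism you invoke to rescue $(3,i)$ does not exist. The paper avoids all of this by multiplying not by a $\gpj$-residue but by $\gpm$-root-group elements $\prod E'_{e_i-e_j}(1)\in\gpk_{\gpm}$: after being transported past $\w^{(\m a,\m r)}$, $\lambda$, and $\w^{\m d}$, these become $\prod E'_{e_i-e_j}(\w^{d_i+r_i-d_j-r_j})$, which land in the appropriate compact group precisely because $\m d+\m r\in\Lambda^+_m$ --- that dominance hypothesis is the engine of $(2,i)$ and $(3,i)$, and your argument never uses it. The $\gpj$-residue this produces, namely $\lambda(\ti{\m r}-\m r)$ with nonpositive exponents, is then absorbed exactly (not merely up to a compact factor) via $\lambda(\m c)=d_m(\w^{\m c})^{-1}\lambda\,d_m(\w^{\m c})$, redistributing the exponents from $\m r$ to $\m d$; your decomposition produces $\lambda(\m\alpha)$ with nonnegative exponents, which under the same identity simply unwinds to the starting point.
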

\begin{proof}
  For part 1, note that when $a<r_{1}$,
  \begin{equation*}
    \begin{split}
     & \w^{\m d}\lambda \w^{\mathbf{(a,r)}}~\w^{\m d}\lambda \w^{\mathbf{(a,r)}}\lambda\\
      ~&\w^{\m d}\lambda((a_{n-m}-r_{1},0,\ldots,0))\w^{\mathbf{(a,r)}}~\w^{\mathbf{d+r-\overline r}}\lambda \w^{\mathbf{(a,\overline r)}},
    \end{split}
   \end{equation*}
so we have part 1. For part 2, we fix $i$, then we have
\begin{equation*}
  \begin{split}
    & \w^{\m d}\lambda \w^{\mathbf{(a,r)}}~\w^{\m d}\lambda \w^{\mathbf{(a,r)}}\cdot\prod_{j\,|j>i,r_{j}>r_{i}}E'_{e_{i}-e_{j}}(1)\\
    ~&\prod_{j\,|j>i,r_{j}>r_{i}}E'_{e_{i}-e_{j}}(\w^{\mathbf{d_{i}+r_{i}-d_{j}-r_{j}}})\w^{\m d}\lambda(\mathbf{\ti r-r})\w^{\mathbf{(a,r)}}~\w^{\mathbf{d+r-\ti r}}\lambda \w^{\mathbf{(a,\ti r)}}
  \end{split}
\end{equation*}
the rest of part 2 is correct since $\mathbf{d+r}\in \Lambda^{+}_{m}$.
Part 3 is similar to part 2. For fix $i$, we have
\begin{equation*}
  \begin{split}
    & \w^{\m d}\lambda \w^{\mathbf{(a,r)}}~\prod_{j|j<i,d_{j}<d_{i}}E'_{-e_{j}+e_{i}}(1)\w^{\m d}\lambda \w^{\mathbf{(a,r)}}\\
    ~&\w^{\m d}\lambda(\mathbf{d-\ti d})\w^{\mathbf{(a,r)}}\prod_{j|j<i,d_{j}<d_{i}}E'_{e_{i}-e_{j}}(\w^{\mathbf{d_{j}+r_{j}-d_{i}-r_{i}}})~\w^{\mathbf{\ti d}}\lambda \w^{\mathbf{(a,r+d-\ti d)}}.
  \end{split}
\end{equation*}
The rest of part 3 is correct since $\mathbf{d+r}\in \Lambda^{+}_{m}$ and $\ti r_{1}\leq r_{1}$.
\end{proof}
\begin{pro}
\label{lem:closure}
  Suppose $\m a\in \Lambda^{+}_{n-m}$, $\m d \geq \m 0$, $\m r\geq \m 0$ and $\mathbf{d+r}\in \Lambda^{+}_{m}$. Starting from $\mathbf{(d;a,r)}$, if we take the \textbf{Operation 1}, and then \textbf{Operation (2,i)}, for $i$ from $1$ to $m$, and then \textbf{Opeartion (3,i)}, for $i$ from $m$ to $1$, we will end the process with $\mathbf{(\overline d;a,\overline r)}$ such that
  \begin{enumerate}
  \item $\mathbf{\overline d+\overline r}=\mathbf{d+r}.$
  \item $\mathbf{\overline d}\geq \mathbf{d}$, so equivalently, $\mathbf{\overline r\leq r}$.
  \item $\mathbf{\overline d}\in \Lambda^{+}_{m}$, and $\mathbf{(a,\overline r)}\in \Lambda^{+}_{n}$.
  \item For any triple $\mathbf{(d';a,r')}$ satisfying (1),(2) and (3) above, $\mathbf{\overline d\leq d'}$.
  \end{enumerate}
\end{pro}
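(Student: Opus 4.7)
The plan is to compute closed-form expressions for $\mathbf{r}$ and $\mathbf{d}$ after each stage of the prescribed sequence of operations, and then to verify the minimality claim (4) by directly propagating the three constraints. Conditions (1) and (2) will be immediate from the definitions of the three operations: every operation preserves the pointwise sum $\mathbf{d+r}$, and every operation can only raise certain coordinates of $\mathbf{d}$ (correspondingly lowering the matching coordinates of $\mathbf{r}$ by the same amount). This gives $\mathbf{\overline d+\overline r}=\mathbf{d+r}$ and $\mathbf{\overline d\geq d}$ automatically.

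For (3), I would first show that after Operation 1 followed by Operations $(2,i)$ for $i=1,\ldots,m$, the resulting $\mathbf{r}$-coordinates have the closed form
\[
r^*_j=\min(a_{n-m},r_1,r_2,\ldots,r_j),
\]
which is manifestly non-increasing in $j$, bounded above by $a_{n-m}$, and non-negative, so $(\mathbf{a},\mathbf{r}^*)\in\Lambda^+_n$. Writing $d^*_j=(d+r)_j-r^*_j$, I would then verify by induction starting from Operation $(3,m)$ and descending that the subsequent Operations $(3,i)$ produce
\[
\overline d_j=\max(d^*_j,d^*_{j+1},\ldots,d^*_m),
\]
which is automatically non-increasing in $j$, giving $\mathbf{\overline d}\in\Lambda^+_m$. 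The one subtle step is checking that each Operation $(3,i)$ preserves the non-increasing property of $\mathbf{r}$ at every intermediate stage; this reduces to a short case analysis on the relative ordering of $d_j$, $d_{j+1}$, and $d_i$ at the moment of the operation, and uses only that $\mathbf{d+r}$ remains non-increasing throughout, which is invariant by (1).

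For the minimality claim (4), I would argue as follows. Let $(\mathbf{d'};\mathbf{a},\mathbf{r'})$ be any triple satisfying (1), (2), (3). From $\mathbf{d'\geq d}$ one obtains $r'_l=(d+r)_l-d'_l\leq r_l$ for every $l$, and from $\mathbf{r'}$ being non-increasing together with $r'_1\leq a_{n-m}$ one obtains $r'_j\leq a_{n-m}$ for every $j$. Combining these two bounds gives $r'_j\leq\min(a_{n-m},r_1,\ldots,r_j)=r^*_j$, hence $d'_j\geq d^*_j$ for every $j$. Finally, since $\mathbf{d'}$ is non-increasing, $d'_j\geq d'_k\geq d^*_k$ for all $k\geq j$, so $d'_j\geq\max_{k\geq j}d^*_k=\overline d_j$, proving (4). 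The main obstacle I anticipate is the bookkeeping in the induction for the $\mathbf{r}$-side of (3): ensuring that non-increasingness of $\mathbf{r}$ survives each Operation $(3,i)$ is the one place where the invariance of the sum $\mathbf{d+r}$ must be used essentially; everything else reduces to routine closed-form computation.
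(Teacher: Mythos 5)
Your proof is correct, but it reaches (4) by a genuinely different route than the paper. You first derive explicit closed forms for the endpoint of the operation chain,
\begin{equation*}
\overline r_j=\min(a_{n-m},r_1,\ldots,r_j),\qquad
\overline d_j=\max_{k\geq j}\bigl((d+r)_k-\overline r_k\bigr),
\end{equation*}
and then prove minimality by direct comparison: from the constraints $\mathbf{d'}\geq\mathbf{d}$, $(\mathbf{a},\mathbf{r'})\in\Lambda^+_n$, and $\mathbf{d'+r'}=\mathbf{d+r}$ you deduce $r'_j\leq\overline r_j$, hence $d'_j\geq (d+r)_j-\overline r_j$, and then monotonicity of $\mathbf{d'}$ propagates this to $d'_j\geq\overline d_j$. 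The paper never computes these closed forms; instead it proves (4) by an invariant argument, showing that the pair of conditions $\mathbf{D+R}=\mathbf{d'+r'}$ and $\mathbf{D}\leq\mathbf{d'}$ is preserved by each of the $2m+1$ operations in Lemma (\ref{lem:induction}), starting from $(\mathbf{d};\mathbf{a},\mathbf{r})$, so the endpoint satisfies $\overline{\mathbf{d}}\leq\mathbf{d'}$. Your version is more constructive and exhibits the optimum explicitly, at the cost of tracking the intermediate states through the full chain of $(2,i)$ and $(3,i)$ operations; the paper's invariant-preservation argument is shorter and sidesteps that bookkeeping, but leaves $\overline{\mathbf{d}}$ implicit. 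One remark: the subtlety you flag, that each Operation $(3,i)$ preserves $(\mathbf{a},\mathbf{r})\in\Lambda^+_n$, is exactly the ``Moreover'' clause of part (3) of Lemma (\ref{lem:induction}), so you can cite it rather than redo the case analysis; you would still, however, need to confirm that the intermediate triples continue to satisfy the lemma's standing hypotheses ($\mathbf{d}\geq\mathbf{0}$, $\mathbf{r}\geq\mathbf{0}$, $\mathbf{d+r}\in\Lambda^+_m$), which holds for the same reasons the paper's invariants do.
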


In other words, for $\mathbf{(d;a,r)}$ such that $\m a\in \Lambda^{+}_{n-m}$, $\m d \geq \m 0$, $\m r\geq \m 0$ and $\mathbf{d+r}\in \Lambda^{+}_{m}$, there exists $\mathbf{(\overline d;a,\overline r)}$ satisfying (1),(2) and (3) above such that $\w^{\m d}\lambda \w^{\mathbf{(a,r)}}~\w^{\mathbf{\overline d}}\lambda \w^{\mathbf{(a,\overline r)}}$. Moreover, among all the triples satisfying (1), (2) and (3), $\mathbf{(\overline d;a,\overline r)}$ is the one with the smallest $\mathbf{\overline d}$.  

\begin{proof}
  Part (1) is obvious. Part (2) is true because in all the operations, either $\m d$ increases or $\m r$ decreases, and by (1) they are equivalent. Part (3) is true because by opeartion 1, $a_{n-m}\geq \overline r_{1}$, and after operation (2,i), $\overline r_{i}\geq \overline r_{j}$ for all $j>i$, and after operation (3,i), $\overline d_{i}\leq \overline d_{j}$ for all $j<i$. 

To prove part (4), we let $\mathbf{(d';a,r')}$ satisfy (1),(2) and (3). Note that  $\mathbf{(\overline d;a,\overline r)}$ is the result of $2i+1$ operations on $\mathbf{(d;a,r)}$. We show below that for any triple  $\mathbf{(D;a,R)}$ satisfying $\mathbf{D+R=d'+r'}$ and $\mathbf{D\leq d'}$, it still satisfies the same conditions after one of the operations in lemma (\ref{lem:induction}). Repeating it for $2i+1$ times we prove (4) since initially we have $\m d \leq \mathbf{d'}$.

 To be precise, let $\mathbf{(D;a,R)}$ be such a triple, and suppose after one of the operations in lemma (\ref{lem:induction}) it becomes $\mathbf{(D';a,R')}$. We need to show $\mathbf{D'+R'=d'+r'}$ and $\mathbf{D'\leq d'}$. The former one is obvious by the definition of the operations. For the latter first suppose the operation we took is (3,i), then for any $j$, either $\mathbf{ D'_{j}=D_{j}\leq d'_{j}}$, or $\mathbf{D'_{j}=D_{i}}$, in which case $j<i$, implying  $\mathbf{ D'_{j}=D_{i}\leq d'_{i}\leq d'_{j}}$. So $\mathbf{ D'_{j}\leq d'_{j}}$ anyway and hence $\mathbf{ D'\leq d'}$. If the operation we took is (1) or (2,i), we can similarly prove that $\mathbf{R'\geq r'}$, which implies, by $\mathbf{D'+R'=d'+r'}$, that $\mathbf{D'\leq d'}$. So in any case $\mathbf{D'\leq d'}$. 
\end{proof}
Following lemma (\ref{lem:induction}) and (\ref{lem:closure}), theorem (\ref{thm:step4}) is implied.

\section{Vectors invariant under certain open compact subgroups}
\label{sec:step5}

\subsection{ The $\gpi_{\gpg}$-invariant vectors in $\repfi$}\hfill\\

Let $\gpi_{\gpg}$ be the Iwahori subgroup of $\gpg$. For $w\in W_{\gpg}$, let $\iwag^{w\chag}$ be the element in $\mathrm{Ind}_{\gpb_{\gpg}}^{\gpg}(w\chag)$ defined as
\begin{equation*}
  \iwag^{w\chag}(g)=\int_{\gpb_{\gpg}}\ud_{l}b\ \mathrm{Ch}_{\gpi_{\gpg}}(bg)(w\chag)^{-1}\dha_{\gpb_{\gpg}}(b).
\end{equation*}
Then $\iwag^{w\chag}$ is $\mathrm{I}_{\gpg}$-invariant. And we have
\begin{equation*}
  \iwag^{w\chag}(1)=\mathrm{Vol}(\gpb_{\gpg}\cap \mathrm{I}_{\gpg})=1.
\end{equation*}
For a Weyl element $w$ and a character ${\chag}$ on the $\gpt_{\gpg}$, let $\mathrm{T}_{w,\chag}$ be the Intertwining operator from $\mathrm{Ind}^{\gpg}_{\gpb_{\gpg}}(\chag)$ to $\mathrm{Ind}^{\gpg}_{\gpb_{\gpg}}(w\chag)$ defined as

\begin{equation*}
  \mathrm{T}_{w,\chag}\mathrm{f}(g)=\int_{\gpn\cap w\gpn w^{-1}}\ud n\ \mathrm{f}(w^{-1}ng).
\end{equation*}
The integral is convergent when $Re(\chag)$ is sufficiently large, and by \cite{casselman's_notes} it continues holomorphically to generic $\chag$. We write $\mathrm{T}_{w,\chag}$ as $\mathrm{T}_{w}$ when there is no risk of confusion. For generic ${\chag}$, the $\gpg$-intertwining operator from $\mathrm{Ind}^{\gpg}_{\gpb_{\gpg}}(\chag)$ to $\mathrm{Ind}^{\gpg}_{\gpb_{\gpg}}(w\chag)$ is unique for every $w\in \gpw_{\gpg}$. So
\begin{equation*}
  \mathrm{T}_{w_{1}}\circ \mathrm{T}_{w_{2}}=c\cdot \mathrm{T}_{w_{1}w_{2}}
\end{equation*}
for some constant $c$. Moreover,if we let 
\begin{equation*}
  \begin{split}
    &    c_{\alpha}(\chag)=\frac{\zeta(\ppair{\chag}{\check{\alpha}})}{\zeta(\ppair{\chag}{\check{\alpha}}+1)}\\
    &c_{w}(\chag)=\prod_{\alpha>0,w\alpha<0}c_{\alpha}(\chag),
  \end{split}
\end{equation*}
then by theorem 3.1 in \cite{MR571057} 
\begin{equation*}
  \mathrm{T}_{w}\funf_{\chag}^{0}=c_{w}(\chag)\funf_{w\chag}^{0}.
\end{equation*}
So if we let
\begin{equation*}
  \overline{\mathrm{T}}_{w,\chag}=c_{w}(\chag)^{-1}\mathrm{T}_{w,\chag},
\end{equation*}
then
\begin{equation*}
  \overline{\mathrm{T}}_{w_{1},w_{2}\chag}\overline{\mathrm{T}}_{w_{2},\chag}=\overline{\mathrm{T}}_{w_{1}w_{2},\chag}
\end{equation*}

Following section 5 in \cite{MR581582} we state the following results without proof.
\begin{lem}
   Elements in $\{\overline{\mathrm{T}}_{w^{-1}}\iwag^{w\chag}\}_{w\in W_{\gpg}}$ form a basis of $\mathrm{Ind}^{\gpg}_{\gpb_{\gpg}}(\chag)^{\gpi_{\gpg}}$.
\end{lem}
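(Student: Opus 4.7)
The plan is to prove this in three stages: fix the ambient dimension, check that the proposed vectors lie in the right space, and then establish linear independence by a triangularity argument.

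First I would establish that $\dim_{\mathbb{C}} \mathrm{Ind}^{\gpg}_{\gpb_{\gpg}}(\chag)^{\gpi_{\gpg}} = |W_{\gpg}|$. This follows from the Bruhat--Iwahori decomposition $\gpg = \bigsqcup_{w \in W_{\gpg}} \gpb_{\gpg} w \gpi_{\gpg}$. An element $f \in \mathrm{Ind}^{\gpg}_{\gpb_{\gpg}}(\chag)^{\gpi_{\gpg}}$ is determined by its values $\{f(w)\}_{w \in W_{\gpg}}$, and conversely the values at any lifts of the Weyl representatives can be prescribed freely because for each $w$ the character $\chag \dha_{\gpb_{\gpg}}$ is trivial on $\gpb_{\gpg} \cap w\gpi_{\gpg} w^{-1}$ (this intersection lies in the maximal compact part of $\gpt_{\gpg}$ together with $\gpn_{\gpg}^{0}$, on which $\chag$ is unramified and $\dha_{\gpb_{\gpg}}$ trivial). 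This gives exactly $|W_{\gpg}|$ degrees of freedom.

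Next, each $\overline{\mathrm{T}}_{w^{-1}} \iwag^{w\chag}$ is manifestly $\gpi_{\gpg}$-invariant: the vector $\iwag^{w\chag}$ is Iwahori-invariant by construction, and any $\gpg$-intertwining operator (hence in particular $\overline{\mathrm{T}}_{w^{-1}}$) commutes with right translation. So the $|W_{\gpg}|$ vectors in question all lie in $\mathrm{Ind}^{\gpg}_{\gpb_{\gpg}}(\chag)^{\gpi_{\gpg}}$.

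The core of the proof is to show linear independence. My approach is to consider the evaluation matrix
\begin{equation*}
M_{w,w'}(\chag) \;=\; \bigl(\overline{\mathrm{T}}_{w^{-1},w\chag}\,\iwag^{w\chag}\bigr)(w'), \qquad w,w' \in W_{\gpg},
\end{equation*}
and prove it is non-degenerate for generic $\chag$. To compute a single entry I would expand
\begin{equation*}
\mathrm{T}_{w^{-1}}\iwag^{w\chag}(w') \;=\; \int_{\gpn_{\gpg} \cap w^{-1}\gpn_{\gpg}w} \iwag^{w\chag}(wn w')\, \mathrm{d}n,
\end{equation*}
decompose $\gpn_{\gpg} \cap w^{-1}\gpn_{\gpg}w$ into its root subgroups, and use the Iwahori decomposition of $\gpi_{\gpg}$ to isolate the contribution from each root. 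The key structural claim, essentially identical to the Iwahori analysis in Casselman's paper, is a triangularity property with respect to the Bruhat order: $M_{w,w'}(\chag)$ vanishes unless $w' \leq w$, and the diagonal entries $M_{w,w}(\chag)$ are explicit non-zero rational functions (products of $c_\alpha(\chag)$ factors canceling the normalization $c_w(\chag)^{-1}$). Once triangularity with non-vanishing diagonal is established, non-degeneracy is immediate for generic $\chag$, completing the proof.

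The main obstacle I expect is the bookkeeping in this last step: carefully tracking the Iwahori decomposition along the positive roots sent to negative roots by $w^{-1}$, and verifying that the normalization constants $c_w(\chag)^{-1}$ precisely cancel the poles coming from integration against unramified characters on each root subgroup. This is a rank-one-reduction calculation iterated along a reduced decomposition of $w$, which is precisely where one uses the cocycle property $\overline{\mathrm{T}}_{w_1,w_2\chag}\overline{\mathrm{T}}_{w_2,\chag} = \overline{\mathrm{T}}_{w_1 w_2,\chag}$ already recorded above.
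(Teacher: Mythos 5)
Your outline — dimension count via the Bruhat--Iwahori decomposition, invariance of each $\overline{\mathrm{T}}_{w^{-1}}\iwag^{w\chag}$, and non-degeneracy of an evaluation matrix — is correct and is the kind of argument that underlies the citation to Casselman--Shalika; the paper itself states this lemma without proof. But it is worth noting that you are proposing more machinery than is needed, and more than the paper uses for its analogous statement. In the $\gpm^{\gpj}$ case (the lemma about $\{\overline{\mathrm{T}}_{w^{-1}}\iwam^{w\cham,\psi}\}$), the paper does not establish full Bruhat-order triangularity of an evaluation matrix; it proves the single statement that $\overline{\mathrm{T}}_u\iwam^{\cham,\psi}(w_0^{\gpm}) = 0$ unless $u = w_0^{\gpm}$, and then uses the cocycle property to conclude: given a linear relation $\sum_w c_w\overline{\mathrm{T}}_{w^{-1}}\iwam^{w\cham,\psi}=0$, apply $\overline{\mathrm{T}}_{w_0^{\gpm}w'}$ and evaluate at $w_0^{\gpm}$; by the cocycle relation this turns each summand into $\overline{\mathrm{T}}_{w_0^{\gpm}w'w^{-1}}\iwam^{w\cham,\psi}$, and the vanishing statement kills every term except $w = w'$. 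The same two-line argument works verbatim for $\gpg$, and it sidesteps both the Bruhat-order bookkeeping you worry about and the need to verify non-vanishing of all the diagonal entries: only the $w_0$-column of your matrix $M$ is ever touched.

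There is an even lighter route that you should consider, using Proposition~\ref{pro:aG} stated immediately below the lemma: $\iwag^{w\chag}$ is a joint eigenvector of the operators $R(\mathrm{Ch}_{\gpi_{\gpg}a\gpi_{\gpg}})$, $a\in\gpt_{\gpg}^-$, with eigenvalue $\mathrm{Vol}(\gpi_{\gpg})(w\chag)\dnh_{\gpb_{\gpg}}(a)$, and $\overline{\mathrm{T}}_{w^{-1}}$ is a $\gpg$-morphism, hence commutes with all the $R(\mathrm{Ch}_{\gpi_{\gpg}a\gpi_{\gpg}})$. So each $\overline{\mathrm{T}}_{w^{-1}}\iwag^{w\chag}$ is a joint eigenvector with eigenvalue character $(w\chag)\dnh_{\gpb_{\gpg}}$; for generic $\chag$ these characters on $\gpt_{\gpg}^-$ are pairwise distinct as $w$ ranges over $\gpw_{\gpg}$, and eigenvectors attached to distinct eigenvalues are independent. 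Combined with your dimension count this gives the basis statement with no integral computations at all. If you do wish to carry out your triangularity argument, be careful with the direction of the partial order: Casselman's support result says $\mathrm{T}_{w^{-1}}$ applied to the characteristic vector of $\gpb_{\gpg}\gpi_{\gpg}$ lands in $\bigoplus_{w'\leq w^{-1}}\mathbb{C}\,\iwagw{\chag,w'}$, and while the Bruhat order is inversion-stable as a relation ($u\leq v\iff u^{-1}\leq v^{-1}$), your stated condition $w'\leq w$ is not literally the same as $w'\leq w^{-1}$. The matrix is still triangular with respect to a suitable total refinement of Bruhat order, so the argument survives, but the precise indexing needs to be adjusted.
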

By this lemma, we have 
\begin{lem}
one can express $\funf_{\chag}^{0}$ as a linear combination of $\{\overline{\mathrm{T}}_{w^{-1}}\iwag^{w\chag}\}_{w\in W_{\gpg}}$. In fact we have
\begin{equation*}
  \funf_{\chag}^{0}=\mathrm{Vol(I_{\gpg})^{-1}}\sum_{w\in W_{\gpg}}c_{w_{0}}(w\chag)\overline{\mathrm{T}}_{w^{-1}}\iwag^{w\chag}.
\end{equation*}
\end{lem}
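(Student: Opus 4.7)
Since $\funf^0_{\chag}$ is $\gpk_{\gpg}$-invariant and hence $\gpi_{\gpg}$-invariant, the preceding lemma gives a unique expansion
\[
\funf^0_{\chag} \;=\; \sum_{w \in \gpw_{\gpg}}\, a_w(\chag)\, \overline{\mathrm{T}}_{w^{-1}}\iwag^{w\chag}
\]
for some coefficient functions $a_w(\chag)$. The plan is to first cut the $|\gpw_{\gpg}|$ unknowns down to a single function via equivariance under the normalized intertwining operators, and then pin that function down by evaluating at the identity.

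For the reduction, the relation $\mathrm{T}_u\funf^0_{\chag} = c_u(\chag)\funf^0_{u\chag}$ combined with $\overline{\mathrm{T}}_u = c_u(\chag)^{-1}\mathrm{T}_u$ yields $\overline{\mathrm{T}}_u\funf^0_{\chag} = \funf^0_{u\chag}$, and the uniqueness of $\gpg$-intertwiners between generic principal series forces the cocycle $\overline{\mathrm{T}}_u\overline{\mathrm{T}}_{w^{-1}} = \overline{\mathrm{T}}_{uw^{-1}}$ (the implicit scalar is $1$, as seen by applying both sides to $\funf^0_{\chag}$ and tracking the $c$-functions). Applying $\overline{\mathrm{T}}_u$ to the expansion and comparing term-by-term with the intrinsic expansion of $\funf^0_{u\chag}$ in the basis at $u\chag$, basis uniqueness gives $a_w(\chag) = a_{wu^{-1}}(u\chag)$; specializing $u=w$ collapses this to $a_w(\chag) = a(w\chag)$ with $a(\chag) := a_e(\chag)$.

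To determine $a(\chag)$, evaluate both sides at $g=1$: since $\funf^0_{\chag}(1)=1$ and $\iwag^{w\chag}(1)=1$, this reduces to $1 = \sum_w a(w\chag)\,\overline{\mathrm{T}}_{w^{-1}}\iwag^{w\chag}(1)$. Each intertwining integral $\mathrm{T}_{w^{-1}}\iwag^{w\chag}(1) = \int_{\gpn\cap w^{-1}\gpn w}\iwag^{w\chag}(wn)\,dn$ is evaluated by using that $\iwag^{w\chag}$ is supported on $\gpb_{\gpg}\gpi_{\gpg}$ with value $(w\chag)\delta_{\gpb_{\gpg}}^{1/2}(b_0)$ on $b_0 k$, and then applying the standard rank-one reduction for each positive root that $w^{-1}$ sends negative; the result is a product of Gindikin--Karpelevich factors in $c_\alpha(w\chag)$. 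After folding in the normalization $c_{w^{-1}}(w\chag)^{-1}$ built into $\overline{\mathrm{T}}_{w^{-1}}$, the guess $a(\chag) = \mathrm{Vol}(\gpi_{\gpg})^{-1}c_{w_0}(\chag)$ is confirmed by the combinatorial identity $\sum_w c_{w_0}(w\chag)\,\overline{\mathrm{T}}_{w^{-1}}\iwag^{w\chag}(1) = \mathrm{Vol}(\gpi_{\gpg})$, which is a form of the Gindikin--Karpelevich summation (equivalently, Macdonald's spherical-function formula) in the Iwahori-fixed basis. The main technical obstacle is exactly this bookkeeping: performing the rank-one evaluation of $\mathrm{T}_{w^{-1}}\iwag^{w\chag}(1)$ for general $w$, tracking Haar-measure normalizations through the Iwahori factorization, and checking the Gindikin--Karpelevich summation identity root by root. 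Steps 1 and 2 of the plan, and the final consistency substitution, are otherwise purely formal.
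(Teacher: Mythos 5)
Your reduction step is sound: the cocycle relation for the normalized intertwiners, together with the fact that $\{\overline{\mathrm{T}}_{w^{-1}}\iwag^{w\chag}\}$ is a basis, does give $a_w(\chag)=a_e(w\chag)$ for a single unknown function $a_e$. The determination of $a_e$, however, has a genuine gap. The paper's method (carried out explicitly in the analogous $\gpm^{\gpj}$ lemma, and for $\gpg$ deferred to Casselman--Shalika) is to apply $\overline{\mathrm{T}}_{w_{0}w}$ to the expansion and then evaluate at $g=w_{0}^{\gpg}$; the earlier vanishing lemma says $\overline{\mathrm{T}}_{u}\iwag^{\chag}(w_{0}^{\gpg})=0$ unless $u=w_{0}^{\gpg}$, so the whole sum collapses to one term and the coefficient drops out. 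Evaluating at $g=1$, as you do, does \emph{not} collapse the sum: every $\overline{\mathrm{T}}_{w^{-1}}\iwag^{w\chag}(1)$ is generically nonzero, so you get a single linear relation among $\{a_{e}(w\chag)\}_{w}$, and verifying that the announced $a_{e}$ satisfies that one relation does not show it is \emph{the} coefficient. A uniqueness argument (e.g.\ invertibility of the matrix $\bigl(C_{w'u^{-1}}(u\chag)\bigr)_{u,w'}$ obtained by also substituting $\chag\mapsto u\chag$) would be needed and is not supplied.

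There is also an incorrect computation underlying your "summation identity." The Gindikin--Karpelevich rank-one factorization into $c_{\alpha}$ factors applies when $\mathrm{T}_{w}$ acts on the $\gpk_{\gpg}$-fixed spherical vector; $\iwag^{w\chag}$ is only Iwahori-fixed and is supported on $\gpb_{\gpg}\gpi_{\gpg}$, not $\gpb_{\gpg}\gpk_{\gpg}$, so $\mathrm{T}_{w^{-1}}\iwag^{w\chag}(1)$ is \emph{not} $c_{w^{-1}}(w\chag)$ or any product of $c_{\alpha}$'s. For instance in the rank-one case one finds
\begin{equation*}
\overline{\mathrm{T}}_{s}\iwag^{s\chag}(1)=\frac{(1-q^{-1})\,q^{\chag_{1}}}{1-q^{\chag_{1}-1}},
\end{equation*}
which is visibly not a Gindikin--Karpelevich factor. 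The identity $\sum_{w}c_{w_{0}}(w\chag)\,\overline{\mathrm{T}}_{w^{-1}}\iwag^{w\chag}(1)=\mathrm{Vol}(\gpi_{\gpg})$ is true, but it is precisely a restatement of the lemma you are trying to prove evaluated at $g=1$, so invoking it (under the label "Macdonald's formula") is circular. The clean route is the paper's: apply $\overline{\mathrm{T}}_{w_{0}w}$ and evaluate at $w_{0}^{\gpg}$, where the vanishing lemma immediately gives $1=b_{w}\,c_{w_{0}}(w\chag)^{-1}\mathrm{Vol}(\gpi_{\gpg})$.
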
  

Next we have
\begin{pro}
\label{pro:aG}
  For $a\in \mathrm{\mathrm{T}}^{-}_{G}$,
  \begin{equation}
\label{eq:1}
     R(\mathrm{Ch}_{I_{\gpg}aI_{\gpg}})\iwag^{\chag}=\mathrm{Vol(I_{\gpg})}\chag\dnh(a)\iwag^{\chag}.
  \end{equation}
So as a consequence,
\begin{equation}
\label{eq:step5}
R(\mathrm{Ch}_{\gpi_{\gpg}a\gpi_{\gpg}})\funf_{\chag}^{0}=\sum_{w\in \gpw_{\gpg}}c_{w_{0}}(w\chag)(w\chag)\dnh(a)\overline{\mathrm{T}}_{w^{-1}}\iwag^{w\chag}.
\end{equation}
\end{pro}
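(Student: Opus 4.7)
The plan is to carry out a direct Iwahori Hecke algebra computation, following the template of \cite[Section 5]{MR581582}. First I would decompose the double coset $\gpi_\gpg a \gpi_\gpg$ into left $\gpi_\gpg$-cosets. Using the Iwahori factorization $\gpi_\gpg = (\gpi_\gpg \cap U^+)(\gpi_\gpg \cap \gpt_\gpg)(\gpi_\gpg \cap U^-)$, where $U^+ = \gpn_\gpg$ and $U^-$ is its opposite, together with the hypothesis $a \in \gpt_\gpg^-$, conjugation by $a$ contracts one of the two unipotent parts into itself while expanding the other. This lets one absorb the contracted factor into the left $\gpi_\gpg$ and obtain a presentation $\gpi_\gpg a \gpi_\gpg = \bigsqcup_i u_i\, a\, \gpi_\gpg$ with representatives $u_i$ running over a finite quotient of the expanded unipotent factor.

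Next, by right $\gpi_\gpg$-invariance of $\iwag^\chag$,
\begin{equation*}
R(\mathrm{Ch}_{\gpi_\gpg a \gpi_\gpg})\iwag^\chag(1) \;=\; \mathrm{Vol}(\gpi_\gpg) \sum_i \iwag^\chag(u_i a).
\end{equation*}
Since $\iwag^\chag$ is supported on $\gpb_\gpg \gpi_\gpg$ and transforms by $\chag\dha$ on the left of $\gpb_\gpg$, each summand $\iwag^\chag(u_i a)$ is either $\chag\dha(a)$ or zero, depending on whether $u_i a \in \gpb_\gpg \gpi_\gpg$. Careful bookkeeping of which $u_i$ contribute, combined with the coset count from the first step, produces exactly $\chag\dnh(a)\,\iwag^\chag(1)$, giving (\ref{eq:1}) at the identity. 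Both sides of (\ref{eq:1}) are $\gpi_\gpg$-fixed vectors in $\repfi$ supported on $\gpb_\gpg \gpi_\gpg$---a one-dimensional subspace---so agreement at the identity forces equality throughout $\gpg$.

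For (\ref{eq:step5}), the plan is to apply (\ref{eq:1}) termwise to the expansion of $\funf_\chag^0$ provided by the preceding lemma. The crucial observation is that every intertwining operator $\overline{\mathrm{T}}_{w^{-1}}$ is a $\gpg$-homomorphism between principal series, and therefore commutes with the right convolution operator $R(\mathrm{Ch}_{\gpi_\gpg a \gpi_\gpg})$. Pushing the Hecke operator past each $\overline{\mathrm{T}}_{w^{-1}}$ and applying (\ref{eq:1}) with $\chag$ replaced by $w\chag$ to each $\iwag^{w\chag}$ yields the stated formula directly.

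The hard part will be the combinatorics in the first two steps: for the symplectic Iwahori one has to track how conjugation by $a \in \gpt_\gpg^-$ acts on each positive root subgroup of type $C_n$ and verify that the surviving coset contributions assemble into exactly $\chag\dnh(a)$. This is the only place where the specific root structure of $\mathrm{Sp}_{2n}$ enters; once the decomposition and coset count are in hand, the remainder of the argument is formal.
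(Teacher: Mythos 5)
Your overall strategy is the standard Iwahori--Hecke computation that the paper itself attributes to Casselman--Shalika, and the second half of your argument---commuting $R(\mathrm{Ch}_{\gpi_\gpg a \gpi_\gpg})$ past each $\overline{\mathrm{T}}_{w^{-1}}$ and applying (\ref{eq:1}) with $\chag$ replaced by $w\chag$---is a correct derivation of (\ref{eq:step5}) from (\ref{eq:1}). However, your "careful bookkeeping" step contains an unresolved arithmetic gap. With the decomposition $\gpi_\gpg a \gpi_\gpg = \bigsqcup_i u_i a \gpi_\gpg$, the representatives $u_i$ range over $\gpn_\gpg^{-,1}/(a\gpn_\gpg^{-,1}a^{-1})$, a set of $\delta_{\gpb_\gpg}(a)$ cosets, and $u_i a \in \gpb_\gpg\gpi_\gpg$ exactly when $a^{-1}u_i a \in \gpn_\gpg^{-,1}$, i.e.\ only for the identity coset. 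That single surviving summand equals $\chag\dha(a)\iwag^{\chag}(1)$, so your sum is $\mathrm{Vol}(\gpi_\gpg)\,\chag\dha(a)$, not $\mathrm{Vol}(\gpi_\gpg)\,\chag\dnh(a)$. There is no "coset count" to combine with: the count of contributing cosets is one. The two answers differ by the full factor $\delta_{\gpb_\gpg}(a)$, and you must either locate a compensating $\delta^{-1}(a)$ somewhere (I do not see one: an $\mathrm{SL}_2$ computation with $a = \mathrm{diag}(\w^{-1},\w)$ gives $\iwag^\chag(a) = \chag\dha(a) = q^{s+1}$ and confirms the $\dha$ answer) or re-examine the sign convention in the statement. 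It is worth comparing with the paper's own proof of the companion Proposition~\ref{pro:aM}, which replaces $\int_{\overline\gpi_\gpm a\overline\gpi_\gpm}(\cdots)\,dh$ by $\int_{\gpn_\gpm^{-,1}\gpx^0}(\cdots)(nxa)\,dn\,dx$ via the parametrization $(n,x,b)\mapsto nxab$ of $\overline\gpi_\gpm a\overline\gpi_\gpm = \gpn_\gpm^{-,1}\gpx^0 a\gpb^0_{\gpm^\gpj}$; that set has Haar volume $\delta(a)\mathrm{Vol}(\overline\gpi_\gpm)$ while the product space has volume $\mathrm{Vol}(\overline\gpi_\gpm)$, so the same Jacobian factor is at issue there.

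A second, smaller gap: your final reduction asserts that $R(\mathrm{Ch}_{\gpi_\gpg a\gpi_\gpg})\iwag^\chag$ lies in the one-dimensional space of $\gpi_\gpg$-fixed vectors supported on $\gpb_\gpg\gpi_\gpg$, so that agreement at $1$ forces equality. This presupposes that $R(\mathrm{Ch}_{\gpi_\gpg a\gpi_\gpg})\iwag^\chag$ vanishes on every cell $\gpb_\gpg w \gpi_\gpg$ with $w\neq 1$, i.e.\ that $\iwag^\chag(w u_i a)=0$ for all $w\neq 1$ and $u_i\in\gpn_\gpg^{-,1}$. This does hold for $a\in\gpt_\gpg^-$, but it is not automatic and requires a short Bruhat-decomposition argument ($wna \in w \gpn_\gpg^- \gpt_\gpg$, which meets $\gpb_\gpg \gpn_\gpg^{-,1}$ only when $w=1$); you should supply it rather than assert the one-dimensionality.
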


\subsection{The $\overline \gpi_{\gpm}$-Invariant vectors in $\repse$.}

\hfill\\
\vspace{6pt}

A similar discussion can be applied to $\gpm^{\gpj}$ with some modification. Let $\mathrm{I}_{\gpm}$ be the Iwahori subgroup of $\gpm$, and let $\overline{\gpi}_{\gpm}=\gpi_{\gpm}\ltimes \gpj^{0}$. Consider the space
\begin{equation*}
  \repse^{\overline{\gpi}_{\gpm}}.
\end{equation*}
We have the following lemma.
\begin{lem}
\label{lem:4.2.1}
Any $f\in \repse^{\overline{\gpi}_{\gpm}}$ is supported on $\gpb_{\gpm^{\gpj}}\gpw_{\gpm}\overline{\gpi}_{\gpm}$.
\end{lem}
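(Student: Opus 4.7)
The strategy is to copy the compact-support argument from the remark following the definition of $\repse$ in Section \ref{Notation}, combined with the Iwahori--Bruhat structure on $\gpm$. Starting from the Iwasawa decomposition $\gpm^{\gpj}=\gpb_{\gpm^{\gpj}}\gpx\gpk_{\gpm}$ given in that remark, I write any $m^{\gpj}\in\gpm^{\gpj}$ as $m^{\gpj}=b\cdot\gpx(\mathbf{x})\cdot k$ with $b\in\gpb_{\gpm^{\gpj}}$, $\mathbf{x}\in F^{m}$, $k\in\gpk_{\gpm}$. The $\gpb_{\gpm^{\gpj}}$-transformation rule reduces the question to analysing when $f(\gpx(\mathbf{x})k)\neq 0$, and in particular to showing $\mathbf{x}\in\mathcal{O}^{m}$ in that case.

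For the compactness step, fix $\mathbf{y}_{0}\in\mathcal{O}^{m}$ and set $h=k^{-1}\gpy(\mathbf{y}_{0})k$. Since $\gpk_{\gpm}\subset\mathrm{Sp}_{2m}(\mathcal{O})$ normalizes the integral lattice $\gpj^{0}$, we have $h\in\gpj^{0}\subset\overline{\gpi}_{\gpm}$, so right $\overline{\gpi}_{\gpm}$-invariance of $f$ yields
\[ f(\gpx(\mathbf{x})k)=f(\gpx(\mathbf{x})\cdot kh)=f(\gpx(\mathbf{x})\gpy(\mathbf{y}_{0})k). \]
The Heisenberg commutation $\gpx(\mathbf{x})\gpy(\mathbf{y}_{0})=\gpy(\mathbf{y}_{0})\gpz(2\mathbf{x}\cdot{}^{t}\mathbf{y}_{0})\gpx(\mathbf{x})$ together with the $\gpb_{\gpm^{\gpj}}$-transformation law (where the $\gpy\gpz$-factor contributes the phase $\psi(2\mathbf{x}\cdot{}^{t}\mathbf{y}_{0})$) then rewrites this as $f(\gpx(\mathbf{x})k)=\psi(2\mathbf{x}\cdot{}^{t}\mathbf{y}_{0})\cdot f(\gpx(\mathbf{x})k)$. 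For $f(\gpx(\mathbf{x})k)$ to be non-zero, we thus need $\psi(2\mathbf{x}\cdot{}^{t}\mathbf{y}_{0})=1$ for every $\mathbf{y}_{0}\in\mathcal{O}^{m}$; since $\psi$ has conductor $0$ and $q$ is odd, $\mathbf{x}\in\mathcal{O}^{m}$, hence $\gpx(\mathbf{x})\in\gpx^{0}\subset\gpj^{0}\subset\overline{\gpi}_{\gpm}$.

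For the final Bruhat step, I use $\gpk_{\gpm}=\bigsqcup_{w\in\gpw_{\gpm}}\gpi_{\gpm}\dot{w}\gpi_{\gpm}$ to write $k=i_{1}\dot{w}i_{2}$, giving
\[ m^{\gpj}\in\gpb_{\gpm^{\gpj}}\cdot\overline{\gpi}_{\gpm}\cdot\dot{w}\cdot\overline{\gpi}_{\gpm}. \]
Combining the Iwahori decomposition $\gpi_{\gpm}=(\gpi_{\gpm}\cap\bar{\gpn}_{\gpm})\cdot(\gpi_{\gpm}\cap\gpb_{\gpm})$ with the Iwasawa form of Bruhat for $\gpm$, namely $\gpm=\bigcup_{w'\in\gpw_{\gpm}}\gpb_{\gpm}\dot{w}'\gpi_{\gpm}$, one absorbs the $\gpi_{\gpm}\cap\gpb_{\gpm}$-part on the left into $\gpb_{\gpm^{\gpj}}$ and rewrites the remaining $(\gpi_{\gpm}\cap\bar{\gpn}_{\gpm})\cdot\gpx^{0}\cdot\dot{w}$ piece as an element of $\gpb_{\gpm}\dot{w}'\gpi_{\gpm}$ for some $w'\in\gpw_{\gpm}$, producing $\overline{\gpi}_{\gpm}\dot{w}\overline{\gpi}_{\gpm}\subset\bigcup_{w'}\gpb_{\gpm^{\gpj}}\dot{w}'\overline{\gpi}_{\gpm}$. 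Hence $m^{\gpj}\in\gpb_{\gpm^{\gpj}}\gpw_{\gpm}\overline{\gpi}_{\gpm}$. The main technical point is this last Iwasawa-form Bruhat step for $\gpm$, where one must check that the ``wrong-side'' unipotent portion of $\overline{\gpi}_{\gpm}$ after conjugation by $\dot{w}$ still only picks up elements of the finite Weyl group (not the affine one); this is standard BN-pair bookkeeping. The compactness of the $\mathbf{x}$-support is a direct copy of the argument in the earlier remark.
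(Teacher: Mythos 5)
Your proof is essentially correct and rests on the same core idea as the paper's: exploit the right $\gpj^{0}$-invariance of $f$, commute a $\gpy$-factor through the Iwasawa/Bruhat decomposition using the fact that the relevant compact subgroup normalizes $\gpj^{0}$, and use the Heisenberg relation $\gpx(\mathbf{x})\gpy(\mathbf{y})=\gpy(\mathbf{y})\gpz(2\ppair{\mathbf{x}}{\mathbf{y}})\gpx(\mathbf{x})$ to force $\mathbf{x}\in\mathcal{O}^{m}$. Where you differ from the paper is in the bookkeeping. The paper starts from the refined decomposition $\gpm=\gpb_{\gpm}\gpw_{\gpm}\gpi_{\gpm}$, which immediately gives $\gpm^{\gpj}=\gpb_{\gpm^{\gpj}}\gpx\,\gpw_{\gpm}\gpi_{\gpm}$; after showing $x\in\gpx^{0}$ one simply observes $\gpb_{\gpm^{\gpj}}\gpx^{0}\gpw_{\gpm}\gpi_{\gpm}=\gpb_{\gpm^{\gpj}}\gpw_{\gpm}\overline{\gpi}_{\gpm}$, and the proof ends. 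You instead split this into Iwasawa ($\gpm^{\gpj}=\gpb_{\gpm^{\gpj}}\gpx\gpk_{\gpm}$), then Iwahori--Bruhat on $\gpk_{\gpm}$, then a reconciliation $\overline{\gpi}_{\gpm}\dot{w}\overline{\gpi}_{\gpm}\subset\bigcup_{w'}\gpb_{\gpm^{\gpj}}\dot{w}'\overline{\gpi}_{\gpm}$; that last step is doing precisely the work that $\gpm=\gpb_{\gpm}\gpw_{\gpm}\gpi_{\gpm}$ buys in one stroke, so your route is longer but not conceptually new.

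One imprecision in your final step is worth noting: you speak of rewriting ``$(\gpi_{\gpm}\cap\bar{\gpn}_{\gpm})\cdot\gpx^{0}\cdot\dot{w}$ as an element of $\gpb_{\gpm}\dot{w}'\gpi_{\gpm}$,'' but this product is not in $\gpm$ at all (the $\gpx^{0}$-factor lives in $\gpj$), so you cannot directly invoke a decomposition of $\gpm$. The correct statement is that $\gpx^{0}\dot{w}=\dot{w}(\dot{w}^{-1}\gpx^{0}\dot{w})\subset\dot{w}\gpj^{0}$ because $\gpw_{\gpm}$ normalizes $\gpj^{0}$, so the $\gpj$-part is pushed into $\overline{\gpi}_{\gpm}$ on the right and the $\gpm$-part $(\gpi_{\gpm}\cap\bar{\gpn}_{\gpm})\dot{w}$ is what lands in $\bigcup_{w'}\gpb_{\gpm}\dot{w}'\gpi_{\gpm}$. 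This is the same observation the paper makes at the very end when checking $\gpb_{\gpm^{\gpj}}\gpx^{0}\gpw_{\gpm}\gpi_{\gpm}=\gpb_{\gpm^{\gpj}}\gpw_{\gpm}\overline{\gpi}_{\gpm}$. It is easily repaired and is not a substantive gap, but as written the claim conflates a subset of $\gpm^{\gpj}$ with one of $\gpm$.

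Your compactness argument itself is fine, including keeping the factor of $2$ that the paper elides (harmless since $q$ is odd and $\psi$ has conductor $0$), and your use of $k^{-1}\gpy(\mathbf{y}_{0})k\in\gpj^{0}$ for $k\in\gpk_{\gpm}$ is the correct generalization of the paper's use of $\gpw_{\gpm}$ normalizing $\gpj^{0}$.
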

\begin{proof}

Since \begin{equation*}
  \gpm=\gpb_{\gpm}\gpw_{\gpm}\gpi_{\gpm}, 
\end{equation*}
we have 
\begin{equation*}
  \gpm^{\gpj}=\gpb_{\gpm^{\gpj}}\gpx \gpw_{\gpm}\gpi_{\gpm}.
\end{equation*}
Suppose $\mathrm{f}\in \repse^{\overline{\gpi}_{\gpm}}$ with $\mathrm{f}(\gpx(x)w)\neq 0$, then since $\gpw_{\gpm}$ normalizes $\gpj^{0}$, we have
\begin{equation*}
  f(\gpx(x)w)=f(\gpx(x)\gpy(y)w)
\end{equation*}
for any $y\in \mathcal{O}^{m}$. But then
\begin{equation*}
  \begin{split}
    &    f(\gpx(x)\gpy(y)w)\\=
    &f(\gpy(y)\gpx(x)\gpz(2\ppair{x}{y})w)\\=
    &\psi(\ppair{x}{y})f(\gpx(x)w).
  \end{split}
\end{equation*}
So $\psi(\ppair{x}{y})=1$ for any $y\in \mathcal{O}^{m}$, which means $x\in \mathcal{O}^{m}$, completing our proof since $\gpb_{\gpm^{\gpj}}\gpx^{0}\gpw_{\gpm}\gpi_{\gpm}=\gpb_{\gpm^{\gpj}}\gpw_{\gpm}\overline\gpi_{\gpm}$.
\end{proof}

By this lemma, we have
\begin{equation*}
  \mathrm{dim\, Ind}^{\gpm_{\gpj}}_{\gpb_{\gpm^{\gpj}}}(\cham, \psi)^{\overline{\gpi}_{\gpm}}\leq \mathrm{Card(W_{\gpm})}.
\end{equation*}
For any character $\cham$ on $\gpt_{\gpm}$, let $\mathrm{T}_{w}^{\cham,\psi}$ be the intertwining operator from $\mathrm{Ind}^{\gpm_{\gpj}}_{\gpb_{\gpm^{\gpj}}}(\cham, \psi)$ to $\mathrm{Ind}^{\gpm_{\gpj}}_{\gpb_{\gpm^{\gpj}}}(w\cham, \psi)$ defined as
\begin{equation*}
 \mathrm{T}_{w}^{\cham,\psi}(f)(g)= \int_{\gpn^{\gpj}\cap w\gpn^{\gpj}w^{-1}\backslash \gpn^{\gpj}}f(w^{-1}ng)\ud n.
\end{equation*}
Similar to the previous subsection, the integral is convergent when $Re(\cham)$ is sufficiently large, and continues holomorphically to generic $\cham$. In fact the only difference is that we are integrating on part of $\gpx$, on which smooth elements in $\repse$ is compactly supported. For generic $\cham$, the $\gpm^{\gpj}$-intertwining operator from  $Ind^{\gpm_{\gpj}}_{\gpb_{\gpm^{\gpj}}}(\cham, \psi)$ to $Ind^{\gpm_{\gpj}}_{\gpb_{\gpm^{\gpj}}}(w\cham, \psi)$ is unique, so we have
\begin{equation*}
\mathrm{T}_{w_{1}}^{w_{2}\cham,\psi}\circ \mathrm{T}_{w_{2}}^{\cham,\psi}=c\cdot \mathrm{T}_{w_{1}w_{2}}^{\cham,\psi}
\end{equation*}
for some constant $c$. By a similar method as in theorem 3.1 in \cite{MR571057} we have the following result.
\begin{lem}
  For $\alpha$ being a simple root in $\gpm$, if we let
\begin{equation*}
  \ti c_{\alpha}(\cham)=\frac{\zeta(\ppair{\cham}{\alpha})}{\zeta(\ppair{\cham}{\alpha}+1)},
\end{equation*}
and let
\begin{equation*}
  \ti c_{w}(\cham)=\prod_{\alpha>0,w\alpha<0}\ti c_{\alpha}(\cham),
\end{equation*}
then,
\begin{equation*}
  \mathrm{T}_{w}^{\cham,\psi}(\funf_{\cham,\psi}^{0})(\mathrm{e})=\ti c_{w}(\cham)
\end{equation*}

\end{lem}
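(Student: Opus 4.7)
The plan is to follow Casselman's original argument verbatim, reducing the general Weyl element case to a rank-one computation and then observing that the Jacobi structure does not interfere.

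First, I would establish the multiplicativity needed to reduce to simple reflections. When $w = s_{\alpha_1}\cdots s_{\alpha_k}$ is a reduced expression in $\gpw_\gpm$, a standard computation with the integration domains shows
\begin{equation*}
\mathrm{T}_{w_1 w_2}^{\cham,\psi} = \mathrm{T}_{w_1}^{w_2 \cham,\psi}\circ \mathrm{T}_{w_2}^{\cham,\psi}\quad \text{whenever } \ell(w_1 w_2)=\ell(w_1)+\ell(w_2),
\end{equation*}
so for a reduced expression no scalar discrepancy arises (the constant $c$ appearing in the uniqueness argument equals $1$ in this case, because the integration variable factors). The same additivity $\{\alpha>0:w_1 w_2\alpha<0\}=\{\alpha>0:w_2\alpha<0\}\sqcup w_2^{-1}\{\alpha>0:w_1\alpha<0\}$ gives $\ti c_{w_1 w_2}(\cham)=\ti c_{w_1}(w_2\cham)\,\ti c_{w_2}(\cham)$. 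By induction on length, it therefore suffices to evaluate $\mathrm{T}_{s_\alpha}^{\cham,\psi}(\funf_{\cham,\psi}^0)(e)$ for a simple root $\alpha$ of $\gpm$.

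For a simple reflection $s_\alpha\in \gpw_\gpm\subset \gpm\subset \gpm^{\gpj}$, the quotient $\gpn^{\gpj}\cap s_\alpha \gpn^{\gpj} s_\alpha^{-1}\backslash\gpn^{\gpj}$ collapses to the one-dimensional root subgroup $\gpn_\alpha\subset \gpn_\gpm$, because $s_\alpha$ fixes pointwise both $\gpy$ and $\gpz$ and permutes the remaining root subgroups of $\gpn_\gpm$ into $\gpn^{\gpj}$. Hence
\begin{equation*}
\mathrm{T}_{s_\alpha}^{\cham,\psi}(\funf_{\cham,\psi}^0)(e)=\int_{F}\funf_{\cham,\psi}^0(s_\alpha^{-1}n_\alpha(t))\,\ud t,
\end{equation*}
and since $s_\alpha^{-1}n_\alpha(t)\in \gpm$ one can drop the $\psi$ from the notation: the integrand is precisely the value of the standard spherical vector of $\mathrm{Ind}_{\gpb_\gpm}^{\gpm}(\cham)$ at $s_\alpha^{-1}n_\alpha(t)$, because by the remark following the notation section the restriction of $\funf_{\cham,\psi}^0$ to $\gpm$ coincides with that vector.

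The remaining step is Casselman's rank-one calculation in $\mathrm{SL}_2$. Split the integral into $t\in \mathcal O$, on which $s_\alpha^{-1}n_\alpha(t)\in \gpk_\gpm$ and the integrand equals $1$, and $t\notin \mathcal O$, on which the Iwasawa decomposition $s_\alpha^{-1}n_\alpha(t)=n_\alpha(t^{-1})\,h_\alpha(t^{-1})\,k$ (with $k\in \gpk_\gpm$) gives $\funf_\cham^0(s_\alpha^{-1}n_\alpha(t))=(\cham\dha)(h_\alpha(t^{-1}))$. Summing the resulting geometric series in $|t|=q^r$, $r<0$, produces $\ti c_\alpha(\cham)=\zeta(\ppair{\cham}{\alpha})/\zeta(\ppair{\cham}{\alpha}+1)$ as in \cite{MR571057}. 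The main point to double-check—and the only place where the Jacobi setting enters—is the reduction of the intertwining integral to an integral over $\gpn_\alpha\subset \gpm$; everything else is transported identically from the reductive case, with $\gpj$ and its character $\psi$ playing a spectator role.
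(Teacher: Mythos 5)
Your reduction to a rank-one Casselman integral breaks down at the long simple root $\alpha=2e_m$ of $\gpm$, and this is exactly where the Jacobi structure stops being a spectator.

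First, the assertion that $s_\alpha$ fixes $\gpy$ pointwise is false for $\alpha=2e_m$: in the symplectic representation of $\gpm$ on $\gpx\oplus\gpy$ the reflection $s_{2e_m}$ sends $y_m$ to $\pm x_m$, so $\gpn^{\gpj}\cap s_\alpha\gpn^{\gpj}s_\alpha^{-1}$ misses not only $\gpn_\alpha$ but also the one-parameter subgroup $\gpy_m=\{\gpj(0,(0,\dots,0,y),0)\}$. The integration domain in $\mathrm{T}_{s_\alpha}^{\cham,\psi}$ is therefore two-dimensional (over $\gpn_\alpha$ and $\gpy_m$, equivalently $\gpn_{-\alpha}$ and $\gpx_m$ after conjugating by $s_\alpha^{-1}$), not one-dimensional, and after conjugation $\gpx_m$ and $\gpn_{-\alpha}$ interact with the $\gpj$-commutation relations, bringing $\psi$ into the integrand.

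Second, and decisively, your output does not match the stated constant. The lemma's $\ti c_\alpha$ deliberately uses $\ppair{\cham}{\alpha}$ rather than $\ppair{\cham}{\check\alpha}$, in contrast to $c_\alpha(\chag)=\zeta(\ppair{\chag}{\check\alpha})/\zeta(\ppair{\chag}{\check\alpha}+1)$ on the $\gpg$-side; the difference only matters for $\alpha=2e_m$, where $\ppair{\cham}{\alpha}=2\cham_m$ while $\ppair{\cham}{\check\alpha}=\cham_m$. This is not a typo --- it is echoed in $\mathrm{d'}(\cham)=\prod\zeta(\cham_a\pm\cham_b)\prod_j\zeta(2\cham_j)$ versus $\mathrm{d}(\chag)=\prod\zeta(\chag_a\pm\chag_b)\prod_i\zeta(\chag_i)$. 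A Casselman rank-one computation in $Sp_{2m}$ for the long root, which is what your proof delivers, produces $\zeta(\cham_m)/\zeta(\cham_m+1)$, not $\zeta(2\cham_m)/\zeta(2\cham_m+1)$. The extra Heisenberg integration and the $\psi$-character are precisely what push $\cham_m$ to $2\cham_m$; dismissing them as spectators gives a wrong answer. There is also a minor inaccuracy in the reduction even for short roots: $\funf_{\cham,\psi}^0|_\gpm$ is the spherical vector for $\cham\dha_{\gpb_{\gpm^{\gpj}}}|_{\gpt_\gpm}$, and $\delta_{\gpb_{\gpm^{\gpj}}}\neq\delta_{\gpb_\gpm}$ on $\gpt_\gpm$ because of the torus action on $\gpy$; the resulting shift happens to vanish on $\check\alpha$ when $\alpha=e_i-e_{i+1}$, so your short-root argument is correct, but for essentially accidental reasons. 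Your multiplicativity step and the short-root computation are fine; the genuine content of the lemma is the two-dimensional Heisenberg-twisted rank-one integral at $2e_m$, which you have skipped.
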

\hfill\\

So if we define
\begin{equation*}
  \overline{\mathrm{T}}_{w}^{\cham,\psi}=(\ti c_{w}(\cham))^{-1}\mathrm{T}_{w}^{\cham,\psi},
\end{equation*}
then
\begin{equation*}
\overline{\mathrm{T}}_{w_{1}}^{w_{2}\cham,\psi}\circ \overline{\mathrm{T}}_{w_{2}}^{\cham,\psi}=\overline{\mathrm{T}}_{w_{1}w_{2}}^{\cham,\psi}
\end{equation*}
Let
\begin{equation*}
  \iwam^{\cham,\psi}=\funf_{\cham,\psi}(\mathrm{Ch}_{\overline\gpi_{\gpm}}),
\end{equation*}
then we have
\begin{equation*}
  \iwam^{\cham,\psi}(g)=
  \begin{cases}
    \cham\psi\dha_{\gpb_{\gpm^{\gpj}}}(b)&\text{if }g=bn^{-}x,\, b\in \gpb_{\gpm^{\gpj}}^{0},\, n^{-}\in \gpn^{-,1}_{\gpm},\, x\in \gpx^{0}.\\
    0 &\text{if } g\notin \gpb_{\gpm^{\gpj}}^{0} \gpn^{-,1}_{\gpm}\gpx^{0}
  \end{cases}
\end{equation*}

\begin{lem}
  The set $\{\mathrm{\overline T}_{w^{-1}}\iwam^{w\cham,\psi}\}_{w\in W_{\gpm}}$ forms a basis of $\mathrm{Ind}^{\gpm_{\gpj}}_{\gpb_{\gpm^{\gpj}}}(\cham, \psi)^{\overline{\gpi}_{\gpm}}$
\end{lem}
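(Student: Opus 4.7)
The plan is to mirror the argument used for $\gpg$ in the previous subsection, paying attention to the features special to $\gpm^{\gpj}$: the non-compactness of $\gpb_{\gpm^{\gpj}}\backslash\gpm^{\gpj}$ and the $\psi$-twist of the induction. Three things need checking: an upper bound $\dim\repse^{\overline{\gpi}_{\gpm}}\leq |\gpw_{\gpm}|$; the $\overline{\gpi}_{\gpm}$-invariance of each vector $\overline{\mathrm{T}}_{w^{-1}}\iwam^{w\cham,\psi}$; and linear independence of these $|\gpw_{\gpm}|$ vectors.

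The upper bound is immediate from Lemma \ref{lem:4.2.1} combined with the Iwasawa decomposition $\gpm^{\gpj}=\gpb_{\gpm^{\gpj}}\gpx\gpw_{\gpm}\gpi_{\gpm}$: any $f\in\repse^{\overline{\gpi}_{\gpm}}$ is supported on the finite union $\bigcup_{w\in\gpw_{\gpm}}\gpb_{\gpm^{\gpj}}w\overline{\gpi}_{\gpm}$, and by the transformation law together with right $\overline{\gpi}_{\gpm}$-invariance, $f$ is determined on each double coset by the single number $f(w)$.

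For the second point, the averaging construction makes $\iwam^{w\cham,\psi}=\funf_{w\cham,\psi}(\mathrm{Ch}_{\overline{\gpi}_{\gpm}})$ automatically right $\overline{\gpi}_{\gpm}$-invariant, since $\mathrm{Ch}_{\overline{\gpi}_{\gpm}}$ is. The intertwining operator $\overline{\mathrm{T}}_{w^{-1}}$, defined first for $\mathrm{Re}(w\cham)$ sufficiently large and then continued holomorphically in $\cham$ as discussed before the lemma, is a $\gpm^{\gpj}$-equivariant map between the two induced representations, hence preserves invariance under the open compact subgroup $\overline{\gpi}_{\gpm}$. For the third point I would follow the strategy of section 5 of \cite{MR581582} and consider the matrix $M_{w,w'}:=\bigl(\overline{\mathrm{T}}_{w^{-1}}\iwam^{w\cham,\psi}\bigr)(w')$ for $w,w'\in\gpw_{\gpm}$. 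Plugging in the explicit support formula for $\iwam^{w\cham,\psi}$ given just above the lemma (it is supported on $\gpb_{\gpm^{\gpj}}^{0}\gpn_{\gpm}^{-,1}\gpx^{0}$ and equals $w\cham\,\psi\,\dha_{\gpb_{\gpm^{\gpj}}}(b)$ there) together with the integral definition of $\mathrm{T}_{w^{-1}}^{w\cham,\psi}$, the claim is that $M_{w,w'}$ is triangular for the Bruhat order on $\gpw_{\gpm}$, with the diagonal entries equal to $1$ after the $\ti c_{w^{-1}}(w\cham)^{-1}$ normalization. Such a matrix is invertible for generic $\cham$, so the vectors are linearly independent, and combined with the dimension bound from step one they form a basis.

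The main obstacle will be the third step, since the Heisenberg factor $\gpj$ enters both the integration domain for $\mathrm{T}_{w^{-1}}^{w\cham,\psi}$ and the twisting character $\psi$. The non-compactness of $\gpb_{\gpm^{\gpj}}\backslash\gpm^{\gpj}$ makes the integrals defining $\mathrm{T}_{w^{-1}}^{w\cham,\psi}$ genuinely improper on the $\gpx$-component, but the same Fourier-theoretic support argument used in the proof of Lemma \ref{lem:4.2.1} — combined with the explicit support control on $\iwam^{w\cham,\psi}$ — confines the nonzero part of the integrand to a compact set. Once this reduction is made, the Bruhat triangularity argument proceeds exactly as in the reductive case treated for $\gpg$, and we recover the desired basis statement.
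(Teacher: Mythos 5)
Your overall architecture matches the paper: bound the dimension by $|\gpw_{\gpm}|$ using Lemma \ref{lem:4.2.1} and the Iwasawa decomposition, note that $\overline{\mathrm{T}}_{w^{-1}}$ preserves $\overline{\gpi}_{\gpm}$-invariance, and then prove linear independence by showing a certain evaluation matrix is invertible. Where you diverge is in the last step, and the divergence matters. You propose to work with the ``naive'' evaluation matrix $M_{w,w'}=\bigl(\overline{\mathrm{T}}_{w^{-1}}\iwam^{w\cham,\psi}\bigr)(w')$ and assert Bruhat triangularity with normalized diagonal entries equal to $1$. The paper instead first applies one more intertwining operator $\overline{\mathrm{T}}_{w_{0}^{\gpm}w}$ to each candidate vector and then evaluates only at the single point $w_{0}^{\gpm}$. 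Because $\overline{\mathrm{T}}_{w_{0}^{\gpm}w}\circ\overline{\mathrm{T}}_{w^{-1}} = \overline{\mathrm{T}}_{w_{0}^{\gpm}ww^{-1}}$, the resulting matrix is indexed by $w_{0}^{\gpm}ww^{-1}$, and the paper's key input is a single vanishing statement: $\overline{\mathrm{T}}_{v}(\iwam^{\cham,\psi})(w_{0}^{\gpm})=0$ unless $v=w_{0}^{\gpm}$. This follows from a short Bruhat-cell comparison — $w^{-1}\gpn^{\gpj}w_{0}^{\gpm}$ lands in $\gpb_{\gpm}ww_{0}^{\gpm}\gpn_{\gpm}\ltimes\gpj$ while the support $\gpb_{\gpm^{\gpj}}^{0}\gpn_{\gpm}^{-,1}\gpx^{0}$ sits inside $\gpb_{\gpm}w_{0}^{\gpm}\gpn_{\gpm}\ltimes\gpj$, so the cells disagree unless $w=w_{0}^{\gpm}$. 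The resulting matrix is therefore \emph{diagonal}, not merely triangular, and invertibility is immediate.

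Your version as stated has a real gap: the Bruhat triangularity of $M_{w,w'}$ is announced but not established, and it is a genuinely more delicate combinatorial claim than the paper's single vanishing, because the integration domain $\gpn^{\gpj}\cap w^{-1}\gpn^{\gpj}w\backslash\gpn^{\gpj}$ varies with $w$ and its interaction with Bruhat cells is not a one-line observation. Moreover the normalized diagonal entries are not $1$: the paper computes $\overline{\mathrm{T}}_{w_{0}^{\gpm}}(\iwam^{\cham,\psi})(w_{0}^{\gpm})=(\ti c_{w_{0}^{\gpm}}(\cham))^{-1}\mathrm{Vol}(\gpi_{\gpm})$, which is nonzero for generic $\cham$ but not $1$; the same sort of volume factor would appear on your diagonal. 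The fix is small: either prove the triangularity you invoke, or, more efficiently, switch to the paper's trick of composing with $\overline{\mathrm{T}}_{w_{0}^{\gpm}w}$ and evaluating at $w_{0}^{\gpm}$, which collapses the combinatorics to the single cell-comparison above.
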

\begin{proof}
  Since  $\mathrm{dim\, Ind}^{\gpm_{\gpj}}_{\gpb_{\gpm^{\gpj}}}(\cham, \psi)^{\overline{\gpi}_{\gpm}}\leq \mathrm{Card}(\mathrm{W}_{\gpm})$, it suffices to prove that elements in $\{\mathrm{\overline T}_{w^{-1}}\iwam^{w\cham,\psi}\}_{w\in W_{\gpm}}$ are linear indepedent. Consider
 \begin{equation*}
    \mathrm{\overline T}_{w}(\iwam^{\cham,\psi})(w_{0}^{\gpm})=(\ti c_{w}(\cham))^{-1}
\int_{\gpn^{\gpj}\cap w\gpn^{\gpj}w^{-1}\backslash \gpn^{\gpj}}\ud n^{\gpj}\iwam^{\cham,\psi}(w^{-1}n^{\gpj}w_{0}^{\gpm})
  \end{equation*}
By the definition of $\iwam^{\cham,\psi}$, the integral is non-zero implies
\begin{equation*}
  w^{-1}\gpn^{\gpj}w_{0}^{\gpm}\cap \gpb_{\gpm^{\gpj}}\gpn_{1,\gpm}^{-}\gpx^{0}\neq \emptyset,
\end{equation*}
which implies
\begin{equation*}
  w^{-1}\gpn^{\gpj}\cap \gpb_{\gpm^{\gpj}}\gpn_{1,\gpm}^{-}\gpx^{0}(w_{0}^{\gpm})^{-1}\neq \emptyset.
\end{equation*}
Note that $w^{-1}\gpn^{\gpj}\subseteq \gpb_{\gpm}w^{-1}\gpn_{\gpm}\ltimes \gpj$, and $\gpb_{\gpm^{\gpj}}\gpn_{1,\gpm}^{-}\gpx^{0}(w_{0}^{\gpm})^{-1}\subseteq \gpb_{\gpm}(w_{0}^{\gpm})^{-1}\gpn_{\gpm}\ltimes \gpj$,
so by the Bruhat-decomposition on $\gpm$ we have
\begin{equation*}
  w=w_{0}^{\gpm}
\end{equation*}
if $\mathrm{\overline T}_{w}(\iwam^{\cham,\psi})(w_{0}^{\gpm})\neq 0$. On the other hand, when $w=w_{0}^{\gpm}$,
\begin{equation*}
  \mathrm{\overline T}_{w_{0}^{\gpm}}(\iwam^{\cham,\psi})(w_{0}^{\gpm})=(\ti c_{w_{0}^{\gpm}}(\cham))^{-1}\int_{\gpn^{-}\gpx}\ud n \ud x \,\iwam^{\cham,\psi}(nx)
\end{equation*}
which equals
\begin{equation*}
  (\ti c_{w_{0}^{\gpm}}(\cham))^{-1}\mathrm{Vol}(\gpi_{\gpm})
\end{equation*}
by the definition of $\iwam^{\cham,\psi}$. Now let  $\mathrm{\overline T}_{w_{0}^{\gpm}w}$ acts on all elements in $\{\mathrm{\overline T}_{w^{-1}}f_{1}^{w\cham,\psi}\}_{w\in W_{\gpm}}$ and evaluate them at $w_{0}^{\gpm}$. Only $\mathrm{\overline T}_{w_{0}^{\gpm}w}\circ\mathrm{\overline T}_{w^{-1}}f_{1}^{w\cham,\psi}$ is non-zero. So they are linear independent, and hence form a basis of $\mathrm{Ind}^{\gpm^{\gpj}}_{\gpb_{\gpm^{\gpj}}}(\cham,\psi)^{\overline I_{\gpm}}.$
\end{proof}

So then we have
\begin{lem}
We can write $\funf_{\chag,\psi}^{0}$ as a linear combination of $\{\mathrm{\overline T}_{w^{-1}}\iwam^{w\cham,\psi}\}_{w\in W_{\gpm}}$ as
\begin{equation}
\label{eq:4.2.1}
\funf_{\chag,\psi}^{0}=\mathrm{Vol}(\gpi_{\gpm})^{-1}\sum_{w\in \gpw_{\gpm}}\ti c_{w_{0}^{\gpm}}(w\chag)\mathrm{\overline T}_{w^{-1}}\iwam^{w_{\cham,\psi}}.
\end{equation}
\end{lem}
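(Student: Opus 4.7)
The plan is to compute the coefficients $a_{w}$ in the expansion
\[
\funf_{\cham,\psi}^{0}=\sum_{w\in \gpw_{\gpm}}a_{w}\,\mathrm{\overline T}_{w^{-1}}\iwam^{w\cham,\psi},
\]
which exists because $\funf_{\cham,\psi}^{0}\in\repse^{\overline{\gpi}_{\gpm}}$ (it is $\gpk_{\gpm^{\gpj}}$-invariant, hence $\overline\gpi_{\gpm}$-invariant) and $\{\mathrm{\overline T}_{w^{-1}}\iwam^{w\cham,\psi}\}_{w\in\gpw_{\gpm}}$ is a basis of that space by the previous lemma. The strategy, which parallels the $\gpg$-side discussion, is to apply $\mathrm{\overline T}_{w_{0}^{\gpm}w}$ to both sides and evaluate the result at $w_{0}^{\gpm}$; the right-hand side will collapse to a single term by the non-vanishing analysis carried out in the proof of the basis lemma.

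First, I would establish that $\mathrm{\overline T}_{w}^{\cham,\psi}(\funf_{\cham,\psi}^{0})=\funf_{w\cham,\psi}^{0}$. Indeed, $\mathrm{T}_{w}^{\cham,\psi}$ is $\gpm^{\gpj}$-equivariant, so $\mathrm{T}_{w}^{\cham,\psi}(\funf_{\cham,\psi}^{0})$ is a $\gpk_{\gpm^{\gpj}}$-invariant element of $\mathrm{Ind}^{\gpm^{\gpj}}_{\gpb_{\gpm^{\gpj}}}(w\cham,\psi)$, hence a scalar multiple of $\funf_{w\cham,\psi}^{0}$; the preceding lemma identifies the scalar at the identity as $\ti c_{w}(\cham)$, and normalization by $\ti c_{w}(\cham)^{-1}$ produces exactly $\funf_{w\cham,\psi}^{0}$. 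Evaluating at $w_{0}^{\gpm}\in \gpk_{\gpm^{\gpj}}$ gives $\mathrm{\overline T}_{w_{0}^{\gpm}w}(\funf_{\cham,\psi}^{0})(w_{0}^{\gpm})=\funf_{w_{0}^{\gpm}w\cham,\psi}^{0}(w_{0}^{\gpm})=1$.

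Next, using the cocycle property $\mathrm{\overline T}_{u_{1}}\circ\mathrm{\overline T}_{u_{2}}=\mathrm{\overline T}_{u_{1}u_{2}}$ (for generic $\cham$), I would rewrite
\[
\mathrm{\overline T}_{w_{0}^{\gpm}w}\bigl(\mathrm{\overline T}_{v^{-1}}\iwam^{v\cham,\psi}\bigr)=\mathrm{\overline T}_{w_{0}^{\gpm}wv^{-1}}\iwam^{v\cham,\psi}.
\]
The argument in the proof of the basis lemma shows that $\mathrm{\overline T}_{u}\iwam^{v\cham,\psi}(w_{0}^{\gpm})$ vanishes unless $u=w_{0}^{\gpm}$, i.e.\ unless $v=w$; and when $v=w$ its value is $\ti c_{w_{0}^{\gpm}}(w\cham)^{-1}\mathrm{Vol}(\gpi_{\gpm})$. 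Therefore evaluating the right-hand side of the expansion at $w_{0}^{\gpm}$ yields $a_{w}\,\ti c_{w_{0}^{\gpm}}(w\cham)^{-1}\mathrm{Vol}(\gpi_{\gpm})$. Matching with the $1$ obtained above solves $a_{w}=\mathrm{Vol}(\gpi_{\gpm})^{-1}\,\ti c_{w_{0}^{\gpm}}(w\cham)$, which is the claimed formula.

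The only genuine step requiring care is the spherical transformation law $\mathrm{T}_{w}^{\cham,\psi}(\funf_{\cham,\psi}^{0})=\ti c_{w}(\cham)\funf_{w\cham,\psi}^{0}$; once that is in hand, everything else is purely formal and an exact analogue of the $\repfi$ computation in Proposition \ref{pro:aG}. That law, in turn, reduces to uniqueness of the $\gpk_{\gpm^{\gpj}}$-spherical vector in the twisted induced representation together with the pointwise evaluation at $e$ recorded in the lemma preceding it, so I do not expect any real obstacle.
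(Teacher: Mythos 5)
Your proposal is correct and follows the same route as the paper: expand $\funf_{\cham,\psi}^{0}$ in the basis $\{\overline{\mathrm{T}}_{w^{-1}}\iwam^{w\cham,\psi}\}$, apply $\overline{\mathrm{T}}_{w_{0}^{\gpm}w}$, evaluate at $w_{0}^{\gpm}$, and use the collapse from the basis lemma together with $\overline{\mathrm{T}}_{w_{0}^{\gpm}}(\iwam^{w\cham,\psi})(w_{0}^{\gpm})=\ti c_{w_{0}^{\gpm}}(w\cham)^{-1}\mathrm{Vol}(\gpi_{\gpm})$. Your extra step of explicitly establishing $\overline{\mathrm{T}}_{w}^{\cham,\psi}(\funf_{\cham,\psi}^{0})=\funf_{w\cham,\psi}^{0}$ via uniqueness of the spherical vector is what the paper leaves implicit to justify that the left side evaluates to $1$ at $w_{0}^{\gpm}$.
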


\begin{proof}
Since $\{\mathrm{\overline T}_{w^{-1}}\iwam^{w\cham,\psi}\}_{w\in W_{\gpm}}$ is the basis of $\mathrm{Ind}^{\gpm^{\gpj}}_{\gpb_{\gpm^{\gpj}}}(\cham,\psi)^{\overline I_{\gpm}}$, we assume
\begin{equation*}
  \funf_{\chag,\psi}^{0}=\sum_{w\in \gpw_{\gpm}}b_{w}\mathrm{\overline T}_{w^{-1}}\iwam^{w\cham,\psi}.
\end{equation*}
for some $b_{w}\in \mathbb{C}$. Let $\mathrm{\overline T}_{w_{0}^{\gpm}w}$ acts on both sides of the equation and take the value at $w_{0}^{\gpm}$, we obtain that
\begin{equation*}
  1=b_{w}(\ti c_{w_{0}^{\gpm}}(w\cham))^{-1}\mathrm{Vol}(\gpi_{\gpm}),
\end{equation*}
completing our proof.
\end{proof}

Next we consider the action $R(\mathrm{Ch}_{\overline \gpi_{\gpm}}a\mathrm{Ch}_{\overline\gpi_{\gpm}})$ on $\funf_{\chag,\psi}^{0}$ for $a\in \gpt_{\gpm}^{-}$. We have
\begin{pro}
  \label{pro:aM}
  \begin{equation*}
  R(\mathrm{Ch}_{\overline \gpi_{\gpm}}a\mathrm{Ch}_{\overline\gpi_{\gpm}})\funf_{\chag,\psi}^{0}=\sum_{w\in W_{\gpm}}\ti c_{w_{0}^{\gpm}}(w\cham)\cdot (w\cham)\dnh_{\gpb_{\gpm^{\gpj}}}(a)\cdot \mathrm{\overline T}_{w^{-1}}\iwam^{w\cham,\psi}
\end{equation*}
\end{pro}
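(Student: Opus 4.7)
The proof mirrors Proposition \ref{pro:aG} step-by-step, working in $\gpm^{\gpj}$ instead of $\gpg$. The plan has two stages.

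First I would establish the Jacobi Iwahori analog of (\ref{eq:1}): for $a \in \gpt_{\gpm}^{-}$,
\begin{equation*}
R(\mathrm{Ch}_{\overline\gpi_{\gpm}} a \mathrm{Ch}_{\overline\gpi_{\gpm}})\, \iwam^{\cham,\psi} \;=\; \mathrm{Vol}(\overline\gpi_{\gpm})\cdot \cham\dnh_{\gpb_{\gpm^{\gpj}}}(a)\cdot \iwam^{\cham,\psi}.
\end{equation*}
Since $a\in \gpt_{\gpm}$ lies in the torus part of $\gpm^{\gpj}$ (with trivial $\gpz$-component), the character value $(\cham\psi)(a)$ reduces to $\cham(a)$, so the scalar really is $\cham\dnh_{\gpb_{\gpm^{\gpj}}}(a)$. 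This identity is a direct convolution calculation using (i) the support of $\iwam^{\cham,\psi}$ on $\gpb_{\gpm^{\gpj}}^{0}\gpn_{\gpm}^{-,1}\gpx^{0}$, (ii) its left transformation law by $\cham\psi\dha_{\gpb_{\gpm^{\gpj}}}$, and (iii) the standard Iwahori double-coset decomposition of $\overline\gpi_{\gpm} a\overline\gpi_{\gpm}$, which reduces to a clean product because $a$ lies in the appropriate Weyl chamber (it contracts $\gpn_{\gpm^{\gpj}}^{+}\cap \overline\gpi_{\gpm}$ under conjugation by $a^{-1}$ and expands $\gpn_{\gpm^{\gpj}}^{-}\cap \overline\gpi_{\gpm}$). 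The change of variable brings out precisely the factor $\delta_{\gpb_{\gpm^{\gpj}}}^{-1/2}(a)$.

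The second stage is to apply $R(\mathrm{Ch}_{\overline\gpi_{\gpm}} a \mathrm{Ch}_{\overline\gpi_{\gpm}})$ to the expansion (\ref{eq:4.2.1}) of $\funf_{\cham,\psi}^{0}$. Each intertwining operator $\mathrm{\overline T}_{w^{-1}}$ is $\gpm^{\gpj}$-equivariant, hence commutes with every right translation, and therefore commutes with the convolution operator $R(\mathrm{Ch}_{\overline\gpi_{\gpm}} a \mathrm{Ch}_{\overline\gpi_{\gpm}})$. Applying the first stage term-by-term (with $\cham$ replaced by $w\cham$) and letting the factor $\mathrm{Vol}(\overline\gpi_{\gpm})$ cancel against the normalization in (\ref{eq:4.2.1}) yields the claimed formula.

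The main obstacle is the first stage. One must verify the Iwahori coset decomposition of $\overline\gpi_{\gpm} a \overline\gpi_{\gpm}$ inside the non-reductive group $\gpm^{\gpj}$, keeping track of how conjugation by $a\in\gpt_{\gpm}$ rescales the $\gpx$, $\gpy$ and $\gpz$ components of $\gpj^{0}$. Since $\gpt_{\gpm}$ normalizes $\gpj$ and centralizes $\gpz$, the scaling is straightforward, and the contribution of $\gpy$ (and the trivial contribution of $\gpz$) to $\delta_{\gpb_{\gpm^{\gpj}}}$ is exactly what is required to match the stated eigenvalue; the rest is a book-keeping exercise essentially identical to the reductive argument behind Proposition \ref{pro:aG}.
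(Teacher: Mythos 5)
Your proposal is correct and follows the same two-stage strategy as the paper: first show that $\iwam^{\cham,\psi}$ is an eigenvector for $R(\mathrm{Ch}_{\overline\gpi_{\gpm}}a\mathrm{Ch}_{\overline\gpi_{\gpm}})$ with eigenvalue $\cham\dnh_{\gpb_{\gpm^{\gpj}}}(a)\mathrm{Vol}(\gpi_{\gpm})$, then push this through the expansion (\ref{eq:4.2.1}) of $\funf_{\cham,\psi}^{0}$ using equivariance of the $\overline{\mathrm{T}}_{w^{-1}}$. The one place your Stage~1 sketch glosses over a genuine step is establishing that the convolution is actually proportional to $\iwam^{\cham,\psi}$ rather than some other element of the $|\gpw_{\gpm}|$-dimensional space $\repse^{\overline\gpi_{\gpm}}$: the paper proves this by evaluating $R(\cdots)\iwam^{\cham,\psi}$ at Weyl-group representatives and invoking the Bruhat decomposition of $\gpm$ to show only the value at $w=e$ survives, and a careful writeup of your ``direct convolution calculation'' would need to include that non-vanishing argument.
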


\begin{proof}
Consider $R(\mathrm{Ch}_{\overline\gpi_{\gpm}}a\mathrm{Ch}_{\overline\gpi_{\gpm}})\iwam^{\cham,\psi}$. Note that it belongs to $\repse^{\overline{\gpi}_{\gpm}}$, so by lemma (\ref{lem:4.2.1}), we only need to consider its value on $\gpw_{\gpm}$. Note that when $a\in \gpt_{\gpm}^{-}$, we have the decomposition
\begin{equation*}
  \overline\gpi_{\gpm}a\overline\gpi_{\gpm}=\gpn_{\gpm}^{-,1}\gpx^{0}a\gpb_{\gpm^{\gpj}}^{0}.
\end{equation*}
Since $\iwam^{\cham,\psi}$ is $\overline \gpi_{\gpm}$-invariant, and $\mathrm{Vol}(\gpx^{0})=\mathrm{Vol}(\gpb_{\gpm^{\gpj}}^{0})=1$, so 
\begin{equation*}
  R( \overline\gpi_{\gpm}a \overline\gpi_{\gpm})\iwam^{\cham,\psi}(w)=\int_{N_{\gpm}^{-,1}\gpx^{0}}\ud n \iwam^{\cham,\psi}(wnxa)
\end{equation*}
Suppose it is non-zero, then by considering the support of $\iwam^{\cham,\psi}$, 
\begin{equation*}
  w\gpn_{1,\gpm}^{-}a\gpx\cap \gpb_{\gpm^{\gpj}}^{0}\gpn_{\gpm}^{-,1}\gpx^{0} \neq \emptyset
\end{equation*}
so
\begin{equation*}
  w\gpn_{1,\gpm}^{-}a\gpx w_{0}^{\gpm}\cap \gpb_{\gpm^{\gpj}}^{0}w_{0}^{\gpm}\gpn_{\gpm}^{1}\gpy^{0} \neq \emptyset
\end{equation*}
Note that
\begin{equation*}
  w\gpn_{1,\gpm}^{-}a\gpx w_{0}^{\gpm}\subseteq \gpb_{\gpm}ww_{0}^{\gpm}\gpn_{\gpm}\ltimes \gpj
\end{equation*}
and
\begin{equation*}
  \gpb_{\gpm^{\gpj}}^{0}w_{0}^{\gpm}\gpn_{\gpm}^{1}\gpy^{0}\subseteq \gpb_{\gpm}w_{0}^{\gpm}\gpn_{\gpm}\ltimes \gpj.
\end{equation*}
So by Bruhat decomposition of $\gpm$ we have
\begin{equation*}
  w=\mathrm{e}.
\end{equation*}
This implies that $R(\overline\gpi_{\gpm}a \overline\gpi_{\gpm})\iwam^{\cham,\psi}$ is propotional to $\iwam^{\cham,\psi}$. Consider $R(\overline\gpi_{\gpm}a \overline\gpi_{\gpm})\iwam^{\cham,\psi}(\mathrm{e})$, which is equal to 
\begin{equation*}
  \int_{N_{\gpm}^{-,1}\gpx^{0}}\ud n \ud x \iwam^{\cham,\psi}(nxa).
\end{equation*}
Note that $\iwam^{\cham,\psi}\in \mathrm{Ind}_{\gpb_{\gpm^{\gpj}}}^{\gpm^{\gpj}}(\cham,\psi)$, the integral is equal to
\begin{equation*}
  \cham\dha_{\gpb_{\gpm^{\gpj}}}(a)\int_{\gpn_{\gpm}^{-,1}\gpx^{0}}\ud n \ud x \, \iwam^{\cham,\psi}(a^{-1}nxa).
\end{equation*}
Considering the support of $\iwam^{\cham,\psi}$,
\begin{equation*}
  \cham\dha_{\gpb_{\gpm^{\gpj}}}(a)\int_{\gpn_{\gpm}^{-,1}\gpx^{0}}\ud n  \, \iwam^{\cham,\psi}(a^{-1}nxa)=\cham\dha_{\gpb_{\gpm^{\gpj}}}(a)\mathrm{Vol}(\gpn_{\gpm}^{-,1}\gpx^{0}\cap a\gpn_{1,\gpm}^{-}\gpx^{0}a^{-1})
\end{equation*}
When $a\in \gpt_{\gpm}^{-}$,  $a\gpn_{1,\gpm}^{-}\gpx^{0}a^{-1}\in \gpn_{\gpm}^{-,1}\gpx^{0}$, so
\begin{equation*}
  \begin{split}
    &    \mathrm{Vol}(\gpn_{\gpm}^{-,1}\gpx^{0}\cap a\gpn_{1,\gpm}^{-}\gpx^{0}a^{-1})\\
    =&\mathrm{Vol}(a\gpn_{1,\gpm}^{-}\gpx^{0}a^{-1})\\
    =&\dne_{\gpb_{\gpm^{\gpj}}}(a)\mathrm{Vol}(\gpi_{\gpm}).
  \end{split}
\end{equation*}
So we have
\begin{equation*}
  R(\overline\gpi_{\gpm}a \overline\gpi_{\gpm})\iwam^{\cham,\psi}=\cham\dnh_{\gpb_{\gpm^{\gpj}}}(a)\mathrm{Vol}(\gpi_{\gpm})\iwam^{\cham,\psi}.
\end{equation*}

Applying this to both sides of equation (\ref{eq:4.2.1}), our proposition is proved.
\end{proof}

\section{$\gamma$-factor}
\label{sec:step6}

In this section we assume $(\chag,\cham)$ is generic. We are going to show that
\begin{thm}
\label{thm:gamma}
  Let $\chag=(\chag_{1},...,\chag_{n})$ and $\cham=(\cham_{1},...\cham_{m})$. Let $\Gamma(\chag,\cham)$ be a function on $(\chag,\cham)$ given by
  \begin{equation*}
    \begin{split}
      \Gamma(\chag,\cham)&=\prod_{1\leq a < b \leq n}\zeta^{-1}(\chag_{a}-\chag_{b}+1)\zeta^{-1}(\chag_{a}+\chag_{b}+1)\cdot \prod_{i=1}^{n}\zeta^{-1}(\chag_{i}+1)\\
&\cdot \prod_{1\leq a < b \leq n}\zeta^{-1}(\cham_{a}-\cham_{b}+1)\zeta^{-1}(\cham_{a}+\cham_{b}+1)\cdot \prod_{j=1}^{m}(1+\cham_{j}(\w)|\w|^{\frac 1 2})\\
&\cdot \prod_{j=1}^{m}\prod_{i=1}^{(n-m)+j-1}\dfrac{\zeta(\chag_{i}-\cham_{j}+\frac 1 2)}{\zeta(-\chag_{i}+\cham_{j}+\frac 1 2) }\cdot \prod_{i=1}^{n}\prod_{j=1}^{m}\zeta(\chag_{i}+\cham_{j}+\frac 1 2)\zeta(-\chag_{i}+\cham_{j}+\frac 1 2)
  \end{split}  
  \end{equation*}
Then for any fixed $g\in \gpg$,
\begin{equation*}
  \frac{\lpair(R(g)\funf_{\chag}^{0},\funf_{\cham,\psi}^{0})}{\Gamma(\chag,\cham)}
\end{equation*}
is $\gpw_{\gpg}\times \gpw_{\gpm}$-invariant. 

\end{thm}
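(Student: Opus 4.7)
My plan is to exploit the uniqueness of the Condition~A pairing proved in Section~\ref{sec:step1} and reduce Weyl invariance to a collection of rank-one functional equations. For each simple reflection $s\in\gpw_\gpg$ the map $(f_1,f_2)\mapsto\lpair_{s\chag,\cham,\psi}(\mathrm{T}_{s,\chag}f_1,f_2)$ is again a pairing between $\repfi$ and $\repse$ satisfying Condition~A, because $\mathrm{T}_{s,\chag}$ is a $\gpg$-intertwiner and hence commutes with $R(m^\gpj)$ for $m^\gpj\in\gpm^\gpj$ and with $R(u)$ for $u\in\gpu$. Uniqueness therefore produces a meromorphic scalar $\gamma^\gpg_s(\chag,\cham)$ with
\begin{equation*}
\lpair_{s\chag,\cham,\psi}(\mathrm{T}_{s,\chag}f_1,f_2)=\gamma^\gpg_s(\chag,\cham)\,\lpair_{\chag,\cham,\psi}(f_1,f_2),
\end{equation*}
and analogously a scalar $\gamma^\gpm_{s'}(\chag,\cham)$ for each simple reflection $s'\in\gpw_\gpm$ via $\mathrm{T}_{s'}^{\cham,\psi}$ acting on the second argument.

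Setting $f_1=R(g)\funf_\chag^0$, $f_2=\funf_{\cham,\psi}^0$, and using $\mathrm{T}_{s,\chag}\funf_\chag^0=c_s(\chag)\funf_{s\chag}^0$ together with $\mathrm{T}_{s,\chag}R(g)=R(g)\mathrm{T}_{s,\chag}$, the identity rearranges into
\begin{equation*}
\lpair_{s\chag,\cham,\psi}(R(g)\funf_{s\chag}^0,\funf_{\cham,\psi}^0)=\frac{\gamma^\gpg_s(\chag,\cham)}{c_s(\chag)}\,\lpair_{\chag,\cham,\psi}(R(g)\funf_\chag^0,\funf_{\cham,\psi}^0),
\end{equation*}
and similarly for $s'$. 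Since simple reflections generate the Weyl groups, the theorem reduces to the two identities
\begin{equation*}
\frac{\gamma^\gpg_s(\chag,\cham)}{c_s(\chag)}=\frac{\Gamma(s\chag,\cham)}{\Gamma(\chag,\cham)},\qquad \frac{\gamma^\gpm_{s'}(\chag,\cham)}{\ti c_{s'}(\cham)}=\frac{\Gamma(\chag,s'\cham)}{\Gamma(\chag,\cham)}
\end{equation*}
for each simple reflection in $\gpw_\gpg$ and in $\gpw_\gpm$ respectively.

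To compute $\gamma^\gpg_s$ and $\gamma^\gpm_{s'}$ I would work in the absolutely convergent region $\mathcal{Z}_c$ of Proposition~\ref{pro3.1}, where formula (\ref{defl}) represents $\lpair$ as an honest integral against $\funk_{\chag,\cham,\psi}$. Choosing $\fifi,\fise$ to be characteristic functions of small open compact subgroups forces the integrand to concentrate on the open Bruhat cell described by Lemma~\ref{lemopen}, and the intertwining integral over the one-dimensional root group of $s$ collapses to a rank-one Tate-type computation. Carrying this out root by root produces $\gamma^\gpg_s$ and $\gamma^\gpm_{s'}$ as explicit products of local $\zeta$-values; matching them against $\Gamma(s\chag,\cham)/\Gamma(\chag,\cham)$ and $\Gamma(\chag,s'\cham)/\Gamma(\chag,\cham)$ is then a direct verification. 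Meromorphic continuation (Section~\ref{sec:step3}) extends the identity from $\mathcal{Z}_c$ to all generic $(\chag,\cham)$.

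The main difficulty will be the root $\alpha=2e_n\in\gpg$ and the transitional roots $e_i-e_{i+1}$ with $i$ near $n-m$: their root groups interact non-trivially with the Heisenberg coordinates $\gpy,\gpz$ and with the covariance character $\psi_\gpu$, so the rank-one reduction is not a clean $\mathrm{SL}_2$ computation but couples the $\chag$ and $\cham$ variables through $\funk_{\chag,\cham,\psi}$. These are precisely the roots that must produce the mixed factors
\begin{equation*}
\prod_{i,j}\zeta(\chag_i+\cham_j+\tfrac12)\zeta(-\chag_i+\cham_j+\tfrac12)\quad\text{and}\quad\prod_{j=1}^m\prod_{i\le n-m+j-1}\frac{\zeta(\chag_i-\cham_j+\tfrac12)}{\zeta(-\chag_i+\cham_j+\tfrac12)}
\end{equation*}
in $\Gamma(\chag,\cham)$; correctly tracking their asymmetric shape, which records the embedding $\gpm\hookrightarrow\gpg$ and the position of the Jacobi direction, will be the main bookkeeping challenge.
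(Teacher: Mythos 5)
Your proposal follows essentially the same route as the paper: use uniqueness of the Condition~A pairing to define rank-one $\gamma$-factors $\gamma(\chag,\cham,w_\alpha,1)$ and $\gamma(\chag,\cham,1,w_\beta)$ for simple reflections, relate the ratio $l_{w\chag,w'\cham}(R(g)\funf^0_{w\chag},\funf^0_{w'\cham,\psi})/\lpair(R(g)\funf^0_\chag,\funf^0_{\cham,\psi})$ to $c_w^{-1}\ti c_{w'}^{-1}\gamma$ via the Gindikin--Karpelevich factors, reduce to explicit root-by-root rank-one computations in the convergent region, and verify against $\Gamma(w\chag,w'\cham)/\Gamma(\chag,\cham)$. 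The one spot where you are vaguer than the paper is the choice of test vectors for the rank-one computation: you say ``characteristic functions of small open compact subgroups,'' but the paper instead uses the Iwahori-invariant vector $\iwagw{\chag,1}+\iwagw{\chag,w_\alpha}$ evaluated at $\lambda w_0^\gpg$ (together with $\funf^1_{\cham,\psi}=\funf_{\cham,\psi}(\mathrm{Ch}_{\gpi_{\gpm^\gpj}})$), precisely because Casselman's identity $\mathrm{T}_{w_\alpha}(\iwagw{\chag,1}+\iwagw{\chag,w_\alpha})=c_\alpha(\chag)(\iwagw{w_\alpha\chag,1}+\iwagw{w_\alpha\chag,w_\alpha})$ turns the intertwining operator into a shift of the parameter for the same test vector, collapsing the calculation of $\gamma$ to a ratio of two one-variable integrals over the root subgroup of $\alpha$ (Propositions~\ref{pro:alpha} and~\ref{pro:beta}). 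With a generic small compact this clean cancellation is not available; you would need to re-derive something equivalent, so in executing the plan you would likely be led to the paper's choice of test vector.
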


If we can prove that for $(w,w')=(w_{\alpha},1)$ and $(w,w')=(1,w_{\beta})$ where $\alpha$ is a simple root in $\gpg$ and $\beta$  is a simple root in $\gpm$,
\begin{equation*}
  \frac{\Gamma(w\chag,w'\cham)}{\Gamma(\chag,\cham)}=\frac{l_{w\chag,w'\cham,\psi}(R(g)\funf_{w\chag}^{0},\funf_{w'\cham,\psi}^{0})}{\lpair(R(g)\funf_{\chag}^{0},\funf_{\cham,\psi}^{0})}
\end{equation*}
then theorem (\ref{thm:gamma}) is implied. Since we can calculate the left hand side above directly, we only need to consider the ratio
\begin{equation}
  \label{eq:6.1}
  \frac{l_{w\chag,w'\cham,\psi}(R(g)\funf_{w\chag}^{0},\funf_{w'\cham,\psi}^{0})}{\lpair(R(g)\funf_{\chag}^{0},\funf_{\cham,\psi}^{0})}.
\end{equation}
We obtain its value by the uniqueness of the pair $\lpair$. To be precise, for $(w,w')\in \gpw_{\gpg}\times \gpw_{\gpm}$, let
\begin{equation*}
  \ti l_{\chag,\cham,w,w'}(\funf_{\chag},\funf_{\cham,\psi})=l_{w\chag,w'\cham,\psi}(\mathrm{T}_{w}^{\chag}\funf_{\chag},\mathrm{T}_{w'}^{\cham,\psi}\funf_{\cham,\psi}).
\end{equation*}
Then $\ti l_{\chag,\cham,w,w'}$ is also a pairing on $\repfi\otimes \repse$ satisfying \textrm{Condition A}. By the uniqueness of such pairing, there exists a constant, which we denote by $\gamma(\chag,\cham,w,w')$, such that
\begin{equation*}
  \ti l_{\chag,\cham,w,w'}=\gamma(\chag,\cham,w,w') \lpair. 
\end{equation*}
Then, since  $\mathrm{T}_{w}^{\chag}\funf_{\chag}^{0}=c_{w}(\chag)\funf_{w\chag}^{0}$ and $  \mathrm{T}_{w'}^{\cham,\psi}\funf_{\cham,\psi}^{0}=\ti c_{w'}(\cham)\funf_{w'\cham,\psi}^{0}$, we have
\begin{equation*}
  \begin{split}
    &    \frac{l_{w\chag,w'\cham,\psi}(R(g)\funf_{w\chag}^{0},\funf_{w'\cham,\psi}^{0})}{\lpair(R(g)\funf_{\chag}^{0},\funf_{\cham,\psi}^{0})}\\
    =&c_{w}^{-1}(\chag)\ti c_{w'}^{-1}(\cham)\cdot \frac{\ti l_{\chag,\cham,w,w'}(R(g)\funf_{\chag}^{0},\funf_{\cham,\psi}^{0})}{\lpair(R(g)\funf_{\chag}^{0},\funf_{\cham,\psi}^{0})}\\
    =&c_{w}^{-1}(\chag)\ti c_{w'}^{-1}(\cham)\gamma(\chag,\cham,w,w')
  \end{split}
\end{equation*}
So in the rest of this section we calculate $\gamma(\chag,\cham,w_{\alpha},1)$ and $\gamma(\chag,\cham,1,w_{\beta})$. The result will be stated in theorem (\ref{thm:alpha}) and (\ref{thm:beta}), which implies the theorem (\ref{thm:gamma}).
\subsection{The calculation of $ \gamma(\chag,\cham,w_{\alpha},1)$}
\label{sec:step6.1}
\hfill 

Let $\gpi_{\gpm^{\gpj}}=\gpi_{\gpm}\ltimes(\gpx^{1}\gpy^{0}\gpz^{0})$, and let 
\begin{equation*}
  \funf_{\cham,\psi}^{1}=\funf_{\cham,\psi}(\mathrm{Ch}_{\gpi_{\gpm^{\gpj}}}).
\end{equation*}. 
For $w\in \gpw_{\gpg}$, let $\iwagw{\chag,w}=\funf_{\chag}(\mathrm{Ch}_{\mathrm{I_{\gpg}}w\mathrm{I_{\gpg}}})$. Then by theorem 3.4 in \cite{MR571057}
\begin{equation*}
  \mathrm{T}_{w_{\alpha}}(\iwagw{\chag,1}+\iwagw{\chag,w_{\alpha}})=c_{\alpha}(\chag)(\iwagw{w_{\alpha}\chag,1}+\iwagw{w_{\alpha}\chag,w_{\alpha}}).
\end{equation*}
So we have
\begin{equation}
\label{eqrankone}
        \gamma(\chag,\cham,w_{\alpha},1) =c_{\alpha}(\chag)\cdot \frac{l_{w_{\alpha}\chag,\cham,\psi
}(R(\lambda w_{0}^{\gpg})\circ(\iwagw{w_{\alpha}\chag,1}+\iwagw{w_{\alpha}\chag,w_{\alpha}}),\funf_{\cham,\psi}^{1})}{\lpair(R(\lambda w_{0}^{\gpg})\circ(\iwagw{\chag,1}+\iwagw{\chag,w_{\alpha}}),\funf_{\cham,\psi}^{1})}
\end{equation}

Let $i'=i-(n-m)$. The result of the calculation is stated as the proposition below:
\begin{pro}
\label{pro:alpha}
For generic $(\chag,\cham)$, the value of $ \lpair(R(\lambda w_{0}^{\gpg})\circ(\iwagw{\chag,1}+\iwagw{\chag,w_{\alpha}}),\funf_{\cham,\psi}^{1})$ equals
  \begin{equation*}
   \mathrm{Vol}(\gpi_{\gpg})\mathrm{Vol}(\gpi_{\gpm^{\gpj}})\ww^{-1}(1-(\chag_{i}\chag_{i+1}^{-1})(\w)\ww)
\end{equation*}
for $\alpha=e_{i}-e_{i+1},1\leq i \leq n-m-1$, and equals
\begin{equation*}
      \mathrm{Vol}(\gpi_{\gpg})\mathrm{Vol}(\gpi_{\gpm^{\gpj}})\cdot(\ww^{-1}-1)\cdot\dfrac{1-\ww(\chag_{i}\chag_{i+1}^{-1})(\w)}{(1-(\chag_{i}\cham_{i'+1}^{-1})(\w)\ww^{\frac{1}{2}})(1-(\chag_{i+1}^{-1}\cham_{i'+1})(\w)\ww^{\frac{1}{2}})}
\end{equation*}
for $\alpha=e_{i}-e_{i+1},n-m \leq i \leq n-1$, and equals
\begin{equation*}
  \mathrm{Vol}(\gpi_{\gpg})\mathrm{Vol}(\gpi_{\gpm^{\gpj}})\ww^{-1}\cdot(1-\chag_{n}(\w)\ww^{})
\end{equation*}
for $\alpha=2e_{n}$.
\end{pro}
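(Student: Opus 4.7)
The plan is to evaluate the pairing directly via the integral representation of $\lpair$ from Section~\ref{sec:step0}. Since both sides of the claimed formula are rational in $(\chag,\cham)$, by Proposition~\ref{pro3.1} I may first restrict to $(\chag,\cham)\in \mathcal{Z}_{c}$ so that the integral converges absolutely, and then extend by analytic continuation. Using the elementary relation $R(g_{0})\funf_{\chag}(\fifi) = \funf_{\chag}(R(g_{0})\fifi)$ and a change of variables in (\ref{defl}), one obtains
\[
\lpair\bigl(R(\lambda w_{0}^{\gpg})(\iwagw{\chag,1}+\iwagw{\chag,w_{\alpha}}),\, \funf_{\cham,\psi}^{1}\bigr) = \int_{\gpg_{\alpha}}\int_{\gpi_{\gpm^{\gpj}}} \funk_{\chag,\cham,\psi}\bigl(g'(\lambda w_{0}^{\gpg})^{-1}(m^{\gpj})^{-1}\bigr)\, dm^{\gpj}\, dg',
\]
where $\gpg_{\alpha} := \gpi_{\gpg} \sqcup \gpi_{\gpg} w_{\alpha} \gpi_{\gpg}$ is the rank-one parahoric attached to the simple root $\alpha$.

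Next I would parametrize the two integration domains explicitly. The parahoric $\gpg_{\alpha}$ decomposes as $\gpi_{\gpg}$ together with the $q$ cosets of the form $n_{-\alpha}(\w^{-1}s)\gpi_{\gpg}$ for $s\in\mathcal{O}/\w\mathcal{O}$, equivalently a fibering over the root subgroup $U_{\alpha}$. On the other factor, $\gpi_{\gpm^{\gpj}} = \gpi_{\gpm}\ltimes(\gpx^{1}\gpy^{0}\gpz^{0})$ splits according to the semidirect product, and since $\funk_{\chag,\cham,\psi}$ transforms by explicit characters under the right action of the Heisenberg part, the integrations over $\gpy^{0}$ and $\gpz^{0}$ collapse to volume factors. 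On each remaining piece I would determine when the argument of $\funk_{\chag,\cham,\psi}$ lands in the big cell $\gpb_{\gpg}w_{0}^{\gpg}\lambda\gpb_{\gpm^{\gpj}}\gpu$ (Lemma~\ref{lemopen}), and on that support substitute Lemma~\ref{lemkk}, tracking how the determinantal functions $\alpha_{k}$ and $\beta_{l}$ transform under left multiplication by $n_{\alpha}(t)$ and under conjugation by $(\lambda w_{0}^{\gpg})^{-1}$.

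A case analysis by $\alpha$ is then unavoidable. For $\alpha = e_{i}-e_{i+1}$ with $1\leq i\leq n-m-1$, the root subgroup lies entirely inside $\gpu$, so $n_{\alpha}(t)$ acts on $\funk_{\chag,\cham,\psi}$ only through the character $\psi_{\gpu}^{-1}$ and only the determinant $\alpha_{i}$ is perturbed; a single geometric sum in $v(t)$ produces $\ww^{-1}(1-\chag_{i}\chag_{i+1}^{-1}(\w)\ww)$. For $\alpha = 2e_{n}$, an analogous one-variable calculation involving $\alpha_{n}$ and $\chag_{n}$ yields the stated value. The principal obstacle is the middle case $\alpha = e_{i}-e_{i+1}$ with $n-m\leq i\leq n-1$ (set $i' = i-(n-m)$): here $n_{\alpha}(t)$ perturbs both the $\gpg$-side determinant $\alpha_{i}$ and one of the Jacobi determinants $\beta_{i'+1}$, while conjugation by $\lambda w_{0}^{\gpg}$ introduces the half-integer shifts of $\tfrac{1}{2}$ visible in the answer. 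Carrying out the resulting double geometric summation, over the $U_{\alpha}$ parameter $t$ and the relevant $\gpx^{1}$ coordinate, carefully separated by valuations in the spirit of \cite{MR1956080}, is what produces the two denominator factors $(1-\chag_{i}\cham_{i'+1}^{-1}(\w)\ww^{1/2})(1-\chag_{i+1}^{-1}\cham_{i'+1}(\w)\ww^{1/2})$ together with the numerator $(\ww^{-1}-1)(1-\ww\chag_{i}\chag_{i+1}^{-1}(\w))$. The combinatorial bookkeeping for this middle case is the main technical burden of the proof.
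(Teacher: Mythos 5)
Your overall plan follows the same broad outline as the paper's proof (reduce to the integral representation, split the parahoric into Iwahori double cosets, exploit the invariance properties of $\funk_{\chag,\cham,\psi}$, then case-analyze by $\alpha$), but several of the concrete steps as stated would not work.

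The most serious gap is the proposed reliance on Lemma~\ref{lemkk}. That lemma only computes $|\funk_{\chag,\cham,\psi}|$; it discards exactly the phase $\psi(z)\psi_{\gpu}^{-1}(u)$ from (\ref{def19}). You yourself observe (correctly) that in the case $1\leq i\leq n-m-1$ the root subgroup sits inside $\gpu$ and hence $n_{\alpha}(t)$ enters through $\psi_{\gpu}^{-1}$, and the same happens for $\alpha=2e_{n}$ through the $\gpz$-component. But then the integral over $t$ is not a geometric sum: after the $SL_2$ decomposition $w_{\alpha}n_{\alpha}(t)\in\gpt_{\gpg}^{0}\gpt_{\alpha}(t^{-1})\gpn_{\alpha}(-t)\gpn_{-\alpha}(t^{-1})$ one is left with $\int_{|t|\leq 1}\chi(t)|t|^{-1}\psi(t^{-1})\,dt$, which is a truncated Gauss sum in which only the shells $|t|=1$ and $|t|=\ww$ contribute (this is Lemma~\ref{lem911} in the paper). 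Writing it as a geometric sum would give the wrong denominator structure. So one cannot ``substitute Lemma~\ref{lemkk}''; the $\psi$-phase has to be carried along explicitly, which the paper does by peeling off factors directly from the definition of $\funk$.

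Second, in the middle case $n-m\leq i\leq n-1$ your proposed ``double geometric summation over $t$ and the relevant $\gpx^{1}$ coordinate'' misidentifies where the two denominators come from. Here the $\gpm^{\gpj}$-side test vector is $\funf_{\cham,\psi}^{1}$, which is $\gpi_{\gpm^{\gpj}}$-invariant with $\gpi_{\gpm^{\gpj}}=\gpi_{\gpm}\ltimes(\gpx^{1}\gpy^{0}\gpz^{0})$; since $\lambda x\in\gpt_{\gpm}^{0}\lambda\gpt_{\gpm}^{0}$ for $x\in\gpx^{1}$, the entire $\gpx^{1}$ integral collapses to a volume factor (this is exactly what Lemma~\ref{lemflat} exploits). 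The nontrivial $\gpx$-integral that you are remembering belongs to Proposition~\ref{pro:beta}, where the relevant compact is $\overline{\gpi}_{\gpm}\supset\gpx^{0}$. In Proposition~\ref{pro:alpha} the whole computation is a single one-dimensional integral over $t$: commuting $n_{\alpha}(t^{-1})$ past $\lambda$ produces $\gpx(\m t_{i'+1})$ with $\m t_{i'+1}$ having a $(1+t^{-1})$ entry, so the integrand becomes $|t|^{\chag_{i}-\cham_{i'+1}-1/2}|t+1|^{-\chag_{i+1}+\cham_{i'+1}-1/2}$, and the two denominator factors together with the $(\ww^{-1}-1)$ and $(1-\ww\chag_{i}\chag_{i+1}^{-1}(\w))$ come in one stroke from the identity $1+\ww^{-1}\int_{\mathcal O}\chi(t)\chi'(1+t)\,dt=(\ww^{-1}-1)\frac{1-\ww^{2}(\chi\chi')(\w)}{(1-\ww\chi(\w))(1-\ww\chi'(\w))}$ quoted from \cite{MR1956080}. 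Finally, as a smaller point, the coset decomposition $\gpg_{\alpha}=\gpi_{\gpg}\sqcup\{n_{-\alpha}(\w^{-1}s)\gpi_{\gpg}\}$ is not correct --- for $s\in\mathcal{O}^{*}$ the element $n_{-\alpha}(\w^{-1}s)$ is not even in the parahoric; the paper instead reduces the double coset $\gpi_{\gpg}w_{\alpha}\gpi_{\gpg}$ to an integral over $\gpn_{\alpha}^{0}$, i.e.\ uses $w_{\alpha}n_{\alpha}(s)\gpi_{\gpg}$ as representatives.
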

Substituting this in (\ref{eqrankone}), we have 
\begin{thm}
\label{thm:alpha}
  For generic $(\chag,\cham)$, the $\gamma$-factor $\gamma(\chag,\cham,w_{\alpha},1)$ is equal to
\begin{equation*}
      c_{\alpha}(\chag)\cdot\dfrac{\zeta(\chag_{i}-\chag_{i+1}+1)}{\zeta(\chag_{i+1}-\chag_{i}+1)}
\end{equation*}
for $\alpha=e_{i}-e_{i+1},1\leq i \leq n-m-1$, and 
\begin{equation*}
      c_{\alpha}(\chag)\cdot\dfrac{\zeta(\chag_{i}-\chag_{i+1}+1)}{\zeta(\chag_{i+1}-\chag_{i}+1)}\dfrac{\zeta(\chag_{i+1}-\cham_{i'+1}+\frac{1}{2})\zeta(-\chag_{i}+\cham_{i'+1}+\frac{1}{2})}{\zeta(\chag_{i}-\cham_{i'+1}+\frac{1}{2})\zeta(-\chag_{i+1}+\cham_{i'+1}+\frac{1}{2})}
\end{equation*}
for $\alpha=e_{i}-e_{i+1},n-m \leq i \leq n-1$, and 
\begin{equation*}
      c_{\alpha}(\chag)\cdot\dfrac{\zeta(\chag_{n}+1)}{\zeta(-\chag_{n}+1)}
\end{equation*}
for $\alpha=2e_{n}$.  
\end{thm}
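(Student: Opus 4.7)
The plan is to derive Theorem~\ref{thm:alpha} as a direct consequence of Proposition~\ref{pro:alpha} combined with the identity (\ref{eqrankone}). Since Proposition~\ref{pro:alpha} gives an explicit formula for the denominator $\lpair(R(\lambda w_{0}^{\gpg})\circ(\iwagw{\chag,1}+\iwagw{\chag,w_{\alpha}}),\funf_{\cham,\psi}^{1})$ in each of the three cases, and the numerator in (\ref{eqrankone}) is simply the same expression with $\chag$ replaced by $w_{\alpha}\chag$, the theorem will follow by taking the quotient and rewriting the resulting rational factors in terms of local zeta functions.

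First I would record that $w_{\alpha}$ acts on the parameters $(\chag_{1},\ldots,\chag_{n})$ by swapping $\chag_{i}$ and $\chag_{i+1}$ when $\alpha=e_{i}-e_{i+1}$, and by sending $\chag_{n}\mapsto -\chag_{n}$ when $\alpha=2e_{n}$. Writing the numerator of (\ref{eqrankone}) via Proposition~\ref{pro:alpha} applied to $w_{\alpha}\chag$, the volume factors and the common prefactor $\ww^{-1}$ (or $\ww^{-1}-1$) cancel immediately with the denominator, leaving a ratio of products of factors of the shape $1-\chi(\w)\ww^{s}$.

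Next I would translate every such factor into a zeta function using $\zeta(s)=(1-q^{-s})^{-1}$ together with $\chag_{i}(\w)=|\w|^{\chag_{i}}=q^{-\chag_{i}}$ and $\cham_{j}(\w)=q^{-\cham_{j}}$. For instance, in the first case one has
\begin{equation*}
1-(\chag_{i}\chag_{i+1}^{-1})(\w)\ww=1-q^{-(\chag_{i}-\chag_{i+1}+1)}=\zeta(\chag_{i}-\chag_{i+1}+1)^{-1},
\end{equation*}
and after swapping $\chag_{i}\leftrightarrow\chag_{i+1}$ in the numerator the ratio becomes $\zeta(\chag_{i}-\chag_{i+1}+1)/\zeta(\chag_{i+1}-\chag_{i}+1)$. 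The same conversion in Case~2 produces the extra four Jacobi-type zeta factors involving $\cham_{i'+1}$, and in Case~3 it produces $\zeta(\chag_{n}+1)/\zeta(-\chag_{n}+1)$. Multiplying by the prefactor $c_{\alpha}(\chag)$ inherited from (\ref{eqrankone}) gives each of the three asserted formulas.

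The main obstacle is of course not the bookkeeping in this step but Proposition~\ref{pro:alpha} itself, which supplies the nontrivial input: the computation of the rank-one pairing $\lpair(R(\lambda w_{0}^{\gpg})\circ(\iwagw{\chag,1}+\iwagw{\chag,w_{\alpha}}),\funf_{\cham,\psi}^{1})$ against an Iwahori-invariant vector on the Jacobi side. Once Proposition~\ref{pro:alpha} is in hand, the remainder of the argument is the short rational manipulation sketched above, performed separately in each of the three subcases corresponding to the three conjugacy classes of simple reflections in $\gpw_{\gpg}$.
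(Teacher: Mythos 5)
Your proposal is correct and matches the paper's own derivation exactly: the paper obtains Theorem~\ref{thm:alpha} by substituting Proposition~\ref{pro:alpha} (applied to both $\chag$ and $w_\alpha\chag$) into equation~(\ref{eqrankone}) and converting the resulting ratio of Euler-type factors $1-\chi(\w)\ww^{s}$ into local zeta functions. Your observation that the real content lies in Proposition~\ref{pro:alpha} and that the rest is routine rational manipulation is precisely the paper's point of view.
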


Recall that for $(\chag,\cham)\in \mathcal{Z}_{c}$, we have 
\begin{equation*}
    \pair{R(g)\funf_{\chag}(\fifi)}{\funf_{\cham,\psi}(\fise)}=\iiib(\fifi,\fise)(g).
\end{equation*}
First we calculate $\iiib(\mathrm{Ch}_{\gpi_{\gpg}},\mathrm{Ch}_{\gpi_{\gpm^{\gpj}}})(\lambda w_{0})$, in fact, 
\begin{lem}
  \label{lemflat}
For $(\chag,\cham)\in \mathcal{Z}_{c}$, we have 
  \begin{equation}
\label{flat}
    \whi_{\chag,\cham}(\mathrm{Ch}_{\gpi_{\gpg}},\mathrm{Ch}_{\gpi_{\gpm^{\gpj}}})(\lambda w_{0})=\mathrm{Vol}(\gpi_{\gpg})\mathrm{Vol}(\gpi_{\gpm}).
\end{equation}
\end{lem}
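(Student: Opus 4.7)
The plan is to unwind the definition (\ref{integral}) of $\iiib(\fifi,\fise)(g)$ with $\fifi=\mathrm{Ch}_{\gpi_\gpg}$, $\fise=\mathrm{Ch}_{\gpi_{\gpm^\gpj}}$, and $g=\lambda w_0$, obtaining
\begin{equation*}
\iiib(\mathrm{Ch}_{\gpi_\gpg},\mathrm{Ch}_{\gpi_{\gpm^\gpj}})(\lambda w_0)=\int_{\gpi_\gpg}\int_{\gpi_{\gpm^\gpj}}\funk_{\chag,\cham,\psi}\bigl(g'(\lambda w_0)^{-1}(m^\gpj)^{-1}\bigr)\ud m^\gpj\ud g',
\end{equation*}
and then to argue that on the entire integration domain the argument of $\funk_{\chag,\cham,\psi}$ lies in the open cell $\gpb_\gpg w_0 \lambda \gpb_{\gpm^\gpj}\gpu$ and that its $\funk_{\chag,\cham,\psi}$-value is identically $1$. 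The lemma would then follow by computing the remaining volume.

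First I would apply the Iwahori factorization $\gpi_\gpg=(\gpn_\gpg^-)^1\gpt_\gpg^0\gpn_\gpg^0$ and the decomposition $\gpi_{\gpm^\gpj}=\gpi_\gpm\ltimes(\gpx^1\gpy^0\gpz^0)$ to factor $g'$ and $m^\gpj$ into their constituent pieces. I would then rewrite $g'(\lambda w_0)^{-1}(m^\gpj)^{-1}$ in the normal form $b_\gpg w_0 \lambda\,\gpj(0,y,z)\,b_\gpm u$ supporting $\funk_{\chag,\cham,\psi}$ by: (a) conjugating the $(\gpn_\gpg^-)^1$-factor of $g'$ through $w_0^{-1}$, where it is sent into $\gpn_\gpg^0\subset\gpb_\gpg^0$; (b) using the Heisenberg commutation $\lambda\gpy(y)\lambda^{-1}=\gpy(y)\gpz(2\sum_i y_i)$ to move any $\gpy$-part past $\lambda$; and (c) writing $\gpx(x+1_m)=t_0\lambda t_0^{-1}$ for the unique $t_0\in\gpt_\gpm^0$ determined by the $\gpx^1$-piece $x\in\w\mathcal{O}^m$ of $m^\gpj$, so that the $\gpx^1$-part combines with $\lambda^{-1}$ into torus factors lying in $\gpt_\gpm^0$.

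Once the normal form is in hand, I would verify that every produced factor sits in the subgroup on which the relevant character is trivial: $b_\gpg\in\gpb_\gpg^0$ gives $\chag\dha_{\gpb_\gpg}(b_\gpg)=1$; $b_\gpm\in\gpb_\gpm^0$ gives $\cham\dnh_{\gpb_\gpm}(b_\gpm)=1$; $z\in\mathcal{O}$ gives $\psi(z)=1$ (since $\psi$ has conductor $0$); and $u\in\gpu^0$ gives $\psi_\gpu^{-1}(u)=1$. By (\ref{def19}) this forces $\funk_{\chag,\cham,\psi}\equiv 1$ on the entire integration domain, so the integral collapses to $\mathrm{Vol}(\gpi_\gpg)\mathrm{Vol}(\gpi_{\gpm^\gpj})=\mathrm{Vol}(\gpi_\gpg)\mathrm{Vol}(\gpi_\gpm)$ under the Haar measure normalizations fixed in Section \ref{Notation}.

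The hardest part will be carrying out step (a)--(c) of the normal-form computation itself: tracking how the $w_0$-conjugate of the $(\gpn_\gpg^-)^1$-factor of $g'$ interacts (through $\gpg$-root subgroups not contained in $\gpm^\gpj$) with the Heisenberg piece of $(m^\gpj)^{-1}$ and with the torus conjugation by $t_0$, and checking root-by-root that every ``error'' term produced by these non-commutativities lies inside the intended congruence subgroup so that all characters remain trivial. Once this bookkeeping is executed, $\funk_{\chag,\cham,\psi}\equiv 1$ on the integration domain, and hence (\ref{flat}), is immediate.
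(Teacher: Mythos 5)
Your overall strategy coincides with the paper's: show that on the whole integration domain the argument of $\funk_{\chag,\cham,\psi}$ lies in the open cell and that $\funk_{\chag,\cham,\psi}$ takes the value $1$ there, so the integral collapses to a product of volumes. Steps (b) and (c) are essentially the paper's manipulations of the $\gpy^{0}\gpz^{0}$-piece and the $\gpx^{1}$-piece (the paper phrases (c) as $\lambda x\in\gpt_{\gpm}^{0}\lambda\gpt_{\gpm}^{0}$ for $x\in\gpx^{1}$, together with the observation that $\gpt_{\gpm}^{0}$ normalizes both $\gpn_{\gpg}^{-,1}$ and $\gpn_{\gpm}^{-,1}$; the paper also first replaces $\lambda^{-1}$ by $\lambda$ via $\gpt_{\gpm}^{0}$-conjugation, a small simplification you omit).

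There is, however, a genuine gap in step (a), and it sits at exactly the place you flag as ``the hardest part.'' Writing $g'=b\,n^{-}$ with $b\in\gpb_{\gpg}^{0}$ and $n^{-}\in\gpn_{\gpg}^{-,1}$, the element $w_{0}^{-1}n^{-}w_{0}$ does land in $\gpn_{\gpg}^{1}\subset\gpb_{\gpg}^{0}$, but it lands on the \emph{right} of $w_{0}$, between $w_{0}$ and $\lambda^{-1}(m^{\gpj})^{-1}$. That is not the $b_{\gpg}$-slot of the normal form $b_{\gpg}w_{0}\lambda\gpj(0,y,z)b_{\gpm}u$, so the triviality of $\chag^{-1}\dha_{\gpb_{\gpg}}$ on $\gpb_{\gpg}^{0}$ does not apply to it. This $\gpn_{\gpg}^{1}$-piece must instead be decomposed (into its $\gpn_{\gpm}$-, $\gpj$-, and $\gpu$-parts) and commuted to the right past $\lambda^{-1}$ and past the factors of $(m^{\gpj})^{-1}$ so that each sub-piece lands in the correct slot of the normal form. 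Moreover, your decomposition of $\gpi_{\gpm^{\gpj}}$ produces, besides the Heisenberg piece you discuss, a factor in $\gpn_{\gpm}^{-,1}$ (coming from the Iwahori factorization of $\gpi_{\gpm}$), which is \emph{not} contained in $\gpb_{\gpm}^{0}$ and therefore cannot be absorbed by right-$\gpb_{\gpm}^{0}$-invariance; your proposal never mentions it. In the paper this is the crux: the residual identity $\funk(n'w_{0}\lambda n^{-})=1$ for $n'\in\gpn_{\gpg}^{-,1}$, $n^{-}\in\gpn_{\gpm}^{-,1}$ is what remains after the reductions, and proving it uses the structural facts $w_{0}^{\gpg}\gpn_{\gpm}^{-,1}w_{0}^{\gpg}=\gpn_{\gpm}^{1}$ and $\gpn_{\gpg}^{-,1}\gpn_{\gpm}^{1}\subset\gpb_{\gpm}^{0}\gpn_{\gpg}^{-,1}$ together with left $\gpb_{\gpm}^{0}$-invariance of $\funk$, followed by the root-by-root removal of the conjugated $\gpn_{\gpg}^{1}$-piece (this is the content of the argument the paper carries out for Lemma~\ref{lem:basic} and cites here). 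Your sketch identifies the difficulty but does not supply the mechanism that makes the bookkeeping close, and the mechanism you do state in (a) is not the one that works.
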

\begin{proof}
  By definition, 
  \begin{equation*}
 \iiib(\mathrm{Ch}_{\gpi_{\gpg}},\mathrm{Ch}_{\gpi_{\gpm^{\gpj}}})(\lambda w_{0})=\int_{\gpi_{\gpg}\times \gpi_{\gpm^{\gpj}}}\funk_{\chag,\cham,\psi}(xw_{0}\lambda^{-1}x')dxdx'
\end{equation*}
From now on we write $\funk_{\chag,\cham,\psi}$ by $\funk$ for simplicity. Note that $$\lambda^{-1}=d_{m}(-1,-1,...,-1)\lambda d_{}(-1,-1,...,-1)\in T_{\gpm}^{0}\lambda T_{\gpm}^{0},$$ so we have 
\begin{equation*}
  \int_{\gpi_{\gpg}\times \gpi_{\gpm^{\gpj}}}dxdx' \funk(xw_{0}\lambda^{-1}x')=\int_{\gpi_{\gpg}\times \gpi_{\gpm^{\gpj}}}dxdx' \funk(xw_{0}\lambda x').
\end{equation*}
The proposition is implied if $\funk(xw_{0}\lambda x')=1$ for all $x\in \gpi_{\gpg}$ and $x'\in \gpi_{\gpm^{\gpj}}$. To show this we note that $\gpi_{\gpg}=\gpb_{\gpg}^{0}N_{\gpg}^{-,1}$ and $\gpi_{\gpm^{\gpj}}=\gpx^{1}N^{-,1}_{\gpm}\gpb_{\gpm}^{0}\gpy^{0}\gpz^{0}$. So by the definition of $\funk$, we only need to show
\begin{equation*}
   \funk(n'w_{0}\lambda xn^{-})=1
\end{equation*}
for $n'\in N^{-,1}_{\gpg}$, $x\in \gpx^{1}$ and $n^{-}\in N^{-,1}_{\gpm}$. Note that when $x\in \gpx^{1}$, we have $\lambda x\in T_{\gpm}^{0}\lambda T_{\gpm}^{0}$, and note that $T_{\gpm}^{0}$ normalizes both $N_{\gpg}^{-,1}$ and $N_{\gpm}^{-,1}$, so it reduces to show that 
\begin{equation*}
  \funk(n'w_{0}\lambda n^{-})=1
\end{equation*}
for $n'\in N^{-,1}_{\gpg}$, and $n^{-}\in N^{-,1}_{\gpm}$. The proof for this is similar to that in lemma (\ref{lem:basic}), so we just skip it here.
\end{proof}
Now we consider the calculation of $\iiib(\mathrm{Ch}_{\gpi_{\gpg}w_{\alpha}\gpi_{\gpg}},\mathrm{Ch}_{\gpi_{\gpm^{\gpj}}})(\lambda w_{0})$.
First, by a similar method as in lemma (\ref{lem:basic}), we have
\begin{equation*}
  \iiib(\mathrm{Ch}_{\gpi_{\gpg}w_{\alpha}\gpi_{\gpg}},\mathrm{Ch}_{\gpi_{\gpm^{\gpj}}})(\lambda w_{0})=\ww^{-1}\mathrm{Vol}(\gpi_{\gpg})\mathrm{Vol}(\gpi_{\gpm})\int_{N^{0}_{\alpha}}dn_{\alpha}\funk(w_{\alpha}n_{\alpha}w_{0}\lambda).
\end{equation*}
Combining this with (\ref{flat}),  we have 
\begin{equation*}
  \iiib(\mathrm{Ch}_{\gpi_{\gpg}}+\mathrm{Ch}_{\gpi_{\gpg}w_{\alpha}\gpi_{\gpg}},\mathrm{Ch}_{\gpi_{\gpm^{\gpj}}})(\lambda w_{0})=\mathrm{Vol}(\gpi_{\gpg})\mathrm{Vol}(\gpi_{\gpm})(1+\ww^{-1}\int_{N^{0}_{\alpha}}dn_{\alpha}\funk(w_{\alpha}n_{\alpha}w_{0}\lambda))
\end{equation*}
Note that $w_{\alpha}n_{\alpha}(t)\in \gpt_{\gpg}^{0}\gpt_{\alpha}(t^{-1})\gpn_{\alpha}(-t)\gpn_{-\alpha}(t^{-1})$, so it is equal to 
\begin{equation*}
  \mathrm{Vol}(\gpi_{\gpg})\mathrm{Vol}(\gpi_{\gpm})\cdot\left(1+\ww^{-1}\int_{|t|\leq 1}dt\funk(T_{\alpha}(t^{-1})w_0n_{\alpha}(t^{-1})\lambda)\right).
\end{equation*}
We consider the calculation of this case by case.
\hfill \\
\textbf{Case a.} When $\alpha=e_{i}-e_{i+1}$ with $1\leq i \leq n-m-1$ we have
$n_{\alpha}(t^{-1})\lambda=\lambda n_{\alpha}(t^{-1})$ if $i<n-m-1$, and $n_{\alpha}(t^{-1})\lambda=\lambda n_{\alpha}(t^{-1}) u$ for some $u\in \gpu$ with $\psi_{\gpu}(u)=1$ if $i=n-m-1$. 
So
\begin{equation*}
  \funk(T_{\alpha}(t^{-1})w_0n_{\alpha}(t^{-1})\lambda)=\funk(T_{\alpha}(t^{-1})w_0\lambda n_{\alpha}(t^{-1})), 
\end{equation*}
and so
\begin{equation*}
  \int_{|t|\leq 1}dt\funk(T_{\alpha}(t^{-1})w_0n_{\alpha}(t^{-1})\lambda)=\int_{|t|\leq 1}dt (\chag_{i}\chag_{i+1}^{-1})(t)|t|^{-1}\psi(t^{-1}).
\end{equation*}

We let 
\begin{align*}
  \gpi(n)=&\int_{|t|=\ww^{n}}dt(\chag_{i}\chag_{i+1}^{-1})(t)|t|^{-1}\psi(t^{-1})\\
=&(\chag_{i}\chag_{i+1}^{-1}(\w))^{n}\ww^{-n}\int_{|t|=\ww^{n}}dt \psi(t^{-1}).
\end{align*}
Using lemma (\ref{lem911}) below we have $\mathrm{I}(0)=1-\ww$, $\mathrm{I}(1)=(-1)(\chag_{i}\chag_{i+1}^{-1})(\w)\ww$, and $\mathrm{I}(n)=0$ if $n\geq 2$. Combining these we have 
\begin{equation*}
  \begin{split}
    &\iiib(\mathrm{Ch}_{\gpi_{\gpg}}+\mathrm{Ch}_{\gpi_{\gpg}w_{\alpha}\gpi_{\gpg}},\mathrm{Ch}_{\gpi_{\gpm^{\gpj}}})(\lambda w_{0})\\=
    &\mathrm{Vol}(\gpi_{\gpg})\mathrm{Vol}(\gpi_{\gpm^{\gpj}})\ww^{-1}(1-(\chag_{i}\chag_{i+1}^{-1})(\w)\ww),
  \end{split}
\end{equation*}
which is the first part of proposition (\ref{pro:alpha})
\\
\textbf{Case b.} When $\alpha=e_{i}-e_{i+1}$ with $n-m\leq i \leq n-1$, we let\\ $\mathbf{t}_{j}=(1,1,...,\overset{\text{j-th}}{1+t^{-1}},1,...,1)\in \m F^{m}$. Then $n_{\alpha}(t^{-1})\lambda=\gpx(\m t_{1})$ if $ i=n-m$, and $n_{\alpha}(t^{-1})\lambda=\gpx(\m t_{i'+1})n_{\alpha}(t^{-1})$ if $i\geq n-m+1$. Here $i'=i-(n-m)$. Note that $w_{0}\gpx(\m t_{j})=d'(\m t_{j})w_{0}\lambda d'(\m t_{j})$, so 
\begin{equation*}
  \funk(T_{\alpha}(t^{-1})w_0n_{\alpha}(t^{-1})\lambda)=\int_{|t|\leq 1}\ud t\, |t|^{\chag_{i}-\cham_{i'+1}-\frac{1}{2}}|t+1|^{-\chag_{i+1}+\cham_{i'+1}-\frac{1}{2}}.
\end{equation*}
To calculate this we apply the lemma 8.6 in \cite{MR1956080}.
\begin{lem}
  Suppose $\chi$ and $\chi'$ are two unramified character on $F^{*}$, then
  \begin{equation*}
    1+\ww^{-1}\int_{\mathcal O}\ud t\  \chi(t)\chi'(1+t)=(\ww^{-1}-1)\dfrac{1-\ww^{2}(\chi\chi')(\w)}{(1-\ww\chi(\w))(1-\ww\chi'(\w))}.
  \end{equation*}
\end{lem}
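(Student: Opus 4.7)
The plan is to partition the domain $\mathcal{O}$ into three disjoint pieces on which the integrand simplifies, exploiting the fact that unramified characters are trivial on $\mathcal{O}^{*}$. Specifically, I would use the regions (I) $t \in \w\mathcal{O}$, where $|1+t|=1$ and hence $\chi'(1+t)=1$; (II) $t \in \mathcal{O}^{*} \setminus (-1+\w\mathcal{O})$, where $|t|=|1+t|=1$ so both characters are trivial; (III) $t \in -1+\w\mathcal{O}$, where $|t|=1$ and $\chi(t)=1$. These three sets cover $\mathcal{O}$ up to measure zero and are pairwise disjoint since $-1\notin \w\mathcal{O}$.

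Writing $a=\chi(\w)$ and $b=\chi'(\w)$, each piece evaluates to an elementary expression. Region (I) decomposes as $\bigsqcup_{k\geq 1}\w^{k}\mathcal{O}^{*}$ with $\mathrm{Vol}(\w^{k}\mathcal{O}^{*})=\ww^{k}(1-\ww)$, giving a geometric series summing to $(1-\ww)\frac{a\ww}{1-a\ww}$. Region (III) is symmetric under the substitution $s=1+t$, producing $(1-\ww)\frac{b\ww}{1-b\ww}$. Region (II) contributes only its volume $1-2\ww$. After summing these three contributions and forming $1+\ww^{-1}(\cdot)$, the left-hand side becomes
\[
(\ww^{-1}-1)+(1-\ww)\left[\frac{a}{1-a\ww}+\frac{b}{1-b\ww}\right].
\]

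The remaining task is purely algebraic: factor out $(\ww^{-1}-1)$ using the identity $1-\ww=\ww(\ww^{-1}-1)$, combine over the common denominator $(1-a\ww)(1-b\ww)$, and verify that the resulting numerator collapses as
\[
(1-a\ww)(1-b\ww)+\ww\bigl[a(1-b\ww)+b(1-a\ww)\bigr]=1-ab\ww^{2},
\]
which matches the claimed right-hand side since $ab=(\chi\chi')(\w)$. The only point needing care is convergence of the two geometric series, which requires $|a\ww|,|b\ww|<1$; since both sides of the asserted identity are rational functions of $a$ and $b$, the equality extends to all unramified characters by analytic continuation on the torus of characters.

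There is no genuine obstacle here: the calculation is in the spirit of standard Tate-style local integrals, and the bulk of the work is simply bookkeeping across the partition and the final algebraic simplification. The one place where a slip would be easy is keeping the measures straight on Region (II), since one must remove both $\w\mathcal{O}$ and $-1+\w\mathcal{O}$ from $\mathcal{O}$ to obtain the correct volume $1-2\ww$.
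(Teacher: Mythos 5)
Your proof is correct. Worth noting: the paper does not actually prove this lemma at all; it is invoked as ``lemma 8.6 in \cite{MR1956080}'' (Kato--Murase--Sugano, the orthogonal-group analogue) and applied without derivation, so there is no in-paper argument to compare against. Your elementary partition of $\mathcal{O}$ into $\w\mathcal{O}$, $-1+\w\mathcal{O}$, and the complementary part of $\mathcal{O}^{*}$ is the natural self-contained approach, and each step checks out: the geometric-series evaluations on the two shell regions, the volume $1-2\ww$ on the middle region (which indeed uses disjointness of $\w\mathcal{O}$ and $-1+\w\mathcal{O}$, automatic since $-1\in\mathcal{O}^{*}$), the absorption of $1+\ww^{-1}(1-2\ww)=\ww^{-1}-1$, and the final numerator collapse to $1-ab\ww^{2}$. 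Your remark on convergence and extension by rationality in $(\chi(\w),\chi'(\w))$ is also the right thing to say. The computation provides a proof the paper leaves to the reader via citation.
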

Applying the lemma for $\chag=\chag_{i}\cham_{i'+1}^{-1}|\cdot|^{-\frac{1}{2}}$ and
$\chag'=\chag_{i+1}^{-1}\cham_{i'+1}|\cdot|^{-\frac{1}{2}}$, we have, when $\alpha=e_{i}-e_{i+1}$ with $ n-m\leq i \leq n-1$, 
\begin{equation*}
  \begin{split}
    &\iiib(\mathrm{Ch}_{\gpi_{\gpg}}+\mathrm{Ch}_{\gpi_{\gpg}w_{\alpha}\gpi_{\gpg}},\mathrm{Ch}_{\gpi_{\gpm^{\gpj}}})(\lambda w_{0})\\
    =&\mathrm{Vol}(\gpi_{\gpg})\mathrm{Vol}(\gpi_{\gpm^{\gpj}})(\ww^{-1}-1)\dfrac{1-\ww(\chag_{i}\chag_{i+1}^{-1})(\w)}{(1-(\chag_{i}\cham_{i'+1}^{-1})(\w)\ww^{\frac{1}{2}})(1-(\chag_{i+1}^{-1}\cham_{i'+1})(\w)\ww^{\frac{1}{2}})},
  \end{split}
\end{equation*}
which is the second part of proposition (\ref{pro:alpha}).
\\
\textbf{Case c.} When $\alpha=2e_{n}$, we have 
\begin{equation*}
  n_{\alpha}(t^{-1})\lambda=\lambda \gpz(t^{-1})\gpy_{1}(-t^{-1})n_{\alpha}(t^{-1})
\end{equation*}
So 
\begin{equation*}
  \int_{|t|\leq 1}\ud t\,\funk(T_{\alpha}(t^{-1})w_0n_{\alpha}(t^{-1})\lambda)=\int_{|t|\leq 1}\ud t\, \chag_{n}(t)|t|^{-1}\psi(t^{-1}).
\end{equation*}

Similar to \textbf{Case a}, if we let 
\begin{equation*}
  \begin{split}
    &    \ti I(i)=\int_{|t|=\ww^{i}}dt \chag_{n}(t)|t|^{-1}\psi(t^{-1})\\=
    &\chag_{n}(\w)^{i}\ww^{-i}\int_{|t|=\ww^{i}}dt\psi(t^{-1}),
  \end{split}
\end{equation*}
then by lemma (\ref{lem911}), 
\begin{equation*}\ti I(i)=
  \begin{cases}
    1-\ww &\text{ if }i=0;\\
-\chag_{n}(\w)\ww &\text{ if } i=1;\\
0 &\text{ if }i\geq 2.
  \end{cases}
\end{equation*}
So 
\begin{align*}
  &\iiib(\mathrm{Ch}_{\gpi_{\gpg}}+\mathrm{Ch}_{\gpi_{\gpg}w_{\alpha}\gpi_{\gpg}},\mathrm{Ch}_{\gpi_{\gpm^{\gpj}}})(\lambda w_{0})\\
=&\mathrm{Vol}(\gpi_{\gpg})\mathrm{Vol}(\gpi_{\gpm^{\gpj}})\ww^{-1}\cdot(1-\chag_{n}(\w)\ww^{}),
\end{align*}
which is part 3 of proposition (\ref{pro:alpha}).

So by the calculation in the \textbf{Case a, b} and \textbf{c}, we have proved proposition (\ref{pro:alpha}), which implies theorem (\ref{thm:alpha}).

\subsection{Calculation of $\gamma(\chag,\cham,1,w_{\beta})$}
\hfill\\
\vspace{3pt} 
Let $\ti \chag_{i}=\chag_{n-m+i}$ for $1\leq i \leq m$. Let $\iwagw{\chag,1}=\funf_{\chag}(\mathrm{Ch}(\gpi_{\gpg}))$ as before. Recall that $\overline \gpi_{\gpm}=\gpi_{\gpm}\ltimes \gpj^{0}$. Let 
\begin{equation*}
  \iwamw{\cham,\psi,w}=\funf_{\cham,\psi}(\mathrm{Ch}_{\overline \gpi_{\gpm}w\overline \gpi_{\gpm}}).
\end{equation*}
Similar to theorem 3.4 in \cite{MR571057}, we have
\begin{equation}
  \mathrm{T}_{w_{\beta}}(\iwamw{\cham,\psi,1}+\iwamw{\cham,\psi,w_{\beta}})=\ti c_{\beta}(\cham)(\iwamw{w_{\beta}\cham,\psi,1}+\iwamw{w_{\beta}\cham,\psi,w_{\beta}}).
\end{equation}
So we have
\begin{equation}
\label{eqgammab}
  \gamma(\chag,\cham,1,w_{\beta})=\ti c_{\beta}(\cham)\cdot \dfrac{l_{\chag,w_{\beta}\cham,\psi}(R(\lambda w_{0}^{\gpg})\iwagw{\chag,1},\iwamw{w_{\beta}\cham,\psi,1}+\iwamw{w_{\beta}\cham,\psi,w_{\beta}})}{\lpair(R(\lambda w_{0}^{\gpg})\iwagw{\chag,1},\iwamw{\cham,\psi,1}+\iwamw{\cham,\psi,w_{\beta}})}
\end{equation}
What we are going to show is
\begin{pro} For generic $(\chag,\cham)$, the value of $\lpair(R(\lambda w_{0}^{\gpg})\iwagw{\chag,1},\iwamw{\cham,\psi,1}+\iwamw{\cham,\psi,w_{\beta}})$ equals
  \label{pro:beta}
  \begin{equation*}
\begin{split}
     &\ww^{-1}(1-\ww)^{m}\mathrm{Vol}(I_{\gpg})\mathrm{Vol}(I_{\gpm})\prod_{j\neq i,i+1}\zeta(\frac{1}{2}-\ti\chag_{j}+\cham_{j})\\
    &\cdot \dfrac{\zeta(\cham_{i}-\ti{\chag}_{i}+\frac{1}{2})\zeta(\cham_{i+1}-\ti{\chag}_{i+1}+\frac{1}{2})\zeta(\cham_{i}-\ti{\chag}_{i+1}+\frac{1}{2})\zeta(-\cham_{i+1}+\ti{\chag}_{i}+\frac{1}{2})}{\zeta(\ti{\chag}_{i}-\ti{\chag}_{i+1}+1)\zeta(\cham_{i}-\cham_{i+1}+1)}
\end{split}
\end{equation*}
when $\beta=e_{i}-e_{i+1}$, and equals
\begin{equation*}
  \begin{split}
    &\ww^{-1}(1-\ww)^{m}\mathrm{Vol}(I_{\gpg})\mathrm{Vol}(I_{\gpm})\prod_{j=1}^{m-1}\zeta(-\ti\chag_{j}+\cham_{j}+\frac 1 2)\\
        &\cdot \dfrac{(1-\ti{\chag}_{m}|\cdot|)(\w)(1+\cham_{m}|\cdot|^{\frac{1}{2}}(\w))}{(1-\ti{\chag}_{m}^{-1}\cham_{m}|\cdot|^{\frac{1}{2}}(\w))(1-\ti{\chag}_{m}\cham_{m}|\cdot|^{\frac{1}{2}}(\w))}
  \end{split}
\end{equation*}
when $\beta=2e_{m}$. 
\end{pro}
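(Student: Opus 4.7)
The plan mirrors the calculation of $\gamma(\chag,\cham,w_{\alpha},1)$ in Section \ref{sec:step6.1}, but with the intertwining operator on the $\gpm^{\gpj}$-side. First, using the integral expression $\lpair(R(g)\funf_{\chag}(\fifi),\funf_{\cham,\psi}(\fise))=\iiib(\fifi,\fise)(g)$ valid on $\mathcal{Z}_{c}$ (and extended meromorphically to generic $(\chag,\cham)$ by the Bernstein-theoretic argument of Section \ref{sec:step3}), the quantity to compute becomes
\[
\iiib(\mathrm{Ch}_{\gpi_{\gpg}},\,\mathrm{Ch}_{\overline{\gpi}_{\gpm}}+\mathrm{Ch}_{\overline{\gpi}_{\gpm}w_{\beta}\overline{\gpi}_{\gpm}})(\lambda w_{0}^{\gpg}).
\]
I split it as $A+B$, where $A$ is a ``flat'' piece playing the role of Lemma \ref{lemflat} and $B$ is the ``rank one'' contribution from the $w_{\beta}$-translate.

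The flat piece $A=\iiib(\mathrm{Ch}_{\gpi_{\gpg}},\mathrm{Ch}_{\overline{\gpi}_{\gpm}})(\lambda w_{0}^{\gpg})$ differs from Lemma \ref{lemflat} because $\overline{\gpi}_{\gpm}=\gpi_{\gpm}\ltimes\gpj^{0}$ contains all of $\gpx^{0}$ rather than only $\gpx^{1}$. Writing $\overline{\gpi}_{\gpm}\ni b_{\gpm}n^{-}\gpx(u)\gpy^{0}\gpz^{0}$ with $u\in\mathcal{O}^{m}$ and using $\lambda\gpx(u)=\gpx(1+u_{1},\ldots,1+u_{m})$, an application of Lemma \ref{lemkk} identifies the $u_{j}$-dependence of $\funk$ as $|1+u_{j}|^{-\ti\chag_{j}+\cham_{j}-1/2}$. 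The substitution $v_{j}=1+u_{j}$ and a direct $\zeta$-integral evaluation give, after collecting all $m$ coordinates,
\[
A=\mathrm{Vol}(\gpi_{\gpg})\mathrm{Vol}(\gpi_{\gpm})(1-\ww)^{m}\prod_{j=1}^{m}\zeta(\tfrac{1}{2}-\ti\chag_{j}+\cham_{j}).
\]
This accounts for $\prod_{j\neq i,i+1}\zeta(\tfrac{1}{2}-\ti\chag_{j}+\cham_{j})$ in the first case and $\prod_{j=1}^{m-1}\zeta(\tfrac{1}{2}-\ti\chag_{j}+\cham_{j})$ in the second; the remaining ``active'' coordinates recombine with the root-subgroup integration below.

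The rank-one piece $B$ is computed case-by-case via the decomposition $\overline{\gpi}_{\gpm}w_{\beta}\overline{\gpi}_{\gpm}=(\gpi_{\gpm}w_{\beta}\gpi_{\gpm})\cdot\gpj^{0}$, together with $w_{\beta}n_{\beta}(t^{-1})\in\gpt_{\gpm}^{0}T_{\beta}(t^{-1})n_{\beta}(-t)n_{-\beta}(t^{-1})$. For $\beta=e_{i}-e_{i+1}$ with $1\leq i\leq m-1$, the root subgroup sits inside $\gpm$, so the integration over $|t|\leq 1$ only modifies the $j=i,i+1$ part of the flat computation; the resulting two-variable integral of the form $\int|t|^{a}|t+1|^{b}\ud t$ is evaluated by the lemma from \cite{MR1956080} quoted in Section \ref{sec:step6.1}, producing the rank-two ratio $\zeta(\cham_{i}-\ti\chag_{i+1}+\tfrac{1}{2})\zeta(-\cham_{i+1}+\ti\chag_{i}+\tfrac{1}{2})/[\zeta(\ti\chag_{i}-\ti\chag_{i+1}+1)\zeta(\cham_{i}-\cham_{i+1}+1)]$ and the remaining $j=i,i+1$ zetas, along with the overall $\ww^{-1}$. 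For $\beta=2e_{m}$, the commutation $n_{\beta}(t^{-1})\lambda=\lambda\gpz(t^{-1})\gpy(-t^{-1}e_{m})n_{\beta}(t^{-1})$, analogous to Case (c) of Section \ref{sec:step6.1}, produces a nontrivial $\gpz$-component whose $\psi$-integration together with the $u_{m}$-integration from $A$ yields, via Lemma \ref{lem911}, the numerator $(1+\cham_{m}(\w)\ww^{1/2})(1-\ti\chag_{m}(\w)\ww)$ and the denominator $(1-(\ti\chag_{m}^{-1}\cham_{m})(\w)\ww^{1/2})(1-(\ti\chag_{m}\cham_{m})(\w)\ww^{1/2})$. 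The main obstacle is precisely this $\beta=2e_{m}$ case: the additive $\gpz$-contribution tangles with the $\gpx$-integration from the flat part, and verifying that the resulting expression reassembles exactly into the claimed combination, with the additive $(1+\cham_{m}(\w)\ww^{1/2})$ in the numerator and the correct multiplicative denominators, requires careful bookkeeping of signs and of the interplay between the $\psi$-integral and the local $L$-factor integrals.
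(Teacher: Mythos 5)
Your overall strategy — split $\iiib(\mathrm{Ch}_{\gpi_{\gpg}},\mathrm{Ch}_{\overline{\gpi}_{\gpm}}+\mathrm{Ch}_{\overline{\gpi}_{\gpm}w_{\beta}\overline{\gpi}_{\gpm}})(\lambda w_{0}^{\gpg})$ into a flat piece and a rank-one contribution, compute the flat piece as a product of local $\zeta$-integrals over the $\gpx^{0}$-coordinates, and handle the rank-one contribution case by case — matches the paper's, and your flat computation is correct and equivalent to Proposition~\ref{pro:beta.1}. The gap lies in the rank-one piece for $\beta=e_{i}-e_{i+1}$. You claim it reduces to a single Shimura-type integral $\int_{\mathcal{O}}|t|^{a}|t+1|^{b}\,\ud t$ handled by the lemma quoted in Section~\ref{sec:step6.1}, but that cannot be right. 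After conjugating $n_{\beta}(t)T_{\beta}(t)$ through $\gpx(\mathbf{x})$, the $x_{i}$ and $x_{i+1}$ coordinates become $t^{-1}x_{i}$ and $-x_{i}+tx_{i+1}$, so one is left with a genuinely three-variable integral over $t$, $x_{i}$, $x_{i+1}$ (equation (\ref{eq18}) in the paper) in which the variables are coupled and do not decouple into the form $\chi(t)\chi'(1+t)$. This is visible already from the shape of the answer: four $\zeta$-factors in the numerator over two in the denominator, whereas a single application of the cited lemma produces one factor over two. The paper instead splits the triple integral by the valuations $|t|=\ww^{a}$, $|x_{i}|=\ww^{b}$ (and a further $c$-sum for $|x_{i+1}|$ when $a\leq b$) and sums the resulting geometric series directly; some argument of that nature is needed, and your reduction to the one-variable lemma would not produce the stated four-over-two structure.

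For $\beta=2e_{m}$ you correctly identify the additive $\gpz$-contribution and Lemma~\ref{lem911} as the key ingredients, but you then describe the remaining reassembly as ``careful bookkeeping'' without supplying it. In fact the paper handles this case by the same device as case $\beta=e_{i}-e_{i+1}$: split the two-variable $(t,x_{m})$-integral by valuations, with $\gpi(a,b)$ governed by the three regimes $a\leq 2b$, $a=2b+1$, $a\geq 2b+2$ from Lemma~\ref{lem911}, and then sum the geometric series together with the flat-piece contribution. As written, the proposal leaves both rank-one cases unsupported: in the first case the claimed reduction is wrong, and in the second case the actual computation is deferred.
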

Substituting this in (\ref{eqgammab}) we have
\begin{thm}
  \label{thm:beta}
 For generic $(\chag,\cham)$, the $\gamma$-factor $\gamma(\chag,\cham,1,w_{\beta})$ equals
  \begin{equation*}
     \ti c_{\beta}(\cham)\dfrac{\zeta(\cham_{i}-\cham_{i+1}+1)\zeta(-\ti{\chag}_{i}+\cham_{i+1}+\frac{1}{2})\zeta(\ti{\chag}_{i}-\cham_{i}+\frac{1}{2})}{\zeta(-\cham_{i}+\cham_{i+1}+1)\zeta(\ti{\chag}_{i}-\cham_{i+1}+\frac{1}{2})\zeta(-\ti{\chag}_{i}+\cham_{i}+\frac{1}{2})}\\
\end{equation*}
when $\beta=e_{i}-e_{i+1}$, and equals
\begin{equation*}
\ti c_{\beta}(\cham)\dfrac{\zeta(-\cham_{m}-\ti{\chag}_{m}+\frac 1 2)\zeta(-\cham_{m}+\ti{\chag}_{m}+\frac 1 2)\zeta(2\cham_{m}+1)\zeta(-\cham_{m}+\frac 1 2)}{\zeta(\cham_{m}-\ti{\chag}_{m}+\frac 1 2)\zeta(\cham_{m}+\ti{\chag}_{m}+\frac 1 2)\zeta(-2\cham_{m}+1)\zeta(\cham_{m}+\frac 1 2)}
\end{equation*}
when $\beta=2e_{m}$.
\end{thm}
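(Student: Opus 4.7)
The theorem reduces via formula (\ref{eqgammab}) and the explicit value
\begin{equation*}
\ti c_\beta(\cham)=\dfrac{\zeta(\ppair{\cham}{\check\beta})}{\zeta(\ppair{\cham}{\check\beta}+1)}
\end{equation*}
to Proposition \ref{pro:beta}. Once Proposition \ref{pro:beta} is known, apply it with $\cham$ replaced by $w_\beta\cham$ to compute the numerator of (\ref{eqgammab}); the bulk products $\prod_{j\ne i,i+1}\zeta(\tfrac12-\ti\chag_j+\cham_j)$ (respectively $\prod_{j=1}^{m-1}\zeta(-\ti\chag_j+\cham_j+\tfrac12)$) are invariant under $w_\beta$, which only permutes $\cham_i,\cham_{i+1}$ (resp.\ negates $\cham_m$), so they cancel in the quotient. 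The surviving factors, multiplied by $\ti c_\beta(\cham)$, recover the stated $\gamma$-factor in Theorem \ref{thm:beta}.

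To prove Proposition \ref{pro:beta}, I would follow the template of Section \ref{sec:step6.1}. By the integral representation (\ref{defl}) and its extension by rationality (Section \ref{sec:step3}),
\begin{equation*}
\lpair\bigl(R(\lambda w_0^\gpg)\iwagw{\chag,1},\,\iwamw{\cham,\psi,1}+\iwamw{\cham,\psi,w_\beta}\bigr)=\iiib\bigl(\mathrm{Ch}_{\gpi_\gpg},\mathrm{Ch}_{\overline\gpi_\gpm}+\mathrm{Ch}_{\overline\gpi_\gpm w_\beta\overline\gpi_\gpm}\bigr)(\lambda w_0^\gpg).
\end{equation*}
The identity piece is computed by the method of Lemma \ref{lemflat}, except that $\overline\gpi_\gpm$ contains all of $\gpx^0$ rather than $\gpx^1$; this extra integration is carried out using the identity $w_0^\gpg\lambda\gpx(x)=d_m(\mathbf{1}+\mathbf{x})w_0^\gpg\lambda\,d_m(\mathbf{1}+\mathbf{x})^{-1}\cdot(\text{unit})$ as in Case b of Section \ref{sec:step6.1}, producing the product $\prod_j\zeta(-\ti\chag_j+\cham_j+\tfrac12)$ from $m$ copies of a Tate integral. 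The Weyl piece is reduced by the Iwahori factorization of $\overline\gpi_\gpm w_\beta\overline\gpi_\gpm$ together with the substitution $w_\beta n_\beta(t)\in T_\beta(t^{-1})N_\beta(-t)N_{-\beta}(t^{-1})$ to a single-variable integral over $|t|\le 1$. When $\beta=e_i-e_{i+1}$, the root subgroup commutes with the Jacobi group, and the integral has the shape $\int_{|t|\le 1}|t|^{s_1}|1+t|^{s_2}\,dt$, evaluated by Lemma 8.6 of \cite{MR1956080}; this supplies the four $\zeta$-factors indexed by $i,i+1$ in Proposition \ref{pro:beta}.

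The main obstacle is the long-root case $\beta=2e_m$: here $n_\beta(t^{-1})$ does not commute with $\lambda$, and the commutator produces nontrivial contributions in $\gpy$ and $\gpz$, analogous to the $\alpha=2e_n$ computation of Section \ref{sec:step6.1}. The $\gpz$-component inserts a character $\psi(\pm t^{-1})$ into the integrand, yielding a Tate integral computed by Lemma \ref{lem911}, while the $\gpy$-component interacts with the $\gpx^0$-part of $\overline\gpi_\gpm$ through the Heisenberg bracket $\psi(2\ppair{x}{y})$. Tracking this interaction carefully is what produces the mixed denominator $(1-\ti\chag_m^{-1}\cham_m|\cdot|^{1/2}(\w))(1-\ti\chag_m\cham_m|\cdot|^{1/2}(\w))$ together with the numerator factor $(1+\cham_m|\cdot|^{1/2}(\w))$ in Proposition \ref{pro:beta}. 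With both cases of Proposition \ref{pro:beta} in hand, Theorem \ref{thm:beta} follows by the cancellation described in the first paragraph.
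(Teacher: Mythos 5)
Your overall architecture matches the paper's: reduce Theorem \ref{thm:beta} to Proposition \ref{pro:beta} via formula (\ref{eqgammab}), observe the cancellation of the bulk $\zeta$-products in the ratio, and prove Proposition \ref{pro:beta} by separating the identity piece (Proposition \ref{pro:beta.1}) from the Weyl piece ($\iiib(\mathrm{Ch}_{\gpi_\gpg},\mathrm{Ch}_{\overline\gpi_\gpm w_\beta\overline\gpi_\gpm})$). The treatment of the identity piece and the reduction of the Weyl piece via Lemma \ref{lem:basic} are described correctly.

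However, your description of the short-root Weyl-piece computation ($\beta=e_i-e_{i+1}$) contains a genuine misunderstanding. You assert that the root subgroup $N_\beta$ commutes with the Jacobi group and that the integral reduces to $\int_{|t|\le 1}|t|^{s_1}|1+t|^{s_2}\,dt$, evaluated by Lemma 8.6 of \cite{MR1956080}. Both claims are wrong. $N_\beta\subset\gpm$ normalizes but does not centralize $\gpj$; the paper's explicit formula
\begin{equation*}
\gpx(x_1,\ldots,x_m)^{n_\beta(t)T_\beta(t)}=\gpx(x_1,\ldots,x_{i-1},t^{-1}x_i,-x_i+tx_{i+1},x_{i+2},\ldots,x_m)
\end{equation*}
shows the conjugation mixes the coordinates $x_i,x_{i+1}$. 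Consequently, after factoring out the independent $x_j$ ($j\neq i,i+1$), the Weyl piece is a genuinely three-variable integral over $t,x_i,x_{i+1}$ with integrand $(\cham_i\cham_{i+1}^{-1}|\cdot|^{-1})(t)\,(\ti\chag_i^{-1}\cham_i|\cdot|^{-1/2})(t^{-1}x_i)\,(\ti\chag_{i+1}^{-1}\cham_{i+1}|\cdot|^{-1/2})(-x_i+tx_{i+1})$, which the paper evaluates by splitting into the regions $|t|>|x_i|$ and $|t|\le|x_i|$ and summing geometric series. The $(1+t)$-type Tate integral and Lemma 8.6 of \cite{MR1956080} appear in Section \ref{sec:step6.1} in the computation of $\gamma(\chag,\cham,w_\alpha,1)$ for $\alpha=e_i-e_{i+1}$ with $n-m\le i\le n-1$; you have transplanted that lemma into the $\beta$-factor computation, where it does not apply. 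Attempting to carry out Case a as you describe would not produce the four-factor expression in Proposition \ref{pro:beta}.

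Your description of Case b ($\beta=2e_m$) is qualitatively closer but also imprecise: the $\psi$-character that enters is $\psi(t^{-1}x_m^2)$, coupling $t$ and $x_m$, not a pure $\psi(\pm t^{-1})$, and the resulting double integral over $t,x_m$ is evaluated via Lemma \ref{lem911} applied at $|t|=\ww^a$, $|x_m|=\ww^b$.
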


To prove \ref{pro:beta}, first we have
\begin{pro} For $(\chag,\cham)\in \mathcal{Z}_{C}$, 
  \label{pro:beta.1}
  \begin{equation*}
    \iiib(\mathrm{Ch}_{\gpi_{\gpg}},\mathrm{Ch}_{\overline\gpi_{\gpm}})(\lambda w_{0}^{\gpg})=\mathrm{Vol}(\gpi_{\gpg})\mathrm{Vol}(\gpi_{\gpm})\prod_{j=1}^{m}\dfrac{1-\ww}{1-\ti{\chag}_j^{-1}\cham_{j}(\w)\ww^{\frac{1}{2}}}
  \end{equation*}
\end{pro}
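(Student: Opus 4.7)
The plan is to extend the computation of Lemma \ref{lemflat} by carefully tracking the additional $\gpx^0/\gpx^1$ piece which is the only structural difference between $\overline\gpi_{\gpm}$ and $\gpi_{\gpm^{\gpj}}$. First I would reduce both Iwahori integrations to their opposite-unipotent parts: using $\gpi_{\gpg}=\gpb_{\gpg}^0\gpn_{\gpg}^{-,1}$ and the left $\gpb_{\gpg}^0$-invariance of $\funk$ (since $\chag^{-1}\dha_{\gpb_{\gpg}}$ is trivial on $\gpb_{\gpg}^0$), together with $\overline\gpi_{\gpm}=\gpi_{\gpm}\cdot\gpj^0$ and the right-invariance of $\funk$ under $\gpb_{\gpm^{\gpj}}^0=\gpb_{\gpm}^0\gpy^0\gpz^0$ (using that $\cham\dnh$ is trivial on $\gpb_{\gpm}^0$, that $\psi$ has conductor $0$, and that $\gpm^{\gpj}$ stabilizes $(\gpu,\psi_{\gpu})$), I reduce to
\begin{equation*}
\iiib(\mathrm{Ch}_{\gpi_{\gpg}},\mathrm{Ch}_{\overline\gpi_{\gpm}})(\lambda w_0^{\gpg})=C\int_{\gpn_{\gpg}^{-,1}}\int_{\gpn_{\gpm}^{-,1}}\int_{\gpx^0}\funk\bigl(n_g^{-}(w_0^{\gpg})^{-1}\lambda^{-1}\gpx(-a)(n_m^{-})^{-1}\bigr)\,da\,dn_g^{-}\,dn_m^{-},
\end{equation*}
with $C=\mathrm{Vol}(\gpb_{\gpg}^0)\mathrm{Vol}(\gpb_{\gpm^{\gpj}}^0)$.

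Next I would substitute $\m b=\m 1+a$, converting $\lambda^{-1}\gpx(-a)$ into $\gpx(-\m b)$ with $\m b\in\mathcal{O}^m$, and decompose $\mathcal{O}^m=\bigsqcup_{\m k\in\mathbb{Z}_{\geq 0}^m}\prod_j\w^{k_j}\mathcal{O}^*$ by valuations. For $\m b=\w^{\m k}\m u$ with $\m u\in(\mathcal{O}^*)^m$, the identity $\gpx(-\m b)=d_m(\m u)^{-1}d_m(\w^{\m k})^{-1}\lambda^{-1}d_m(\w^{\m k})d_m(\m u)$ applies. Since $d_m(\m u)\in\gpt_{\gpm}^0\subset\gpi_{\gpg}\cap\gpi_{\gpm^{\gpj}}$, the $d_m(\m u)^{\pm 1}$ factors are absorbed into the $n_g^{-}$ and $n_m^{-}$ integrations without any Jacobian or character change (both the $\gpn^{-,1}$-range and the measure are preserved since $|\delta|=1$ on $\gpt^0$). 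Consequently the $\m u$-dependence disappears and the $\m u$-integration contributes a uniform volume $(1-\ww)^m$, while the $\w^{\m k}$-scaling yields $\prod_j\ww^{k_j}$ per $\m k$-piece.

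Third, I would extract the $\m k$-dependent factor $I(\m k)/I(\m 0)$ coming from the remaining $d_m(\w^{\m k})^{-1}$ on the left and $d_m(\w^{\m k})$ on the right. Rewriting $(w_0^{\gpg})^{-1}d_m(\w^{\m k})^{-1}=\tilde s\,(w_0^{\gpg})^{-1}$ with $\tilde s=(w_0^{\gpg})^{-1}d_m(\w^{-\m k})w_0^{\gpg}\in\gpt_{\gpg}$, the left passage of $\tilde s$ together with the conjugation of $n_g^{-}$ yields the character $\chag^{-1}\dha_{\gpb_{\gpg}}(\tilde s)$ and the Jacobian $|\delta_{\gpb_{\gpg}}(\tilde s)|^{-1}$; symmetrically, passing $d_m(\w^{\m k})$ to the right through $(n_m^{-})^{-1}$ produces $\cham\dnh_{\gpb_{\gpm^{\gpj}}}(d_m(\w^{\m k}))$ and the Jacobian $|\delta_{\gpb_{\gpm}}(d_m(\w^{\m k}))|^{\pm 1}$. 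Using the longest-Weyl identities $w_0^{\gpg}\chag=-\chag$ and $w_0^{\gpg}\delta_{\gpb_{\gpg}}=\delta_{\gpb_{\gpg}}^{-1}$, and the explicit exponents of $\delta_{\gpb_{\gpg}}$, $\delta_{\gpb_{\gpm}}$, $\delta_{\gpb_{\gpm^{\gpj}}}$, these factors should combine to yield $I(\m k)/I(\m 0)=\prod_j(\ti\chag_j^{-1}\cham_j(\w))^{k_j}\ww^{-k_j/2}$. Summing the geometric series $\sum_{\m k\geq\m 0}\prod_j\ww^{k_j}\cdot(\ti\chag_j^{-1}\cham_j(\w)\ww^{-1/2})^{k_j}=\prod_j(1-\ti\chag_j^{-1}\cham_j(\w)\ww^{1/2})^{-1}$, and using Lemma \ref{lemflat} to pin down $I(\m 0)$ and the overall normalization, delivers the formula.

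The main obstacle lies in the third step: the conjugation of $\gpn_{\gpg}^{-,1}$ by $\tilde s$ (and of $\gpn_{\gpm}^{-,1}$ by $d_m(\w^{\m k})$) does not preserve the valuation-$1$ range when $\m k\neq\m 0$, since different root spaces rescale by different powers of $\w$. One must therefore decompose the conjugation root-by-root, tracking each $|\alpha(\tilde s)|^{\pm 1}$ against the valuations of $x_\alpha$, and verify that the combined factor is exactly $\prod_j(\ti\chag_j^{-1}\cham_j(\w))^{k_j}\ww^{-k_j/2}$ with no stray contribution from the $\gpg$-roots acting trivially on the $\gpm$-embedded torus. An alternative that sidesteps this bookkeeping is to invoke Lemmas \ref{lemmaalpha}, \ref{lemopen}, and \ref{lemkk} to compute $|\funk|$ directly on $n_g^{-}(w_0^{\gpg})^{-1}\gpx(-\w^{\m k})(n_m^{-})^{-1}$ via the polynomial functions $\alpha_i,\beta_l$: since $|\beta_j|=\ww^{k_j}$ is the only valuation that varies with $\m k$ (the $\alpha_i$ remaining bounded), it supplies precisely the $(\ti\chag_j^{-1}\cham_j\,|\cdot|^{-1/2})^{k_j}$ factor demanded by Lemma \ref{lemkk}.
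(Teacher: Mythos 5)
Your opening reduction to opposite unipotents matches the paper, but from there you take a route that hits a genuine wall, and you acknowledge it yourself: the conjugation of $\gpn_{\gpg}^{-,1}$ and $\gpn_{\gpm}^{-,1}$ by $d_m(\w^{\m k})^{\pm 1}$ does not preserve the $\w$-congruence subgroups when $\m k\neq\m 0$, and neither your Jacobian bookkeeping nor your proposed alternative is carried through to close this. The alternative (computing $|\alpha_i|$, $|\beta_j|$ on the triple product $n_g^-(w_0^{\gpg})^{-1}\gpx(-\w^{\m k})(n_m^-)^{-1}$) is not immediate either: $\alpha_k$ and $\beta_l$ are only $\gpn_{\gpg}$-left- and $\gpn_{\gpm^{\gpj}}\gpu$-right-invariant, so one must show that the $n_g^-$, $n_m^-$ perturbations cannot spoil the leading term of $\beta_j$ when $k_j$ is large, and that the $\psi(z)\psi_{\gpu}^{-1}(u)$ phase in the definition of $\funk$ stays trivial; Lemma \ref{lemkk} only controls $|\funk|$, not $\funk$ itself.

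The paper sidesteps this entirely by reorganizing the iterated integral: after the same unipotent reduction, it fixes $n\in\gpn_{\gpg}^{-,1}$, $n'\in\gpn_{\gpm}^{-,1}$ and proves, by the change-of-variable device of Lemma \ref{lem:basic}, that the \emph{inner} integral
\begin{equation*}
\int_{\gpx^{0}}\funk(n\,w_{0}^{\gpg}\,n'\,x')\,\ud x'
\end{equation*}
is independent of $n$ and $n'$. Once the inner $\gpx^0$-integral is pulled out, the $n$ and $n'$ integrations contribute only $\mathrm{Vol}(\gpi_{\gpg})\mathrm{Vol}(\gpi_{\gpm})$, and the remaining integral $\int_{\gpx^{0}}\funk(w_{0}^{\gpg}x')\,\ud x'$ is evaluated pointwise via the elementary identity $w_{0}^{\gpg}\gpx(x_{1},\ldots,x_{m})=d_{m}(x_{1},\ldots,x_{m})\,w_{0}^{\gpg}\,\lambda\,d_{m}(x_{1},\ldots,x_{m})$ (valid when all $x_i\neq 0$), which immediately produces $\prod_{j}\int_{\mathcal O}(\ti\chag_{j}^{-1}\cham_{j}|\cdot|^{-1/2})(x_{j})\,\ud x_{j}$ and hence the stated product via Lemma \ref{lem:basic1}. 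The missing idea in your proposal is precisely this independence-of-the-inner-integral step; it removes the need for any $\m k$-dependent Jacobian tracking and eliminates the phase worry, since it reduces everything to $\funk$ evaluated on the open cell with trivial $\psi$ and $\psi_\gpu$ arguments.
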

\begin{proof}

By definition, we have
\begin{equation*}
  \iiib(\mathrm{Ch}_{\gpi_{\gpg}},\mathrm{Ch}_{\overline\gpi_{\gpm}})(\lambda w_{0}^{\gpg})=\int_{\gpi_{\gpg}}\int_{\overline \gpi_{\gpm}}\ud g \ud m\funk(gw_{0}^{\gpg}\lambda^{-1}m).
\end{equation*}
Since $\lambda\in \overline \gpi_{\gpm}$, and $\funk$ is left $\gpb_{\gpg}^{0}$-invariant and right $\gpb_{\gpm}^{0}$-invariant, and $\mathrm{Vol}(\gpb_{\gpg}^{0})=\mathrm{Vol}(\gpb_{\gpm}^{0})=1$, we have 
\begin{equation*}
  \int_{\gpi_{\gpg}}\int_{\overline \gpi_{\gpm}}\ud g \ud m\funk(gw_{0}^{\gpg}\lambda^{-1}m)=\int_{N_{\gpg}^{-,1}}\ud n\int_{N_{\gpm}^{-,1}\gpx^{0}}\ud n' \ud x'\funk(nw_{0}^{\gpg} n'x').
\end{equation*}
By a similar discussion as in lemma (\ref{lem:basic}), which is given later, we have,  for all $n\in N_{\gpg}^{-,1}$ and $n'\in N_{\gpm}^{-,1}$,
\begin{equation*}
  \int_{\gpx^{0}}\ud x'\funk(nw_{0}^{\gpg} n'x')=\int_{\gpx^{0}}\ud x'\funk(w_{0}^{\gpg}x').
\end{equation*}
So
\begin{equation*}
  \iiib(\mathrm{Ch}_{\gpi_{\gpg}},\mathrm{Ch}_{\overline\gpi_{\gpm}})(\lambda w_{0}^{\gpg})=\mathrm{Vol}(\gpi_{\gpg})\mathrm{Vol}(\gpi_{\gpm})\int_{\gpx^{0}}\ud x'\funk(w_{0}^{\gpg} x').
\end{equation*}
Parametrize $\gpx^{0}$ as $\gpx^{0}=\{\gpx(\m x)=\gpx(x_{1},...,x_{m})|x_{i}\in \mathcal O \text{ for all } i \}$, then,\\ $w_{0}^{\gpg}\gpx(x_{1},...,x_{m})=d_{m}(x_{1},...,x_{m})w_{0}^{\gpg}\lambda d_{m}(x_{1},...,x_{m})$ if all $x_{i}$ are non-zero, and\\ $\funk(w_{0}^{\gpg} x')=0$ otherwise. So 
\begin{align*}
  \int_{\gpx^{0}}dx'\funk(w_{0}^{\gpg} x')&=\prod_{j=1}^{m}\int_{x_{j}\in \mathcal O}\ud x_{j}(\ti{\chag}_j^{-1}\cham_{j}|\cdot|^{-\frac{1}{2}})(x_{j})\\
  &=\prod_{j=1}^{m}\dfrac{1-\ww}{1-\ti{\chag}_j^{-1}\cham_{j}(\w)\ww^{\frac{1}{2}}},
\end{align*}
where the last equality follows from the following lemma.
\end{proof}
\begin{lem}
\label{lem:basic1}
  Let $\chi$ be an unramified character on $F^{*}$. Then
  \begin{equation*}
    \int_{x\in \mathcal O^{*}}\chi(x)\ud x=\dfrac{1-\ww}{1-\chi(\w)\ww}.
  \end{equation*}
\end{lem}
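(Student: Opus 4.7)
The literal formula as stated cannot be correct: since $\chi$ is unramified, $\chi|_{\mathcal{O}^{*}}\equiv 1$, so $\int_{\mathcal{O}^{*}}\chi(x)\ud x = \mathrm{Vol}(\mathcal{O}^{*})=1-\ww$ (with the Haar normalization $\mathrm{Vol}(\mathcal{O})=1$ in force throughout the paper), which equals the stated right-hand side $(1-\ww)/(1-\chi(\w)\ww)$ only in the degenerate case $\chi(\w)\ww=0$. The right-hand side is, however, the standard local geometric-series expression for $\int_{\mathcal{O}}\chi(x)\ud x$; moreover the invocation of this lemma just above, in the last display of the proof of Proposition \ref{pro:beta.1}, reads explicitly $\int_{x_{j}\in\mathcal{O}}\ud x_{j}$. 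So the lemma statement contains a typo ($\mathcal{O}^{*}$ for $\mathcal{O}$), and what one actually has to prove is
\begin{equation*}
\int_{\mathcal{O}}\chi(x)\,\ud x \;=\; \frac{1-\ww}{1-\chi(\w)\ww}.
\end{equation*}

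\textbf{Key steps.} The proof is a two-line local computation. First, partition $\mathcal{O}\setminus\{0\}$ as the disjoint union $\bigsqcup_{n\ge 0}\w^{n}\mathcal{O}^{*}$; the singleton $\{0\}$ has measure zero and contributes nothing. Second, on each shell $\w^{n}\mathcal{O}^{*}$ apply the substitution $x=\w^{n}u$ with $u\in\mathcal{O}^{*}$: unramifiedness gives $\chi(x)=\chi(\w)^{n}$, and translation/scaling of Haar measure gives $\mathrm{Vol}(\w^{n}\mathcal{O}^{*})=\ww^{n}(1-\ww)$. Third, assemble and sum the geometric series,
\begin{equation*}
\int_{\mathcal{O}}\chi(x)\,\ud x \;=\; (1-\ww)\sum_{n\ge 0}\bigl(\chi(\w)\ww\bigr)^{n} \;=\; \frac{1-\ww}{1-\chi(\w)\ww},
\end{equation*}
which is the desired identity.

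\textbf{Main obstacle.} There is essentially no mathematical obstacle; the computation is a standard Tate-type local calculation. The only point meriting comment is the convergence of the geometric series, which requires $|\chi(\w)|\cdot\ww<1$. This hypothesis is not spelled out in the lemma, but it is met in the application: in Proposition \ref{pro:beta.1} the relevant character is $\ti\chag_{j}^{-1}\cham_{j}|\cdot|^{-1/2}$, and the parameters are taken in the convergence region $\mathcal{Z}_{c}$ of Proposition \ref{pro3.1}. The extension to generic $(\chag,\cham)$ then proceeds by the same meromorphic continuation via Bernstein's theorem that the paper has used throughout Sections \ref{sec:step0}--\ref{sec:step3}.
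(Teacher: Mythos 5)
Your reading of the statement is correct: the lemma is invoked for integrals taken over $\mathcal{O}$ (in the last display of the proof of Proposition \ref{pro:beta.1}, and again in the case $\beta=e_{i}-e_{i+1}$), and for an unramified $\chi$ an integral over $\mathcal{O}^{*}$ could not depend on $\chi(\w)$, so the domain is a typo and the identity to prove is $\int_{\mathcal{O}}\chi(x)\ud x=(1-\ww)/(1-\chi(\w)\ww)$, valid when $|\chi(\w)|\,\ww<1$, which holds for the characters $\ti\chag_{j}^{-1}\cham_{j}|\cdot|^{-1/2}$ in the region $\mathcal{Z}_{c}$. The paper explicitly skips the proof, and your argument — the shell decomposition $\mathcal{O}\setminus\{0\}=\bigsqcup_{n\geq 0}\w^{n}\mathcal{O}^{*}$, $\mathrm{Vol}(\w^{n}\mathcal{O}^{*})=\ww^{n}(1-\ww)$, and the geometric series — is exactly the standard computation the author intends, so nothing is missing.
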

We skip its proof.

Next we consider $\iiib(\mathrm{Ch}_{\gpi_{\gpg}},\mathrm{Ch}_{\overline\gpi_{\gpm}w_{\beta}\overline\gpi_{\gpm}})(\lambda w_{0}^{\gpg})$. By definition, it is equal to
\begin{equation*}
  \int_{\gpi_{\gpg}}\ud x\int_{\overline \gpi_{\gpm}w_{\beta}\overline \gpi_{\gpm}}\ud x'\funk(xw_{0}^{\gpg}\lambda^{-1}x').
\end{equation*}
Note that $\lambda\in \mathrm{\overline I}_{m}$, and $\overline \gpi_{\gpm}w_{\beta}\overline \gpi_{\gpm}=\gpn_{\gpm,\hat{\beta}}^{-,1}w_{\beta}\gpn_{-\beta}^{0}\gpj^{0}\gpb_{\gpm}^{0}$. So the above integral is equal to
\begin{equation*}
  \int_{\gpn_{\gpg}^{-,1}}\ud n\int_{\gpn_{\gpm}^{-,1}}\ud n'\int_{\gpn_{-\beta}^{0}}\ud n_{-\beta}\int_{\gpj^{0}}\ud u \,\funk(nw_{0}^{\gpg}n'w_{\beta}n_{-\beta}u)
\end{equation*}
We claim that
\begin{lem}
\label{lem:basic}
  For any $n \in \gpn_{\gpg}^{-,1}$ and $n'\in \gpn_{\gpm}^{-,1}$,
  \begin{equation*}
    \int_{\gpn_{-\beta}^{0}}\ud n_{-\beta}\int_{\gpj^{0}}\ud u \,\funk(nw_{0}^{\gpg} n'w_{\beta}n_{-\beta}u)=\int_{\gpn_{-\beta}^{0}}\ud n_{-\beta}\int_{\gpj^{0}}\ud u \,\funk(w_{0}^{\gpg} w_{\beta}n_{-\beta}u).
  \end{equation*}
\end{lem}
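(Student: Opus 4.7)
The plan is to reduce the left-hand side to the right-hand side by absorbing the parameters $n$ and $n'$ into the compact integration domains via the covariance properties of $\funk$ from (\ref{def19}), together with a change of variables.

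First I would conjugate $n$ across $w_{0}^{\gpg}$ by writing $nw_{0}^{\gpg}=w_{0}^{\gpg}\tilde n$ with $\tilde n=(w_{0}^{\gpg})^{-1}nw_{0}^{\gpg}\in \gpn_{\gpg}^{1}$, and then decompose $\tilde n$ with respect to the parabolic structure $\gpn_{\gpg}=(\gpn_{\gpg}\cap \gpm^{\gpj})\cdot \gpu$ as $\tilde n=\tilde n_{\gpm^{\gpj}}\tilde n_{\gpu}$. Since $\gpm^{\gpj}$ normalizes $\gpu$ and stabilizes $\psi_{\gpu}$, pushing $\tilde n_{\gpu}$ past the remaining factors $\tilde n_{\gpm^{\gpj}}n'w_{\beta}n_{-\beta}u$ moves it to the far right as an element of $\gpu^{1}$, where the right-$\gpu$-covariance of $\funk$ produces the character $\psi_{\gpu}^{-1}(\tilde n_{\gpu}')$. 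This evaluates to $1$ because $\tilde n_{\gpu}'$ still has entries in $\w\mathcal{O}$ and $\psi$ has conductor $0$.

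Next I would handle $\tilde n_{\gpm^{\gpj}}n'$, which is a product of small elements in $\gpm^{\gpj}$ at level $\w$. Pushing it through $w_{\beta}$ gives another element of $\gpm^{\gpj}$ at the same level, which then decomposes into root subgroup contributions with $\w$-level entries. Each of these root contributions can be absorbed into the integration variables $n_{-\beta}\in\gpn_{-\beta}^{0}$ and $u\in\gpj^{0}$ by a translation: the additive shift in $n_{-\beta}$ is by an element of $\w\mathcal{O}$, which stays within $\gpn_{-\beta}^{0}$, while the shift in the Heisenberg variable $u$ is absorbed similarly, with any character contribution from the Heisenberg commutators (of the form $\psi$ applied to a bilinear expression in $\w\mathcal{O}$) collapsing to $1$ by the conductor assumption on $\psi$. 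The Jacobians of all these translations are trivial since we are moving inside compact subgroups by their normal subgroups.

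The main obstacle will be the final bookkeeping of the Heisenberg commutators and the $\psi_{\gpu}$-phases: one has to verify that every character that appears along the way has argument in $\w\mathcal{O}$, and that the decomposition of $w_{\beta}^{-1}\tilde n_{\gpm^{\gpj}}n'w_{\beta}$ produces only components that can be absorbed into $\gpn_{-\beta}^{0}$ and $\gpj^{0}$, rather than stray factors that push the integrand out of the support $\gpb_{\gpg}w_{0}^{\gpg}\lambda\gpj\gpb_{\gpm}\gpu$ of $\funk$.
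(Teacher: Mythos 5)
Your opening move diverges from the paper's in a way that creates a real gap. You conjugate only $n$ past $w_{0}^{\gpg}$, arriving at $\funk(w_{0}^{\gpg}\tilde n\, n'\, w_{\beta}n_{-\beta}u)$ with $\tilde n\in\gpn_{\gpg}^{1}$ but $n'\in\gpn_{\gpm}^{-,1}$ still sitting on the \emph{right} of $w_{0}^{\gpg}$. After you peel off $\tilde n_{\gpu}$, you are left with $\tilde n_{\gpm^{\gpj}}n'$, a product of an upper-unipotent and a lower-unipotent element of $\gpm^{\gpj}$. Conjugating this by $w_{\beta}$ scatters it across many root subgroups of $\gpm$, positive and negative; the negative ones other than $-\beta$ are not in $\gpn_{-\beta}^{0}$, not in $\gpj^{0}$, and not absorbable by left or right $\gpb^{0}$-invariance of $\funk$, so they genuinely push the argument out of the support $\gpb_{\gpg}w_{0}^{\gpg}\lambda\gpj\gpb_{\gpm}\gpu$. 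You flag exactly this danger at the end but do not resolve it, and with the setup you chose it is not merely bookkeeping.

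The paper's proof avoids the problem with a preliminary step you skipped: write $nw_{0}^{\gpg}n' = n\,(w_{0}^{\gpg}n'(w_{0}^{\gpg})^{-1})\,w_{0}^{\gpg}$, note $w_{0}^{\gpg}n'(w_{0}^{\gpg})^{-1}\in\gpn_{\gpm}^{1}$, and use the factorization $\gpn_{\gpg}^{-,1}\gpn_{\gpm}^{1}\subset\gpb_{\gpm}^{0}\gpn_{\gpg}^{-,1}$ together with left $\gpb_{\gpm}^{0}$-invariance of $\funk$ to replace $n$ and $n'$ by a \emph{single} $\overline n\in\gpn_{\gpg}^{-,1}$. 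Only then is $\overline n$ conjugated past $w_{0}^{\gpg}$, giving a single upper-unipotent $\tilde n\in\gpn_{\gpg}^{1}$ with no stray lower-unipotent factor. From there the paper uses a finer four-piece decomposition $\tilde n = n_{1}n_{2}\gpj(x,y,z)n_{3}$ with $n_{1}\in\gpn_{\beta}^{1}$, $n_{2}\in\gpn_{\gpm,\check\beta}^{1}$, $\gpj(x,y,z)\in\gpj^{1}$, $n_{3}\in\gpu^{1}$, removing each piece by a distinct mechanism: $n_{3}$ by right $\gpu^{1}$-invariance (your step handles this correctly); $\gpj(x,y,z)$ by a change of variables in $\gpj^{0}$; $n_{2}$ by commuting past $w_{\beta}\gpn_{-\beta}^{0}\gpj^{0}$ and using right $\gpb_{\gpm}^{0}$-invariance; $n_{1}$ by a change of variables in $\gpn_{-\beta}^{0}$ via $\gpn_{\beta}^{1}w_{\beta}=w_{\beta}\gpn_{-\beta}^{1}$. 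Your coarse split $\tilde n=\tilde n_{\gpm^{\gpj}}\tilde n_{\gpu}$ only cleanly handles the $\gpu$-piece and lumps the other three mechanisms into a single ``absorb into $n_{-\beta}$ and $u$'' claim, which overlooks the $\gpn_{\gpm,\check\beta}^{1}$ contribution that must instead be disposed of by right $\gpb_{\gpm}^{0}$-invariance.
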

\begin{proof}
  First we have
  \begin{equation}
    \label{eq:basic11}
    \int_{\gpn_{-\beta}^{0}}\ud n_{-\beta}\int_{\gpj^{0}}\ud u \,\funk(nw_{0}^{\gpg} n'w_{\beta}n_{-\beta}u)=\int_{\gpn_{-\beta}^{0}}\ud n_{-\beta}\int_{\gpj^{0}}\ud u \,\funk(\overline nw_{0}^{\gpg} w_{\beta}n_{-\beta}u).
  \end{equation}
  for some $\overline n\in \gpn_{\gpg}^{-,1}$. This is true because
  \begin{enumerate}
  \item $w_{0}^{\gpg}\gpn_{\gpm}^{-,1}w_{0}^{\gpg}=\gpn_{\gpm}^{1}$,
  \item $\gpn_{\gpg}^{-,1}\gpn_{\gpm}^{1}\subset \gpb_{\gpm}^{0}\gpn_{\gpg}^{-,1}$,
  \item $\funk$ is left $\gpb_{\gpm}^{0}$-invariant.
  \end{enumerate}
Continue the calculation, the right hand side of equation (\ref{eq:basic11}) is equal to 
\begin{align}
\label{eq:basic3}
  &\int_{\gpn_{-\beta}^{0}}\ud n_{-\beta}\int_{\gpj^{0}}\ud u \,\funk(w_{0}^{\gpg}\ti n w_{\beta}n_{-\beta}u)  
\end{align}
for some $\ti n\in \gpn_{\gpg}^{1}$. We write $\ti n$ as $\ti n=n_{1}n_{2}\gpj(x,y,z)n_{3}$ where $n_{1}\in \gpn_{\beta}^{1}$, $n_{2}\in \gpn_{\gpm,\check{\beta}}^{1}$, $\gpj(x,y,z)\in \gpj^{1}$, and $n_{3}\in \gpu^{1}$. Here $\gpn_{\gpm,\check{\beta}}$ is the subgroup of $\gpn_{\gpm}$ generated by all positive roots in $\gpm$ except $\beta$. Then (\ref{eq:basic3}) is equal to
\begin{equation*}
  \int_{\gpn_{-\beta}^{0}}\ud n_{-\beta}\int_{\gpj^{0}}\ud u \,\funk(w_{0}^{\gpg}n_{1}n_{2}\gpj(x,y,z)n_{3} w_{\beta}n_{-\beta}u) 
\end{equation*}
Our lemma will be proved if we remove $n_{1}n_{2}\gpj(x,y,z)n_{3}$ in the integral, which is by the following steps
\begin{enumerate}
\item First, $n_{3}$ can be removed because $\gpu^{1}$ is normalized by $w_{\beta}$, $\gpn_{-\beta}^{0}$, and $\gpj^{0}$, and $\funk$ is right $\gpu^{1}$-invariant. 
\item Note that $\gpj^{1}$ is normalized by $w_{\beta}\gpn_{-\beta}^{0}$, we can remove $\gpj^{1}(x,y,z)$ by a change of variable in the integral of $\gpj^{0}$.
\item We can remove $n_{2}$ because $\gpn_{\gpm,\check{\beta}}^{1}$ is normalized by $w_{\beta}\gpn_{-\beta}^{0}$, and $\gpn_{\gpm,\check{\beta}}^{1}$ normalizes $\gpj^{0}$, and $\funk$ is right $\gpn_{\gpm,\check{\beta}}^{1}$-invariant.
\item Finally, note that $\gpn_{\beta}^{1}w_{\beta}=w_{\beta}\gpn_{-\beta}^{1}$, we can remove $n_{1}$ by a change of variable in the integral of $\gpn_{-\beta}^{0}$
\end{enumerate}
So the lemma is proved.
\end{proof}

By the lemma we have
\begin{equation*}
 \iiib(\mathrm{Ch}_{\gpi_{\gpg}},\mathrm{Ch}_{\overline\gpi_{\gpm}w_{\beta}\overline\gpi_{\gpm}})(\lambda w_{0}^{\gpg})=\ww^{-1}\mathrm{Vol}(\gpi_{\gpg})\mathrm{Vol}(\gpi_{\gpm})\int_{|t|\leq 1}\ud t\int_{\gpx^{0}}\ud x \funk(w_{0}^{\gpg} w_{\beta}n_{-\beta}(t)x)
\end{equation*}

Combining this with proposition (\ref{pro:beta.1}), we have
\begin{align}
  &\iiib(\mathrm{Ch}_{\gpi_{\gpg}},\mathrm{Ch}_{\overline\gpi_{\gpm}}+\mathrm{Ch}_{\overline\gpi_{\gpm}w_{\beta}\overline\gpi_{\gpm}})(\lambda w_{0}^{\gpg})\notag\\
  =&\mathrm{Vol}(\gpi_{\gpg})\mathrm{Vol}(\gpi_{\gpm})\left(\prod_{j=1}^{m}\dfrac{1-\ww}{1-\ti{\chag}_j^{-1}\cham_{j}(\w)\ww^{\frac{1}{2}}}+\ww^{-1}\int_{|t|\leq 1}\ud t\int_{\gpx^{0}}\ud x \funk(w_{0}^{\gpg} w_{\beta}n_{-\beta}(t)x)\right).
\label{eq:beta1}
\end{align}
Now we calculate
\begin{equation*}
  \int_{|t|\leq 1}\ud t\int_{\gpx^{0}}\ud x \funk(w_{0}^{\gpg}w_{\beta}n_{-\beta}(t)x).
\end{equation*}
Note that %\begin{equation*}
$w_{\beta}n_{-\beta}(t)=n_{-\beta}(-t^{-1})\gpn_{\beta}(t)T_{\beta}(t)$, so
%\end{equation*}

\begin{align*}
  &\int_{|t|\leq 1}\ud t\int_{\gpx^{0}}\ud x \funk(w_{0}^{\gpg} w_{\beta}n_{-\beta}(t)x)\\
=&\int_{|t|\leq 1}\ud t\int_{\gpx^{0}}\ud x \funk(w_{0}^{\gpg} n_{-\beta}(-t^{-1})n_{\beta}(t)T_{\beta}(t) x)\\
=&\int_{|t|\leq 1}\ud t\int_{\gpx^{0}}\ud x \funk(w_{0}^{\gpg}x^{n_{\beta}(t)T_{\beta}(t)})(\cham\dnh_{\gpb_{\gpm}^{\gpj}})(T_{\beta}(t))
\end{align*}
Below we discuss this integral case by case.
\hfill\\
\textbf{Case a.} When $\beta=e_{i}-e_{i+1}$. 

We parametrize $\gpx^{0}$ as $\gpx(x)=\gpx(x_{1},...,x_{m})$ with $x_{j}\in \mathcal{O}$. Then we have
\begin{equation*}
  \gpx(x_{1},...,x_{m})^{n_{\beta}(t)T_{\beta}(t)}=\gpx(x_{1},...,x_{i-1},t^{-1}x_{i},-x_{i}+tx_{i+1},x_{i+2},\ldots,x_{m})
\end{equation*}
and
\begin{equation*}
  (\cham\dnh_{\gpb_{\gpm}^{\gpj}})(T_{\beta}(t))=(\cham_{i}\cham_{i+1}^{-1}|\cdot|^{-1})(t).
\end{equation*}
Let $\mathrm{x}(i,t)=(x_{1},...,x_{i-1},t^{-1}x_{i},-x_{i}+tx_{i+1},x_{i+2},...,x_{m})$, then
\begin{equation*}
   \begin{split}
    &\int_{|t|\leq 1}\ud t\int_{\gpx^{0}}\ud x \funk(w_{0}^{\gpg}x^{n_{\beta}(t)T_{\beta}(t)})(\cham\dnh)(T_{\beta}(t))\\
&=\int_{|t|\leq 1}\ud t\int_{|x_{i}|\leq 1,i=1,2,...,m}\ud x_{i} \, \, (\cham_{i}\cham_{i+1}^{-1})(t)|t|^{-1}\funk(w_{0}^{\gpg}\gpx(\mathrm{x}(i,t))).
\end{split}
\end{equation*}
Note that if none of the components of $\mathrm{x}(i,t)$ is zero, then
\begin{equation*}
w_{0}^{\gpg}\gpx(\mathrm{x}(i,t))=d_{m}(\mathrm{x}(i,t))w_{0}^{\gpg}\lambda d_{m}(\mathrm{x}(i,t))
\end{equation*}
 and $\funk(w_{0}^{\gpg}\gpx(\mathrm{x}(i,t)))=0$ if otherwise. So the integral above is equal to
\begin{align*}
 & \prod_{j\in \{1,2,...,i-1,i+2,...,m\} }\int_{x_{j}\in \mathcal O}(\ti{\chag}_{j}^{-1}\cham_{j}|\cdot|^{-\frac{1}{2}})(x_{j})\ud x_{j}\cdot\\
&\int_{|t|\leq 1,|x_{i}|\leq 1,|x_{i+1}|\leq 1}\ud t\ud x_{i}\ud x_{i+1} (\cham_{i}\cham^{-1}_{i+1}|\cdot|^{-1})(t)\\
&(\ti{\chag}^{-1}_{i}\cham_{i}|\cdot|^{-\frac{1}{2}})(t^{-1}x_{i})\,(\ti{\chag}^{-1}_{i+1}\cham_{i+1}|\cdot|^{-\frac{1}{2}})(-x_{i}+tx_{i+1})\\
\end{align*}
By lemma (\ref{lem:basic1}), it is equal to
\begin{equation}
\label{eq18}
  \begin{split}
    & \prod_{j\in \{1,2,...,i-1,i+2,...,m\} }\dfrac{1-\ww}{1-\ti{\chag}_j^{-1}\cham_{j}(\w)\ww^{\frac{1}{2}}}\\
  &\int_{|t|\leq 1,|x_{i}|\leq 1,|x_{i+1}|\leq 1}\ud t\ud x_{i}\ud x_{i+1} (\cham_{i}\cham^{-1}_{i+1}|\cdot|^{-1})(t)\\
&(\ti{\chag}^{-1}_{i}\cham_{i}|\cdot|^{-\frac{1}{2}})(t^{-1}x_{i})\,(\ti{\chag}^{-1}_{i+1}\cham_{i+1}|\cdot|^{-\frac{1}{2}})(-x_{i}+tx_{i+1})
  \end{split}  
\end{equation}
Let $\gpi(a,b)$ be part of the integral above for $|t|=\ww^{a}$ and $|x_{i}|=\ww^{b}$ with $a,b\geq 0$, then the integral is equal to $\sum_{a,b\geq 0}\gpi(a,b)$.  When $a>b$. i.e., $|x_{i}|>|t|$,  we have $|-x_{i}+tx_{i+1}|=|x_{i}|$. So
\begin{equation*}
  \gpi(a,b)=(1-\ww)^{2}\ww^{\frac a 2}\ti\chag_{i}^{a-b}\ti\chag_{i+1}^{-b}\cham_{i}^{b}\cham_{i+1}^{-a+b}.
\end{equation*}
When $a\leq b$, i.e., $|x_{i}|\leq |t|$ we can change the variable $x_{i+1}\to x_{i+1}+\frac{x_{i}}{t}$, we have
\begin{equation*}
  \gpi(a,b)=\sum_{c\geq 0}(1-\ww)^{3}\ww^{\frac{(b+c)}{2}}\ti\chag_{i}^{a-b}\ti\chag_{i+1}^{-a-c}\cham_{i}^{b}\cham_{i+1}^{c},
\end{equation*}
where the summand for $c$ corresponds to $|x_{i+1}|=\ww^{c}$ after the change of variable. So applying this to equation (\ref{eq:beta1}) and by some calculation,
\begin{equation*}
  \begin{split}
    &\iiib(\mathrm{Ch}_{\gpi_{\gpg}},\mathrm{Ch}_{\overline\gpi_{\gpm}}+\mathrm{Ch}_{\overline\gpi_{\gpm}w_{\beta}\overline\gpi_{\gpm}})(\lambda w_{0}^{\gpg})=\mathrm{Vol}(\gpi_{\gpg})\mathrm{Vol}(\gpi_{\gpm})\\
    &\cdot\left(\prod_{j=1}^{m}\dfrac{1-\ww}{1-\ti{\chag}_j^{-1}\cham_{j}(\w)\ww^{\frac{1}{2}}}+\ww^{-1}\prod_{j\neq i,i+1 }\dfrac{1-\ww}{1-\ti{\chag}_j^{-1}\cham_{j}(\w)\ww^{\frac{1}{2}}}\sum_{a,b\geq 0}\gpi(a,b)\right)\\
    =&\ww^{-1}(1-\ww)^{m}\mathrm{Vol}(I_{\gpg})\mathrm{Vol}(I_{\gpm})\prod_{j\neq i,i+1}\zeta(\frac{1}{2}-\ti\chag_{j}+\cham_{j})\\
    \cdot &\dfrac{\zeta(\cham_{i}-\ti{\chag}_{i}+\frac{1}{2})\zeta(\cham_{i+1}-\ti{\chag}_{i+1}+\frac{1}{2})\zeta(\cham_{i}-\ti{\chag}_{i+1}+\frac{1}{2})\zeta(-\cham_{i+1}+\ti{\chag}_{i}+\frac{1}{2})}{\zeta(\ti{\chag}_{i}-\ti{\chag}_{i+1}+1)\zeta(\cham_{i}-\cham_{i+1}+1)}
  \end{split}
\end{equation*}

which is the first part of proposition (\ref{pro:beta})

\textbf{Case b.} When $\beta=2e_{m}$, we have
\begin{equation*}
   (\cham\dnh_{\gpb_{\gpm^{\gpj}}})(T_{\beta}(t))=\cham_{m}(t)|t|^{-\frac{3}{2}},
\end{equation*}
and 
\begin{equation*}
  \gpx(x_{1},...,x_{m})^{n_{\beta}(t)T_{\beta}(t)}=\gpx(x_{1},...,x_{m-1},t^{-1}x_{m})\gpy_{1}(-x_{m})\gpz(t^{-1}x_{m}^{2}).
\end{equation*}
So
\begin{align}
  &  \int_{|t|\leq 1}\ud t\int_{\gpx^{0}}\ud x \funk(w_{0}^{\gpg}x^{n_{\beta}(t)T_{\beta}(t)})(\cham\dnh)(T_{\beta}(t))\notag\\
  =&\int_{|t|\leq 1}\ud t\int_{x_{i}\in \mathcal O}\ud x_{i}\, \psi(t^{-1}x_{m}^{2})\cham_{m}(t)|t|^{-\frac{3}{2}}\funk(w_{0}^{\gpg}\gpx(x_{1},...,x_{m-1},t^{-1}x_{m}))\\
=&\prod_{j=1}^{m-1}\dfrac{1-\ww}{1-\ti {\chag}_{j}^{-1}\cham_{j}|\cdot|^{\frac{1}{2}}(\w)}\\
&\cdot \int_{t,x_{m}\in \mathcal O}\ud t \ud x_{m}\,(\ti{\chag}_{m}^{-1}\cham_{m}|\cdot|^{-\frac{1}{2}})(t^{-1}x_{m})(\cham_{m}|\cdot|^{-\frac{3}{2}})(t)\psi(t^{-1}x_{m}^{2}).
\end{align}
Let $\gpi(a,b)$ be part of the integral above when $|t|=\ww^{a}$ and $|x_{m}|=\ww^{b}$. To calculate $\gpi(a,b)$, we apply the following lemma which can be proved by direct calculation
\begin{lem}
\label{lem911}
Suppose $|x|=\ww^{i}$, then
  \begin{equation*}
\int_{|t|=\ww^{j}}\psi(t^{-1}x)dt=
    \begin{cases}
      \ww^{j}(1-\ww) &\text{ if $j\leq i$}\\
      -\ww^{i+2} &\text{ if $j=i+1$}\\
      0         &\text{ if $j\geq i+2$}  
    \end{cases}
  \end{equation*}
\end{lem}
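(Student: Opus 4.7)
The plan is to reduce the integral to the standard fact that
\[
\int_{\mathfrak p^n}\psi(cy)\,dy = \begin{cases}\ww^n & \text{if } v(c)+n\ge 0,\\ 0 & \text{otherwise,}\end{cases}
\]
which is immediate from $\psi$ having conductor $0$ (so $\psi$ is trivial on $\mathcal O$ and nontrivial on $\mathfrak p^{-1}$). First I would substitute $t=\mathfrak p^{j} u$ with $u\in\mathcal O^{*}$, giving $dt=\ww^{j}\,du$ and $t^{-1}x=\mathfrak p^{-j}u^{-1}x$. Then, since $u\mapsto u^{-1}$ is a measure-preserving involution of $\mathcal O^{*}$ for additive Haar measure (its Jacobian $-u^{-2}$ has absolute value $1$ on $\mathcal O^{*}$), the integral becomes
\[
\ww^{j}\int_{\mathcal O^{*}}\psi(cv)\,dv,\qquad c=\mathfrak p^{-j}x,\quad v(c)=i-j.
\]

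Writing $\mathcal O^{*}=\mathcal O\setminus\mathfrak p$ and splitting, the inner integral equals $\int_{\mathcal O}\psi(cv)\,dv-\int_{\mathfrak p}\psi(cv)\,dv$. Applying the displayed fact with $n=0,1$ gives the three cases: when $j\le i$ (so $v(c)\ge 0$) both pieces are $1$ and $\ww$ respectively, giving $1-\ww$; when $j=i+1$ (so $v(c)=-1$) the first vanishes and the second gives $-\ww$; when $j\ge i+2$ (so $v(c)\le -2$) both pieces vanish. Multiplying by $\ww^{j}$ recovers $\ww^{j}(1-\ww)$, $-\ww^{i+2}$, and $0$ respectively. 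There is essentially no obstacle here: this is the elementary Fourier-analytic calculation the authors refer to as ``direct,'' with the only minor point being the Jacobian check for the inversion $u\mapsto u^{-1}$ on $\mathcal O^{*}$ indicated above.
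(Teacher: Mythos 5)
Your proof is correct, and it is exactly the elementary direct calculation the paper refers to when it states the lemma ``can be proved by direct calculation'' without giving details. The substitution $t=\w^{j}u$, the measure-preserving inversion on $\mathcal O^{*}$, and the splitting $\mathcal O^{*}=\mathcal O\setminus\mathfrak p$ combined with the conductor-zero fact handle all three cases cleanly.
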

So $\gpi(a,b)=(1-\ww)^{2}\ti\chag_{m}^{a-b}\cham_{m}^{b}\ww^{\frac 1 2 b}$ 
when $0\leq a \leq 2b$, and $\gpi(a,b)=-(1-\ww)\ti\chag_{m}\cham_{m}^{b}\ww^{\frac 1 2 b+1}$ when $a=2b+1$, and $\gpi(a,b)=0$ when $a\geq 2b+2$. Applying these to equation (\ref{eq:beta1}) and by some calculation, we have
\begin{equation*}
  \begin{split}
    &   \iiib(\mathrm{Ch}_{\gpi_{\gpg}},\mathrm{Ch}_{\overline\gpi_{\gpm}}+\mathrm{Ch}_{\overline\gpi_{\gpm}w_{\beta}\overline\gpi_{\gpm}})(\lambda w_{0}^{\gpg})\\=
    &\ww^{-1}(1-\ww)^{m}\mathrm{Vol}(I_{\gpg})\mathrm{Vol}(I_{\gpm})\prod_{j=1}^{m-1}\zeta(-\ti\chag_{j}+\cham_{j}+\frac 1 2)\\
&\cdot \frac{(1-\ti{\chag}_{m}|\cdot|)(\w)(1+\cham_{m}|\cdot|^{\frac{1}{2}}(\w))}{(1-\ti{\chag}_{m}^{-1}\cham_{m}|\cdot|^{\frac{1}{2}}(\w))(1-\ti{\chag}_{m}\cham_{m}|\cdot|^{\frac{1}{2}}(\w))}
  \end{split}
\end{equation*}
which is the second part of the proposition (\ref{pro:beta}). Combining \textbf{Case a} and \textbf{b}, we have proposition \ref{pro:beta}, which implies theorem \ref{thm:beta}.

\section{Formula for generic $(\chag,\cham)$}
\label{sec:step7}

In this section we discuss the value of $\iii(\w^{\m d}\lambda\w^{\m f})$ for generic $(\chag,\cham)$ where $\m d\in \Lambda^{+}_{m}$ and $\m f \in \Lambda^{+}_{n}$.  First we claim that
\begin{lem}
\label{lem7.1}
  For any $g\in \gpi_{\gpg}$ and $m\in \gpi_{\gpm}$, we have
  \begin{equation*}
    \w^{\m d}\lambda w_{0}^{\gpg}\w^{\m -f}~\w^{\m d}m\lambda w_{0}^{\gpg} g\w^{\m -f} 
  \end{equation*}
in the double coset $U\gpk_{\gpm^{\gpj}}\backslash \gpg\slash \gpk_{\gpg}$.
\end{lem}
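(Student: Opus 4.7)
The plan is to factor the element so that $E := \w^{\m d}\lambda w_{0}^{\gpg}\w^{-\m f}$ appears as a central factor, and show the outer corrections lie in $U\gpk_{\gpm^{\gpj}}$ on the left and $\gpk_{\gpg}$ on the right. From the identity
\[
\w^{\m d} m\lambda w_{0}^{\gpg} g\w^{-\m f}=(\w^{\m d} m\w^{-\m d})\cdot E\cdot(\w^{\m f} g\w^{-\m f}),
\]
I would use the Iwahori factorizations $m = u_m^- t_m u_m^+$ with $u_m^-\in\gpn_{\gpm}^{-,1}$, $t_m\in\gpt_{\gpm}^0$, $u_m^+\in\gpn_{\gpm}^0$, and analogously $g = u_g^- t_g u_g^+$. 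Since $\m d\in \Lambda_m^+$ and $\m f\in \Lambda_n^+$ pair non-negatively with every positive root of $\gpm$ and $\gpg$ respectively, the upper-unipotent and torus parts satisfy $\w^{\m d}(t_m u_m^+)\w^{-\m d}\in\gpk_{\gpm}\subset \gpk_{\gpm^{\gpj}}$ and $\w^{\m f}(t_g u_g^+)\w^{-\m f}\in \gpk_{\gpg}$, and so are absorbed directly into the two sides of the coset.

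The main work is with the corrections $N_A:=\w^{\m d}u_m^-\w^{-\m d}$ and $N_B:=\w^{\m f}u_g^-\w^{-\m f}$. Sliding $N_B$ to the left of $E$, using $w_{0}^{\gpg}\w^{-\m f}=\w^{\m f}w_{0}^{\gpg}$, gives
\[
E\,N_B\,E^{-1}=\w^{\m d}\lambda\,\hat u\,\lambda^{-1}\w^{-\m d},
\]
where $\hat u:=w_{0}^{\gpg}u_g^-(w_{0}^{\gpg})^{-1}\in\gpn_{\gpg}^{1}$ since $w_{0}^{\gpg}$ inverts roots. Because $\lambda\in\gpn_{\gpg}^0$, a commutator-grading argument yields $\lambda\hat u\lambda^{-1}\in\gpn_{\gpg}^{1}$. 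The final $\w^{\m d}$-conjugation decomposes by root subgroup: positive roots with $\langle\alpha,\m d\rangle\ge 0$ stay in $\gpk_{\gpg}$ (and can be shifted back across $E$ into the right coset), while the only roots with $\langle\alpha,\m d\rangle<0$ are $\alpha=e_i-e_j$ with $i\le n-m$ and $j>n-m$. Of these, the roots with $i\le n-m-1$ have their root subgroups contained in $U$, and those with $i=n-m$ belong to the Heisenberg subgroup $\gpj\subset \gpm^{\gpj}$.

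Combined with $N_A$, which sits in the negative root subgroups of $\gpm\subset \gpm^{\gpj}$, the left-of-$E$ factor takes the form $u\cdot m^{\gpj}$ with $u\in U$ and $m^{\gpj}\in \gpm^{\gpj}$. The main technical obstacle is to upgrade this $m^{\gpj}$ to an element of $\gpk_{\gpm^{\gpj}}$: the Heisenberg contributions coming from the $i=n-m$ roots can carry negative valuations, and must cancel against the matching components in $N_A\subset \w^{\m d}\gpn_{\gpm}^{-,1}\w^{-\m d}$ together with the commutator corrections produced during the $\lambda$-conjugation. I expect to verify this last cancellation by an explicit block-matrix computation, using the explicit form $\lambda=\gpj(\mathbf 1,0,0)$ and the commutation relations in the Jacobi group, at which point the full product takes the desired form $u\cdot k_{\gpm^{\gpj}}\cdot E\cdot k_{\gpg}$.
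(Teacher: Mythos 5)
Your high-level plan — Iwahori-factorize $m$ and $g$, pass the bounded torus and positive parts into the two compact factors, and chase the remaining $\gpn_{\gpm}^{-,1}$ and $\gpn_{\gpg}^{-,1}$ pieces by conjugation and root-subgroup decomposition — is the same skeleton the paper uses (reduce to $m\in\gpn_{\gpm}^{-,1}$, $g\in\gpn_{\gpg}^{-,1}$ and apply the manipulation of Lemma~\ref{lem:basic}). But the step you correctly flag as the crux is exactly where your plan breaks, and the cancellation you hope for does not exist. $N_A=\w^{\m d}u_m^{-}\w^{-\m d}$ is supported on the negative root spaces of $\gpm$, namely $-(e_{n-m+i}\pm e_{n-m+j})$ and $-2e_{n-m+i}$; none of these involve $e_{n-m}$, so $N_A$ has no component whatsoever in any $\gpx$-direction $e_{n-m}-e_{n-m+j}$, nor in $\gpy$ or $\gpz$. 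There is no ``matching component'' in $N_A$ for the Heisenberg blow-up to cancel against, and the Chevalley commutators of $N_A$ with the other pieces will not conjure one.

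The mechanism that actually saves the $\gpx$-directions is the one recorded as Lemma~\ref{lem:zero}, and it is a torus conjugation rather than a cancellation. The $\gpx^{1}$-component of $\lambda\hat{u}\lambda^{-1}$ should not be dragged past $\w^{\m d}$ at all: left between $\w^{\m d}$ and $w_{0}^{\gpg}$, it multiplies $\lambda=\gpx(\m 1)$ to give $\gpx(\m c)$ with every $c_{j}\in 1+\w\mathcal O\subset\mathcal O^{*}$, whence $\gpx(\m c)=\tau\lambda\tau^{-1}$ with $\tau=d_{m}(\m c)\in\gpt_{\gpm}^{0}$. Since $\tau$ commutes with $\w^{\m d}$ and $w_{0}^{\gpg}\tau^{-1}=\tau w_{0}^{\gpg}$, the two copies of $\tau$ split cleanly into $\gpt_{\gpm}^{0}\subset\gpk_{\gpm^{\gpj}}$ on the left and $\gpt_{\gpg}^{0}\subset\gpk_{\gpg}$ on the right, giving the required equivalence with $E$. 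Conjugating all of $\lambda\hat{u}\lambda^{-1}$ by $\w^{\m d}$ first, as you do, tears the $\gpx$-piece away from $\lambda$ before this can be exploited. (The auxiliary claim that the non-negative-pairing pieces ``can be shifted back across $E$ into the right coset'' is also unsupported — conjugating by $E^{-1}$ involves the unbounded factor $\w^{-\m d}$ and does not preserve $\gpk_{\gpg}$; those pieces in fact belong on the left, inside $\gpu^{1}$, $\gpn_{\gpm}^{1}$ and $\gpj^{0}\subset\gpk_{\gpm^{\gpj}}$.)
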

\begin{proof}
  When $\m d\in \Lambda^{+}_{m}$ and $\m f\in \Lambda^{+}_{n}$, we have $\w^{\m d}\gpb_{\gpm}^{0}\w^{-\m d}\in \gpb_{\gpm}^{0}$ and $\w^{\m f}\gpb_{\gpg}^{0}\w^{-\m f}\in \gpb_{\gpg}^{0}$, so it suffices to prove the lemma when $g\in \gpn_{\gpg}^{-,1}$ and $m\in \gpn_{\gpm}^{-,1}$, which is true by the method in lemma (\ref{lem:basic}).
\end{proof}
Now we consider
\begin{equation*}
\Gamma^{-1}(\chag,\cham)\cdot \pair{R(\lambda w_{0}^{\gpg})R(\gpi_{\gpg}\w^{\m -f}\gpi_{\gpg})\funf_{\chag}^{0}}{R( \overline\gpi_{\gpm}\w^{\m -d}\overline\gpi_{\gpm})\funf_{\cham,\psi}^{0}}.  
\end{equation*}
By lemma (\ref{lem7.1}) it is equal to 
\begin{equation*}
    \Gamma^{-1}(\chag,\cham)\mathrm{Vol}(\gpi_{\gpg})\mathrm{Vol}(\gpi_{\gpm})\int_{\gpx^{0}}\ud x\,\iii(\w^{\m d}x\w^{\m f}).
\end{equation*}
On the other hand, by lemma (\ref{pro:aG}) and (\ref{pro:aM}), it is equal to 
\begin{equation*}
\begin{split}
    &\Gamma^{-1}(\chag,\cham)\sum_{w\in W_{\gpg},w'\in W_{\gpm}}c_{w_{0}^{\gpg}}(w\chag)(w\chag\dnh_{\gpb_{\gpg}})(\w^{-\m f})\ti c_{w_{0}^{\gpm}}(w'\cham)(w'\cham\dnh_{\gpb_{\gpm^{\gpj}}})(\w^{-\m d})\\
    &\cdot \pair{R(\lambda w_{0})\overline T_{w^{-1}}\iwag^{w\chag}}{\overline T_{(w')^{-1}}\iwam^{w'\cham,\psi}}
 \end{split}
\end{equation*}
So we have
\begin{equation}
\label{eqluan}
  \begin{split}
    \Gamma^{-1}(\chag,\cham)\int_{\gpx^{0}}&\iii(\w^{\m d} x \w^{\m f}) =\mathrm{Vol}(\gpi_{\gpg})^{-1}\mathrm{Vol}(\gpi_{\gpm})^{-1}\\
    \cdot\sum_{w\in W_{\gpg},w'\in W_{\gpm}}&\left(c_{w_{0}^{\gpg}}(w\chag)(w\chag\dnh_{\gpb_{\gpg}})(\w^{-\m f})\ti c_{w_{0}^{\gpm}}(w'\cham)(w'\cham\dnh_{\gpb_{\gpm^{\gpj}}})(\w^{-\m d})\right.\\
    & \cdot \left.\pair{R(\lambda w_{0})\overline T_{w^{-1}}\iwag^{w\chag}}{\overline T_{(w')^{-1}}\iwam^{w'\cham,\psi}}\Gamma^{-1}(\chag,\cham)\right).
  \end{split}
\end{equation}
To calculate the right hand side of the equation above, we use the following lemma without proof.
\begin{lem}
  \label{1childlem}
For generic $(\chag,\cham)$, and $(\m d, \m f)\in \Lambda^{+}_{m}\times \Lambda^{+}_{n}$, let
 \begin{equation*}
  \mathcal{B}(\chag,\cham,\m d, \m f)=\sum_{w\in \gpw_{\gpg},w'\in\gpw_{\gpm}}\mathcal{A}(\chag,\cham,w,w')(w\chag)(\w^{\m f})(w'\cham)(\w^{\m d}).
\end{equation*}
Suppose $\mathcal{B}(\chag,\cham,\m d, \m f)$ is $\gpw_{\gpg}\times \gpw_{\gpm}$-invariant for all $(\m d,\m f)\in \Lambda^{+}_{m}\times \Lambda^{+}_{n}$, then we have
\begin{equation*}
  \mathcal{A}(\chag,\cham,w,w')=\mathcal{A}(w\chag,w'\cham, e, e).
\end{equation*}
\end{lem}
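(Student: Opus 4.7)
The plan is to exploit the assumed $\gpw_{\gpg}\times\gpw_{\gpm}$-invariance together with the linear independence of distinct characters on the dominant cone. First, I would apply the invariance for an arbitrary $(\sigma,\sigma')\in\gpw_{\gpg}\times\gpw_{\gpm}$: the identity $\mathcal B(\chag,\cham,\m d,\m f)=\mathcal B(\sigma\chag,\sigma'\cham,\m d,\m f)$ is a linear relation between products of characters. After the change of summation variables $u=w\sigma$, $u'=w'\sigma'$ in the right hand side, both sides become sums indexed by $(u,u')$ against the same family of characters $(u\chag)(\w^{\m f})(u'\cham)(\w^{\m d})$, producing
\begin{equation*}
\sum_{u,u'}\bigl[\mathcal A(\chag,\cham,u,u')-\mathcal A(\sigma\chag,\sigma'\cham,u\sigma^{-1},u'(\sigma')^{-1})\bigr]\,(u\chag)(\w^{\m f})(u'\cham)(\w^{\m d})=0
\end{equation*}
for every $(\m d,\m f)\in\Lambda_m^+\times\Lambda_n^+$.

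The crux is to upgrade this to vanishing of every bracket. For generic $(\chag,\cham)$, the $|\gpw_{\gpg}|$ characters $u\chag$ of $\gpt_{\gpg}$ are pairwise distinct, and similarly for $\{u'\cham\}$. I would prove linear independence of the functions $(\m d,\m f)\mapsto q^{-\langle u\chag,\m f\rangle-\langle u'\cham,\m d\rangle}$ on $\Lambda_m^+\times\Lambda_n^+$ by separating the two variables: fixing $\m d$, the $\m f$-dependence reduces to distinct exponentials $q^{-\langle u\chag,\m f\rangle}$, whose linear independence on $\Lambda_n^+$ follows by restricting $\m f$ to the ray $\{k\rho:k\in\mathbb N\}$ for a strictly dominant $\rho$ and applying a standard Vandermonde argument to the distinct values $q^{-\langle u\chag,\rho\rangle}$; running the same step in $\m d$ then isolates each bracket.

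With every bracket forced to zero, one obtains $\mathcal A(\chag,\cham,u,u')=\mathcal A(\sigma\chag,\sigma'\cham,u\sigma^{-1},u'(\sigma')^{-1})$ for all $(u,u',\sigma,\sigma')$. Specializing $(\sigma,\sigma')=(u,u')$ yields $\mathcal A(\chag,\cham,u,u')=\mathcal A(u\chag,u'\cham,e,e)$, which is the claim. The main (but minor) obstacle is tracking the genericity conditions required on $(\chag,\cham)$ to simultaneously guarantee that all $|\gpw_{\gpg}|\cdot|\gpw_{\gpm}|$ tuples $(u\chag,u'\cham)$ remain distinct and that the Vandermonde matrix of exponentials on the ray is invertible; each is a Zariski-open condition, so both hold on the same generic set used elsewhere in the paper.
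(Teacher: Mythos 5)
The paper does not prove this lemma: the surrounding text reads ``we use the following lemma without proof,'' so there is no argument in the source to compare your proposal against. Your argument is correct and is the natural one. The change of variables $u=w\sigma$, $u'=w'\sigma'$ correctly uses $w(\sigma\chag)=(w\sigma)\chag$ to rewrite $\mathcal B(\sigma\chag,\sigma'\cham,\m d,\m f)$ against the fixed family of characters $(u\chag)(\w^{\m f})(u'\cham)(\w^{\m d})$, and the resulting vanishing relation holds coefficientwise once you know those characters are linearly independent as functions on $\Lambda_m^+\times\Lambda_n^+$. Your two-stage separation (first in $\m f$, then in $\m d$) and the restriction to a ray $k\rho$ with $\rho$ strictly dominant are exactly the right way to handle the fact that the exponents range only over a cone rather than a lattice; the only genericity needed is that the finitely many values $q^{-\langle u\chag,\rho\rangle}$ (and similarly for $\cham$) are pairwise distinct, a Zariski-open condition compatible with the genericity already assumed throughout the paper. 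Specializing $(\sigma,\sigma')=(u,u')$ then gives the stated identity directly. In short, you have supplied a correct proof of a step the paper leaves to the reader.
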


By this lemma, the summation on the right hand side of (\ref{eqluan}) is determined by its summand at $(w,w')=(e,e)$, which, by proposition \ref{pro:beta.1}, is equal to
\begin{equation*}
  \begin{split}
    &c_{w_{0}^{\gpg}}(\chag)(\chag\dnh_{\gpb_{\gpg}})(\w^{-\m f})\ti c_{w_{0}^{\gpm}}(\cham)(\cham\dnh_{\gpb_{\gpm^{\gpj}}})(\w^{-\m d})\cdot \pair{R(\lambda w_{0})\iwag^{\chag}}{\iwam^{\cham,\psi}}\cdot \Gamma^{-1}(\chag,\cham)\\
    =&(1-\ww)^{m}\mathrm{b}(\chag,\cham)\mathrm{d}(\chag)\mathrm{d'}(\cham)(\chag\dnh_{\gpb_{\gpg}})(\w^{-\m f})(\cham\dnh_{\gpb_{\gpm^{\gpj}}})(\w^{-\m d})\mathrm{Vol}(\gpi_{\gpg})\mathrm{Vol}(\gpi_{\gpm}),
  \end{split}
\end{equation*}
where
\begin{equation*}
  \mathrm{d}(\chag)=\prod_{1 \leq a< b \leq n }\zeta(\chag_{a}-\chag_{b})\zeta(\chag_{a}+\chag_{b})\prod_{i=1}^{n}\zeta(\chag_{i}),
\end{equation*}
\begin{equation*}
  \mathrm{d'}(\cham)=\prod_{1\leq a<b \leq m}\zeta(\cham_{a}-\cham_{b})\zeta(\cham_{a}+\cham_{b})\prod_{j=1}^{m}\zeta(2\cham_{j}),
\end{equation*}
and
\begin{equation*}
  \begin{split}
    \mathrm{b}(\chag,\cham)=&\prod_{i<j+n-m}\zeta^{-1}(\chag_{i}-\cham_{j}+\frac{1}{2})\cdot \prod_{i>j+n-m}\zeta^{-1}(-\chag_{i}+\cham_{j}+\frac{1}{2})\\
    &\prod_{ 1\leq j \leq m}\zeta^{-1}(\cham_{j}+\frac 1 2)\cdot \prod_{\substack{1 \leq i \leq n\\ 1\leq j \leq m}}\zeta^{-1}(\chag_{i}+\cham_{j}+\frac{1}{2})
  \end{split}   
\end{equation*}
Applying the lemma $\ref{1childlem}$, we have 
\begin{equation}
  \label{eq:final}
  \begin{split}
    &\int_{\gpx^{0}}\ud x\,\iii(\w^{\m d} x\w^{\m f})=(1-\ww)^{m}\Gamma(\chag,\cham)\\
    &\cdot\sum_{w\in \gpw_{\gpg},w'\in\gpw_{\gpm}}\mathrm{b}(w\chag,w'\cham)\mathrm{d}(w\chag)\mathrm{d'}(w'\cham)(w\chag\dnh_{\gpb_{\gpg}})(\w^{-\m f})(w'\cham\dnh_{\gpb_{\gpm^{\gpj}}})(\w^{-\m d}).
  \end{split}
\end{equation}
Though this is not a direct formula for $\iii(\w^{\m d}\lambda \w^{\m f})$, we have the following theorem.
\begin{thm}
  \label{usefulthm}
  Let $l(\m d,\m f)=\iii(\w^{\m d}\lambda \w^{\m f})$, and let
  \begin{equation}
    \label{eq:expand}
L(\m d,\m f)=\int_{\gpx^{0}}\ud x\,\iii(\w^{\m d} x \w^{\m f}),
  \end{equation}
then $l(\m 0, \m f)=L(\m 0,\m f)$. Moreover, there exists $a(\m d')\geq 0$ such that
\begin{equation*}
  l(\m d,\m f)=\sum_{\m d'} a(\m d')L(\m d',\mathbf{f+d-d'}) 
\end{equation*}
where the sum is over the set $\{\m d'\mid \mathbf{d'\leq d}, \mathbf{d'}\in \Lambda^{+}_{m}, \mathbf{f+d-d'}\in \Lambda^{+}_{n}\}$, and we have $a(\m d)> 0$.
\end{thm}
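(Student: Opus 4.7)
The first claim $l(\m 0, \m f) = L(\m 0, \m f)$ is immediate from the left-$\gpk_{\gpm^{\gpj}}$-invariance of $\iii$: since $\gpx^{0} \subset \gpj^{0} \subset \gpk_{\gpm^{\gpj}}$ and $\lambda \in \gpx^{0}$, one has $\iii(x\w^{\m f}) = \iii(\lambda\w^{\m f}) = l(\m 0, \m f)$ for every $x \in \gpx^{0}$, so integrating against the normalized Haar measure on $\gpx^{0}$ yields the identity.

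For the general expansion, the plan is to decompose $L(\m d, \m f)$ via the stratification $\gpx^{0} = \bigsqcup_{\m c} \gpx^{0}(\m c)$ with $\gpx^{0}(\m c) = \{\gpx(\m x) : v(x_i) = c_i\}$, and then invert the resulting triangular relation. On each stratum, writing $\m x = \w^{\m c}\m u$ with $\m u \in (\mathcal O^{*})^{m}$ and $\gpx(\m x) = d_m(\m u^{-1})\lambda(\m c)d_m(\m u)$, the bi-$\gpk$-invariance of $\iii$ together with the identity $\lambda(\m c) = \w^{-\m c}\lambda\w^{\m c}$ reduce the integrand to $\iii(\w^{\m d-\m c}\lambda\w^{\m f + \hat{\m c}})$, where $\hat{\m c}$ pads $\m c$ with $n-m$ zeros in front. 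For $\m c \leq \m d$ componentwise, the triple $(\m d - \m c;\, \m a,\, \m r + \m c)$ (writing $\m f = (\m a, \m r)$) satisfies the hypotheses of Proposition~\ref{lem:closure}, and the operations of Lemma~\ref{lem:induction} bring it to canonical form $(\bar{\m d}(\m c), \m a, \bar{\m r}(\m c))$; by conservation of $\m d + \m r$ the last $m$ coordinates of $\bar{\m f}(\m c)$ are $\m f_{\text{last}} + \m d - \bar{\m d}(\m c)$, and since $\m d$ itself is a canonical candidate dominating $\m d - \m c$, property~(4) of Proposition~\ref{lem:closure} forces $\bar{\m d}(\m c) \leq \m d$. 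For mixed $\m c$ with some $c_i > d_i$, I split off the excess factor $\gpx(\w^{(\m c - \m d)_{+}}) \in \gpx^{0} \subset \gpk_{\gpm^{\gpj}}$ on the left, reducing to the case $\m c' = \min(\m c, \m d) \leq \m d$.

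Regrouping contributions by $\m d^{*} := \bar{\m d}(\m c)$ produces the lower-triangular relation
\[
L(\m d, \m f) \;=\; \sum_{\substack{\m d^{*} \leq \m d,\ \m d^{*}\in\Lambda^{+}_{m} \\ \m f+\m d-\m d^{*} \in \Lambda^{+}_{n}}} B(\m d, \m d^{*})\, l(\m d^{*}, \m f + \m d - \m d^{*}),
\]
with $B(\m d, \m d^{*}) \geq 0$ a sum of volumes $\mathrm{Vol}(\gpx^{0}(\m c))$ and $B(\m d, \m d) \geq \mathrm{Vol}(\gpx^{0}(\m 0)) > 0$ (because $\m c = \m 0$ already contributes to the diagonal). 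The index set is finite, so back-substitution along the componentwise partial order inverts the system to produce $l(\m d, \m f) = \sum_{\m d' \leq \m d} a(\m d')\, L(\m d', \m f + \m d - \m d')$, with $a(\m d) = B(\m d, \m d)^{-1} > 0$ as the diagonal coefficient. The main obstacles I foresee are the handling of mixed $\m c$ (where the torus-conjugation reduction is delicate when some $x_i$ vanish, so one must stratify further on zero-versus-nonzero coordinates and check that the absorption step preserves the coset information used by the operations) and the non-negativity claim $a(\m d') \geq 0$, which does not follow formally from lower-triangular inversion with non-negative entries and will likely require either a direct combinatorial identity for $a(\m d')$ in terms of the volumes $\mathrm{Vol}(\gpx^{0}(\m c))$ or an alternative expansion carried out directly on $\iii$ rather than via matrix inversion.
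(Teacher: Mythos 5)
Your proof follows essentially the same strategy as the paper's. Both arguments establish $l(\m 0,\m f)=L(\m 0,\m f)$ from left $\gpk_{\gpm^{\gpj}}$-invariance of $\iii$, expand $L(\m d,\m f)$ over the valuation strata of $\gpx^{0}$, reduce each stratum to a canonical coset representative via Lemma~\ref{lem:zero} (which caps the valuation vector at $\m d$) and Proposition~\ref{lem:closure}, obtain the triangular relation $L(\m d,\m f)=\sum_{\m d'\leq \m d}b(\m d')\,l(\m d',\m f+\m d-\m d')$ with $b(\m d')\geq 0$ and $b(\m d)\geq(1-\ww)^{m}>0$, and then invert. Your use of Proposition~\ref{lem:closure}(4) --- testing $(\m d;\m a,\m r)$ as a competitor to conclude $\overline{\m d}(\m c)\leq\m d$ --- is exactly the (implicit) step that justifies the index set in the paper's equation~(\ref{biglsmalll}).

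Two remarks on the obstacles you flag. The worry about strata where some $x_{i}=0$ is not a real obstacle: this locus has Haar measure zero in $\gpx^{0}$, so only finite $\m c\in\mathbb{Z}_{\geq 0}^{m}$ contribute to the integral, and the capping $\m c\mapsto\min(\m c,\m d)$ supplied by Lemma~\ref{lem:zero} handles the rest, exactly as you indicate. On the other hand, your concern about $a(\m d')\geq 0$ is well placed and in fact points to a gap in the paper's own proof. The paper only establishes $L=\sum b\,l$ with $b\geq 0$ and positive diagonal, and then declares the theorem ``implied''; but inverting a lower-triangular system with non-negative entries and positive diagonal gives a positive diagonal in the inverse while the off-diagonal entries carry alternating signs, so $a(\m d')\geq 0$ for $\m d'<\m d$ does not follow formally. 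Either the non-negativity assertion in the statement is an overclaim, or it requires an argument not given. The saving grace is that nowhere in the paper is $a(\m d')\geq 0$ actually used: the applications (notably Section~\ref{sec:13}) rely only on $a(\m d)>0$, the independence of $a$ from $(\chag,\cham,\psi)$, and the special case $\wsf(\w^{\m f})=\mathcal{L}(\m 0,\m f)$. So your proposal reproduces the paper's argument faithfully, and correctly identifies that the non-negativity claim is not established.
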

\begin{proof}
 We have $l(\m 0,\m f)=L(\m 0,\m f)$ by the $\gpk_{\gpm^{\gpj}}$-invariance on the left. Because of this, if we can prove
  \begin{equation}
\label{biglsmalll}
   L(\m d,\m f)=\sum_{\m d'}b(\m d')l(\m d',\mathbf{f+d-d'}) 
  \end{equation}
where $\mathbf{d'}$ runs over $\{\m d'\mid \mathbf{d'\leq d}, \mathrm{d'}\in \Lambda^{+}_{m}, \mathbf{f+d-d'}\in \Lambda^{+}_{n}\}$ with $b(\m d')\geq 0$ and $b(\m d)> 0$, then the theorem is implied. Expand the right hand side of equation (\ref{eq:expand}), and then by lemma (\ref{lem:zero}), $L(\m d,\m f)$ is a linear combination of $\iii(\w^{\m d}\gpx(\mathbf{c})\w^{\m f})$ with non-negative coefficients where $\mathbf{0\leq c\leq d}$. Note that $\iii(\w^{\m d}\gpx(\mathbf{c})\w^{\m f})=\iii(\w^{\mathbf{d-c}}\lambda \w^{\mathbf{f+c}})$, and that $\mathbf{(d-c;f+c)}$ satisfies the assumption in lemma (\ref{lem:closure}). So by lemma (\ref{lem:closure}), $\iii(\w^{\mathbf{d-c}}\lambda\w^{\mathbf{f+c}})=\iii(\w^{\m d'}\lambda \w^{\mathbf{f+d-d'}})$  where $\m d'$ satisfies the conditions in our theorem, so we have (\ref{biglsmalll}). To show that $b(\m d)\neq 0$, note that when $x\in (\mathcal{O}^{*})^{m}$, \\$\iii(\w^{\m d} \gpx(x) \w^{\m f})=l(\mathbf{d,f})$, so $b(\m d)\geq (1-\ww)^{m}>0$, completing our proof.
\end{proof}

\section{Normalization}
\label{sec:step8}

By equation (\ref{eq:final}), we have
\begin{equation*}
     \iii(e)=(1-\ww)^{m}\Gamma(\chag,\cham)\cdot\sum_{w\in \gpw_{\gpg},w'\in\gpw_{\gpm}}\mathbf{b}(w\chag,w'\cham)\mathbf{d}(w\chag)\mathbf{d'}(w'\cham).
\end{equation*}
Now we calculate the value of $\iii(e)$. The method is similar to that in section 11 in \cite{MR1956080}. Our conclusion is
\begin{thm}
\label{thmid}
  The value of $\iii$ at identity is given by
  \begin{equation*}
%    \begin{split}
       \iii(e)=(1-\ww)^{m}\Gamma(\chag,\cham)\zeta(1)^{m}\prod_{i=1}^{m}\zeta^{-1}(2i).
 %   \end{split}
  \end{equation*}
\end{thm}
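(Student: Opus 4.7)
The plan is to set $\m d = \m 0$ and $\m f = \m 0$ in equation (\ref{eq:final}). Since $\lambda \in \gpk_{\gpm^{\gpj}}$, the left $\gpk_{\gpm^{\gpj}}$-invariance of $\iii$ gives $\iii(\lambda) = \iii(e)$, and the exponential factors $(w\chag)^{-1}\dha_{\gpb_{\gpg}}(\w^{\m f})$, $(w'\cham)^{-1}\dha_{\gpb_{\gpm^{\gpj}}}(\w^{\m d})$ all reduce to $1$. Thus
\begin{equation*}
\iii(e) = (1-\ww)^m\, \Gamma(\chag, \cham)\cdot S(\chag, \cham),\qquad S(\chag, \cham) := \sum_{w \in \gpw_{\gpg},\, w' \in \gpw_{\gpm}} \mathrm{b}(w\chag, w'\cham)\, \mathrm{d}(w\chag)\, \mathrm{d}'(w'\cham),
\end{equation*}
so the theorem reduces to the constant-term identity
\begin{equation*}
S(\chag, \cham) = \zeta(1)^m \prod_{i=1}^m \zeta^{-1}(2i).
\end{equation*}
By construction $S$ is $\gpw_{\gpg} \times \gpw_{\gpm}$-invariant and a rational function in $q^{-\chag_i}, q^{-\cham_j}$.

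The strategy, modeled on Section 11 of \cite{MR1956080}, proceeds in two steps. Step 1: show $S$ is in fact independent of $(\chag, \cham)$. The individual summands have potential poles along the root hyperplanes $\ppair{w\chag}{\alpha^\vee}=0$ (from the $\zeta$-factors of $\mathrm{d}$) and $\ppair{w'\cham}{\beta^\vee}=0$ (from $\mathrm{d}'$), but pairing summands related by simple reflections and applying the elementary identity of the form $c_\alpha(\chag) + c_\alpha(s_\alpha\chag) = \text{regular}$ eliminates all such poles; this is the standard regularity argument underlying Macdonald's formula. A regular $\gpw_{\gpg}\times\gpw_{\gpm}$-invariant rational function in these variables that is bounded on the compact torus must be constant. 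Step 2: evaluate the constant by sending $(\chag,\cham)$ to infinity along a generic ray deep inside the dominant Weyl chamber. Then $\zeta(\chag_a - \chag_b), \zeta(\chag_i), \zeta(\cham_a - \cham_b), \zeta(2\cham_j) \to 1$ unless the corresponding argument has negative real part, in which case the factor tends to $0$; thus $\mathrm{d}(w\chag)\mathrm{d}'(w'\cham)$ vanishes in the limit unless $(w,w') = (e,e)$. The single surviving summand is $\mathrm{b}(\chag,\cham)\mathrm{d}(\chag)\mathrm{d}'(\cham)$, and its limit, computed by carefully tracking which $\zeta$-factors remain nontrivial (notably the factors $\zeta^{-1}(\cham_j + 1/2)$ contributing the $\zeta(1)^m$ and the interplay between $\prod_j \zeta(2\cham_j)$ and the Weyl denominator contributing the $\prod_i \zeta^{-1}(2i)$), yields exactly the claimed product.

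The main obstacle is the asymmetric structure of $\mathrm{b}(\chag, \cham)$ arising from the split at $i = j + (n-m)$: the factor $\zeta^{-1}(\chag_i - \cham_j + \tfrac{1}{2})$ on one side versus $\zeta^{-1}(-\chag_i + \cham_j + \tfrac{1}{2})$ on the other. This asymmetry complicates both the Weyl cancellation of Step 1 (particularly under reflections in $\gpw_{\gpg}$ that swap indices across the split $i = n-m$) and the limiting evaluation of Step 2 (different mixing factors tend to $1$ or to nontrivial constants depending on whether the $\chag$-index lies below or above the split). One may handle this either by (a) an inductive reduction on the rank difference $n - m$, with base case $n = m+1$ where the split is trivial, or (b) a careful reorganization of the $\gpw_{\gpg}$-sum by decomposing $\gpw_{\gpg}$ into cosets modulo the stabilizer of the split datum $\{n-m\}$, so that the mixing factors of $\mathrm{b}$ interact uniformly with each coset. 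The adaptation from the orthogonal case in KMS requires this extra bookkeeping but no genuinely new combinatorial input.
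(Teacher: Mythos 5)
Your outline reduces correctly to showing that the constant
$S(\chag,\cham)=\sum_{w,w'}\mathrm{b}(w\chag,w'\cham)\mathrm{d}(w\chag)\mathrm{d'}(w'\cham)$
equals $\zeta(1)^m\prod_{i=1}^m\zeta^{-1}(2i)$, and that part is fine. But both of your two steps for evaluating $S$ have genuine gaps.

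Step 1 asserts that a regular, $\gpw_{\gpg}\times\gpw_{\gpm}$-invariant rational function in $q^{-\chag_i}, q^{-\cham_j}$ that is bounded on the compact torus must be constant. That statement is false: every Weyl-invariant Laurent polynomial (e.g.\ any symmetric polynomial in $q^{\pm\chag_i}$) is regular, invariant, and bounded on $|q^{-\chag_i}|=1$, yet is typically nonconstant. So even granting the pole-cancellation, boundedness on the unit torus gives you nothing, and constancy of $S$ is not established.

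Step 2 is also problematic. Along a ray with $\chag,\cham$ going to infinity deep in the dominant chambers, you are correct that $\mathrm{d}(\chag)\to 1$, $\mathrm{d'}(\cham)\to 1$, and, on a suitably interlaced ray, $\mathrm{b}(\chag,\cham)\to 1$. So the $(e,e)$ summand tends to $1$, \emph{not} to $\zeta(1)^m\prod_{i=1}^m\zeta^{-1}(2i)$, which is $\prod_{i=1}^m(1+q^{-1}+\cdots+q^{-(2i-1)})^{-1}\cdot(1-q^{-1})^{-m}\neq 1$ for $m\geq 1$. Moreover, for $(w,w')\neq(e,e)$ the summand is an indeterminate $0\cdot\infty$: $\mathrm{d}(w\chag)$ (a product of $\zeta$'s) has a factor tending to $0$, while $\mathrm{b}(w\chag,w'\cham)$ (a product of $\zeta^{-1}=1-q^{-s}$) has factors tending to $\infty$ when their arguments become negative. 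These terms cannot simply be discarded; indeed, since the $(e,e)$ limit is $1$ and the constant value is not $1$, they \emph{must} contribute in the limit. The asymmetry you flag in $\mathrm{b}$ is only part of the difficulty; the essential obstruction is this competition between the zeros of $\mathrm{d},\mathrm{d'}$ and the poles of $\mathrm{b}$.

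The paper's argument avoids limits entirely. It rewrites $\mathrm{d}$ and $\mathrm{d'}$ via the Weyl character formula for $SO_{2n+1}$ and $Sp_{2m}$ as ratios $(-1)^n/(\mathrm{P}(\chag)\aveg(\mathrm{P}(\chag)))$, etc., expresses $S$ as a quotient of alternating sums, and then (Lemma 8.4) expands $\mathrm{b}(\chag^{-1},\cham^{-1})\mathrm{P}(\chag)\mathrm{Q}(\cham)$ into monomials $\chag^\epsilon\cham^\mu$ and shows that \emph{regular} $\epsilon$ necessarily have $\epsilon_i=\tfrac12+n-i$ for $i\le n-m$; this both removes a whole block of factors of $\mathrm{b}$ and (Lemma 8.5) collapses the $\gpw_{\gpg}$-alternating sum to a $\gpw_{\gpm}$-alternating sum in an auxiliary $\ti\chag\in\mathbb{C}^m$, identifying $S$ with a combinatorial constant $\sum_{w_1,w_2}d_{w_1,w_2}\,sgn(w_1)sgn(w_2)$ that is manifestly independent of $(\chag,\cham)$. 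Finally it evaluates this constant at the finite special point $\ti\chag_i=-(m+1-i)$, $\cham_j=-(m+\tfrac12-j)$, where the zeros of $\zeta^{-1}$ at $0$ annihilate every summand except $(e,e)$ (Lemma 8.6), and computes the denominators by Weyl's character formula. If you want to proceed along your own lines you would need to replace the boundedness claim of Step 1 by something like the paper's Lemma 8.4--8.5 reduction, and replace the limit in Step 2 by an evaluation at a finite degenerate point; your alternative (b) (reorganizing by cosets of the stabilizer of the split index) is essentially the right instinct, and it is carried out precisely in Lemmas 8.4--8.5.
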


Let $\m C=\sum_{w\in \gpw_{\gpg},w'\in\gpw_{\gpm}}\mathbf{b}(w\chag,w'\cham)\mathbf{d}(w\chag)\mathbf{d'}(w'\cham)$, then what we need to show is
\begin{equation*}
  \m C=\zeta(1)^{m}\prod_{i=1}^{m}\zeta^{-1}(2i).
\end{equation*}
In this section we simply denote $\chag_{i}(\w)$ (resp. $\cham_{j}(\w)$) as $\chag_{i}$ (resp. $\cham_{j}$)  for convenience. Let $\mathbf{b}_{1}(\chag)$ be a function defined on $\chag\in\mathbb{C}^{n}$ and $\m b_{2}(\cham)$ a function on $\cham\in\mathbb{C}^{m}$, we define
\begin{equation*}
  \aveg \left(\m b_{1}(\chag)\right)=\sum_{w\in \gpw_{\gpg}}sgn(w)\m b_{1}(w\chag).
\end{equation*}
and
\begin{equation*}
  \avem (\m b_{2}(\cham))=\sum_{w'\in \gpw_{\gpm}}sgn(w')\m b_{2}(w'\cham).
\end{equation*}
Then it is not hard to see that
\begin{lem}
  For $w\in \gpw_{\gpg}$ and $w'\in \gpw_{\gpm}$, we have
  \begin{equation*}
    \begin{split}
      &\aveg(\m b_{1}(w\chag))=sgn(w)\aveg(\m b_{1}(\chag)),\\
      &\avem(\m b_{2}(w'\cham))=sgn(w')\avem(\m b_{2}(\cham)).
    \end{split}
  \end{equation*}

\end{lem}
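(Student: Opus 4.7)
The plan is to establish both identities by a single, elementary change of summation index in the finite Weyl-group sums defining $\aveg$ and $\avem$. The two claims are formally identical, so I will describe the argument in detail for $\aveg$ and indicate the trivial modification for $\avem$. The only structural facts required are that $\gpw_{\gpg}$ (resp.\ $\gpw_{\gpm}$) is a group and that $\mathrm{sgn}$ is a homomorphism to $\{\pm 1\}$; no properties of the functions $\m b_{1}$, $\m b_{2}$ beyond being defined on $\mathbb{C}^{n}$, $\mathbb{C}^{m}$ are needed.

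First I would unfold the definition of $\aveg$ to obtain
\begin{equation*}
\aveg(\m b_{1}(w\chag)) = \sum_{u\in \gpw_{\gpg}} \mathrm{sgn}(u)\,\m b_{1}((uw)\chag).
\end{equation*}
Since right-translation by $w$ is a bijection of $\gpw_{\gpg}$ onto itself, I would then substitute $v=uw$, so that $u=vw^{-1}$ and $v$ ranges over all of $\gpw_{\gpg}$ as $u$ does. Using multiplicativity of $\mathrm{sgn}$ gives $\mathrm{sgn}(u)=\mathrm{sgn}(v)\mathrm{sgn}(w^{-1})=\mathrm{sgn}(w)\mathrm{sgn}(v)$, and pulling the constant factor $\mathrm{sgn}(w)$ out of the sum yields
\begin{equation*}
\aveg(\m b_{1}(w\chag)) = \mathrm{sgn}(w)\sum_{v\in\gpw_{\gpg}}\mathrm{sgn}(v)\,\m b_{1}(v\chag) = \mathrm{sgn}(w)\,\aveg(\m b_{1}(\chag)),
\end{equation*}
which is the first claim.

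The $\avem$ identity follows by the identical argument, replacing $\gpw_{\gpg}$, $\chag$, $\m b_{1}$, $w$ by $\gpw_{\gpm}$, $\cham$, $\m b_{2}$, $w'$ throughout. I do not anticipate any obstacles: the proof is entirely formal, relying only on the group axioms for $\gpw_{\gpg}$ and $\gpw_{\gpm}$ and on the fact that $\mathrm{sgn}$ is a character into $\{\pm 1\}$, with no analysis of the specific form of $\m b_{1}$, $\m b_{2}$ or of the characters $\chag$, $\cham$ required.
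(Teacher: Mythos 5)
Your proof is correct: the change of index $v=uw$ (or $v=wu$, depending on how one reads the composition — either works since $\mathrm{sgn}(w^{-1})=\mathrm{sgn}(w)$) together with multiplicativity of $\mathrm{sgn}$ is exactly the standard argument, and the paper itself omits the proof as immediate ("it is not hard to see"), clearly having this same computation in mind. Nothing is missing.
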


  For $\epsilon\in \mathbb{Z}^{n}$ and $\mu\in \mathbb{Z}^{m}$, let  $\chag^{\epsilon}=\prod_{i}\chag_{i}^{\epsilon_{i}}$ and $\cham^{\mu}=\prod_{j}\cham_{j}^{\mu_{j}}$. We say $\epsilon$ (resp. $\mu$) is regular if $w\epsilon=\epsilon$ (resp. $w'\mu=\mu$) implies  $w=e$ (resp. $w'=e$). Then we have 
\begin{lem}
 For non-regular $\epsilon$ and $\mu$,
  \begin{equation*}
    \aveg(\chag^{\epsilon})=0;\qquad \avem(\cham^{\mu})=0.
  \end{equation*}
\end{lem}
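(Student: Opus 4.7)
The plan is to reinterpret $\aveg(\chag^\epsilon)$ as an alternating sum over the $W_{\gpg}$-orbit of $\epsilon$, and then to pair off terms using a sign $-1$ Weyl element that stabilizes $\epsilon$. First, observe that if we write $\chag^\epsilon = \prod_i \chag_i^{\epsilon_i}$ and view $w\in W_{\gpg}$ as acting on $\chag$ by the usual permutation-and-inversion, a direct calculation gives $(w\chag)^\epsilon = \chag^{w^{-1}\epsilon}$. Substituting $v = w^{-1}$ (and noting $sgn(v^{-1})=sgn(v)$) converts the definition into
\begin{equation*}
\aveg(\chag^\epsilon) \;=\; \sum_{v\in W_{\gpg}} sgn(v)\,\chag^{v\epsilon}.
\end{equation*}

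Next, I would exhibit a reflection $s\in W_{\gpg}$ with $sgn(s)=-1$ and $s\epsilon=\epsilon$ whenever $\epsilon$ is non-regular. Recall $W_{\gpg}$ is the hyperoctahedral group $\{\pm 1\}^n\rtimes S_n$; its sign character sends $(\sigma,\vec \varepsilon)$ to $sgn(\sigma)\prod_i \varepsilon_i$, and every reflection has sign $-1$. The case analysis is short:
\begin{enumerate}[(i)]
\item if $\epsilon_i=\epsilon_j$ for some $i\neq j$, take $s=(ij)$;
\item if $\epsilon_i=-\epsilon_j$ for some $i\neq j$, take $s$ to be the transposition $(ij)$ composed with the sign flips in slots $i$ and $j$ (three reflections, total sign $-1$);
\item if some $\epsilon_i=0$, take $s$ to be the sign flip at slot $i$.
\end{enumerate}
Non-regularity forces one of these configurations, and in each case $s\epsilon=\epsilon$ with $sgn(s)=-1$.

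Finally, partition $W_{\gpg}$ into the two-element cosets $\{v,vs\}$. Since $(vs)\epsilon = v(s\epsilon) = v\epsilon$ and $sgn(vs)=-sgn(v)$, the two terms cancel: $sgn(v)\chag^{v\epsilon}+sgn(vs)\chag^{(vs)\epsilon}=0$. Summing over the cosets gives $\aveg(\chag^\epsilon)=0$. The statement $\avem(\cham^\mu)=0$ follows by the identical argument with $W_{\gpm}$, which is again of type $B_m$, replacing $W_{\gpg}$. There is no real obstacle here; the only point requiring care is the combinatorial verification in case (ii) that the prescribed stabilizing element genuinely has sign $-1$, but this is immediate from the sign character formula recalled above.
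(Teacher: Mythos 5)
Your proof is correct; the paper states this lemma without proof (immediately after recording the $sgn$-equivariance of $\aveg$ and $\avem$), and your pairing argument is the standard one it leaves implicit. The one substantive point is that a nontrivial stabilizer of $\epsilon$ in the hyperoctahedral group $\gpw_{\gpg}$ must contain a $sgn=-1$ element, and your cases (i)--(iii) are exhaustive: if $(\sigma,\vec\varepsilon)$ stabilizes $\epsilon$ nontrivially, then either $\sigma\neq\mathrm{id}$, which forces $\epsilon_i=\pm\epsilon_j$ for some $i\neq j$, or $\sigma=\mathrm{id}$ with some $\varepsilon_i=-1$, which forces $\epsilon_i=0$. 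Worth noting that the element you construct in case (ii) is in fact a single reflection, namely the one attached to the root $e_i+e_j$, so its sign $-1$ is automatic; your computation $(-1)^3=-1$ reaches the same conclusion.
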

Let $\rho_{1}=(n-\frac 1 2, n-\frac 3 2,..., \frac 1 2)$ and let $\rho_{2}=(m,m-1,...,1)$. Let
\begin{equation*}
  \mathrm{P}(\chag)=\chag^{\rho_{1}}; \qquad \mathrm{Q}(\cham)=\cham^{\rho_{2}}.
\end{equation*}
Note that $\rho_{1}$ is the half sum of postive roots in $\hat{\gpg}=SO_{2n+1}(\mathbb{C})$, and $\rho_{2}$ is that of $\gpm=SP_{2m}$, and note that $\gpw_{\gpg}=\gpw_{\hat{\gpg}}$, so, by the Weyl character formula on $\hat{\gpg}$ and on $\gpm$, we have 
\begin{equation*}
 \m d(\chag)=\dfrac{(-1)^{n}}{\mathrm{P}(\chag)\aveg(\mathrm{P}(\chag))},\quad \m d'(\cham)=\dfrac{(-1)^{m}}{\mathrm{Q}(\cham)\avem(\mathrm{Q}(\cham))}.
\end{equation*}
From which we know that
\begin{equation*}
\m d(w\chag)=\dfrac{(-1)^{n}sgn(w)}{\mathrm{P}(w\chag)\aveg(\mathrm{P}(\chag))},\quad \m d'(w'\cham)=\dfrac{(-1)^{m}sgn(w')}{\mathrm{Q}(w'\cham)\avem(\mathrm{Q}(\cham))}.
\end{equation*}
So we have
\begin{equation}
\label{eq:8.1}
  \mathrm{C}=(-1)^{m+n}\dfrac{(\aveg\circ\avem)(\m b(\chag,\cham)\mathrm{P}(\chag^{-1})\mathrm{Q}(\cham^{-1}))}{\aveg(\mathrm{P}(\chag))\avem(\mathrm{Q}(\cham))}.
\end{equation}
Note that $sgn(w_{0}^{\gpg})sgn(w_{0}^{\gpm})=(-1)^{m+n}$, $w_{0}^{\gpg}(\chag)=\chag^{-1}$ and $w_{0}^{\gpm}(\cham)=\cham^{-1}$. So
\begin{equation*}
  \m C=\dfrac{(\aveg\circ\avem)(\m b(\chag^{-1},\cham^{-1})\mathrm{P}(\chag)\mathrm{Q}(\cham))}{\aveg(\mathrm{P}(\chag))\avem(\mathrm{Q}(\cham))}.
\end{equation*}

To calculate $\m C$, we need to simplify $(\aveg\circ\avem)(\m b(\chag^{-1},\cham^{-1})\mathrm{P}(\chag)\mathrm{Q}(\cham))$. 
\begin{lem}
\label{lem:104}
  Let \begin{equation*}
  \begin{split}
    A(\chag,\cham)=\dfrac{\m b(\chag^{-1},\cham^{-1})\mathrm{P}(\chag)\mathrm{Q}(\cham)}{\prod_{i=1}^{n-m}\prod_{j=1}^{m}\zeta^{-1}(-\chag_{i}+\cham_{j}+\frac{1}{2})\zeta^{-1}(-\chag_{i}-\cham_{j}+\frac{1}{2})}
   \end{split}
 \end{equation*}
then 
\begin{equation*}
  \m C=\dfrac{(\aveg\circ\avem)(A(\chag,\cham))}{\aveg(\mathrm{P}(\chag))\avem(\mathrm{Q}(\cham))}.
\end{equation*}
\end{lem}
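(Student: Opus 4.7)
The plan is to reduce the lemma to the identity $(\aveg\circ\avem)(A\cdot(F-1))=0$, where
\[F(\chag,\cham)=\prod_{i=1}^{n-m}\prod_{j=1}^{m}\zeta^{-1}(-\chag_{i}+\cham_{j}+1/2)\zeta^{-1}(-\chag_{i}-\cham_{j}+1/2)\]
is the denominator being cleared in the definition of $A$, so that $A\cdot F=\m b(\chag^{-1},\cham^{-1})\mathrm{P}(\chag)\mathrm{Q}(\cham)$. Combined with the formula for $\m C$ obtained right before the lemma, this identity is equivalent to the desired claim. The idea is then to prove the stronger statement $\aveg(A\cdot\chag^{\alpha})=0$ for every nonzero monomial $\chag^{\alpha}$ appearing in the expansion of $F-1$.

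First I would factor $F=\prod_{i=1}^{n-m}F_i(\chag_i,\cham)$, where each $F_i$ is $\gpw_{\gpm}$-invariant in $\cham$, depends only on $\chag_i$ among the $\chag$-variables, and is a polynomial of degree $2m$ in $\chag_i^{-1}$. Expanding $\prod_i F_i-1=\sum_{\emptyset\neq S\subseteq\{1,\ldots,n-m\}}\prod_{i\in S}(F_i-1)$, and each $F_i-1$ as a polynomial in $\chag_i^{-1}$ with $\gpw_{\gpm}$-invariant coefficients, the difference $F-1$ becomes a finite sum of terms $\chag^{\alpha}\cdot g(\cham)$ where $\alpha$ is supported on $\{1,\ldots,n-m\}$ with $\alpha_i\in\{-2m,\ldots,0\}$ and not identically zero, and $g(\cham)$ is $\gpw_{\gpm}$-invariant. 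The $\gpw_{\gpm}$-invariant coefficients pull out of $\avem$, reducing the problem to showing $\aveg(A\cdot\chag^{\alpha})=0$ for each such nonzero $\alpha$.

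Next I would use the fact that $A(\chag,\cham)=\mathrm{P}(\chag)\mathrm{Q}(\cham)A_0(\chag_{n-m+1},\ldots,\chag_n,\cham)$, a direct consequence of the definition of $A$: dividing $\m b(\chag^{-1},\cham^{-1})\mathrm{P}(\chag)\mathrm{Q}(\cham)$ by $F$ cancels precisely those $\zeta^{-1}$-factors of $\m b(\chag^{-1},\cham^{-1})$ involving $\chag_i$ with $i\leq n-m$. Consequently, for $i\leq n-m$ the variable $\chag_i$ appears in $A$ solely through the monomial $\chag_i^{n-i+1/2}$ from $\mathrm{P}$. Expanding $A_0$ as a sum of monomials $\chag^{\gamma}\cham^{\delta}$ with $\gamma$ supported on $\{n-m+1,\ldots,n\}$, an inspection of the surviving $\zeta^{-1}$-factors yields $(\rho_1+\gamma)_{n-m+j'}\in\{-m+1/2,\ldots,m-1/2\}$ for every $j'=1,\ldots,m$; in particular $|(\rho_1+\gamma)_{n-m+j'}|\in\{1/2,3/2,\ldots,m-1/2\}$.

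The main obstacle will be the combinatorial step establishing $\aveg(\chag^{\rho_1+\alpha+\gamma})=0$ for each such $\alpha\neq 0$ and each admissible $\gamma$. Arguing by contradiction, if $\rho_1+\alpha+\gamma$ were $\gpw_{\gpg}$-regular then its $n$ distinct absolute values, all lying in $\{1/2,\ldots,n-1/2\}$, would exhaust this set. The lower-block absolute values $\{|(\rho_1+\gamma)_{n-m+j'}|\}$ would then occupy exactly $\{1/2,\ldots,m-1/2\}$, so the upper-block values $|(\rho_1+\alpha)_i|$ for $i\leq n-m$ would occupy $\{m+1/2,\ldots,n-1/2\}$. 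The bound $\alpha_i\geq-2m$ excludes the negative alternative $(\rho_1+\alpha)_i\leq-(m+1/2)$, so $(\rho_1+\alpha)_i=n-j_i+1/2$ with $j_i\in\{i,i+1,\ldots,n-m\}$. Distinctness forces $j$ to be a bijection of $\{1,\ldots,n-m\}$ satisfying $j_i\geq i$; since $\sum j_i=\sum i$ and each summand $j_i-i\geq 0$, this forces $j_i=i$ for all $i$, hence $\alpha\equiv 0$, contradicting the hypothesis. Thus every monomial in $A\cdot(F-1)$ has non-regular $\chag$-exponent and is killed by $\aveg$, completing the proof.
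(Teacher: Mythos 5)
Your proof is correct and takes essentially the same approach as the paper: both arguments rest on the observation that for $i\le n-m$ the $\chag_i$-exponent in $\m b(\chag^{-1},\cham^{-1})\mathrm{P}\mathrm{Q}$ lies in $[n-i+\tfrac12-2m,\,n-i+\tfrac12]$, that the lower-block exponents have absolute value at most $m-\tfrac12$, and that regularity therefore forces the upper-block exponents to their upper bounds, so only the constant term of $F$ survives. The only difference is presentational: you organize it as the identity $(\aveg\circ\avem)(A\cdot(F-1))=0$ and close the combinatorial step with the bijection-sum argument $\sum j_i=\sum i$, $j_i\ge i$, whereas the paper works directly with monomials of $\m b\cdot\mathrm{P}\cdot\mathrm{Q}$ and runs a downward induction $\epsilon_{n-m}=m+\tfrac12$, $\epsilon_{n-m-1}=m+\tfrac32,\dots$; these are interchangeable.
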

\begin{proof}
Consider $\m b(\chag^{-1},\cham^{-1})\mathrm{P}(\chag)\mathrm{Q}(\cham)$, wihch is equal to
\begin{equation*}
  \begin{split}
    &\prod_{i<j+n-m}\zeta^{-1}(-\chag_{i}+\cham_{j}+\frac{1}{2})\cdot \prod_{i>j+n-m}\zeta^{-1}(+\chag_{i}-\cham_{j}+\frac{1}{2})\\
    &\prod_{ 1\leq j \leq m}\zeta^{-1}(-\cham_{j}+\frac 1 2)\cdot \prod_{\substack{1\leq i \leq n\\1\leq j \leq m}}\zeta^{-1}(-\chag_{i}-\cham_{j}+\frac{1}{2})\cdot \chag^{\rho_{1}}\cham^{\rho_{2}}.
  \end{split}
\end{equation*}
Let $\m b(\chag^{-1},\cham^{-1})\mathrm{P}(\chag)\mathrm{Q}(\cham)=\sum_{\epsilon,\mu}c_{\epsilon,\mu}\cdot \chag^{\epsilon}\cham^{\mu}$, then
\begin{equation*}
  (\aveg\circ\avem)(\m b(\chag^{-1},\cham^{-1})\mathrm{P}(\chag)\mathrm{Q}(\cham))=\sum_{\epsilon,\mu \text{ regular}}c_{\epsilon,\mu}\cdot (\aveg\circ\avem)(\chag^{\epsilon}\cham^{\mu}).
\end{equation*}
By considering the power of $\chag$, for those $(\epsilon,\mu)$ with $c_{\epsilon,\mu}\neq 0$, we have
\begin{equation*}
\epsilon_{i}\in
  \begin{cases}
    [\frac 1 2+(n-i)-2m, \frac{1}{2}+(n-i)]& \text{ when }1\leq i \leq n-m\\
    [\frac{1}{2}-m, m-\frac{1}{2}]&\text { when } n-m+1\leq i \leq n
      \end{cases}
\end{equation*}
Suppose $\epsilon$ is regular, then  for $n-m+1\leq i \leq n$, $  \{|\epsilon_{i}|\}$ must be a permutation of $\{ \frac 1 2, \frac 3 2,..., m-\frac 1 2\}$, which implies $\epsilon_{n-m}=\frac{1}{2}+m$, which then implies $\epsilon_{n-m-1}=\frac 1 2 + (m+1)$, and so on. Eventually we have, $\epsilon_{i}=\frac{1}{2}+n-i$ for $1\leq i \leq n-m$. In other words, for $1\leq i \leq n-m$, $\epsilon_{i}$ should attain its upper bound. This implies that $ (\aveg\circ\avem)(\m b(\chag^{-1},\cham^{-1})\mathrm{P}(\chag)\mathrm{Q}(\cham))$ remains the same if we divides $ \prod_{i=1}^{n-m}\prod_{j=1}^{m}\zeta^{-1}(-\chag_{i}+\cham_{j}+\frac{1}{2})\zeta^{-1}(-\chag_{i}-\cham_{j}+\frac{1}{2})$ from $\m b(\chag^{-1},\cham^{-1})\mathrm{P}(\chag)\mathrm{Q}(\cham)$, which equals $A(\chag,\cham)$.
\end{proof}

Let $\ti \chag=(\ti \chag_{1},\ldots, \ti \chag_{m})\in \mathbb{C}^{m}$ such that $\ti \chag_{i}=\chag_{n-m+i}$. Let
\begin{equation*}
  \begin{split}
    &\mathrm{\ti A}(\ti\chag,\cham)=\dfrac{A (\chag,\cham)}{\prod_{i=1}^{n-m} \chag_{i}^{\frac{1}{2}+(n-i)}}\\
    =&\prod_{i<j}\zeta^{-1}(-\ti{\chag}_{i}+\cham_{j}+\frac{1}{2})\prod_{i>j}\zeta^{-1}(\ti{\chag}_{i}-\cham_{j}+\frac{1}{2})\\
    &\prod_{ 1\leq j \leq m}\zeta^{-1}(-\cham_{j}+\frac 1 2)\prod_{1 \leq i,j \leq m}\zeta^{-1}(-\ti{\chag}_{i}-\cham_{j}+\frac{1}{2}) {\ti\chag}^{\ti \rho_{1}} \cham^{\rho_{2}},
  \end{split}
\end{equation*}
where $\ti \rho_{1}=(m-\frac 1 2, n-\frac 3 2,..., \frac 1 2)$. Then we have the following lemma.
\begin{lem}
\label{lem:105}
  If we let $\mathrm{P}(\ti \chag)=\ti\chag^{\ti\rho_{1}}$, and let $$(\avem\circ\avem)(\mathrm{\ti A}(\ti\chag,\cham))=\sum_{w_{1},w_{2}\in \gpw_{\gpm}}sgn(w_{1})sgn(w_{2})\mathrm{\ti A}(w_{1}\ti\chag,w_{2}\cham),$$ then

\begin{equation*}
  \m C=\dfrac{(\avem\circ\avem)(\mathrm{\ti A}(\ti\chag,\cham))}{\avem (\mathrm{\ti P}(\ti \chag))\avem (\mathrm{Q}(\cham))}.
\end{equation*}
Moreover, the value of $\mathbf{C}$ is independent of $(\chag,\cham)$.
\end{lem}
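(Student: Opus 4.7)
The plan is to reduce the $\gpw_\gpg$-antisymmetrization in the numerator produced by Lemma \ref{lem:104} to a $\gpw_\gpm$-antisymmetrization, exploiting the factorization
\begin{equation*}
A(\chag,\cham) = \ti A(\ti\chag,\cham)\cdot \prod_{i=1}^{n-m}\chag_{i}^{n-i+1/2}
\end{equation*}
in which only the first $n-m$ components of $\chag$ appear in the monomial factor. First I would decompose $\gpw_\gpg\cong(\mathbb{Z}/2)^{n}\rtimes S_{n}$ into left cosets of the ``Levi'' subgroup $H=H_{n-m}\times H_{m}$, where $H_{n-m}$ is the hyperoctahedral group acting on $(\chag_{1},\ldots,\chag_{n-m})$ and $H_{m}$ on $\ti\chag=(\chag_{n-m+1},\ldots,\chag_{n})$. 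The $\binom{n}{m}$ cosets are indexed by $m$-subsets $T\subset\{1,\ldots,n\}$; for each $T$ I would pick a pure permutation representative $w_{T}$ sending the positions $n-m+1,\ldots,n$ onto $T$ in order. Summing within each coset yields
\begin{equation*}
\aveg(A(\chag,\cham)) = \sum_{|T|=m} sgn(w_{T})\,\Xi(\chag|_{T^{c}})\cdot \avem(\ti A(\chag|_{T},\cham)),
\end{equation*}
where $\Xi(y):=\sum_{h\in H_{n-m}}sgn(h)\prod_{i=1}^{n-m}(hy)_{i}^{n-i+1/2}$ is the $H_{n-m}$-antisymmetrization of a single fixed monomial; the identical decomposition with $\ti P(\chag|_{T})$ in place of $\ti A(\chag|_{T},\cham)$ expresses $\aveg(P(\chag))$.

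The crucial claim is that
\begin{equation*}
\frac{\avem(\ti A(\ti\chag,\cham))}{\avem(\ti P(\ti\chag))} \text{ is independent of } \ti\chag.
\end{equation*}
Both numerator and denominator are $\gpw_\gpm$-anti-invariant in $\ti\chag$, so the ratio is $\gpw_\gpm$-invariant. By the Weyl denominator identity, $\avem(\ti P(\ti\chag))$ factors (up to a monomial prefactor) as $\prod_{\alpha>0}(\ti\chag^{\alpha/2}-\ti\chag^{-\alpha/2})$ over the positive roots of $\gpm$, and on each such root hyperplane the numerator $\avem(\ti A(\ti\chag,\cham))$ vanishes as well by anti-invariance, so the quotient is regular in $\ti\chag$. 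A direct inspection of the highest and lowest $\ti\chag_{k}$-Laurent contributions arising from the four families of $\zeta^{-1}(\cdot)$ factors and the monomial $\ti\chag^{\ti\rho_{1}}$ in $\ti A$, using $\zeta^{-1}(s)=1-q^{-s}$, matched against the explicit leading monomial of $\avem(\ti P)$, shows that the ratio has Laurent degree zero in each $\ti\chag_{k}$. Being $\gpw_\gpm$-invariant, regular, and of individual degree zero in every variable, it must therefore be a constant in $\ti\chag$.

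Once this claim is in hand, the constant factor pulls out of the $T$-sum, yielding $\aveg(A(\chag,\cham))/\aveg(P(\chag)) = \avem(\ti A(\ti\chag,\cham))/\avem(\ti P(\ti\chag))$; dividing through by $\avem(Q(\cham))$ gives the first identity of the lemma. For the ``moreover'' clause, this identity already exhibits $\m C$ as depending only on $\ti\chag$ and $\cham$, and the claim above removes the $\ti\chag$-dependence. To remove the $\cham$-dependence, I would apply the same strategy in the $\cham$-variables to the already-simplified expression $(\avem\circ\avem)(\ti A)/(\avem(\ti P)\avem(Q))$: this quotient is $\gpw_\gpm$-invariant in $\cham$, regular on the $\gpm$-root hyperplanes in $\cham$ by anti-invariance, and a parallel degree count in each $\cham_{j}$ forces it to be constant in $\cham$ as well.

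The principal difficulty will be the degree analysis needed to upgrade ``$\gpw_\gpm$-invariant and regular'' to ``constant''. For each variable $\ti\chag_{k}$ (and later each $\cham_{j}$) one must track the contributions to the top and bottom Laurent degrees coming from the various $\zeta^{-1}$ factors in $\ti A$ and compare them precisely against the leading monomial of the Weyl denominator $\avem(\ti P)$. Though this bookkeeping is entirely elementary, it is the crux of the argument; once it is carried through, the combination of invariance, regularity, and vanishing individual degrees forces the desired constancy immediately.
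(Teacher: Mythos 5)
Your proposal is correct but proceeds along a genuinely different route from the paper's proof. The paper expands $A$ and $\mathrm{\ti A}$ into Laurent monomials $\chag^{\epsilon}\cham^{\mu}$, uses the exponent bounds from Lemma~\ref{lem:104} to observe that a monomial $\chag^{\epsilon}$ can survive $\aveg$ only when $\epsilon$ is regular, and then notes that regularity combined with those bounds forces $\epsilon = w_{1}\rho_{1}$ and $\mu=w_{2}\rho_{2}$ with $w_{1},w_{2}\in\gpw_{\gpm}$. This identifies the surviving monomials exactly and collapses both sides of the asserted identity to the same signed sum $\sum_{w_{1},w_{2}}d_{w_{1},w_{2}}\,sgn(w_{1})sgn(w_{2})$ of constant coefficients, from which both the equality and the constancy are immediate. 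Your argument instead reduces $\aveg$ to $\avem$ via the parabolic coset decomposition $\gpw_{\gpg}=\bigsqcup_{T}w_{T}(H_{n-m}\times H_{m})$, and proves the constancy of the ratio $\avem(\mathrm{\ti A})/\avem(\mathrm{\ti P})$ by an analytic argument: anti-invariance gives divisibility by the Weyl denominator (so the ratio is a Laurent polynomial), and a degree count in each $\ti\chag_{k}$ (range $[-(m-\frac12),m-\frac12]$ in the numerator, exactly that range in the denominator) forces degree zero, then repeat in $\cham_{j}$ (where the symmetric range $[-m,m]$ again matches $\avem(\mathrm{Q})$). I checked the degree bookkeeping; it does close. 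What the paper's route buys is the explicit closed form $\m C=\sum d_{w_{1},w_{2}}sgn(w_{1})sgn(w_{2})$ and the avoidance of the coset decomposition (which requires some care: the $\avem$ on $\mathrm{\ti A}(\chag|_{T},\cham)$ depends on a choice of ordering of $T$, which must cancel against $sgn(w_{T})$); what your route buys is that one need not pin down the regular monomials exactly, only bound their degree. Two small points to tighten in a write-up: (1) distinguish notationally between $\avem$ acting on $\ti\chag$ and $\avem$ acting on $\cham$ throughout, since both appear and the lemma's $\avem\circ\avem$ mixes them; (2) the reduction to the first identity should be stated with the $\avem^{\cham}$ carried along, i.e.\ from $\aveg(A)=c(\cham)\aveg(\mathrm{P})$ one applies $\avem^{\cham}$ and uses that it commutes with $\aveg$ to obtain $(\aveg\circ\avem)(A)=\avem^{\cham}(c)\aveg(\mathrm{P})$, and similarly on the $\mathrm{\ti A}$ side, before dividing by $\avem(\mathrm{Q})$.
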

\begin{proof}
What we actually need to prove is
 \begin{equation*}
  \dfrac{(\aveg\circ\avem)(A(\chag,\cham))}{\aveg(\mathrm{P}(\chag))\avem(\mathrm{Q}(\cham))}=\dfrac{(\avem\circ\avem)(\mathrm{\ti A}(\ti\chag,\cham))}{\avem (\mathrm{\ti P}(\ti \chag))\avem (\mathrm{Q}(\cham))}
\end{equation*}
and that it is independent of $(\chag,\cham)$.
Let $A(\chag,\cham)=\sum_{\epsilon,\mu}d_{\epsilon,\mu}\chag^{\epsilon}\cham^{\mu}$. Then by the discussion in the previous lemma,
\begin{equation*}
  \begin{split}
    &\epsilon_{i}=\frac 1 2 +n -i, \text{ for }1 \leq i \leq n-m\\
    &\epsilon_{i}\in [-(m-\frac 1 2),m-\frac 1 2], \text{ for } i>n-m.
  \end{split}
\end{equation*}
So for each regular $\epsilon$, if we let $\epsilon'=(\epsilon_{n-m+1},\ldots,\epsilon_{n})\in \mathbb{Z}^{m}$, then
\begin{enumerate}
\item $\epsilon=(n-\frac 1 2, n-\frac 3 2,\ldots ,m+\frac 1 2; \epsilon')$.
\item Note that $\chag^{\epsilon}=\prod_{i=1}^{n-m} \chag_{i}^{\frac{1}{2}+(n-i)}\cdot\ti\chag^{\epsilon'}$, so
  \begin{equation*}
    \mathrm{\ti A}(\ti\chag,\cham)=\sum_{\epsilon,\mu}d_{\epsilon,\mu}\ti\chag^{\epsilon'}\cham^{\mu}
  \end{equation*}
\item $\epsilon$ is regular if and only if $\epsilon'$ is regular with respect to the action of $\gpw_{\gpm}$. So when $\epsilon$ is regular, $\{|\epsilon_{n-m+1}|,\ldots,|\epsilon_{n}|\}$ is a permutation of $\{\frac 1 2, \ldots, m-\frac 1 2\}$, and hence there exists $w_{1}\in \gpw_{\gpm}$, such that $\epsilon=w_{1}\rho_{1}$ and $\epsilon'=w_{1}\ti\rho_{1}$.
\end{enumerate}

By a similar consideration on the power of $\cham$, if $\mu$ is regular, we have
\begin{equation*}
  \{|\mu_{1}|,\ldots,|\mu_{m}|\}\text{ is a permutation of }\{1,\ldots,m\}.
\end{equation*}
So for $(\epsilon,\mu)$ being regular, we let $\epsilon=w_{1}\rho_{1}$ (so $\epsilon'=w_{1}\ti\rho_{1}$), and $\mu=w_{2}\rho_{2}$, where $w_{1},w_{2}\in \gpw_{\gpm}$, and we let $d_{w_{1},w_{2}}=d_{\epsilon,\mu}$. Then
\begin{equation*}
  \begin{split}
    &(\aveg\circ\avem)(A(\chag,\cham))=\sum_{\epsilon,\mu\text{ regular}}d_{\epsilon,\mu}(\aveg\circ\avem)(\chag^{\epsilon}\cham^{\mu})\\
    =&\sum_{w_{1},w_{2}\in \gpm}d_{w_{1},w_{2}}(\aveg\circ\avem)\left((\chag)^{w_{1}\rho_{1}}(\cham)^{w_{2}\rho_{2}}\right)\\
    =&\aveg(\mathrm{P}(\chag))\avem(\mathrm{Q}(\cham))\sum_{w_{1},w_{2}\in \gpm}d_{w_{1},w_{2}}sgn(w_{1})sgn(w_{2}).
  \end{split}
\end{equation*}
and
\begin{equation*}
   \begin{split}
    &(\avem\circ\avem)(\mathrm{\ti A}(\ti \chag,\cham))=\sum_{\epsilon,\mu\text{ regular}}d_{\epsilon,\mu}(\avem\circ\avem)(\ti \chag^{\epsilon'}\cham^{\mu})\\
    =&\sum_{w_{1},w_{2}\in \gpm}d_{w_{1},w_{2}}(\avem\circ\avem)\left((\ti \chag)^{w_{1}\ti\rho_{1}}(\cham)^{w_{2}\rho_{2}}\right)\\
    =&\avem(\mathrm{\ti P}(\ti \chag))\avem(\mathrm{Q}(\cham))\sum_{w_{1},w_{2}\in \gpm}d_{w_{1},w_{2}}sgn(w_{1})sgn(w_{2}).
  \end{split}
\end{equation*}
So
\begin{equation*}
  \begin{split}
    \m C=&\dfrac{(\aveg\circ\avem)(A(\chag,\cham))}{\aveg(\mathrm{P}(\chag))\avem(\mathrm{Q}(\cham))}\\
    =&\sum_{w_{1},w_{2}\in \gpm}d_{w_{1},w_{2}}sgn(w_{1})sgn(w_{2})\\=
    &\dfrac{(\avem\circ\avem)(\mathrm{\ti A}(\ti \chag,\cham))}{\avem(\mathrm{\ti P}(\ti \chag))\avem(\mathrm{Q}(\cham))},
  \end{split}
\end{equation*}
completing our proof since $d_{w_{1},w_{2}}$ is independent of $(\chag,\cham)$.
\end{proof}

So to find the value of $\m C$ one only needs to find the value of
\begin{equation*}
  \dfrac{(\avem\circ\avem)(\mathrm{\ti A}(\ti \chag,\cham))}{\avem(\mathrm{\ti P}(\ti \chag))\avem(\mathrm{Q}(\cham))}
\end{equation*}
for a special $(\chag,\cham)$. From now on we let $\ti\chag_{i}=-(m+1-i)$ and $\cham_{j}=-(m+\frac 1 2 -j)$, then we have the following lemma
\begin{lem}
  If $\mathrm{\ti A}(w\ti\chag,w'\cham)\neq 0$, then  $w=w'=e$.
\end{lem}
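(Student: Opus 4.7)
The plan is to show that for $(w,w') \neq (e,e)$, one of the $\zeta^{-1}$ factors in $\mathrm{\ti A}(w\ti\chag, w'\cham)$ vanishes at the specialized point. Since $\zeta^{-1}(s) = 1-q^{-s}$ vanishes iff $s = 0$ (for the real half-integer arguments of bounded size appearing here), I need to exhibit $i, j$ realizing one of the following, where $a_i = (w\ti\chag)_i$ and $b_j = (w'\cham)_j$: $\mathrm{(F1)}$ $a_i - b_j = \tfrac12$ with $i < j$; $\mathrm{(F2)}$ $a_i - b_j = -\tfrac12$ with $i > j$; $\mathrm{(F3)}$ $b_j = \tfrac12$; $\mathrm{(F4)}$ $a_i + b_j = \tfrac12$. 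Writing $\gpw_\gpm = \{\pm1\}^m \rtimes S_m$, the absolute-value multisets $\{|a_i|\} = \{1,\ldots,m\}$ and $\{|b_j|\} = \{\tfrac12,\tfrac32,\ldots,m-\tfrac12\}$ are preserved under the action.

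The first step is to rule out sign flips. Set $S = \{i : a_i > 0\}$ and $T = \{j : b_j > 0\}$, and suppose $\mathrm{(F3)}, \mathrm{(F4)}$ both fail. Failure of $\mathrm{(F3)}$ gives $|b_j| \geq \tfrac32$ for $j \in T$. For each $i \in S$ there is a unique index $j$ with $|b_j| = |a_i| - \tfrac12$; if such $j$ had $b_j < 0$ then $a_i + b_j = \tfrac12$, contradicting failure of $\mathrm{(F4)}$, so $j \in T$. This gives an injection $\phi: S \hookrightarrow T$. Symmetrically, for $j \in T$ the unique $i$ with $|a_i| = |b_j| - \tfrac12 \geq 1$ must satisfy $a_i > 0$, yielding an injection $\psi: T \hookrightarrow S$. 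Hence $|S| = |T|$ and $\psi \circ \phi$ is a self-map of $S$ sending $i$ to the element $i' \in S$ with $|a_{i'}| = |a_i| - 1$. Taking $i \in S$ that minimizes $|a_i|$ produces $i' \in S$ with strictly smaller $|a_{i'}|$, a contradiction. (If $S = \emptyset \neq T$ or vice versa, the same reasoning produces $(i,j)$ with $a_i + b_j = \tfrac12$ directly, so $\mathrm{(F4)}$ vanishes.) Hence $S = T = \emptyset$, and $w, w'$ act as pure permutations $\sigma, \tau$ of $\{1,\ldots,m\}$.

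In the pure permutation case all $a_i, b_j$ stay negative, so only $\mathrm{(F1)}, \mathrm{(F2)}$ can fire. Substituting $a_i = -(m+1-\sigma(i))$ and $b_j = -(m+\tfrac12-\tau(j))$ gives $a_i - b_j = \sigma(i)-\tau(j)-\tfrac12$, so $\mathrm{(F2)}$ vanishes iff $\sigma(i) = \tau(j)$ for some $i > j$, equivalently $\sigma^{-1}(k) > \tau^{-1}(k)$ for some $k$. Non-vanishing forces $\sigma^{-1}(k) \leq \tau^{-1}(k)$ for every $k$; since both sums equal $m(m+1)/2$, equality holds everywhere and $\sigma = \tau$. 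Then $\mathrm{(F1)}$ vanishes iff $\sigma^{-1}(k+1) < \sigma^{-1}(k)$ for some $k$, so non-vanishing forces $\sigma^{-1}$ strictly increasing, giving $\sigma = e$. Therefore $w = w' = e$.

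The main obstacle is the first step, where sign flips on the two sides mutually constrain each other via the pairing $|a_i| - |b_j| = \pm\tfrac12$; the chain-collapse argument through the minimum of $\{|a_i| : i \in S\}$ is what extracts the contradiction cleanly and avoids an inductive case split on the sign pattern.
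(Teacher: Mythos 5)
Your proof is correct and follows the paper's strategy: derive the four nonvanishing conditions F1--F4 (the paper's conditions (1)--(4)), use F3 and F4 to rule out sign flips, then use F1 and F2 to force the two permutations to be the identity. The only differences are in packaging: the paper eliminates sign flips by explicitly climbing a chain $\tfrac12\notin B \Rightarrow -\tfrac12\in B \Rightarrow 1\notin A \Rightarrow \cdots$ and finishes by interleaving the inequalities $\sigma^{-1}(k)\leq(\sigma')^{-1}(k)\leq\sigma^{-1}(k+1)$ into a single descending chain, whereas your injection-plus-minimum argument for the sign flips and your split into a sum-trick step (giving $\sigma=\tau$) followed by a monotonicity step are equivalent, slightly tidier formulations of the same combinatorics.
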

\begin{proof}
We denote $\ti\chag_{-i}=-\ti\chag_{i}$ and $\cham_{-j}=\cham_{j}$. For any weyl element $w,w'\in W_{\gpm}$, there exists $\sigma,\sigma'$, permutations of the set $\{\pm 1,\pm 2,\ldots,\pm m\}$ such that
\begin{enumerate}
\item $\sigma(-i)=-\sigma(i)$, $\sigma'(-j)=-\sigma'(j)$.
\item $(w{\ti\chag})_{i}={\ti\chag}_{\sigma_{i}}$, $(w'\cham)_{j}=\cham_{\sigma'_{j}}$.
\end{enumerate}
So $w=w'=e$ if and only if $\sigma=\sigma'=\mathrm{id}$. Now suppose $\mathrm{\ti A}(w\ti\chag,w'\cham)\neq 0$, since $\zeta^{-1}(0)=0$, we $\sigma$ and $\sigma'$ should satisfy the following properties:
\begin{enumerate}
\item For any $i<j$, $\ti{\chag}_{\sigma_{i}}-\cham_{\sigma'(j)}\neq \frac 1 2$. 
\item For any $i>j$, $\ti{\chag}_{\sigma_{i}}-\cham_{\sigma'(j)}\neq -\frac 1 2$.
\item For any $1\leq j\leq m$, $\cham_{\sigma'(j)}\neq \frac 1 2$.
\item For any $1\leq i,j \leq m$, $\ti{\chag}_{\sigma_{i}}+\cham_{\sigma'(j)}\neq \frac 1 2$ 
\end{enumerate}
These four conditions actually imply $\sigma=\sigma'=1$. To see this we consider $ A= \{\ti{\chag}_{\sigma(i)},1\leq i \leq m\}$ and $ B= \{\cham_{\sigma'(j)},1\leq j \leq m\}$. Note that $\{|\ti{\chag}_{\sigma(1)}|,\ldots,|\ti{\chag}_{\sigma(m)}|\}=\{1,\ldots,m\}$ and $\{|\cham_{\sigma(1)}|,\ldots,|\cham_{\sigma(m)}|\}=\{\frac 1 2,\ldots, (m-\frac 1 2)\}$. By property (3), $\frac 1 2\notin B$, which implies $-\frac 1 2\in B$. Then by property (4), $1 \notin A$. So then $-1 \in A$. Then again by property (4), $\frac 3 2\notin B$, so then $-\frac{3}{2}\in B$. And then by property (4), $2\notin A$, so $-2\in A$. Continuing this process, we will eventually have $A=\{-m,-(m-1),...,-1\}$ and $  B=\{-m+\frac 1 2, -m+\frac 3 2,..., -\frac 1 2\}$. So $\sigma$ and $\sigma'$ are actually permutations of $\{1,2,...,m\}$. Note that $\ti{\chag}_{k}-\cham_{k}=-\frac 1 2$, so by property (2), $\sigma(i)\neq \sigma'(j)$ for any $i>j$, which implies that
\begin{equation}
\label{eq:8.5}
  \sigma^{-1}(k)\leq (\sigma')^{-1}(k)
\end{equation}
when $1 \leq k \leq m$. On the other hand, since $\ti{\chag}_{k+1}-\cham_{k}=\frac 1 2$, we have
\begin{equation}
\label{eq:8.6}
  \sigma^{-1}(k+1)\geq (\sigma')^{-1}(k).
\end{equation}
by property (1). Combining equation (\ref{eq:8.5}) and (\ref{eq:8.6}), we have 
\begin{equation*}
  (\sigma')^{-1}(m)\geq \sigma^{-1}(m)\geq (\sigma')^{-1}(m-1)\geq \sigma^{-1}(m-1)\geq... \geq (\sigma')^{-1}(1)\geq \sigma^{-1}(1),
\end{equation*}
which implies that $\sigma=\sigma'=\mathbf{id}$.
\end{proof}

By the lemma,
\begin{equation*}
  \m C=\dfrac{\mathrm{\ti A}(\ti\chag,\cham)}{\avem(\mathrm{\ti P}(\ti\chag))\avem(\mathrm{Q}(\cham))}
\end{equation*}
where $\ti\chag_{i}=-(n+1-i)$ and $\cham_{j}=-(m+\frac 1 2 -j)$.
Note that by the Weyl character formula,
\begin{equation*}
  \begin{split}
    &{\avem(\mathrm{\ti P}(\ti\chag))}=\prod_{1 \leq a< b \leq m }\zeta^{-1}(-\ti\chag_{a}+\ti\chag_{b})\zeta^{-1}(-\ti\chag_{a}-\ti\chag_{b})\prod_{i=1}^{n}\zeta^{-1}(-\ti\chag_{i}){\mathrm{\ti P}(\ti\chag)};\\
    &{\avem(\mathrm{Q}(\cham))}=\prod_{1\leq a<b \leq m}\zeta^{-1}(-\cham_{a}+\cham_{b})\zeta^{-1}(-\cham_{a}-\cham_{b})\prod_{j=1}^{m}\zeta^{-1}(-2\cham_{j}){\mathrm{Q}(\cham)}.
    \end{split}
\end{equation*}
By direct calculation we have
\begin{equation*}
  \mathbf{C}=\zeta(1)^{m}\prod_{i=1}^{m}\zeta^{-1}(2i).
\end{equation*}

\section{Uniqueness of the Whittaker Shintani function }
\label{sec:step9}

In this section we show the following theorem.
\begin{thm}
\label{thmunique}
  Let $\wsfb$ be a Whittaker Shintani function on $\gpg$. Let $\mathcal{W}(\m d, \m f)=\wsfb(\w^{\m d}\lambda \w^{\m f})$ with $\m d\in \Lambda^{+}_{m}$ and $\m f\in \Lambda^{+}_{n}$. If $\mathcal{W}(\m 0,\m 0)=0$, then $\mathcal{W}(\m d,\m f)=0$ for every $\m d\in \Lambda^{+}_{m}$ and $\m f\in \Lambda^{+}_{n}$. 
\end{thm}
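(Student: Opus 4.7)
The plan is to prove by induction on a suitable well-founded ordering of $\Lambda^+_m \times \Lambda^+_n$ that $\mathcal{W}(\m d, \m f) = 0$ for every $(\m d, \m f)$; since the support of $\wsfb$ is exhausted by the double cosets indexed by such pairs (Theorem \ref{thm:step4}) and $\wsfb$ transforms by fixed characters under left multiplication by $\gpz\gpu\gpk_{\gpm^{\gpj}}$ and right multiplication by $\gpk_{\gpg}$, this will force $\wsfb \equiv 0$. The base case $(\m 0, \m 0)$ is the hypothesis of the theorem.

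For the inductive step, fix $(\m d, \m f) \neq (\m 0, \m 0)$ and assume $\mathcal{W}(\m d', \m f') = 0$ for every strictly smaller $(\m d', \m f')$. Let $\chi_{\m f} = \mathrm{Ch}_{\gpk_{\gpg}\w^{\m f}\gpk_{\gpg}} \in \mathcal{H}_{\gpg}$ and let $\chi'_{\m d} \in \mathcal{H}_{\gpm^{\gpj},\psi}$ be the spherical Hecke element attached to the Cartan double coset of $\w^{\m d}$ in the sense of \cite{MR1018057}. The two-sided Hecke eigenvalue property of $\wsfb$, combined with the hypothesis $\mathcal{W}(\m 0, \m 0) = 0$, gives
\begin{equation*}
    L(\chi'_{\m d}) R(\chi_{\m f}) \wsfb(e) = \omega_{\cham}(\chi'_{\m d}) \omega_{\chag}(\chi_{\m f}) \mathcal{W}(\m 0, \m 0) = 0.
\end{equation*}
On the other hand, expanding the left-hand side directly using the Cartan decompositions $\gpk_{\gpg}\w^{\m f}\gpk_{\gpg} = \bigsqcup_i n_i \w^{\m f}\gpk_{\gpg}$ and the analogous one for $\gpk_{\gpm^{\gpj}}\w^{\m d}\gpk_{\gpm^{\gpj}}$, together with Theorem \ref{thm:step4} and the canonical-form reductions of Lemma \ref{lem:induction} and Proposition \ref{lem:closure}, rewrites the identity above as a finite linear relation
\begin{equation*}
    0 = \sum_{i,j} c_{ij}\,\mathcal{W}(\m d_{ij}', \m f_{ij}'),
\end{equation*}
in which $(\m d_{ij}', \m f_{ij}') \in \Lambda^+_m \times \Lambda^+_n$ indexes the double coset containing the reduced form of the $(i,j)$-summand $m_j^{\gpj}\w^{\m d}\lambda n_i \w^{\m f}$.

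The crucial claim is that the summand corresponding to trivial coset representatives contributes precisely $\mathrm{Vol}(\gpk_{\gpg})\mathrm{Vol}(\gpk_{\gpm^{\gpj}})\,\mathcal{W}(\m d, \m f)$ with nonzero coefficient, while every other $(\m d_{ij}', \m f_{ij}')$ appearing is strictly below $(\m d, \m f)$ in the chosen ordering. Given this, the inductive hypothesis forces $\mathcal{W}(\m d, \m f) = 0$, completing the induction.

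The main obstacle is verifying this strict-decrease claim. One must show that any nontrivial perturbation of $\w^{\m d}\lambda\w^{\m f}$ by the $n_i$'s on the right, or the $m_j^{\gpj}$'s on the left, when reduced to canonical form via the operations of Lemma \ref{lem:induction}, produces a pair $(\m d_{ij}', \m f_{ij}')$ strictly smaller than $(\m d, \m f)$. A natural candidate for the ordering is the partial order generated by the dominance order on $\Lambda^+_m \times \Lambda^+_n$ together with the total-size function $\sum_i d_i + \sum_j f_j$; checking that it is compatible with both the valuation constraints in the spirit of Lemmas \ref{lem:4.1} and \ref{lem:4.2} and with the three reductions of Lemma \ref{lem:induction} is the technical heart of the argument.
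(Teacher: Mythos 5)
Your overall strategy matches the paper's: act on $\wsfb$ by spherical Hecke operators on both sides, use the eigenvalue property to identify the result with $\omega_{\cham}(\cdot)\omega_{\chag}(\cdot)\mathcal{W}(\m 0,\m 0)=0$, expand the action into a finite linear relation among the values $\mathcal{W}(\m d',\m f')$, and induct on a well-founded ordering. The paper phrases the key identity as computing $\int_{\gpk_{\gpm^{\gpj}}\w^{\m d}\gpk_{\gpg}\w^{\m f}\gpk_{\gpg}}\wsfb(g)\ud g$, which is the same Hecke-operator computation you describe. Two points in your execution, however, are genuine gaps.

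The "crucial claim" that \emph{only} the trivial coset representatives return the pair $(\m d,\m f)$, with every other summand strictly below it, is stronger than what is provable and also stronger than what is needed. Several coset representatives $m_j^{\gpj}\w^{\m d}\lambda n_i\w^{\m f}$ can land in the same double coset $\gpz\gpu\gpk_{\gpm^{\gpj}}\w^{\m d}\lambda\w^{\m f}\gpk_{\gpg}$, and a priori their contributions to the coefficient of $\mathcal{W}(\m d,\m f)$ carry the phases $\psi^{-1}(z)\psi_{\gpu}(u)$ coming from the left transformation law of $\wsfb$; these could cancel. What actually saves the argument is Lemma \ref{lemgroup}(2) in the paper: whenever $\gpk_{\gpm^{\gpj}}\w^{\m d}\gpk_{\gpg}\w^{\m f}\gpk_{\gpg}\cap zu\gpk_{\gpm^{\gpj}}\w^{\m d}\lambda\w^{\m f}\gpk_{\gpg}\neq\emptyset$, one has $\psi(z)=\psi_{\gpu}(u)=1$, so the coefficient of $\mathcal{W}(\m d,\m f)$ is a sum of positive volumes rather than possibly-cancelling complex numbers. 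Proving this is a nontrivial matrix argument (the paper does a row reduction of $k_1\in\gpk_{\gpg}$ followed by an induction on $n-m$), and your proposal leaves this entirely to the reader in the form of an over-strong claim.

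The ordering you propose, dominance on $\Lambda^+_m\times\Lambda^+_n$ together with total size $\sum d_i+\sum f_j$, is not the right one. The reductions of Lemma \ref{lem:induction} systematically trade $\m d$ against $\m f$ (for example, Operation 1 replaces $(\m d,\m r)$ by $(\m d+\m r-\overline{\m r},\overline{\m r})$ and thus preserves both $\m d+\m r$ and the total size $\sum d_i+\sum f_j$), so neither coordinate-wise dominance nor total size is strictly monotone under these moves. The order the paper uses, $\geq_{\mathcal{WS}}$, is built from the mixed partial sums $\ppair{\varpi_l}{\m f}$ for $l\leq n-m$, $\ppair{\varpi_l}{\m f}+\ppair{\varpi'_{l-(n-m)}}{\m d}$ for $n-m<l\leq n$, and $\ppair{\varpi_{n-m+l-1}}{\m f}+\ppair{\varpi'_l}{\m d}$ for $1\leq l\leq m$; these are exactly (minus) the valuations of the determinantal invariants $\alpha_l,\beta_l$ of Lemmas \ref{lemmaalpha} and \ref{lemopen}, and the monotonicity (Lemma \ref{lemgroup}(1)) is proved by bounding $v(\alpha_l)$ and $v(\beta_l)$ on products of double cosets via a Cauchy--Binet-type expansion.
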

Combine this with theorem (\ref{thm:step4}) we know that for all $(\chag,\cham)$, the space of Whittaker Shintani function $\ws$ is of at most one dimensional. The method we use in the proof is from \cite{MR1956080} and \cite{MR1121142}. First we define an order on $\Lambda^{+}_{n}\times \Lambda^{+}_{m}$ as
\begin{define}
  Let $\varpi_{k}$, $\varpi_{l}'$ be the dominant weights of $\gpg$ and $\gpm$. For any\\ $\mathbf{(d',f'),(d,f)}\in \Lambda^{+}_{m}\times \Lambda^{+}_{n}$, we write $\mathbf{(d,f)\geq_{\mathcal{WS}} (d',f')}$ if
  \begin{enumerate}
  \item $\ppair{\varpi_{l}}{\m f}\geq\ppair{\varpi_{l}}{\m f'}$ for $1\leq k\leq n-m$
  \item $\ppair{\varpi_{l}}{\m f}+\ppair{\varpi_{l-(n-m)}'}{\m d}\geq \ppair{\varpi_{l}}{\m f'}+\ppair{\varpi_{l-(n-m)}}{\m d'}$ for $n-m+1\leq l\leq n$
  \item $\ppair{\varpi_{n-m+l-1}}{\m f}+\ppair{\varpi_{l}'}{\m d}\geq \ppair{\varpi_{n-m+l-1}}{\m f'}+\ppair{\varpi_{l}}{\m d'}$ for $1\leq l\leq m$
   \end{enumerate}
\end{define}
Then we have the following lemma
\begin{lem}
\label{lemgroup}
  Suppose $\mathbf{(d,f)}\in \Lambda^{+}_{m}\times \Lambda^{+}_{n}$. 
  \begin{enumerate}
  \item If $\mathbf{(d',f')}\in \Lambda^{+}_{m}\times \Lambda^{+}_{n}$ satisfies
    \begin{equation*}
      \gpk_{\gpm^{\gpj}}\w^{\m d}\gpk_{\gpg}\w^{\m f}\gpk_{\gpg}\cap \gpz\gpu\gpk_{\gpm^{\gpj}}\w^{\m d'}\lambda\w^{\m f'}\gpk_{\gpg}\neq \emptyset,
    \end{equation*}
then $\mathbf{(d,f)}\geq_{\mathcal{WS}} \mathbf{(d',f')}$.
\item If $u\in \gpu$ and $z\in \gpz$ satisfies
  \begin{equation*}
    \gpk_{\gpm^{\gpj}}\w^{\m d}\gpk_{\gpg}\w^{\m f}\gpk_{\gpg}\cap zu\gpk_{\gpm^{\gpj}}\w^{\m d}\lambda\w^{\m f}\gpk_{\gpg}\neq \emptyset,
  \end{equation*}
then $\psi_{\gpu}(u)=1$ and $\psi(z)=1$.
  \end{enumerate}
\end{lem}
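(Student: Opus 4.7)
The plan is to exploit the regular functions $\alpha_k$ ($1\leq k\leq n$) and $\beta_l$ ($1\leq l\leq m$) from Lemmas \ref{lemmaalpha} and \ref{lemopen}. By the Remark following Lemma \ref{lemopen}, their $\gpb_{\gpg}\times\gpb_{\gpm^{\gpj}}$-weights---$(\varpi_k,0)$ for $\alpha_k$ with $k\leq n-m$, $(\varpi_k,\varpi'_{k-(n-m)})$ for $\alpha_k$ with $k>n-m$, and $(\varpi_{n-m+l-1},\varpi'_l)$ for $\beta_l$---are precisely the three weight combinations appearing in the three families of inequalities defining $\geq_{\mathcal{WS}}$. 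Denoting these weights uniformly by $(\lambda_F^{\gpg},\lambda_F^{\gpm})$, the main device is the bi-$(\gpk_{\gpm^{\gpj}},\gpk_{\gpg})$-invariant height
\begin{equation*}
\Phi_F(g)\;:=\;\sup_{k\in\gpk_{\gpm^{\gpj}},\,k'\in\gpk_{\gpg}}|F(kgk')|,
\end{equation*}
which is finite since $F$ is polynomial with $\mathcal{O}$-coefficients, and which is moreover left-invariant under $\gpz\gpu\subset\gpn_{\gpg}$ because the $\alpha_k,\beta_l$ themselves are left-$\gpn_{\gpg}$-invariant.

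The crux is the pair of relations, valid for $\m e\in\Lambda^+_m$, $\m h\in\Lambda^+_n$ and any $k\in\gpk_{\gpg}$:
\begin{equation*}
\mathrm{(a)}\quad\Phi_F(\w^{\m e}\lambda\w^{\m h})=q^{\langle\lambda_F^{\gpg},\m h\rangle+\langle\lambda_F^{\gpm},\m e\rangle},\qquad
\mathrm{(b)}\quad\Phi_F(\w^{\m d}k\w^{\m f})\leq q^{\langle\lambda_F^{\gpg},\m f\rangle+\langle\lambda_F^{\gpm},\m d\rangle}.
\end{equation*}
To prove (a) I would choose Weyl representatives $k,k'$ that move the argument into the Zariski-open big cell $\gpb_{\gpg}w_0^{\gpg}\lambda\gpb_{\gpm^{\gpj}}\gpu$ of Lemma \ref{lemopen}, where $|F|$ can be read off via Lemma \ref{lemkk} and the transformation rules of Lemmas \ref{lemmaalpha} and \ref{lemopen}; the matching upper bound for the supremum follows by applying the same transformation rules to any other Iwasawa decomposition. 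To prove (b) I would Iwasawa-decompose $k$ inside $\gpk_{\gpg}$ and observe that the $\gpt_{\gpg}$-part of every $\gpk_{\gpm^{\gpj}}\times\gpk_{\gpg}$-translate of $\w^{\m d}k\w^{\m f}$ has valuation componentwise bounded by that of $\w^{\m d+\m f}$, which combined with the weight of $F$ gives the bound.

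With (a) and (b) in hand, \emph{part (1)} is immediate: for $g$ in the intersection, write $g=k_1\w^{\m d}k_2\w^{\m f}k_3=zuk_4\w^{\m d'}\lambda\w^{\m f'}k_5$; the two invariances of $\Phi_F$ give
$\Phi_F(\w^{\m d}k_2\w^{\m f})=\Phi_F(g)=\Phi_F(\w^{\m d'}\lambda\w^{\m f'})$,
and (a) on the right with (b) on the left yields $\langle\lambda_F^{\gpg},\m f'\rangle+\langle\lambda_F^{\gpm},\m d'\rangle\leq\langle\lambda_F^{\gpg},\m f\rangle+\langle\lambda_F^{\gpm},\m d\rangle$; running $F$ through the three families recovers the three families of inequalities in $\geq_{\mathcal{WS}}$. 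For \emph{part (2)}, the extra hypothesis $(\m d',\m f')=(\m d,\m f)$ turns all these inequalities into equalities, saturating the upper bound in (b) for every $F$ simultaneously; saturation forces $k_2$ into a specific $\gpi_{\gpg}$-coset (the one whose Bruhat representative is the longest Weyl element), and substitution back into the identity $k_1\w^{\m d}k_2\w^{\m f}k_3=zuk_4\w^{\m d}\lambda\w^{\m f}k_5$ then constrains $zu$ to lie in $\gpz^0\gpu^0$; triviality of $\psi$ and $\psi_{\gpu}$ there follows from the conductor-$0$ assumption on $\psi$.

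The \textbf{main obstacle} is the exact evaluation in (a): one must verify that the prescribed $(k,k')$-choice really attains the supremum, rather than merely giving a lower bound, i.e.\ that no other $\gpk_{\gpm^{\gpj}}\times\gpk_{\gpg}$-translate of $\w^{\m e}\lambda\w^{\m h}$ yields a strictly larger $|F|$. This requires delicate bookkeeping of the simultaneous $\gpg$- and $\gpm$-sides of the weight of $F$ through the symplectic Weyl group.
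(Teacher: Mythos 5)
Part (1) of your proposal is essentially the paper's own argument, repackaged. The paper fixes one element $g$ of the (nonempty) intersection, writes it once via the $\gpz\gpu\gpk_{\gpm^{\gpj}}\w^{\m d'}\lambda\w^{\m f'}\gpk_{\gpg}$ factorization and once via the $\gpk_{\gpm^{\gpj}}\w^{\m d}\gpk_{\gpg}\w^{\m f}\gpk_{\gpg}$ factorization, and compares the valuations $v(\alpha_l(g))$, $v(\beta_l(g))$ computed through each: the first form evaluates exactly using the left $\gpn_{\gpg}$-invariance and the explicit torus weights of $\alpha_l,\beta_l$; the second is bounded below via the expansion identity $\Delta_{I,J}(g^1g^2g^3)=\sum_{A,C}f_{I,A}(g^1)\Delta_{A,C}(g^2)f_{C,J}(g^3)$ together with the constraint that $k\in\gpk_{\gpm^{\gpj}}$ forces $c_j=j$ for $j\leq n-m$. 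Your $\Phi_F$ is the double-coset supremum of exactly these quantities, and your (a),(b) are those two computations in disguise. The ``obstacle'' you flag for (a) is in fact harmless once (b) is available: the lower bound $|F(\w^{\m e}\lambda\w^{\m h})|$ is read off from the torus-weight transformation of $F$ across the big cell, and (b) applied with $k_2=\lambda\in\gpk_{\gpg}$ gives the matching upper bound. So part (1) is sound and parallel to the paper.

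Part (2) is where there is a genuine gap. Every $F$ in your family ($\alpha_k$ or $\beta_l$) is left $\gpn_{\gpg}$-invariant, and $\gpz\gpu\subset\gpn_{\gpg}$; consequently $\Phi_F$ is literally blind to the factor $zu$. No saturation analysis on these heights can constrain $zu$, let alone force $zu\in\gpz^0\gpu^0$. The intermediate step, that saturation ``forces $k_2$ into a specific $\gpi_{\gpg}$-coset'', is also unjustified: minimality of the valuations of the minors is an open condition met by many Iwahori cosets, and in any case carries no information about $zu$ because the minors never saw it. The paper's proof of (2) is of a different nature entirely: starting from the matrix identity $\w^{\m d}k^1\w^{\m f}=zuk\w^{\m d}\lambda\w^{\m f}k^2$, it compares last rows, uses primitivity of the rows of $k^2$ to produce a Weyl element $w$ making the $(2n,2n)$-entry of $k^1w^{-1}$ a unit, peels off a Heisenberg-type unipotent factor via a Bruhat decomposition of $\gpk_{\gpg}$ mod $\w$, and closes by induction on $n-m$, with the base case $n-m=1$ handled separately inside $\gpm^{\gpj}$. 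You would need an argument of this kind to replace your saturation step.
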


Before proving the lemma (\ref{lemgroup}), we first show it implies theorem (\ref{thmunique}).

\begin{proof}[Proof of theorem (\ref{thmunique})]
 Consider $\int_{\gpk_{\gpm^{\gpj}}\w^{\m d}\gpk_{\gpg}\w^{\m f}\gpk_{\gpg}}\ud g \whi_{\chag,\cham,\psi}(g)$. 
On the one hand it is equal to
\begin{equation*}
  \omega_{\chag}(\mathrm{Ch}_{\gpk_{\gpg}\w^{\m f}\gpk_{\gpg}})\omega_{\cham}(\mathrm{Ch}_{\gpk_{\gpm^{\gpj}}\w^{\m d}\gpk_{\gpm^{\gpj}}})\mathcal{W}(\m 0,\m 0).
\end{equation*}
On the other hand, by lemma (\ref{lemgroup}) and themrem (\ref{thm:step4}) it is equal to
\begin{equation*}
  \sum_{(\m d',\m f')\leq_{\mathcal{WS}} (\m d,\m f), (\m d',\m f')\in \Lambda^{+}_{m}\times \Lambda^{+}_{n}}C_{\m d',\m f'}\mathcal{W}(\m d',\m f').
\end{equation*}
Where $C_{\m d, \m f}$ is positive by the second part of the lemma (\ref{lemgroup}). So if $\mathcal{W}(\m 0,\m 0)=0$, we have
\begin{equation*}
  \sum_{(\m d',\m f')\leq_{\mathcal{WS}} (\m d,\m f), (\m d',\m f')\in \Lambda^{+}_{m}\times \Lambda^{+}_{n}}C_{\m d',\m f'}\mathcal{W}(\m d',\m f')=0
\end{equation*}
for all $(\m d, \m f)\in \Lambda^{+}_{m}\times \Lambda^{+}_{n}$. Taking the induction on $(\m d,\m f)$ by the order $\leq_{\mathcal{WS}}$ we have
  \begin{equation*}
    \mathcal{W}(\m d, \m f)=0
  \end{equation*}
for all $(\m d, \m f)\in \Lambda^{+}_{m}\times \Lambda^{+}_{n}$, completing our proof. 
\end{proof}

So in the rest of this section we only need to prove lemma (\ref{lemgroup}). To prove the first part of lemma (\ref{lemgroup}), we need the following lemma.
\begin{lem}
  Let $\mathcal{N}_{2n}=\{1,2,\ldots,2n\}$, and let  $g,g^{1},g^{2},g^{3}\in \gpg$.  For $I=(i_{1},\ldots, i_{k})$, $J=(j_{1},\ldots, j_{k})\in (\mathcal{N}_{2n})^{k}$, we denote $f_{I,J}(g)=\prod_{s=1}^{k}g_{i_{s},j_{s}}$, and
\begin{equation*}
  \Delta_{I,J}(g)=\det(g_{I,J})=\sum_{\sigma\in S_{k}}sgn(\sigma)\prod_{s=1}^{k}g_{i_{s},j_{\sigma(s)}}.
\end{equation*}
If $g=g^{1}g^{2}g^{3}$, then we have
\begin{equation}
  \label{eqhopf}
  \Delta_{I,J}(g)=\sum_{A,C\in \mathcal{N}_{2n}^{k}}f_{I,A}(g^{1})\Delta_{A,C}(g^{2})f_{C,J}(g^{3}).
\end{equation}
\end{lem}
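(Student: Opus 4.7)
The plan is a direct index computation using multilinearity of the determinant. Starting from the Leibniz formula
\begin{equation*}
\Delta_{I,J}(g) = \sum_{\sigma \in S_k} sgn(\sigma) \prod_{s=1}^{k} g_{i_s, j_{\sigma(s)}},
\end{equation*}
I would expand each matrix entry via the triple product $g = g^1 g^2 g^3$ as $g_{i_s, j_{\sigma(s)}} = \sum_{a,c} g^1_{i_s, a}\, g^2_{a, c}\, g^3_{c, j_{\sigma(s)}}$. The crucial choice is to name the intermediate indices so that the one contracted against $g^1$ depends on the row label $s$ (call it $a_s$) while the one contracted against $g^3$ depends on the column label $t = \sigma(s)$ (call it $c_{\sigma(s)}$). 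With this naming, each row entry becomes $\sum_{a_s, c_{\sigma(s)}} g^1_{i_s, a_s}\, g^2_{a_s, c_{\sigma(s)}}\, g^3_{c_{\sigma(s)}, j_{\sigma(s)}}$.

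Substituting into the Leibniz sum and exchanging the finite product with the sums over the dummy indices, I would obtain
\begin{equation*}
\Delta_{I,J}(g) = \sum_{a_1,\ldots,a_k} \sum_{c_1,\ldots,c_k} \Bigl(\prod_s g^1_{i_s, a_s}\Bigr) \Bigl(\prod_t g^3_{c_t, j_t}\Bigr) \sum_{\sigma} sgn(\sigma) \prod_s g^2_{a_s, c_{\sigma(s)}}.
\end{equation*}
The reindexing $\prod_s g^3_{c_{\sigma(s)}, j_{\sigma(s)}} = \prod_t g^3_{c_t, j_t}$, valid because $\sigma$ is a bijection on $\{1,\ldots,k\}$, is what decouples the $g^3$ factor from $\sigma$. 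Writing $A=(a_1,\ldots,a_k)$ and $C=(c_1,\ldots,c_k)$, the first two products are exactly $f_{I,A}(g^1)$ and $f_{C,J}(g^3)$, while the innermost alternating sum is the Leibniz expansion of $\det\bigl((g^2_{a_s, c_t})_{s,t}\bigr) = \Delta_{A,C}(g^2)$. Collecting the sums over $A, C \in \mathcal{N}_{2n}^k$ yields the identity (\ref{eqhopf}).

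There is no significant obstacle here; the proof is essentially bookkeeping, and the only place one can slip is in the choice of intermediate index naming. If one instead chose \emph{both} intermediate indices to depend on $s$, the $g^2$ factor would carry no $\sigma$-dependence and one would land with an $f_{A,C}(g^2)$ in the middle rather than $\Delta_{A,C}(g^2)$. The asymmetric naming (row-indexed $a_s$ for $g^1$, column-indexed $c_t$ for $g^3$) is precisely what transfers the antisymmetrization off the permutation $\sigma$ and onto the middle factor $g^2$. No further tools, such as Cauchy--Binet, are required: the identity follows directly from the two-fold application of multilinearity implicit in the index manipulation above.
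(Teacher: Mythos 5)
Your proof is correct and is essentially the same argument as in the paper: the key move in both is to transfer the permutation $\sigma$ from the $g^3$-contraction onto the $g^2$-contraction via the bijection $b_s = c_{\sigma(s)}$, so that the alternating sum reassembles into $\Delta_{A,C}(g^2)$. The only cosmetic difference is that you fold the relabeling into how you name the dummy indices at the outset, whereas the paper first names all dummies by $s$ and then performs the explicit change of variable $c_s = b_{\sigma^{-1}(s)}$; these are the same step.
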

\begin{proof}
 Since $g=g^{1}g^{2}g^{3}$, we have
\begin{equation}
  \label{eq123}
  g_{i,j}=\sum_{a,b}g^{1}_{i,a}g^{2}_{a,b}g^{3}_{b,j},
\end{equation}
where $a,b$ runs over $\mathcal{N}_{2n}^{k}$. So
\begin{equation}
\label{eqggg}
  \Delta_{I,J}(g)=\sum_{\sigma\in S_{k}}sgn(\sigma)\sum_{(a_{1},\ldots,a_{k}),(b_{1},\ldots,b_{k})\in (\mathcal{N}_{2n})^{k}}\ \prod_{s=1}^{k}g^{1}_{i_{s},a_{s}}g^{2}_{a_{s},b_{s}}g^{3}_{b_{s},j_{\sigma(s)}}.
\end{equation}
Note that $S_{k}$ acts on $(\mathcal{N}_{2n})^{k}$. If we define $c_{s}=b_{\sigma^{-1}(s)}$, then 
\begin{equation*}
  \begin{split}
    &\sum_{(a_{1},\ldots,a_{k}),(b_{1},\ldots,b_{k})\in (\mathcal{N}_{2n})^{k}}\ \prod_{s=1}^{k}g^{1}_{i_{s},a_{s}}g^{2}_{a_{s},b_{s}}g^{3}_{b_{s},j_{\sigma(s)}}\\
    =&\sum_{(a_{1},\ldots,a_{k}),(c_{1},\ldots,c_{k})\in (\mathcal{N}_{2n})^{k}}\ \prod_{s=1}^{k}g^{1}_{i_{s},a_{s}}g^{2}_{a_{s},c_{\sigma(s)}}g^{3}_{c_{\sigma(s)},j_{\sigma(s)}}.
  \end{split}
\end{equation*}
Note that for any $\sigma\in S_{k}$, $$\prod_{s=1}^{k}g^{3}_{c_{\sigma(s)},j_{\sigma(s)}}=\prod_{s=1}^{k}g^{3}_{c_s,j_{s}}.$$ So (\ref{eqggg}) is equal to
\begin{equation*}
  \begin{split}
    &\sum_{(a_{1},\ldots,a_{k}),(c_{1},\ldots,c_{k})\in (\mathcal{N}_{2n})^{k}}\prod_{s=1}^{k}\left(g^{1}_{i_{s},a_{s}}g^{3}_{c_s,j_{s}}\right)\cdot \sum_{\sigma\in S_{k}}sgn(\sigma)\prod_{s=1}^{k}g^{2}_{a_{s},c_{\sigma(s)}}\\
   =&\sum_{(a_{1},\ldots,a_{k}),(c_{1},\ldots,c_{k})\in (\mathcal{N}_{2n})^{k}}\prod_{s=1}^{k}\left(g^{1}_{i_{s},a_{s}}g^{3}_{c_s,j_{s}}\right)\cdot \Delta_{(a_{1},\ldots,a_{k}),(c_{1},\ldots,c_{k})}(g^{2}).
 \end{split}
\end{equation*}
which is the formula we want to prove.
\end{proof}

By this lemma we can prove the first part of lemma (\ref{lemgroup}). 
\begin{proof}[Proof of first part of lemma (\ref{lemgroup})]
Suppose 
\begin{equation*}
  \gpk_{\gpm^{\gpj}}\w^{\m d}\gpk_{\gpg}\w^{\m f}\gpk_{\gpg}\cap \gpz\gpu\gpk_{\gpm^{\gpj}}\w^{\m d'}\lambda\w^{\m f'}\gpk_{\gpg}\neq \emptyset,
\end{equation*}
then $\w^{\m f'}w_{0}^{\gpg}\lambda\w^{-\m d'}uz=k_{1}\w^{-\m f}k_{2}\w^{-\m d}k$ for some $k_{1},k_{2}\in \gpk_{\gpg}$ and $k\in \gpk_{\gpm^{\gpj}}$. Apply $\alpha_{l}$ and $\beta_{l}$, which are defined as (\ref{eqalpha}) and (\ref{eqbeta}) on page \pageref{eqalpha} to both sides of the equation. When $l\geq n-m+1$, we have
\begin{equation}
\label{in1}
  v(\alpha_{l}(\w^{\m f'}w_{0}^{\gpg}\lambda\w^{-\m d'}uz))=-\ppair{\varpi_{l}}{\m f'}-\ppair{\varpi_{l-(n-m)}'}{\m d'}.
\end{equation}
On the other hand, note that
\begin{equation}
  \alpha_{l}(g)=\Delta_{I_{l},J_{l}}(g).
\end{equation}
where $I_{l}=(2n+1-l,2n+1-(l-1),\ldots, 2n)$, and $J_{l}=(1,2,\ldots,l)$. By (\ref{eqhopf}), we have 

\begin{equation*}
  \alpha_{l}(k_{1}\w^{-\m f}k_{2}\w^{-\m d}k)=\sum_{A,C\in (\mathcal{N}_{2n})^{l}}f_{I,A}(k_{1})\Delta_{A,C}(\w^{-\m f}k_{2}\w^{-\m d})f_{C,J}(k).
\end{equation*}
Note that for $f_{I,A}(k_{1})\Delta_{A,C}(\w^{-\m f}k_{2}\w^{-\m d})f_{C,J}(k)\neq 0$, both $A$ and $C$ should contain distinct coordinates. Moreover, note that $k\in \gpk_{\gpm^{\gpj}}$, so $f_{C,J}(k)\neq 0$ implies $c_{j}=j$ for all $1\leq j\leq n-m$. Under these two restrictions it is not hard to see that
\begin{equation*}
  v(\Delta_{A,C}(\w^{-\m f}k_{2}\w^{-\m d}))\geq -\ppair{\varpi_{l}}{\m f}-\ppair{\varpi_{l-(n-m)}'}{\m d}. 
\end{equation*}
So
\begin{equation}
\label{in2}
  v(\alpha_{l}(k_{1}\w^{-\m f}k_{2}\w^{-\m d}k))\geq -\ppair{\varpi_{l}}{\m f}-\ppair{\varpi_{l-(n-m)}'}{\m d}. 
\end{equation}
Comparing (\ref{in1}) and (\ref{in2}) we have
\begin{equation*}
  \ppair{\varpi_{l}}{\m f'}+\ppair{\varpi_{l-(n-m)}'}{\m d'}\leq \ppair{\varpi_{l}}{\m f}+\ppair{\varpi_{l-(n-m)}'}{\m d}
\end{equation*}
for all $n-m+1\leq l\leq n$. 
Similarly, if we apply $\alpha_{l}$ for $1\leq l \leq n-m$ or $\beta_{l}$ for $1\leq l \leq m$, we will have
\begin{equation*}
  \ppair{\varpi_{l}}{\m f'}\leq \ppair{\varpi_{l}}{\m f}
\end{equation*}
for all $1\leq l \leq n-m$ and 
\begin{equation*}
  \ppair{\varpi_{n-m+l-1}}{\m f'}+\ppair{\varpi_{l}'}{\m d'}\leq \ppair{\varpi_{m-m+l-1}}{\m f}+\ppair{\varpi_{l}'}{\m d} 
\end{equation*}
for all $1\leq l \leq m$. 
So we have the first part of the lemma (\ref{lemgroup}).
\end{proof}

Next we prove the second part of the lemma (\ref{lemgroup}). 
\begin{proof}[Proof of second part of the lemma (\ref{lemgroup})]
Suppose
\begin{equation}
  \label{eq:333}
  \w^{\m d}k^{1}\w^{\m f}=zuk\w^{\m d}\lambda\w^{\m f}k^{2}
\end{equation}
for some $u\in \gpu$, $z\in \gpz$, $k\in \gpk_{\gpm^{\gpj}}$ and $k_{1},k_{2}\in \gpk_{\gpg}$. We need to show that $\psi(z)=\psi_{\gpu}(u)=1$. Consider the element $r_{2n}=(0,0,\ldots,0,1)\in F^{2n}$. Multiplying $r_{2n}$ from the left to both sides of (\ref{eq:333}), we have
\begin{equation*}
  \m k^{1}_{2n}\w^{\m f}=\w^{-f_{1}}\m k^{2}_{2n}.
\end{equation*}
Here $\m k^{1}_{2n}$, $\m k^{2}_{2n}$ are the 2n-th row of $k^{1}$ and $k^{2}$. Suppose $\m k^{1}_{2n}=(k^{1}_{2n,1},\ldots,k^{1}_{2n,2n})$, then
\begin{equation}
\begin{split}
\label{eqpri}
&\m k^{2}_{2n}\\=
&(\w^{f_{1}+f_{1}}k_{2n,1}^{1},\w^{f_{1}+f_{2}}k_{2n,2}^{1}, \ldots,\w^{f_{1}+f_{n}}k_{2n,n}^{1},\w^{f_{1}-f_{n}}k_{2n,n+1}^{1},\ldots,\w^{f_{1}-f_{2}}k_{2n,2n-1}^{1},k_{2n,2n}^{1}).
\end{split}
\end{equation}
Note that when $k^{2}\in \gpk_{\gpg}$, each row of it is primitive, that is, it belongs to $\mathcal{O}^{2n}$, but not $(\w\mathcal{O})^{2n}$. So suppose $f_{1}=f_{2}=\cdots f_{k}>f_{k+1}\geq \cdots \geq f_{n}$ for some $k$, then by (\ref{eqpri}), at least one element in $\{k_{2n,2n-k+1}^{1},\ldots,k_{2n,2n}^{1}\}$ belongs to $\mathcal{O}^{*}$. Let's say it is $k^{1}_{2n,2n-i+1}$. Let $w$ be an weyl element of $\gpg$ transposing $1$ and $i$, then $k^{1}w^{-1}$ has element in $\mathcal{O}^{*}$ at the $(2n,2n)$ position. Then, by the Bruhat decomposition of $\gpk\ (mod\ \w)$, there exists $x_{1},x_{2},y_{1},y_{2}\in \mathcal{O}^{n-1}$, and $z_{1},z_{2}\in \mathcal{O}$, such that
\begin{equation*}
  k^{1}w^{-1}=E_{1}(x_{1},y_{1},z_{1})
  \begin{pmatrix}
    \epsilon&& \\ & k' & \\ && \epsilon^{-1}
  \end{pmatrix}
E_{1}(x_{2},y_{2},z_{2})^{w_{0}^{G}},
\end{equation*}
where $\epsilon\in \mathcal{O^{*}}$, $k'\in \gpk_{Sp_{2n-2}}$, and
\begin{equation*}
  E_{1}(x,y,z)=
  \begin{pmatrix}
    1&
    \begin{matrix}
      x&y
    \end{matrix}
    &z\\
    &I_{2n-2}&
    \begin{matrix}
      ^{t}y\\-^{t}x
    \end{matrix}
    \\
    &&1
  \end{pmatrix}
\end{equation*}
So (\ref{eq:333}) becomes
\begin{equation*}
  E_{1}(x_{1},y_{1},z_{1})^{\w^{\m d}}\left(\w^{\m d}\begin{pmatrix}
    \epsilon&& \\ & k' & \\ && \epsilon^{-1}
  \end{pmatrix}
\w^{\m f}\right)\w^{-\m f}E_{1}(x_{2},y_{2},z_{2})^{w_{0}^{G}}w\w^{\m f}=uzk\w^{\m d}\lambda \w^{\m f}k^{2}.
\end{equation*}
By the definition of $w$, it commutes with $\w^{\m f}$, so $\w^{-\m f}E_{1}(x_{2},y_{2},z_{2})^{w_{0}^{G}}w\w^{\m f}\in \gpk_{\gpg}$. So we just need to show that  
\begin{equation}
\label{eqhaha}
  E_{1}(x_{1},y_{1},z_{1})^{\w^{\m d}}\left(\w^{\m d}\begin{pmatrix}
    1&& \\ & k' & \\ && 1
  \end{pmatrix}
\w^{\m f}\right)=uzk\w^{\m d}\lambda \w^{\m f}k^{2}
\end{equation} implies $\psi(z)=\psi_{\gpu}(u)=1$. We prove this by induction on $n-m$. 

When $n-m=1$, $\gpu$ is trivial, and $E_{1}(x,y,z)=\gpj(x,y,z)$. By (\ref{eqhaha}), we have $\w^{\m f}k^{2}\w^{-\m f} \in \gpm^{\gpj}$. So $k^{2}\in \gpk_{\gpm^{\gpj}}$. Suppose $k^{2}=n_{1}k''$ where $n_{1}\in \gpj^{0}$ and $k''\in \gpk_{\gpm}$, and suppose $\mathbf{\tilde{f}}=(f_{2},\ldots,f_{n})$, then we have
\begin{equation*}
   \gpj(x_{1},y_{1},z_{1})^{\w^{\m d}}\left(\w^{\m d}\begin{pmatrix}
    1&& \\ & k' & \\ && 1
  \end{pmatrix}
\w^{\mathbf{\tilde{f}}}\right)=zk\w^{\m d}\lambda n_{1}^{\w^{\m f}} \w^{\mathbf{\tilde{f}}}k''
\end{equation*}
Note that now both sides belongs to $\gpm^{\gpj}$, we write both sides in the form of $\gpj\rtimes \gpm$. Then by comparing the $\gpj$-part of both sides we have
\begin{equation*}
  \gpj(x_{1},y_{1},z_{1})^{\w^{\m d}}=z(\lambda n_{1}^{\w^{\m f}})^{k\w^{\m d}}.
\end{equation*}
When $\m f \in \Lambda^{+}_{n}$, $n_{1}^{\w^{\m f}}\in \gpj^{0}$. So we assume $\lambda n_{1}^{\w^{\m f}}=\gpj(x_{3},y_{3},z_{3})$ with $x_{3},y_{3}\in \mathcal{O}^{n-1}$ and $z_{3}\in \mathcal{O}$. Then we have
\begin{equation*}
  z\cdot \gpz(z_{3}-z_{1})=\gpj(x_{1},y_{1},0)^{\w^{\m d}}\gpj(-x_{3},-y_{3},0)^{k\w^{\m d}}
\end{equation*}
When $\gpj(x_{1},y_{1},0)^{\w^{\m d}}\gpj(-x_{3},-y_{3},0)^{k\w^{\m d}}\in \gpz$, we have
\begin{equation*}
  \gpj(-x_{3},-y_{3},0)^{k\w^{\m d}}=\gpj(-x_{1},-y_{1},0)^{\w^{\m d}}.
\end{equation*}
So $z\cdot\gpz(z_{3}-z_{1})=x_{1}\cdot ^{t}\!\!y_{1}\in \gpz^{0}$. Since $z_{1},z_{3}\in \mathcal{O}$, so $z\in \gpz^{0}$, and hence $\psi(z)=1$, completing the proof for $n-m=1$. 

Assume the lemma is true for $n-m=r-1$, and suppose now $n-m=r>1$. Then $E_{1}(x_{1},y_{1},z_{1})^{\w^{\m d}}\in \gpu$. Since $E_{1}(x_{1},y_{1},z_{1})\in \gpu^{0}$ and $\w^{\m d}$ stablizes $\psi_{\gpu}$, we have $\psi_{\gpu}(E_{1}(x_{1},y_{1},z_{1})^{\w^{\m d}})=1$. So from (\ref{eqhaha}) reduces to show that
\begin{equation}
\label{eqhehe}
  \w^{\m d}
\begin{pmatrix}
    1&& \\ & k' & \\ && 1
  \end{pmatrix}
\w^{\m f}=uzk\w^{\m d}\lambda \w^{\m f}k_{2}
\end{equation} implies $\psi(z)=\psi_{\gpu}(u)=1$.
Let
\begin{equation*}
  \gpg'=\{g\in \gpg\mid g=
\begin{pmatrix}
  1 & * & *\\ & g' &*\\ && 1
\end{pmatrix}\}\cong \mathrm{Sp}_{2n-2}\ltimes \mathcal{H}_{2n-1}.
\end{equation*}
Then by (\ref{eqhehe}), $\w^{\m f}k_{2}\w^{-\m f}\in \gpg'$, so $k_{2}\in \gpk_{\gpg'}$. Let $k_{2}=\tilde{u}k''$ where $\tilde{u}\in \mathcal{H}_{2n-1}$ and $k''\in \gpk_{\mathrm{Sp}_{2n-2}}$. Then we have
\begin{equation*}
  \w^{\m d}
\begin{pmatrix}
    1&& \\ & k' & \\ && 1
  \end{pmatrix}
\w^{\tilde{\mathbf{f}}}=uzk\w^{\m d}\lambda\cdot  (\tilde{u})^{\w^{\m f}}\w^{\tilde{\mathbf{f}}}k''
\end{equation*}
Suppose $u=u^{1}u^{2}$ where $u^{1}\in \mathcal{H}_{2n-1}$ and $u^{2}\in \mathrm{Sp}_{2n-2}$. Now both sides belongs to $\gpg'$. Write them in the form $\mathcal{H}_{2n-1}\mathrm{Sp}_{2n-2}$ we have
\begin{equation*}
  u^{1}(\tilde{u})^{zk\w^{\m d}\w^{\m f}}=1  
\end{equation*}
and
\begin{equation*}
  u^{2}zk\w^{\m d}\lambda \w^{\tilde{\mathbf{f}}}k''=\w^{\m d}
  \begin{pmatrix}
   1 &  & \\ & k' &\\ && 1
 \end{pmatrix}\w^{\tilde{\m f}}.
\end{equation*}
Note that $\tilde{u}\in \gpu^{0}$, $\m f\in \Lambda^{+}_{n}$, and $zk\w^{\m d}\in \gpm^{J}$ which stablizes $\psi_{\gpu}$,we have $$\psi_{\gpu}(u_{1})=\psi_{\gpu}^{-1}(\tilde{u}^{zu\w^{\m d}\w^{\m f}})=\psi_{\gpu}^{-1}(\tilde{u}^{\w^{\m f}})=1. $$ 
On the other hand, by assumption of the induction, when 
$$ u^{2}zk\w^{\m d}\lambda \w^{\tilde{\mathbf{f}}}k''=\w^{\m d}
  \begin{pmatrix}
   1 &  & \\ & k' &\\ && 1
 \end{pmatrix}\w^{\tilde{\m f}},$$ we have $\psi_{\gpu}(u_{2})=\psi(z)=1$. So $\psi_{\gpu}(u)=\psi_{\gpu}(u_{1}u_{2})=1$ and  $\psi(z)=1$, completing our proof for $n-m=r>1$.
\end{proof}

\section{The formula for normalized Whittaker-Shintani function}
\label{sec:step10}
Let $\overline{l}_{\chag,\cham,\psi}$ be a pairing between $\repfi$ and $\repse$ satisfying \textrm{Condition A} and
\begin{equation*}
  \overline{l}_{\chag,\cham,\psi}(\funf_{\chag}^{0},\funf_{\cham,\psi}^{0})=1
\end{equation*}

Since every such pairing produces a normalized  Whittaker-Shintani function, so it is unique for every $(\chag,\cham)$. By theorem (\ref{thmid}), for those generic $(\chag,\cham)$ where $\Gamma(\chag,\cham)$ has no zeroes or poles, such pairing exists. So applying the Bernstein theorem, and the corollary in section 1 in \cite{MR1671189}, formula (\ref{eq:final}) can be extended to a regular function on $(\chag,\cham)$. Now we summerize our result. 
\begin{thm}
  For every $(\chag,\cham)\in \mathbb{C}^{n}\times \mathbb{C}^{m}$, the normalzied Whittaker-Shintani function is given by
      \begin{equation*}
  \begin{split}
    \int_{\gpx^{0}}\ud x\wsf(\w^{\mathbf{d}} x \w^{\mathbf{f}})&=\zeta(1)^{-m}\prod_{i=1}^{m}\zeta(2i)\cdot\\
    \sum_{w\in W_{\gpg},w'\in W_{\gpm}}&b(w\chag,w'\cham)d(w\chag)d'(w'\cham)((w\chag)^{-1}\dha)(\w^{\m f})((w'\cham)^{-1}\dha)(\w^{\m d}) 
  \end{split}
\end{equation*}
for $\m{d} \in \Lambda_{m}^{+}$ and $\m{f} \in \Lambda_{n}^{+}$. If we let $\mathcal{L}(\m d', \m f')=  \int_{\gpx^{0}}\ud x\wsf(\w^{\mathbf{d'}} x \w^{\mathbf{f'}})$, then 
\begin{equation*}
  \wsf(\w^{\mathbf{d}} \lambda \w^{\mathbf{f}})=\sum_{\m d'}a(\m d')\mathcal{L}(\m d', \mathbf{f+d-d'} ).
\end{equation*}
with $a(\m d')\geq 0$ and $a(\m d)>0$. Here in the summation on  $\m d'$ runs over the set
\begin{equation*}
   \{\m d'\mid\mathbf{d'\in }\Lambda^{+}_{m}, \mathbf{f+d-d'}\in \Lambda^{+}_{n}, \mathbf{d'\leq d}\}.
\end{equation*}
In particuler,
\begin{equation*}
  \wsf(\w^{\mathbf{f}})=\mathcal{L}(\m 0, \mathbf{f} ).
\end{equation*}

\end{thm}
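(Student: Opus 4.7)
The plan is to assemble the final formula by combining four results already proved in the paper: the generic formula from Section \ref{sec:step7}, the normalization computation of Section \ref{sec:step8}, the uniqueness result of Section \ref{sec:step9}, and Bernstein's principle of meromorphic continuation.

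First I would observe that a normalized pairing $\overline{l}_{\chag,\cham,\psi}$ satisfying \textrm{Condition A} corresponds to a normalized Whittaker--Shintani function via $\wsf(g) = \overline{l}_{\chag,\cham,\psi}(R(g)\funf_{\chag}^{0},\funf_{\cham,\psi}^{0})$, and conversely. By Theorem \ref{thmunique}, such a function is determined by its value at the identity, hence unique whenever it exists. For those generic $(\chag,\cham)$ at which $\iii(e)$ (given explicitly by Theorem \ref{thmid}) is nonzero and finite, dividing the pairing $\lpair$ constructed in Sections \ref{sec:step0}--\ref{sec:step3} by $\iii(e)$ produces a valid normalized pairing, so the normalized Whittaker--Shintani function exists and equals $\iiib(g)/\iii(e)$ for $g = \w^{\m d}\lambda\w^{\m f}$.

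Next I would divide equation (\ref{eq:final}) by the value of $\iii(e)$ from Theorem \ref{thmid}. The factor $(1-\ww)^{m}\Gamma(\chag,\cham)$ cancels, leaving precisely
\begin{equation*}
  \int_{\gpx^{0}}\ud x\,\wsf(\w^{\mathbf{d}} x \w^{\mathbf{f}})
  = \zeta(1)^{-m}\prod_{i=1}^{m}\zeta(2i)\sum_{w,w'} b(w\chag,w'\cham)d(w\chag)d'(w'\cham)\bigl((w\chag)^{-1}\dha\bigr)(\w^{\m f})\bigl((w'\cham)^{-1}\dha\bigr)(\w^{\m d}).
\end{equation*}
The inversion formula expressing $\wsf(\w^{\m d}\lambda\w^{\m f})$ as a non-negative combination of the $\mathcal{L}(\m d',\mathbf{f+d-d'})$ then follows directly from Theorem \ref{usefulthm}, applied to $\iii$ and divided by $\iii(e)$ (the coefficients $a(\m d')$ are intrinsic combinatorial quantities from the double-coset analysis of Section \ref{sec:step4} and do not depend on $(\chag,\cham)$).

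The final and most delicate step is the extension from generic $(\chag,\cham)$ to all of $\mathbb{C}^{n}\times\mathbb{C}^{m}$. Here I would invoke Bernstein's continuation principle (as in the corollary cited from \cite{MR1671189}): the system of linear equations defining a normalized Whittaker--Shintani function (the Hecke eigenvalue equations together with the normalization $\wsf(e)=1$) depends polynomially on $(\chag,\cham)$, and by Theorem \ref{thmunique} the solution space is at most one-dimensional at every point; hence the unique solution depends rationally, and in fact regularly, on the parameters. Since the right-hand side of the displayed formula above is a symmetric (under $\gpw_{\gpg}\times\gpw_{\gpm}$) rational expression that agrees with $\wsf$ on a Zariski-dense subset where both sides are regular, and since any apparent poles of the summands cancel in the symmetrized sum (as already established for generic parameters), the identity extends to all $(\chag,\cham)$. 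The main obstacle is precisely verifying that the regularization is legitimate, i.e.\ that neither side develops a genuine pole; this is where the explicit cancellation between $\Gamma(\chag,\cham)$ and $\iii(e)$ in the ratio $\iiib/\iii(e)$ is crucial, since it eliminates all factors that could have produced singularities.
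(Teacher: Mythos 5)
Your proposal is correct and follows essentially the same route as the paper: divide $\iiib$ and formula (\ref{eq:final}) by $\iii(e)$ from Theorem \ref{thmid} so that $(1-\ww)^{m}\Gamma(\chag,\cham)$ cancels, obtain the inversion formula from Theorem \ref{usefulthm}, then use Theorem \ref{thmunique} for uniqueness of the normalized pairing together with Bernstein's continuation principle (the corollary from \cite{MR1671189}) to extend from generic $(\chag,\cham)$ to all of $\mathbb{C}^{n}\times\mathbb{C}^{m}$. You spell out the cancellation and the role of the one-dimensionality more explicitly than the paper does, but the logic and the ingredients are the same.
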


\section{Application}
\label{sec:13}

Using the formula for the Shintani function, we can give an alternative proof of the \textbf{Theorem 6.1} in \cite{MR1121142}. We rewrite the theorem as below.

\begin{thm}[Theorem 6.1 in \cite{MR1121142}, conjectured by T.Shintani]
\label{thm:13}
Let $\gpg=\mathrm{Sp}_{2n}$ and $\gpm=\mathrm{Sp}_{2m}$ as defined in our paper, and suppose $n=m+1$. Let $\pi$ and $\tilde\sigma$ be the unramified representation of $\gpg(F_{v})$ and $\tilde\gpm(F_{v})$ respectively. Let \\$z_{\pi}=(\w^{\chag_{1}},\ldots,\w^{\chag_{m+1}},1,\w^{-\chag_{m+1}},\ldots,\w^{-\chag_{1}})$ be the Satake parameters of $\pi$ and $z_{\tilde\sigma}=(\w^{\cham_{1}},\ldots, \w^{\cham_{m}},\w^{-\cham_{m}},\ldots, \w^{-\cham_{1}})$ be the Satake parameters of $\tilde\sigma$ with respect to $\psi$ so that $\tilde\sigma\otimes \omega_{\psi}\cong \repse$. Let $\wsf$ be the Whittaker-Shintani function as we defined in this paper. Then we have
\begin{equation}\label{eq:a1}
  \int_{\mathrm{GL}_{1}} \wsf
  \begin{pmatrix}
    t && \\ &I_{2m} & \\ && t^{-1}
  \end{pmatrix}
|t|^{s-m-1}\ud t=\frac{L(\pi,s)}{L_{\psi}(\tilde\sigma,s+\frac 1 2)\zeta(2s)}
\end{equation}
\end{thm}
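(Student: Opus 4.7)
\textbf{Proof proposal for Theorem \ref{thm:13}.}

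The plan is to substitute the explicit torus formula (\ref{whatweneed}) for $\wsf$ into the left side and reduce the outcome to a Weyl-group identity that matches the target $L$-function ratio. First, observe that for $n=m+1$ the element $\mathrm{diag}(t,I_{2m},t^{-1})$ equals $\w^{\mathbf{f}(t)}$ with $\mathbf{f}(t)=(v(t),0,\ldots,0)\in\mathbb{Z}^{n}$, so by Theorem \ref{thm:step4} (applied with $\m d=\m 0$) the function $\wsf$ vanishes on this element whenever $v(t)<0$. The integral therefore collapses to a Dirichlet series
\begin{equation*}
  \int_{F^{*}}\wsf(\w^{\mathbf{f}(t)})\,|t|^{s-m-1}\,\ud t=\mathrm{vol}(\mathcal{O}^{*})\sum_{k\geq 0}\wsf(\w^{(k,0,\ldots,0)})\,q^{-k(s-m-1)}.
\end{equation*}

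Next I would apply the identity $\wsf(\w^{\mathbf{f}})=\mathcal{L}(\m 0,\mathbf{f})$ from (\ref{whatweneed}) and track exponents. At $\mathbf{f}=(k,0,\ldots,0)$ one computes $((w\chag)^{-1}\dha)(\w^{\mathbf{f}})=q^{k((w\chag)_{1}-n)}$, since $\rho_{\gpg,1}=n$ for $\gpg=\mathrm{Sp}_{2n}$, while $|t|^{s-m-1}=q^{-k(s-n)}$ using $n=m+1$. The two contributions involving $n$ cancel, and the geometric series sums to $(1-q^{(w\chag)_{1}-s})^{-1}$. Because $\m d=\m 0$, the factor $((w'\cham)^{-1}\dha)(\w^{\m d})$ is trivially $1$, so the integral equals
\begin{equation*}
  \mathrm{vol}(\mathcal{O}^{*})\,\zeta(1)^{-m}\prod_{i=1}^{m}\zeta(2i)\cdot\sum_{w\in W_{\gpg}}\frac{d(w\chag)\,\mathcal{S}_{\gpm}(w\chag)}{1-q^{(w\chag)_{1}-s}},
\end{equation*}
where $\mathcal{S}_{\gpm}(\chag):=\sum_{w'\in W_{\gpm}}b(\chag,w'\cham)\,d'(w'\cham)$ is already symmetric in $\cham$ under $W_{\gpm}$.

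The crux is identifying this sum with $L(\pi,s)/(L_{\psi}(\tilde\sigma,s+\tfrac{1}{2})\zeta(2s))$. Following the strategy of Section \ref{sec:step8}, I would rewrite each $d(\cdot)$ and $d'(\cdot)$ as a reciprocal of the appropriate Weyl denominator via the Weyl character formula (so $d'(\cham)$ is, up to signs, $\avem(\mathrm{Q}(\cham))^{-1}\mathrm{Q}(\cham)^{-1}$, etc.). The inner sum $\mathcal{S}_{\gpm}(\chag)$ then takes the shape $\avem(\cdots)/\avem(\mathrm{Q}(\cham))$, and a symmetry argument as in Lemma \ref{lem:104} extracts the $\cham$-dependent factors $\prod_{j}(1-q^{\pm\cham_{j}-s-1/2})$ that constitute $L_{\psi}(\tilde\sigma,s+\tfrac{1}{2})^{-1}$. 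What remains is a $W_{\gpg}$-sum in $\chag$ weighted by a single pole $(1-q^{(w\chag)_{1}-s})^{-1}$; this is of exactly the shape treated by Macdonald's formula for the spherical function of $\gpg$ (cf.\ the $c_{w}$-type identities of \cite{MR571057}) and matches the $\chag$-dependent part of $L(\pi,s)$.

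The main obstacle is the emergence of the factor $\zeta(2s)^{-1}=1-q^{-2s}$ together with its companion $(1-q^{-s})^{-1}\subset L(\pi,s)$ coming from the trivial eigenvalue of the Satake parameter of $SO_{2n+1}$: these combine into the symmetric factor $(1+q^{-s})$, which must be produced by a delicate cancellation between the $\zeta(\chag_{i})$-factors in $d(\chag)$ and the $\zeta^{-1}(\cham_{j}+\tfrac{1}{2})$-factors in $b(\chag,\cham)$ after the Weyl summations. This parallels the normalization argument of Section \ref{sec:step8}, but now twisted by the geometric pole $1-q^{(w\chag)_{1}-s}$; pinning down the signs and verifying the resulting rational identity at a convenient specialization (in the spirit of the choice $\ti\chag_{i}=-(m+1-i)$, $\cham_{j}=-(m+\tfrac{1}{2}-j)$ used there) will form the technical heart of the argument.
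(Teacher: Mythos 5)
Your opening reduction is correct and matches the paper: by the support theorem (restricted to $\m d=\m 0$, $\m f=(k,0,\ldots,0)$ with $k\ge 0$), formula (\ref{whatweneed}), and the cancellation between $\delta^{1/2}_{\gpb_{\gpg}}(\w^{\m f})$ and the $|t|^{-(m+1)}$ normalization, each summand of the double Weyl sum contributes a geometric series in $k$ summing to $(1-q^{(w\chag)_{1}-s})^{-1}$, and the inner $\gpw_{\gpm}$-sum is $\gpw_{\gpm}$-invariant in $\cham$. Up to that point you and the paper agree.

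The gap is in the conversion of the resulting expression to the $L$-function ratio, which you sketch but do not carry out — and where the sketch, as written, would not go through as stated. You claim that ``a symmetry argument as in Lemma \ref{lem:104} extracts the $\cham$-dependent factors $\prod_j(1-q^{\pm\cham_j-s-1/2})$ that constitute $L_\psi(\tilde\sigma,s+\tfrac12)^{-1}$.'' But the $\cham$-dependence cannot be pulled out of the $\gpw_{\gpg}$-average directly, because after the Lemma \ref{lem:105}-style reduction the $\cham$-factors enter as a product $\prod_j(1-\w^{-\chag_1\pm\cham_j+\frac12}\cdot\w^{(l+m+\frac12)\chag_1})$ whose first argument is still twisted by $\chag_1$ under $\gpw_{\gpg}$. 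The paper's actual mechanism is to expand this product into elementary symmetric functions $\wedge^r(\Gamma_{\tilde\sigma})$ of the shifted Satake parameters of $\tilde\sigma$, recognize the resulting $\chag$-average as a Weyl character $\schh_{m+1}((l-r,0,\ldots,0);z_\pi)$ of $SO_{2m+3}(\mathbb{C})$, then perform the change of variable $l=a+r$ (using the vanishing of $\schh_{m+1}$ when the first entry of its weight lies in $\{-1,\ldots,-2m\}$), so that the $\wedge^r$-sum finally factors out as $L_\psi(\tilde\sigma,s+\tfrac12)^{-1}$. Without this reorganization the ``extraction'' does not happen.

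Your other cited ingredient — ``Macdonald's formula for the spherical function'' matching ``the $\chag$-dependent part of $L(\pi,s)$'' — is also not quite the right tool. The paper instead invokes the specific generating-function identity $L(\pi,s)/\zeta(2s)=\sum_{a\ge 0}\schh_{m+1}(a;z_\pi)\,\ww^{as}$ from \cite{MR1675971}, i.e.\ the standard $L$-factor for $\mathrm{Sp}_{2n}$ as a sum of one-row $SO_{2n+1}$-characters. This is precisely what absorbs the $\zeta(2s)^{-1}$ factor you flagged as the ``main obstacle'': it is not produced by a delicate cancellation among the $\zeta(\chag_i)$ and $\zeta(\cham_j+\tfrac12)$ factors but comes packaged with $L(\pi,s)$ in that cited identity. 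So you have correctly identified the hard spot, but the approach you propose to overcome it (a cancellation verified by specialization) is not the one that works; what is needed is the $\wedge^r$-expansion, the variable shift $l\mapsto a+r$, and the explicit character generating function.
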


We denote by $\mathrm{LHS}$ and $\mathrm{RHS}$ the left hand side and right hand side of above respectively. First we have 

  \begin{lem} Recall that $\rho_{1}=(n-\frac 1 2, n-\frac 3 2, \ldots, \frac 1 2)$, where $n=m+1$ now, corresponding to the half sum of positive roots in $SO_{2n+1}=SO_{2m+3}$. Then  
    \begin{equation*}
    \begin{split}
      \mathrm{LHS}=\sum_{l\geq 0}\frac{\mathcal{A}_{\gpw_{\gpg}}(\prod_{1\leq j \leq m}(1-\w^{-\chag_{1}\pm \cham_{j}+\frac 1 2})\w^{\ppair{\chag}{l+\rho_{1}}})}{\mathcal{A}_{\gpw_{\gpg}}(\w^{\ppair{\chag}{\rho_{1}}})}\cdot \ww^{ls}.
    \end{split}
  \end{equation*}
Note that by abusing the notation the $l$ in $l+\rho_{1}$ is regarded as $(l,0,\ldots,0)\in \mathbb{C}^{n}$.
  \end{lem}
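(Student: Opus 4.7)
The plan is to substitute the formula for $\wsf$ directly into $\mathrm{LHS}$ and collapse the resulting double Weyl sum over $\gpw_{\gpg}\times\gpw_{\gpm}$ into a single $\gpw_{\gpg}$-antisymmetrization.

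First, writing $t=\w^l u$ with $u\in\mathcal{O}^\times$ and $l\in\mathbb{Z}$, the matrix $\mathrm{diag}(t,I_{2m},t^{-1})$ coincides with $\w^{\mathbf{f}_l}$, where $\mathbf{f}_l=(l,0,\ldots,0)\in\mathbb{Z}^n$, modulo a factor in $\gpk_{\gpg}$. By Theorem~\ref{thm:step4} the integrand vanishes unless $l\geq 0$, and for such $l$ the identity (\ref{whatweneed}) gives $\wsf(\w^{\mathbf{f}_l})=\mathcal{L}(\m 0,\mathbf{f}_l)$. Splitting the integration into annuli $|t|=\ww^l$ and inserting the main formula for $\mathcal{L}(\m 0,\mathbf{f}_l)$ writes $\mathrm{LHS}$ as $\sum_{l\geq 0}\ww^{ls}$ times a double sum over $(w,w')\in\gpw_{\gpg}\times\gpw_{\gpm}$ of the summand $b(w\chag,w'\cham)\,d(w\chag)\,d'(w'\cham)\,((w\chag)^{-1}\dha_{\gpb_{\gpg}})(\w^{\mathbf{f}_l})$. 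The exponent $|t|^{s-m-1}$ from the character and the modulus $\dha_{\gpb_{\gpg}}(\w^{\mathbf{f}_l})$ combine so that, after the $\gpw_{\gpg}$-reassembly in the third step below, the $\chag$-monomial becomes exactly $\w^{\ppair{w\chag}{l+\rho_1}}$.

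The key step is the inner summation over $w'\in\gpw_{\gpm}$. With $n=m+1$, the two products in $b(w\chag,w'\cham)$ indexed by $i<j+1$ and $i>j+1$ together cover all $i\neq j+1$, so those factors of $b(w\chag,w'\cham)\cdot\prod_{i,j}\zeta^{-1}(\chag_i+\cham_j+\tfrac12)$ that involve $(w\chag)_1$ are exactly $\prod_{j=1}^{m}\zeta^{-1}((w\chag)_1\pm(w'\cham)_j+\tfrac12)$. The product of these factors over both signs is $\gpw_{\gpm}$-invariant in $\cham$, so it can be pulled outside the $\gpw_{\gpm}$-sum. What remains after the extraction is a rational function in $\cham$, and its $\gpw_{\gpm}$-antisymmetrization should be computable from the Weyl denominator identity $\avem(\w^{\ppair{\cham}{\rho_2}})=\w^{\ppair{\cham}{\rho_2}}\prod_{\alpha>0}(1-\w^{-\ppair{\cham}{\alpha}})$ for $\mathrm{Sp}_{2m}$, by the same type of pole-matching argument used in Section~\ref{sec:step8} to evaluate $\iii(e)$. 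The expected outcome is
\begin{equation*}
\sum_{w'\in\gpw_{\gpm}}b(w\chag,w'\cham)\,d'(w'\cham)=C(w\chag)\cdot\prod_{j=1}^{m}\bigl(1-\w^{-(w\chag)_1+\cham_j+\tfrac12}\bigr)\bigl(1-\w^{-(w\chag)_1-\cham_j+\tfrac12}\bigr),
\end{equation*}
with $C(w\chag)$ independent of $\cham$.

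Finally, summing over $w\in\gpw_{\gpg}$, the combined factor $d(w\chag)\,C(w\chag)\,((w\chag)^{-1}\dha_{\gpb_{\gpg}})(\w^{\mathbf{f}_l})$ is to be identified as $\mathrm{sgn}(w)\,\w^{\ppair{w\chag}{l+\rho_1}}/\aveg(\w^{\ppair{\chag}{\rho_1}})$ up to a $\gpw_{\gpg}$-invariant constant, via the Weyl denominator identity $\aveg(\w^{\ppair{\chag}{\rho_1}})=\w^{\ppair{\chag}{\rho_1}}\prod_{a<b}(1-\w^{-\chag_a\pm\chag_b})\prod_i(1-\w^{-\chag_i})$ for $\mathrm{SO}_{2n+1}=\mathrm{SO}_{2m+3}$. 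The half-integer shift from the $\mathrm{Sp}_{2n}$-modulus half-sum $(n,n-1,\ldots,1)$ to $\rho_1=(n-\tfrac12,\ldots,\tfrac12)$ is supplied by the $\prod_i\zeta^{-1}(\chag_i+\tfrac12)$ factors hidden in $b$ together with the $|t|^{s-m-1}$ exponent. Reassembling produces $\aveg(\prod_{j=1}^m(1-\w^{-\chag_1\pm\cham_j+\tfrac12})\,\w^{\ppair{\chag}{l+\rho_1}})/\aveg(\w^{\ppair{\chag}{\rho_1}})$, which is the claimed expression. The principal obstacle is the inner $\gpw_{\gpm}$-collapse: it requires showing that the residual rational function in $\cham$ antisymmetrizes cleanly to the $\mathrm{Sp}_{2m}$-Weyl denominator times an appropriate monomial, and I would verify this by examining residues in each $\cham_j$, after which the rationality result of Section~\ref{sec:step3} pins down the identity globally.
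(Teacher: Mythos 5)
Your overall strategy — substitute the main formula, extract the $\gpw_{\gpm}$-invariant factor $\prod_{j}\zeta^{-1}((w\chag)_1\pm\cham_j+\tfrac12)$, and then collapse the remaining Weyl sums — is the same strategy the paper follows, and the extraction step is correct for $n=m+1$. The intermediate claim that $\sum_{w'}b(w\chag,w'\cham)d'(w'\cham)$ factors as $C(w\chag)$ times the extracted product, with $C(w\chag)$ depending only on $\tilde{w\chag}=((w\chag)_2,\ldots,(w\chag)_{m+1})$ and not on $\cham$, is also correct; it follows from the degree bound on the $\cham$-exponents (the regular $\mu$ occurring in $\tilde b\,Q^{-1}$ are exactly the signed permutations of $\rho_2$), exactly the argument used in the proof of Lemma \ref{lem:105}.

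However, your final reassembly step contains a genuine gap. You assert that $d(w\chag)\,C(w\chag)\,((w\chag)^{-1}\dha_{\gpb_{\gpg}})(\w^{\mathbf f_l})$ can be identified term-by-term with $\mathrm{sgn}(w)\,\w^{\ppair{w\chag}{l+\rho_1}}/\aveg(\w^{\ppair{\chag}{\rho_1}})$ up to a $\gpw_{\gpg}$-invariant constant. That cannot hold term by term: after $d(w\chag)=(-1)^n\mathrm{sgn}(w)\w^{-\ppair{w\chag}{\rho_1}}/\aveg(\w^{\ppair{\chag}{\rho_1}})$ and the sign flip from $w\mapsto ww_0^{\gpg}$ are accounted for, matching the two sides forces $C(w\chag)$ to be constant in $w$. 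But $C(w\chag)$ depends nontrivially on $\tilde{w\chag}$ — already for $m=1$ one computes $C(\tilde\chag)=\zeta^{-1}(\tilde\chag_1+1)$, which varies as $\gpw_{\gpg}$ moves the second coordinate of $w\chag$. The identification only becomes true \emph{after} summing over the copy of $\gpw_{\gpm}$ inside $\gpw_{\gpg}$ that permutes and flips $\tilde{w\chag}$: one needs $\mathcal A_{\gpw_{\gpm}}(\tilde\chag^{\tilde\rho_1}C(\tilde\chag))=\mathbf C\cdot\mathcal A_{\gpw_{\gpm}}(\tilde\chag^{\tilde\rho_1})$ with $\mathbf C=\zeta(1)^m\prod_i\zeta^{-1}(2i)$, which is exactly the content of Lemma \ref{lem:105}. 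In other words, a second application of the regular-weight argument, this time in the $\tilde\chag$-variables, is indispensable; the paper encodes this in the coset decomposition $\gpw_{\gpg}=\bigsqcup_{w_0}(\gpw_1\times\gpw_2)w_0$. Your phrase ``antisymmetrizes cleanly to the $\mathrm{Sp}_{2m}$-Weyl denominator times an appropriate monomial'' is also inaccurate at the intermediate stage: the quotient is a genuine polynomial in $\tilde\chag$, not a monomial, and only antisymmetrizes to a multiple of $\mathcal A_{\gpw_{\gpm}}(\tilde\chag^{\tilde\rho_1})$ after the second Weyl sum. (A minor further slip: the half-integer shift from $(n,\ldots,1)$ to $\rho_1=(n-\tfrac12,\ldots,\tfrac12)$ comes from the factor $\prod_i\zeta(\chag_i)$ in $d(\chag)$, i.e.\ from the Weyl character formula for $SO_{2n+1}$, not from any $\prod_i\zeta^{-1}(\chag_i+\tfrac12)$ factor in $b$; no such factor appears in $b$.)
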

  \begin{proof}
 Since all the data are unramified, the integral on the left is actually a sum over $t=\w^{l}$ with $l\in \mathbb{Z}$. By the discussion of section \ref{sec:step4}, the Whittaker-Shintani function vanishes unless $l\geq 0$. Substituting $\wsf$ by the formula we developped in the previous sections, and note that $\dha
_{\gpb_{\gpg}}\begin{pmatrix}
    t && \\ &I_{2m} & \\ && t^{-1}
  \end{pmatrix}=|t|^{m+1}$, we have
\begin{equation*}
  \mathrm{LHS}=c^{-1}\cdot \sum_{l\geq 0}\left[\ww^{sl}\cdot\sum_{w\in W_{\gpg},w'\in W_{\gpm}}b(w\chag,w'\cham)d(w\chag)d'(w'\cham)((w\chag)^{-1})(\w^{l})\right].
\end{equation*}
Here $c=\zeta(1)^{m}\prod_{i=1}^{m}\zeta^{-1}(2i)$. Recall that $\rho_{2}=(m,m-1,\ldots, 1)$. Then
\begin{align*}
  &d(\chag)=(-1)^{m+1}\w^{-\ppair{\chag}{\rho_{1}}}\mathcal{A}_{W_{\gpg}}^{-1}(\w^{\ppair{\chag}{\rho_{1}}}).\\
  &d'(\cham)=(-1)^{m}\w^{-\ppair{\cham}{\rho_{2}}}\mathcal{A}_{W_{\gpm}}^{-1}(\w^{\ppair{\cham}{\rho_{2}}}).
\end{align*}
So 
\begin{equation}
\label{eq:n2}
  \begin{split}
   \mathrm{LHS}=&\frac{c^{-1}}{\mathcal{A}_{W_{\gpg}}(\w^{\ppair{\chag}{\rho_{1}}})\mathcal{A}_{W_{\gpm}}(\w^{\ppair{\cham}{\rho_{2}}})}\\\cdot &\sum_{l\geq 0}\left[\ww^{ls}\sum_{w\in W_{\gpg},w'\in W_{\gpm}}(-1)^{2m+1}sgn(w)sgn(w')\w^{-\ppair{w\chag}{l+\rho_{1}}}\w^{-\ppair{w'\cham}{\rho_{2}}}b(w\chag,w'\cham)\right].
  \end{split}
\end{equation}
 In fact the summation on $\gpw_{\gpg}\times \gpw_{\gpm}$ is equal to, by a change of variable $w\mapsto ww_{0}^{\gpg}$ and $w'\mapsto w'w_{0}^{\gpm}$, 
\begin{equation*}
  \sum_{w\in W_{\gpg},w'\in W_{\gpm}}sgn(w)sgn(w')\w^{\ppair{w\chag}{l+\rho_{1}}}\w^{\ppair{w'\cham}{\rho_{2}}}b(-w\chag,-w'\cham)
\end{equation*}

We can further simplify this summation by a similar discussion as in \textbf{Section} \ref{sec:step8}. By the definition of $b(\chag,\cham)$, we have
\begin{equation*}
  \begin{split}
    &\w^{\ppair{w\chag}{\rho_{1}+l}}\w^{\ppair{w'\cham}{\rho_{2}}}b(-w\chag,-w'\cham)\\
=&\left[\prod_{j=1}^{m}(1-\w^{-\chag_{1}\pm\cham_{j}+\frac 1 2}\w^{(l+m+\frac 1 2)\chag_{1}})\right]\left[\tilde{A}(\tilde\chag,\cham)\right].
  \end{split}
\end{equation*}
Here $\tilde{A}(\tilde\chag,\cham)$ is defined right after \textbf{lemma 10.4}. If we let $\gpw_{1}$ be the subgroup of $\gpw_{\gpg}$ stablizing $\chag_{2},\ldots, \chag_{m+1}$ and $\gpw_{2}$ the subgroup of $\gpw_{\gpg}$ stablizing $\chag_{1}$, and let $\gpw_{0}$ be a set of representatives in $(\gpw_{1}\times \gpw_{2})\backslash \gpw_{\gpg}$, then the first bracket is invariant under $\gpw_{\gpm}\times\gpw_{2}$, and the second bracket is invariant under $\gpw_{1}$. So the summation over $\gpw_{\gpg}\times \gpw_{\gpm}$ is equal to
\begin{align*}
  \sum_{w_{0},w_{1}}& sgn(w_{0}w_{1})[(w_{1}w_{0})\circ (\prod_{j=1}^{m}(1-\w^{-\chag_{1}\pm\cham_{j}+\frac 1 2}\w^{(l+m+\frac 1 2)\chag_{1}}))]\\
  &\cdot [\sum_{w_{2},w_{\gpm}}sgn(w_{2}w_{\gpm})\tilde A(w_{2}w_{0}\tilde\chag,w_{\gpm}\cham)]
\end{align*}
By \textbf{Lemma \ref{lem:105}}, the second bracket is equal to
\begin{equation*}
  c\cdot \sum_{w_{2},w_{\gpm}}sgn(w_{2}w_{\gpm})\w^{\ppair{w_{2}w_{0}\chag}{\tilde\rho_{1}}}\w^{\ppair{w_{\gpm}\cham}{\rho_{2}}},
\end{equation*} where $\tilde\rho_{1}=(0,m-\frac 1 2, m-\frac 3 2, \ldots, \frac 1 2)$. From this it is not hard to see that the summation over $\gpw_{\gpg}\times \gpw_{\gpm}$ in (\ref{eq:n2}) is equal to
\begin{align*}
  c\cdot \left[\sum_{w\in \gpw_{\gpg}}sgn(w)\cdot w\circ\left(\prod_{j=1}^{m}(1-\w^{-\chag_{1}\pm \cham_{j}+\frac 1 2})\w^{\ppair{\chag}{l+\rho_{1}}}\right)\right]\mathcal{A}_{\gpw_{\gpm}}(w^{\ppair{\cham}{\rho_{2}}})
\end{align*}
Substituting the formula to equation \ref{eq:n2} we obtain our lemma. 
\end{proof} 
Now we can prove \textbf{Theorem \ref{thm:13}}.
\begin{proof}[(Proof of \textbf{Theorem \ref{thm:13}})]
By Weyl's character formula, for the representation of $SO_{2N+1}(\mathbb{C})$ whose highest weight is $\lambda=(\lambda_{1},\ldots, \lambda_{N})\in \Lambda_{N}^{+}$,  the trace of $x=diag(x_{1},\ldots, x_{N},1,x_{N}^{-1},\ldots, x_{1}^{-1})$ is $\schh_{N}(\lambda;x)=\frac{\det (x_{i}^{\lambda_{j}+N-j+\frac 1 2}-x_{i}^{-(\lambda_{j}+N-j+\frac 1 2)})}{\det (x_{i}^{N-j+\frac 1 2}-x_{i}^{-(N-j+\frac 1 2)})}$. The function $\schh_{N}(\lambda;x)$ is in fact defined for all $\lambda\in \mathbb{Z}^{N}$. For any set $A=\{a_{1},\ldots, a_{N}\}$, we let $\wedge^{i}(A)=\sum_{S\subset A, |S|=i}(\prod_{s\in S}a_{s})$. Using these notation, we can express $\mathrm{LHS}$ as
\begin{equation}
  \label{eq:n3}
  \mathrm{LHS}=\sum_{l\geq 0, r \in \{0,1,\ldots, 2m\}}(-1)^{r}\wedge^{r}(\Gamma_{\tilde\sigma})\cdot\schh_{m+1}((l-r,0,\ldots,0);z_{\pi})\ww^{ls}.
\end{equation}
Here $\Gamma_{\tilde\sigma}$ is the set $\{\cham_{1}+\frac 1 2, \ldots, \cham_{m}+\frac 1 2, -\cham_{m}+\frac 1 2,\ldots, -\cham_{1}+\frac 1 2\}$. 
  Next we consider $\mathrm{RHS}$. By the discussion  in \cite[Theorem 3.1]{MR1675971}, we have
\begin{equation*}
  \frac{L(\pi,s)}{\zeta(2s)}=\sum_{a\geq 0}\schh_{m+1}( a;z_{\pi})\ww^{as}.
\end{equation*}
So by the notation introduced above, we have
\begin{equation}
\label{eq:rhs}
  \mathrm{RHS}=\sum_{a\geq 0, r\in \{0,1,\ldots, 2m\}^{m}}(-1)^{r}\wedge^{r}(\Gamma_{\tilde\sigma})\schh_{m+1}((a,0,\ldots,0);z_{\pi})\ww^{(a+r)s}.
\end{equation}
To show that (\ref{eq:n3}) equals (\ref{eq:rhs}), note that in (\ref{eq:n3}) if $l<r$, then $l-r\in \{-1,\ldots,-2m\}$ since $0\leq r \leq 2m$. Then it is not hard to see that $\schh_{m+1}(l-r,0,\ldots,0)=0$ by its definition. So one can replace the summation from $l \geq 0$ to $l\geq r$. Then by a change of the variable $l=a+r$ with $a\geq 0$ we have (\ref{eq:n3}) equals (\ref{eq:rhs}). 
\end{proof}
\bibliographystyle{plain}
\bibliography{math}

\end{document}